\DeclareFontFamily{U}{ntxmia}{\skewchar \font =127}
 \DeclareFontShape{U}{ntxmia}{m}{it}{
                        <-> \ntxmath@scaled ntxmia
                      }{}
                      \DeclareFontShape{U}{ntxmia}{b}{it}{
                        <-> \ntxmath@scaled ntxbmia
                      }{}
\tikzset{vtx/.style={inner sep=1.7pt, outer sep=0pt, circle, fill,draw}}
\pgfplotsset{compat=1.18}
\def\NAT@spacechar{~}
\crefname{figure}{Figure}{Figures}
\Crefname{figure}{Figure}{Figures}
\newtheorem{theorem}{Theorem}[section]
\crefname{theorem}{Theorem}{Theorems}
\Crefname{theorem}{Theorem}{Theorems}
  \newcommand{\newcreftheorem}[5]{%
    \newaliascnt{#1}{theorem}%
    \newtheorem{#1}[#1]{#2}%
    \aliascntresetthe{#1}%
    \crefname{#1}{#3}{#4}%
    \Crefname{#1}{#2}{#5}%
  }%
  \newcommand{\newcreftheorem}[5]{%
    \newtheorem{#1}[theorem]{#2}%
    \crefname{#1}{#3}{#4}%
    \Crefname{#1}{#2}{#5}%
  }%
\crefname{subclaim}{Subclaim}{Subclaims}
\Crefname{subclaim}{Subclaim}{Subclaims}
\theoremstyle{definition}
\newenvironment{claimproof}{%
\let\origqed=\qedsymbol%
\renewcommand{\qedsymbol}{$\blacktriangleleft$}%
\begin{proof}}{\end{proof}\let\qedsymbol=\origqed}
\numberwithin{equation}{section}
\renewcommand{\binom}[2]{\ensuremath{\mleft(\kern-.1em\genfrac{}{}{0pt}{}{#1}{#2}\kern-.1em\mright)}}    
\newcommand{\inbinom}[2]{\ensuremath{\bigl(\kern-.1em\genfrac{}{}{0pt}{}{#1}{#2}\kern-.1em\bigr)}} 
\newcommand*\nume{\ensuremath{\mathrm{e}}}
\newcommand{\cA}{\mathcal{A}}
\newcommand{\cC}{\mathcal{C}}
\newcommand{\cE}{\mathcal{E}}
\DeclareSymbolFont{sfletters}{OML}{cmbrm}{m}{it}
\DeclareMathSymbol{\salpha}{\mathord}{sfletters}{"0B}
\DeclareMathSymbol{\sbeta}{\mathord}{sfletters}{"0C}
\DeclareMathSymbol{\sgamma}{\mathord}{sfletters}{"0D}
\DeclareMathSymbol{\sdelta}{\mathord}{sfletters}{"0E}
\DeclareMathSymbol{\sepsilon}{\mathord}{sfletters}{"0F}
\DeclareMathSymbol{\szeta}{\mathord}{sfletters}{"10}
\DeclareMathSymbol{\seta}{\mathord}{sfletters}{"11}
\DeclareMathSymbol{\stheta}{\mathord}{sfletters}{"12}
\DeclareMathSymbol{\siota}{\mathord}{sfletters}{"13}
\DeclareMathSymbol{\skappa}{\mathord}{sfletters}{"14}
\DeclareMathSymbol{\slambda}{\mathord}{sfletters}{"15}
\DeclareMathSymbol{\smu}{\mathord}{sfletters}{"16}
\DeclareMathSymbol{\snu}{\mathord}{sfletters}{"17}
\DeclareMathSymbol{\sxi}{\mathord}{sfletters}{"18}
\DeclareMathSymbol{\spi}{\mathord}{sfletters}{"19}
\DeclareMathSymbol{\srho}{\mathord}{sfletters}{"1A}
\DeclareMathSymbol{\ssigma}{\mathord}{sfletters}{"1B}
\DeclareMathSymbol{\stau}{\mathord}{sfletters}{"1C}
\DeclareMathSymbol{\supsilon}{\mathord}{sfletters}{"1D}
\DeclareMathSymbol{\sphi}{\mathord}{sfletters}{"1E}
\DeclareMathSymbol{\schi}{\mathord}{sfletters}{"1F}
\DeclareMathSymbol{\spsi}{\mathord}{sfletters}{"20}
\DeclareMathSymbol{\somega}{\mathord}{sfletters}{"21}
\DeclareSymbolFont{sfgreek}{LGR}{cmss}{m}{n}
\DeclareMathSymbol{\ssalpha}{\mathord}{sfgreek}{`a}
\DeclareMathSymbol{\ssbeta}{\mathord}{sfgreek}{`b}
\DeclareMathSymbol{\ssgamma}{\mathord}{sfgreek}{`g}
\DeclareMathSymbol{\ssdelta}{\mathord}{sfgreek}{`d}
\DeclareMathSymbol{\ssepsilon}{\mathord}{sfgreek}{`e}
\DeclareMathSymbol{\sszeta}{\mathord}{sfgreek}{`z}
\DeclareMathSymbol{\sseta}{\mathord}{sfgreek}{`h}
\DeclareMathSymbol{\sstheta}{\mathord}{sfgreek}{`j}
\DeclareMathSymbol{\ssiota}{\mathord}{sfgreek}{`i}
\DeclareMathSymbol{\sskappa}{\mathord}{sfgreek}{`k}
\DeclareMathSymbol{\sslambda}{\mathord}{sfgreek}{`l}
\DeclareMathSymbol{\ssmu}{\mathord}{sfgreek}{`m}
\DeclareMathSymbol{\ssnu}{\mathord}{sfgreek}{`n}
\DeclareMathSymbol{\ssxi}{\mathord}{sfgreek}{`x}
\DeclareMathSymbol{\ssomicron}{\mathord}{sfgreek}{`o}
\DeclareMathSymbol{\sspi}{\mathord}{sfgreek}{`p}
\DeclareMathSymbol{\ssrho}{\mathord}{sfgreek}{`r}
\DeclareMathSymbol{\sssigma}{\mathord}{sfgreek}{`s}
\DeclareMathSymbol{\sstau}{\mathord}{sfgreek}{`t}
\DeclareMathSymbol{\ssupsilon}{\mathord}{sfgreek}{`u}
\DeclareMathSymbol{\ssphi}{\mathord}{sfgreek}{`f}
\DeclareMathSymbol{\sschi}{\mathord}{sfgreek}{`q}
\DeclareMathSymbol{\sspsi}{\mathord}{sfgreek}{`y}
\DeclareMathSymbol{\ssomega}{\mathord}{sfgreek}{`w}
\DeclareMathSymbol{\ssvarsigma}{\mathord}{sfgreek}{`c}
\DeclareMathSymbol{\ssGamma}{\mathalpha}{sfgreek}{`G}
\DeclareMathSymbol{\ssDelta}{\mathalpha}{sfgreek}{`D}
\DeclareMathSymbol{\ssTheta}{\mathalpha}{sfgreek}{`J}
\DeclareMathSymbol{\ssLambda}{\mathalpha}{sfgreek}{`L}
\DeclareMathSymbol{\ssXi}{\mathalpha}{sfgreek}{`X}
\DeclareMathSymbol{\ssPi}{\mathalpha}{sfgreek}{`P}
\DeclareMathSymbol{\ssSigma}{\mathalpha}{sfgreek}{`S}
\DeclareMathSymbol{\ssUpsilon}{\mathalpha}{sfgreek}{`U}
\DeclareMathSymbol{\ssPhi}{\mathalpha}{sfgreek}{`F}
\DeclareMathSymbol{\ssPsi}{\mathalpha}{sfgreek}{`Y}
\DeclareMathSymbol{\ssOmega}{\mathalpha}{sfgreek}{`W}
\DeclareRobustCommand{\msf}[1]{%
  \ifcat\noexpand#1\relax\msfgreek{#1}\else\mathsf{#1}\fi
}
\newcommand{\msfgreek}[1]{\csname ss\expandafter\@gobble\string#1\endcsname}
\newcommand{\vect}[1]{\boldsymbol{\msf{#1}}}
\def\moverlay{\mathpalette\mov@rlay}
\def\mov@rlay#1#2{\leavevmode\vtop{%
  \baselineskip\z@skip \lineskiplimit-\maxdimen
  \ialign{\hfil$\m@th#1##$\hfil\cr#2\crcr}}}
\newcommand{\charfusion}[3][\mathord]{
    #1{\ifx#1\mathop\vphantom{#2}\fi
        \mathpalette\mov@rlay{#2\cr#3}
      }
    \ifx#1\mathop\expandafter\displaylimits\fi}
\newcommand{\cupdot}{\charfusion[\mathbin]{\cup}{\cdot}}
\newcommand{\mybar}[1]{\makebox[0pt]{$\phantom{#1}\overline{\phantom{#1}}$}#1}
\newcommand{\eps}{\epsilon}
\newcommand{\defi}[1]{\emph{\color{red!60!black}#1}}
\newcommand{\COMMENT}[1]{}
\newcommand{\COMNEW}[1]{}
\newcommand{\aed}[1]{\textcolor{red}{\textbf{[AED: } #1\textbf{]}}}
\title[Factors and powers of Hamilton cycles in the budget-constrained random graph process]{Graph factors and powers of Hamilton cycles\linebreak{} in the budget-constrained random graph process}
\author[A.~Espuny D\'iaz]{Alberto Espuny D\'iaz}
\email{\{espuny-diaz,garbe,smith\}@informatik.uni-heidelberg.de}
\address[Espuny D\'iaz, Garbe, Smith]{Institut f\"ur Informatik, Universit\"at Heidelberg, 69120 Heidelberg, Germany.}
\author[F.~Garbe]{Frederik Garbe}
\author[T.~Naia]{T\'assio Naia}
\email{tnaia@member.fsf.org}
\address[Naia]{Centre de Recerca Matem\`atica, 08193 Bellaterra, Spain.}
\author[Z.~Smith]{Zak Smith}
\thanks{The research leading to these results has been partially supported by the Deutsche Forschungsgemeinschaft (DFG, German Research Foundation) through projects no.\ 513704762 (A.~Espuny D\'iaz) and 428212407 (F.~Garbe, Z.~Smith). 
T.~Naia was supported by the Grant PID2023-147202NB-I00 funded by MICIU/AEI/10.13039/501100011033. 
This work is also partially supported by the Spanish State Research Agency,
through the Severo Ochoa and Mar\'ia de Maeztu Program for Centers
and Units of Excellence in R\&D (CEX2020-001084-M)}
\date{\today}
\begin{document}
\begin{abstract}
We consider the following budget-constrained random graph process introduced by Frieze, Krivelevich and Michaeli.
A player, called Builder, is presented with $t$ distinct edges of $K_n$ one by one, chosen uniformly at random.
Builder may purchase at most $b$ of these edges, and must (irrevocably) decide whether to purchase each edge as soon as it is offered.
Builder's goal is to construct a graph which satisfies a certain property; we investigate the properties of containing different $F$-factors or powers of Hamilton cycles.

We obtain general lower bounds on the budget~$b$, as a function of~$t$, required for Builder to obtain partial $F$-factors, for arbitrary $F$.
These imply lower bounds for many distinct spanning structures, such as powers of Hamilton cycles.
Notably, our results show that, if $t$ is close to the hitting time for a partial $F$-factor, then the budget $b$ cannot be substantially lower than $t$.
These results give negative answers to questions of Frieze, Krivelevich and Michaeli.

Conversely, we also exhibit a simple strategy for constructing (partial) $F$-factors, in particular showing that our general lower bound is tight up to constant factors.
The ideas from this strategy can be exploited for other properties.
As an example, we obtain an essentially optimal strategy for powers of Hamilton cycles.
In order to formally prove that this strategy succeeds, we develop novel tools for analysing multi-stage strategies, which may be of general interest for studying other properties.
\end{abstract}


\maketitle

\section{Introduction}

Given a positive integer $n$, let $M\coloneqq\inbinom{n}{2}$ and consider the \defi{random graph process}, that is, a random sequence of graphs $G_0\subseteq G_1\subseteq\ldots\subseteq G_M$ obtained by letting $G_0$ be the empty graph on a (labelled) set of $n$ vertices and, for each $i\in[M] = \{ 1, \ldots, M \} $, letting $G_i$ be obtained from $G_{i-1}$ by adding to it a new edge $ e_i $, chosen uniformly at random among all of its missing edges.
Note that each $G_m$ in the above process is distributed as $\hat{G}(n,m)$ (which we write to mean a graph chosen uniformly at random among the set of all $n$-vertex graphs with exactly $m$ edges), though of course the graphs in the sequence are not independent.
For a non-trivial increasing graph property~$\mathcal{P}$, there exists a unique $\tau_{\mathcal{P}}\in[M]$ such that $G_{\tau_{\mathcal{P}}}\in\mathcal{P}$ but $G_{\tau_{\mathcal{P}}-1}\notin\mathcal{P}$; such a $\tau_{\mathcal{P}}$ (which is a random variable) is called the \defi{hitting time} for $\mathcal{P}$ in the random graph process.

In general, even graph properties which require ``few'' edges to occur often have very ``large'' hitting times, meaning that, in order for a random graph to satisfy the desired property, one usually needs to have ``many more'' random edges than the minimum number of edges required for the property.
Let us see some classical examples of this behaviour.
We say that an event $\mathcal{A}$ (formally, a sequence of events $(\mathcal{A}_n)_{n\in\mathbb{N}}$, but we omit the dependence on $n$ from the notation) occurs \defi{asymptotically almost surely} (a.a.s.) if $ \mathbb{P}[\mathcal{A}] \to 1 $ as $ n \to \infty $.
\begin{itemize}
    \item A perfect matching is a set of $n/2$ disjoint edges, but a.a.s.\ will not appear in the random graph process until it has at least $(1-o(1))n\log n/2$ edges~\cite{ER66}.
    \item A Hamilton cycle (that is, a cycle containing every vertex of the graph) requires $n$ edges, but a.a.s.\ will not appear in the random graph process until it has more than $n\log n/2$ edges~\cite{KS83}.
    \item A triangle factor (which is a set of $n/3$ vertex-disjoint triangles) again requires only $n$ edges, but a.a.s.\ $G_t$ will not contain one if $t=o(n^{4/3}\log^{1/3}n)$~\cite{JKV08}.
\end{itemize}
Moreover, as edges arrive one by one through the random graph process, it is generally hard to know whether an edge will be crucial for the desired property.
These facts motivated \citet{FKM25} to introduce an ``online'' model for constructing graphs following the random graph process, but with a constraint on the ``budget'', that is, on the number of edges that the final constructed graph is allowed to have.
This ``online'' model is the main object of study in this paper.

Formally, the proposed model (to which we will refer as the \defi{budget-constrained random graph process}) constitutes a one-player game where the edges of the complete graph on $n$ vertices are given a uniformly random order and a player, \textbf{Builder}, is presented the first $t\in[M]$ edges one by one.
Each time she is presented an edge, and without knowing which edges will come next, Builder must irrevocably decide whether to ``purchase'' this edge or not.
The choices made by Builder result in a sequence of graphs $B_0\subseteq B_1\subseteq\ldots\subseteq B_t$, where $B_i\subseteq G_i$ for all $i\in\{0,1,\ldots,t\}$.
The goal of Builder is to construct a graph which satisfies a desired property $\mathcal{P}$; the catch is that she has a limited budget~$b\in[t]$, that is, the graph she constructs must satisfy that $|E(B_t)|\leq b$.
For any pair of positive integers~$t\in[M]$ and~$b\in[t]$, a \defi{$(t,b)$-strategy} for Builder is a (possibly random) function which, for each $i\in[t]$, given the graph $B_{i-1}$ and presented with a new edge at time $i$, decides whether to purchase the edge, under the limitation that $|E(B_i)|\leq b$.
In general, given a $(t,b)$-strategy $\mathcal{S}$ for Builder, we will refer to the (random) sequence of graphs $B_0\subseteq B_1\subseteq\ldots\subseteq B_t$ created by following this strategy on the random graph process as the \defi{budget-constrained random graph process under $\mathcal{S}$}.
Given a monotone graph property $\mathcal{P}$ (we will usually think of $\mathcal{P}$ as the containment of a subgraph isomorphic to a given graph), a $(t,b)$-strategy which a.a.s.\ produces a graph $B_t$ satisfying property $\mathcal{P}$ is called a \defi{successful strategy for $\mathcal{P}$}.

The general question that one would like to understand in the budget-constrained random graph process is the following: given a monotone increasing property $\mathcal{P}$, for which pairs $(t,b)$ does Builder have a successful $(t,b)$-strategy for $\mathcal{P}$?
Note that, by monotonicity, if $t_1<t_2$ and there exists a successful $(t_1,b)$-strategy for $\mathcal{P}$, then there also exists a successful $(t_2,b)$-strategy for $\mathcal{P}$.
One can also make the trivial observations that any successful strategy must use budget $b\geq\min_{G\in\mathcal{P}}|E(G)|$ and take time at least the (usual) hitting time for $\mathcal{P}$ in the random graph process.
But how close to both of these trivial bounds can one get?
Let us briefly present the results which are known for this model.

\subsection{Previous results}

A simple graph property is that of having minimum degree at least $k$, for some $k\geq1$.
Let us denote the hitting time for this property by $\tau_{k}$; it is well known that, for $k$ independent of $n$, a.a.s.\ $\tau_k=(1\pm o(1))n\log n/2$.
\citet{FKM25} showcased a successful $((1+o(1))n\log n/2,(1+o(1))kn/2)$-strategy for minimum degree at least $k$, which is clearly asymptotically optimal both with respect to the time and the budget.
One may improve the error term on the time constraint in exchange for allowing a larger (but still linear) budget:
\citet{FKM25} also proposed a $(\tau_{k},(1+o(1))C_kn)$-strategy which a.a.s.\ produces a graph $B_{\tau_{k}}$ of minimum degree at least $k$.
Here, $k/2<C_k\leq3k/4$ is an explicit constant.
Very recently, \citet{KL25} proposed an alternative strategy which improves the value of $C_k$ to~$k/2+2^{-k}$ for all $k\geq2$.
The optimal value that this constant may take remains unknown.

Let us now consider $k$-connectivity.
For $k=1$ there is a straightforward strategy that results in a connected graph with optimal time and budget: simply purchase any edge that does not create a cycle.
For $k\geq2$, \citet{Lichev22} showcased a successful $((1+o(1))n\log n/2,(1+o(1))kn/2)$-strategy for $k$-connectivity; this, again, is asymptotically optimal in both time and budget.
For $k\geq3$, the $(\tau_{k},(1+o(1))C_kn)$-strategy proposed by \citet{FKM25} for minimum degree at least $k$ a.a.s.\ also results in a $k$-connected graph.

\citet{FKM25} proposed a successful $(\tau_2,Cn)$-strategy for Hamiltonicity, where $C>1$ is an absolute constant (note that this implies that linear budget also suffices at the hitting time for $2$-connectivity).
\citet{Anastos22} showed that at the hitting time $\tau_2$ the constant $C$ cannot be arbitrarily close to $1$, but provided a successful $((1+o(1))n\log n/2,(1+o(1))n)$-strategy for Hamiltonicity (which is asymptotically optimal in both time and budget).
Similar results hold for perfect matchings~\cite{FKM25,Anastos22}.

Lastly, \citet{FKM25} considered the problem of constructing copies of small graphs, and provided asymptotically optimal strategies for constructing trees and cycles, as well as unicyclic graphs, showing a trade-off between the time allowed in the process and the budget needed to construct these graphs.
Recently, \citet{ILS24} obtained similarly tight results for the diamond as well as any number of triangles sharing a unique vertex, and \citet{AEPS26+} obtained such results for wheels and graphs obtained by adding a universal vertex to a tree.
All other fixed graphs remain an open problem.

In view of their results for spanning properties, \citet{FKM25} raised the following question.

\begin{question}[\citet{FKM25}]\label{question:FKMgeneral}
    Let $H$ be an $n$-vertex graph with bounded maximum degree (so $|E(H)|=O(n)$).
    Does Builder have a successful $(t,b)$-strategy for containing a copy of $H$ with $t$ being ``close'' to the hitting time for $H$ and $b$ being linear?
\end{question}

As a concrete example, they asked whether this can be achieved for the square of a Hamilton cycle.
(In this paper, for any positive integer $k$, the \defi{$k$-th power} of a graph $G$ is the graph obtained from $G$ by adding an edge between any pair of vertices whose graph distance in $G$ is at most $k$.)

\begin{question}[\citet{FKM25}]\label{question:FKMsquare}
    Is there a successful $(O(n^{3/2}),O(n))$-strategy for the square of a Hamilton cycle?
\end{question}

In this paper, we show that the answer to these questions is negative in general and that, in some cases, close to the hitting time, the budget cannot be asymptotically smaller than the cost of the trivial strategy of purchasing every edge presented to Builder.
We showcase this behaviour by considering \emph{graph factors}.
In more generality, we obtain new results about the interplay between~$t$ and~$b$ required to have successful strategies for constructing graphs which contain different graph factors or powers of a Hamilton cycle.
We achieve this by providing lower bounds for~$b$ as a function of~$t$ as well as successful $(t,b)$-strategies which provide (almost) matching upper bounds.
We detail our contributions in the coming sections.
It is worth noting that sometimes the strategies used are quite simple and greedy (though analysing the process may not be so simple).
For instance, in the strategy proposed by \citet{FKM25} for constructing a graph with minimum degree at least $k$, Builder accepts every edge one of whose endpoints has degree less than $k$.
The strategies that we showcase in this paper will also be quite simple in nature.

Before describing our results, we remark here that hitting times in the random graph process are deeply tied to \emph{thresholds} in binomial random graphs.
For a positive integer $n$ and $p\in[0,1]$, a \defi{binomial random graph} $G(n,p)$ is a (labelled) $n$-vertex graph obtained by including each of the~$M$ possible edges independently with probability $p$.
A function $p^*=p^*(n)$ is a \defi{threshold} (resp.\ \defi{sharp threshold}) for a non-trivial increasing property $\mathcal{P}$ if (for every $\eps>0$) $\mathbb{P}[G(n,p)\in\mathcal{P}]=o(1)$ whenever $p=o(p^*)$ (resp.\ $p\leq(1-\eps)p^*$) and $\mathbb{P}[G(n,p)\in\mathcal{P}]=1-o(1)$ whenever $p=\omega(p^*)$ (resp.\ $p\geq(1+\eps)p^*$).
Using standard coupling arguments (see \cref{lem:basiccoupling}), it follows that, if $p^*$ is a threshold for property~$\mathcal{P}$, then a.a.s.\ the hitting time $\tau_{\mathcal{P}}$ for~$\mathcal{P}$ in the random graph process satisfies that $\tau_{\mathcal{P}}=\Theta(n^2p^*)$, whereas if~$p^*$ is a sharp threshold for $\mathcal{P}$, then a.a.s.\ $\tau_{\mathcal{P}}=(1\pm o(1))Mp^*$.

\subsection{Factors}

Let $F$ be a fixed graph.
We say that a graph $G$ on $n$ vertices (where $|V(F)|$ divides~$n$) contains an \defi{$F$-factor} if its vertex set can be partitioned into $n/|V(F)|$ sets of equal size in such a way that the graph induced by $G$ on each of the sets contains a copy of $F$.
In more generality (and removing the restriction on the divisibility of $n$), for any $\alpha\in(0,1)$, we say that $G$ contains an \defi{$\alpha$-$F$-factor} if it contains a set of at least $\alpha n/|V(F)|$ vertex-disjoint copies of $F$.
We sometimes generally refer to these as \defi{partial $F$-factors}.

Finding $F$-factors in graphs is one of the classical problems in graph theory, particularly when $F=K_r$ is the complete graph on $r\geq2$ vertices, and has received much attention in the context of random graphs.
The case $F=K_2$, which corresponds to a perfect matching, was settled in a seminal paper of \citet{ER66}.
For other instances of~$F$, this problem was first considered independently by \citet{Ruc92} (who, among other results, obtained the threshold for $\alpha$-$F$-factors for \emph{all} fixed graphs $F$ and any $\alpha\in(0,1)$) and by \citet{AY93}.
The threshold for $F$-factors for all strictly $1$-balanced $F$ in random graphs was determined in the influential paper of \citet{JKV08} (see \cref{thm:JKVbound}).
This includes the case of clique-factors, and their result implies that a.a.s.\ the hitting time $\tau_{K_r}$ for containing a $K_r$-factor satisfies $\tau_{K_r}=\Theta(n^{2-2/r}\log^{1/\binom{r}{2}}n)$.
Subsequently, \citet{GM15} obtained the threshold when $F$ is not vertex-balanced (see \cref{thm:GMbound}).
The current best result in the area for clique factors is a ``hitting time result'' by \citet{HKMP23} (which is established via a coupling with a result of \citet{Kahn22,Kahn23} for perfect matchings in hypergraphs, building on earlier work of \citet{Riordan22} and \citet{Heckel21}), which in particular implies that the hitting time for $K_r$-factors is concentrated around a certain explicit value.
Very recently, \citet{BKMP24} announced an extension of this hitting time result to $F$-factors for graphs $F$ in a larger family of ``nice'' graphs, and \citet{BHKMP24} obtained the sharp threshold for all strictly $1$-balanced graphs.

Given the wealth of results about graph factors in the random graph process, it is very natural to consider this property in the budget-constrained random graph process.
For simplicity, consider here the particular case that $F=K_r$ with $r\geq2$.
For the property that $B_t$ contains a (partial) $K_r$-factor, we prove the following result for any successful $(t,b)$-strategy.

\begin{theorem}\label{thm:factors_lower}
    Let $r\geq2$ and $\alpha\in(0,1]$.
    Let $t=t(n)$ and $b=b(n)$ be such that there exists a successful $ (t, b) $-strategy for an $\alpha$-$ K_r $-factor.
    Then we must have $ b = \Omega(n^{r-1}/t^{r/2 - 1}) $.
\end{theorem}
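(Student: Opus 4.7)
The plan is a first-moment argument bounding $\mathbb{E}[N(K_r, B_t)]$ from above, where $N(K_r, G)$ denotes the number of copies of $K_r$ in $G$. Since the strategy is successful, $N(K_r, B_t) \geq \alpha n/r$ a.a.s.\ (as an $\alpha$-$K_r$-factor supplies that many vertex-disjoint copies), and hence by Markov applied to $\alpha n/r - N(K_r,B_t)$ we get $\mathbb{E}[N(K_r, B_t)] = \Omega(n)$.

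The key observation is that every copy of $K_r$ in $B_t$ is \emph{completed} at a unique step $i \in [t]$: the step at which the last of its $\binom{r}{2}$ edges is accepted by Builder. At step $i$, conditional on the history $\mathcal{F}_{i-1}$ of the process up to step $i-1$, the new edge $e_i$ is uniformly distributed over the $M - i + 1$ edges of $K_n$ not yet revealed, and a new $K_r$ is created precisely when $e_i$ coincides with the missing edge of some copy of $K_r^-$ (i.e.\ $K_r$ minus an edge) in $B_{i-1}$. Since each such $K_r^-$ has a unique missing edge, the conditional expected number of $K_r$'s completed at step $i$ is at most $N(K_r^-, B_{i-1})/(M - i + 1)$. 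Summing over $i \in [t]$ yields
\[
    \mathbb{E}[N(K_r, B_t)] \leq \sum_{i=1}^t \frac{\mathbb{E}[N(K_r^-, B_{i-1})]}{M - i + 1}.
\]

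The core of the proof is then a bound on $\mathbb{E}[N(K_r^-, B_{i-1})]$ that, once substituted into the sum above and compared to the lower bound $\mathbb{E}[N(K_r, B_t)] = \Omega(n)$, yields the claimed $b = \Omega(n^{r-1}/t^{r/2-1})$. This step is the main obstacle: neither of the two constraints on $B_{i-1}$ suffices in isolation. Using only $B_{i-1} \subseteq G_{i-1}$ (a random graph with $i-1$ edges) yields $\mathbb{E}[N(K_r^-, B_{i-1})] = O(n^r (i/M)^{\binom{r}{2}-1})$, producing only the (trivial-for-our-purposes) hitting-time bound on $t$ with no dependence on $b$. Using only $|B_{i-1}| \leq b$ together with Kruskal--Katona yields the deterministic bound $N(K_r^-, B_{i-1}) \leq O(n b^{(r-1)/2})$, leading to the strictly weaker bound $b \geq \Omega(n^{4/(r-1)}/t^{2/(r-1)})$. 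The correct estimate must therefore combine both constraints. The natural route is to assume---without loss of generality, by symmetrising over relabellings of $V(K_n)$---that Builder's strategy is vertex-transitive, so that the probability of a specific copy of $K_r^-$ appearing in $B_{i-1}$ depends only on its isomorphism type; and then to apply a moment- or exchangeability-based inequality that carefully controls the positive correlations between Builder's acceptance decisions on the $\binom{r}{2}-1$ edges forming a $K_r^-$, so that the ``random-like'' behaviour of $B_{i-1}$ dominates the bound.
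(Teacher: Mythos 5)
Your first-moment skeleton is sound as far as it goes, and you have correctly located the crux: one needs a bound on $\mathbb{E}[N(K_r^-, B_{i-1})]$ that simultaneously exploits the budget constraint and the time constraint, and neither the time-only nor the Kruskal--Katona bound gets there. But the route you gesture at does not close the gap, and more importantly, the aggregate (total-count) formulation you have set up is itself too weak to give the theorem.

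The concrete obstruction is degree concentration. Using only $e(B_{i-1})\le b$, the quantity $N(K_r^-, B_{i-1})$ can be as large as $\Theta(bn^{r-2})$ when Builder concentrates her budget on a few high-degree vertices; plugging this into your recursion yields (for $r=3$) $\mathbb{E}[N(K_3,B_t)] \lesssim tb/n$ and hence only $b=\Omega(n^2/t)$, which is polynomially weaker than the claimed $b=\Omega(n^2/t^{1/2})$. This is not an artefact of a lossy intermediate step: a strategy that piles up all its edges near a few vertices really can produce $\Omega(n)$ copies of $K_r$ with budget far below $n^{r-1}/t^{r/2-1}$ --- those copies simply all overlap, so there is no $\alpha$-$K_r$-factor, but a lower bound on total copies cannot see this. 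An argument on $\mathbb{E}[N(K_r,B_t)]$ alone therefore cannot prove the statement; one needs a \emph{per-vertex} version (most vertices lie in few, hence no, copies of $K_r$), and this is exactly what the paper proves (\cref{thm:lowerboundF}). Your symmetrisation idea does not repair this either: making the strategy vertex-transitive equalises expectations of per-vertex quantities but leaves realisations free to concentrate adaptively, so it does not control the correlations you would need to control.

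The paper's actual route is an induction over \emph{all} connected subgraphs on $\le k_0$ vertices (not just $K_r$ and $K_r^-$), tracking for each vertex $v$ the number of copies of each such $F$ through $v$. A shrinking ``filtration'' $U_i$ of vertices is maintained, discarding any vertex whose count exceeds $\lambda d^{v(F)-1} p^{e(F)-v(F)+1}$; concentration cannot spoil the bound because the culprits are simply removed from $U$, and Markov's inequality keeps the removed set of size $O(\eps n)$. When a copy of $F$ is completed inside $U$ by an edge $e$, the argument branches: if $F\setminus e$ is connected, one uses the time constraint (the probability the missing edge arrives at that step is $\approx 1/\binom{n}{2}$, and $F\setminus e$ is controlled by induction); if $F\setminus e$ is disconnected, one uses the budget constraint (each purchased edge can join at most $\lambda^2 d^{k-2}p^{m-k+1}$ pairs of $U$-contained pieces, and only $b$ edges are bought). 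Your recursion only unfolds one layer ($K_r\to K_r^-$) and only in the ``connected'' direction; the induction down to spanning trees, where the budget finally bites, together with the filtration, is what is missing.
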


In particular, ``close'' to the hitting time $\tau_{K_r}$ (that is, if $ t \le n^{2 - 2/r + o(1)} $), the budget cannot be improved by a polynomial factor from the trivial strategy (that is, $ b \ge n^{2 - 2/r - o(1)} $).
Moreover, when considering $\log_n b$ as a function of $\log_n t$, we observe a linear interpolation between the trivial strategy at the hitting time and the trivial strategy that uses linear budget with quadratic time (namely, buy only edges belonging to a fixed $K_r$-factor); see \cref{fig:Krfactors} for a depiction of this behaviour.
For $r\geq3$, this provides a negative answer to \cref{question:FKMgeneral} in a strong sense: for some graphs $H$, a linear budget cannot suffice unless $t$ is quadratic.

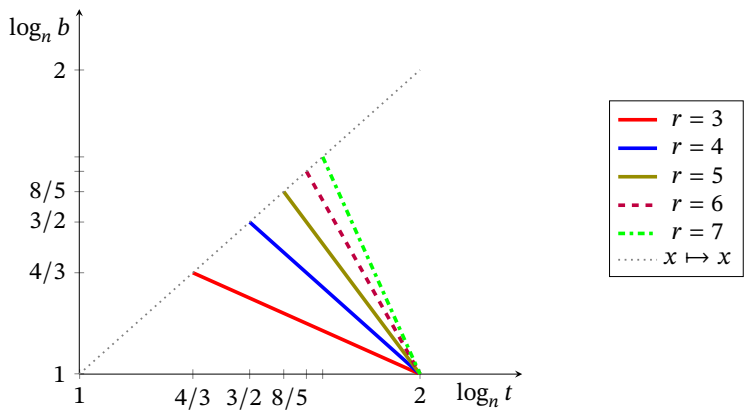
\begin{figure}
\begin{tikzpicture}[scale=0.85]
\begin{axis}[
    axis lines = middle,
    xlabel={$\log_{n}t$},
    ylabel={$\log_{n}b$},
    label style = {below left},
    legend style={at={(1.5,0.75)}},
    xmin=0, xmax=1.3, 
    xtick={0.001,1/3,1/2,3/5,2/3,5/7,1},
    xticklabels={$1$, $4/3$, $3/2\ \ $, $\ \ 8/5$, , , $2$},
    ymin=0, ymax=1.2,
    ytick={0.001,1/3,1/2,3/5,2/3,5/7,1},
    yticklabels={$1$, $4/3$, $3/2$, $8/5$, , , $2$}]
\addplot[red, ultra thick, domain=1/3:1] {1-(x+1)/2};
\addplot[blue, ultra thick, domain=1/2:1] {1-x};
\addplot[olive, ultra thick, domain=3/5:1] {3-3*(x+1)/2};
\addplot[purple, ultra thick, dashed, domain=2/3:1] {2-2*x};
\addplot[green, ultra thick, dashdotted, domain=5/7:1] {5-5*(x+1)/2};
\addplot[gray, thick, dotted, domain=0:1] {x};
\legend{$r=3$,$r=4$,$r=5$,$r=6$,$r=7$,$x\mapsto x$}
\end{axis}
\end{tikzpicture}
\caption{A depiction of the optimal budget $b$ for successful $(t,b)$-strategies for $K_r$-factors when $r\in\{3,4,5,6,7\}$, as follows from \cref{thm:factors_lower,thm:factor_upper}.}
\label{fig:Krfactors}
\end{figure}

In fact, our result is not limited to clique factors, and our method gives a lower bound for~$b$ in any successful strategy that creates (partial) $F$-factors, for any fixed graph $F$ (see \cref{thm:Ffactors_lower}).
This extends the negative answer to \cref{question:FKMgeneral} to a large class of graphs.
Indeed, let~$H$ be a bounded-degree graph which contains a (partial) $F$-factor for some fixed graph~$F$ satisfying the mild condition that its $1$-density satisfies $d^*(F)>1$; see \eqref{equa:1density_def}.\COMMENT{The reason why we need this condition is so that the bound we obtain on the budget is indeed superlinear.}
An application of \cref{thm:Ffactors_lower} then guarantees that there are no successful $(t,b)$-strategies for~$H$ with linear budget when $t$ is ``close'' to the hitting time for $H$, and in fact that a linear budget cannot suffice unless $t$ is quadratic.
We apply this method to powers of Hamilton cycles (see \cref{sect:intropowers}), and it could also be applied to other spanning structures, such as grids.

\Cref{thm:factors_lower} provides a lower bound on the budget of any successful $(t,b)$-strategy for containing $\alpha$-$K_r$-factors; we complement this by showcasing a successful $(t,b)$-strategy where $b$ coincides with this lower bound, up to a constant factor.
This shows that our results are tight.

\begin{theorem}\label{thm:partial_factor_upper}
    Let $r\geq2$ and $\alpha\in(0,1)$.
    For every $\delta>0$, there exists some $K>0$ such that, if $t=t(n)\geq Kn^{2-2/r}$ with $t\leq n^{2-\delta}$, there exists a $b=b(n)=O(n^{r-1}/t^{r/2-1})$ such that there exists a successful $(t,b)$-strategy for an $\alpha$-$K_r$-factor.
\end{theorem}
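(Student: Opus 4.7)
My plan is to present a simple \emph{block-partition} strategy and then verify the budget and success criteria separately. Let $\alpha' \coloneqq (1+\alpha)/2$, let $C = C(\alpha, r)$ be a sufficiently large constant to be chosen, and set the block size $s \coloneqq \lceil Cn^r/t^{r/2}\rceil$. The assumption $t \leq n^{2-\delta}$ ensures $s \geq Cn^{\delta r/2} \to \infty$; by taking $K = K(C,\alpha)$ large enough, the assumption $t \geq Kn^{2-2/r}$ forces $s/n \leq (1-\alpha)/10$. I will fix an arbitrary partition $V(K_n) = V_0 \sqcup V_1 \sqcup \dots \sqcup V_m$ with $m = \lfloor n/s\rfloor$ blocks $V_1, \dots, V_m$ of size exactly $s$, set $b \coloneqq \lceil 4Cn^{r-1}/t^{r/2-1}\rceil$, and define Builder's strategy as follows: buy an offered edge $e$ if and only if both endpoints of $e$ lie in the same $V_i$ with $i \in [m]$ (and fewer than $b$ edges have been bought so far).

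For the budget, $K_n$ contains $m\binom{s}{2} \leq ns/2$ in-block edges, so the number $X$ of in-block edges among the $t$ uniformly random offered edges is hypergeometrically distributed with mean at most $(ns/2)(t/M) \leq 2Cn^{r-1}/t^{r/2-1}$. A standard Chernoff-type tail bound for the hypergeometric distribution will yield $X \leq b$ asymptotically almost surely, making the ``stopping'' clause in the strategy vacuous. For success, I will use \cref{lem:basiccoupling} to couple the process with a binomial random graph $G(n,p_-) \subseteq G_t$ with $p_- = (1-o(1))t/M$. Since the edges of $G(n,p_-)$ within distinct blocks are mutually independent, its restriction to each $V_i$ is an independent copy of $G(s,p_-)$, and $sp_-^{r/2} \geq 2^{r/2}C(1-o(1))$. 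Invoking a classical partial-factor threshold result for binomial random graphs (essentially due to \cite{Ruc92}; see also \cite{JKV08}), I will choose $C$ large enough that $G(s,p_-)$ contains an $\alpha'$-$K_r$-factor with probability at least $1-\eta(s)$, where $\eta(s) \to 0$ as $s \to \infty$.

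Letting $X_i$ indicate that $V_i$ contains an $\alpha'$-$K_r$-factor in $B_t$, the coupling gives $\mathbb{E}[X_i] \geq 1-\eta(s)-o(1)$, and Markov's inequality applied to $m - \sum_i X_i$ then yields $\sum_i X_i \geq (1-\eta')m$ asymptotically almost surely, for $\eta' \coloneqq (1-\alpha)/10$. Combined with $ms \geq (1-(1-\alpha)/10)n$, this produces at least $(1-\eta')\alpha'\cdot ms/r \geq [1-(1-\alpha)/10]^2\cdot (1+\alpha)/(2r)\cdot n > \alpha n/r$ vertex-disjoint $K_r$'s in $B_t$, where the last strict inequality is an elementary check valid for $\alpha \in (0,1)$. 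I expect the main difficulty to be locating (or deriving) an appropriate quantitative partial-$K_r$-factor statement for $G(s,p)$ at sparse density $sp^{r/2} = \Theta(1)$ that applies uniformly across the range of $s$ arising from $t \in [Kn^{2-2/r}, n^{2-\delta}]$; once that input is in hand, the remaining steps are routine.
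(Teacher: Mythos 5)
Your proposal is correct, and the underlying strategy is the same as the paper's: partition $[n]$ into blocks of size $\Theta(n^r/t^{r/2})$, purchase every in-block edge that is offered (subject to the budget cap), couple with a binomial random graph, and apply Ruci\'nski's partial-factor threshold result (this is \cref{thm:Rucbound} in the paper) to each block. The budget and block-size calculations match what the paper does in the proof of \cref{thm:factoruppergeneral}/\cref{thm:partialfactoruppergeneral}. The quantitative input you were worried about locating is exactly \cref{thm:Rucbound}, which in fact gives the exponentially small failure probability $2^{-s}$ per block.

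Where you diverge from the paper is in how the per-block successes are combined. The paper exploits the exponential tail $2^{-s}$ and takes a union bound over all $\Theta(n/s)$ blocks, which forces \emph{every} block to contain an $\alpha$-$K_r$-factor a.a.s. You instead target an $\alpha'$-$K_r$-factor per block for $\alpha' = (1+\alpha)/2 > \alpha$, observe that the expected number of failed blocks is $o(m)$, and use Markov to show that at most an $\eta'$-fraction of blocks fail; the surplus $\alpha' - \alpha$ then absorbs the vertices lost to failed blocks, the remainder set $V_0$, and rounding. Your route is a bit more robust: it needs only that the per-block failure probability tends to $0$ with the block size (the qualitative threshold statement), not a bound decaying faster than $1/m$, and the Markov step requires no independence between blocks. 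You also bound the budget directly via hypergeometric concentration rather than the paper's coupling-then-Chernoff, which is equivalent but arguably cleaner. So this is a valid and slightly more self-contained variant of the same basic argument.
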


Moreover, we also consider complete $K_r$-factors (for which a lower bound follows from \cref{thm:factors_lower}).
In this case, our successful $(t,b)$-strategy uses a budget which is larger than the lower bound by only a polylogarithmic factor, thus proving essentially tight results in this case too.

\begin{theorem}\label{thm:factor_upper}
    Let $r\geq2$ and $n$ be a multiple of $r$.
    For every $\delta>0$, there exists some $K>0$ such that, if $t=t(n)\geq Kn^{2-2/r}\log^{1/\binom{r}{2}}n$ with $t\leq n^{2-\delta}$, there exists a $b=b(n)=O(n^{r-1}\log^{1/(r-1)}n/t^{r/2-1})$ such that there exists a successful $(t,b)$-strategy for a $K_r$-factor.
\end{theorem}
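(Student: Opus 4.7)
Plan: Our strategy extends that of Theorem~\ref{thm:partial_factor_upper} by adding a completion phase to handle any leftover vertices, at a cost of an additional $\log^{1/(r-1)}n$ factor in the budget. The approach proceeds in two phases.

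Phase~1 applies the strategy of Theorem~\ref{thm:partial_factor_upper} with $\alpha$ close to~$1$ on a fraction of the available time, obtaining a partial $K_r$-factor $\mathcal{F}$ covering all but a small set $U$ of vertices, at budget $O(n^{r-1}/t^{r/2-1})$. During Phase~1, Builder also sets aside a modest reservoir $R \subseteq V$ of vertices that are deliberately excluded from $\mathcal{F}$, so that the other $r-1$ members of each $K_r$-copy needed in Phase~2 can be chosen from $R$, preserving vertex-disjointness with~$\mathcal{F}$.

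Phase~2 covers $U$ with vertex-disjoint $K_r$-copies. For each vertex $v \in U$, Builder commits in advance to a carefully selected family of candidate $K_r$-copies containing $v$ (with the remaining $r-1$ vertices drawn from~$R$) and buys every offered edge contained in some candidate. By overprovisioning these candidate families by a factor of $\Theta(\log^{1/(r-1)}n)$ compared to what would suffice for partial coverage, a union bound over $v \in U$ shows that a.a.s.\ every vertex of~$U$ lies in some completed candidate (one all of whose $\binom{r}{2}$ edges have been offered), and a Hall-type matching argument in the auxiliary hypergraph of completed candidates extracts a $K_r$-matching covering~$U$.

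The main technical obstacle is the completion phase, namely controlling the budget while simultaneously guaranteeing coverage and matchability. This requires a careful choice of candidate distribution, designed to avoid bottleneck vertices in the auxiliary hypergraph, together with the novel multi-stage analysis tools developed in the paper for tracking budget across phases and handling the conditioning on the state between them. A secondary issue is handling divisibility: Phase~1 and the reservoir~$R$ must be set up so that $U$ together with the used reservoir vertices can be partitioned into sets of size~$r$, and so that $n \equiv 0 \pmod{r}$ is respected at every step.
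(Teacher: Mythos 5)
Your proposal takes a genuinely different route from the paper, and unfortunately it has serious gaps that would make Phase~2 either fail or require an amount of work comparable to re-proving the Johansson--Kahn--Vu theorem itself. The paper's actual proof of \cref{thm:factor_upper} (via \cref{thm:factoruppergeneral}) is much simpler and does not use a two-phase completion strategy at all: it partitions $[n]$ into parts of size $\Theta(\log^{1/(v(F)-1)}n/p^{d(F)})$ --- slightly \emph{larger} than what suffices for a partial factor --- so that the induced random graph on each part is above the threshold of \cref{thm:JKVbound}, which crucially gives polynomially small failure probability. A union bound over the parts then gives a full $F$-factor directly. The extra $\log^{1/(r-1)}n$ factor in the budget arises precisely because the parts must be that much larger; there is no completion phase.

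The concrete problems with your plan are as follows. First, the budget of Phase~2 does not fit. After Phase~1 with any fixed $\alpha<1$, the leftover set $U$ has size $\Theta(n)$ (the constants in \cref{thm:partial_factor_upper} blow up as $\alpha\to1$, so you cannot take $|U|=o(n)$). For a per-vertex failure probability $o(1/|U|)$ you need $m p^{\binom{r}{2}}=\Omega(\log n)$ candidates per vertex, which at $t=\Theta(n^{2-2/r}\log^{1/\binom{r}{2}}n)$ forces $m=\Omega(n^{r-1})$. The expected number of edges Builder would buy in Phase~2 is then on the order of $|U|\cdot m\cdot p=\Omega(n\cdot n^{r-1}\cdot n^{-2/r})=\Omega(n^{r-2/r})$, which for $r\geq3$ is polynomially larger than the target budget of $O(n\log^{1/\binom{r}{2}}n)$ near the hitting time. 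Making candidate families share edges to cut the budget destroys the independence you need for the per-vertex estimate and invalidates the union bound. Second, ``a Hall-type matching argument in the auxiliary hypergraph'' is a significant overstatement: hypergraph matchings have no Hall analogue in general (you would need a Haxell- or Aharoni--Haxell-type condition, or a nibble argument, or the full machinery behind the JKV theorem). Third, carving out a reservoir $R$ of size $\Omega(n)$ inside $[n]$ during Phase~1 both reduces the vertex set available for the partial factor and interacts with the budget in ways you have not accounted for. In short, the completion phase is essentially as hard as the original problem; the crucial insight you are missing is that \cref{thm:JKVbound} already gives polynomially small failure probability, so one can simply enlarge the parts and union bound without any completion.
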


Similarly to the lower bounds, our upper bounds are in fact more general, and hold for (partial) $F$-factors for large families of graphs $F$.
See \cref{section:factors_upper} for the precise statements.

\subsection{Powers of Hamilton cycles}\label{sect:intropowers}

Recall that, for any integer $k\geq1$, the $k$-th power of a Hamilton cycle is obtained by adding an edge from each vertex to the $k$ vertices which precede it on the cycle.
For $k\geq2$, the threshold for containing the $k$-th power of a Hamilton cycle is of order $n^{-1/k}$.
For $k\geq3$, this follows from a general result of \citet{riordan00}, as observed by \citet{KO12}. 
For the case $k=2$, it took longer to determine the threshold.
\citet{KO12} showed that it is at most $n^{-1/2+o(1)}$.
Their result was subsequently improved by \citet{NS19} and by \citet{FSST22}.
Soon after, the threshold was determined by \citet{KNP21}.
Very recently, the sharp thresholds for powers of Hamilton cycles have been determined independently by \citet{MPPS25} and \citet{Z25}.
It is therefore natural to consider powers of Hamilton cycles in the budget-constrained random graph process.

As a corollary of \cref{thm:Ffactors_lower}, we can give a negative answer to \cref{question:FKMsquare}: we show in the following theorem that, for any successful $(t,b)$-strategy for the square of a Hamilton cycle, we must have that $b\geq n^{3-o(1)}/t$.

\begin{theorem} \label{thm:hamsquare_lower}
    Fix an integer $k\geq2$.
    Let $t=t(n)$ and $b=b(n)$ be such that there exists a successful $ (t, b) $-strategy for the $k$-th power of a Hamilton cycle.
    Then we must have $ b \geq n^{2k-1 - o(1)}/t^{k-1} $.
\end{theorem}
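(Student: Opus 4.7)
The plan is to derive Theorem~\ref{thm:hamsquare_lower} from Theorem~\ref{thm:Ffactors_lower} by exhibiting a suitable $\alpha$-$F$-factor inside the $k$-th power of a Hamilton cycle.

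First, for each integer $\ell \geq k+1$, let $F_\ell$ denote the $k$-th power of the path on $\ell$ vertices. Partitioning the vertex set of the cycle $C_n$ into consecutive blocks of $\ell$ vertices (discarding at most $\ell-1$ vertices at the end) gives $\lfloor n/\ell \rfloor$ pairwise vertex-disjoint copies of $F_\ell$ inside the $k$-th power of $C_n$; hence the $k$-th power of the Hamilton cycle contains an $\alpha$-$F_\ell$-factor with $\alpha = 1 - O(\ell/n)$. In particular, any successful $(t,b)$-strategy for the $k$-th power of a Hamilton cycle is also a successful $(t,b)$-strategy for this $\alpha$-$F_\ell$-factor, so Theorem~\ref{thm:Ffactors_lower} applied to $F_\ell$ yields a lower bound on $b$.

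Next, I would substitute the parameters of $F_\ell$: this graph has $v(F_\ell) = \ell$ and $e(F_\ell) = k\ell - \binom{k+1}{2}$, so the relevant density is $d^*(F_\ell) = e(F_\ell)/(v(F_\ell) - 1) = k - \binom{k}{2}/(\ell - 1)$, which tends to $k$ as $\ell \to \infty$. The resulting lower bound from Theorem~\ref{thm:Ffactors_lower} should take the shape $b = \Omega(n^{\ell-1}/t^{d^*(F_\ell) - 1})$ (or an expression of comparable strength), with exponent of $n$ equal to $\ell - 1$ and exponent of $t$ tending to $k - 1$ as $\ell \to \infty$. Choosing $\ell$ as a large fixed constant (say $\ell = 2k$) or as a slowly growing function of $n$ then yields the target $b \geq n^{2k-1-o(1)}/t^{k-1}$.

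The main obstacle is to balance the two sources of error—the discrepancy between $\ell - 1$ and $2k - 1$ in the exponent of $n$, and the discrepancy between $d^*(F_\ell)$ and $k$ in the exponent of $t$—so that both are absorbed into the $o(1)$ slack in the exponent of $n$. Taking $\ell = \omega(1)$ sufficiently slowly makes both errors $o(1)$ simultaneously, while checking that Theorem~\ref{thm:Ffactors_lower} applies (or can be adapted to apply) in this regime; the remaining step is an elementary computation using its precise form.
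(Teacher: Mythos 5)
Your high-level plan is exactly the paper's: the paper also applies \cref{thm:Ffactors_lower} with $F$ a $k$-th power of a path of some fixed length $q$ (denoted $P_q^k$ there), observing that the $k$-th power of a Hamilton cycle contains a $(1/2)$-$P_q^k$-factor, and then lets $q$ be large. So the route is right. But two details are off and need fixing.

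First, the form of the bound from \cref{thm:Ffactors_lower} is misstated. That theorem gives $b\,t^{d^*(F)-1}=\Omega(n^{2d^*(F)-1})$, i.e.\ $b=\Omega\bigl(n^{2d^*(F_\ell)-1}/t^{d^*(F_\ell)-1}\bigr)$; the exponent of $n$ is $2d^*(F_\ell)-1$, not $v(F_\ell)-1=\ell-1$. These two quantities happen to agree for $K_r$ (since $d^*(K_r)=r/2$), which is presumably the source of the confusion, but for $F_\ell$ they are very different (one is a constant, the other grows with $\ell$). With the correct form, $2d^*(F_\ell)-1 = 2k-1-2\binom{k}{2}/(\ell-1)$ tends to $2k-1$ as $\ell\to\infty$, and the deficit factor one picks up compared to the target is $(t/n^2)^{\binom{k}{2}/(\ell-1)}$, which — using $t\geq b=\Omega(n)$, or $t\gtrsim n^{2-1/k}$ — is $n^{-O(1/\ell)}$ and hence can be made $n^{-\eta}$ for any fixed $\eta$ by choosing $\ell$ large enough as a function of $\eta$.

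Second, your proposed choices of $\ell$ don't quite work as stated. A single fixed $\ell$ such as $\ell=2k$ leaves a deficit of $n^{-\Omega(1)}$ (a fixed negative power of $n$), not $n^{-o(1)}$. And $\ell=\omega(1)$ means $F_\ell$ is no longer a fixed graph, so \cref{thm:Ffactors_lower} (and the underlying \cref{thm:lowerboundF}, which only handles connected graphs on at most $k_0$ vertices for fixed $k_0$) does not apply as stated; extending it to slowly growing $F$ would require tracking the dependence on $v(F)$ through the hierarchy of constants in its proof, which is nontrivial. The clean fix, and the one the paper uses, is the standard ``for each $\eta>0$ there is a fixed $\ell=\ell(\eta)$'' argument: to prove $b\geq n^{2k-1-o(1)}/t^{k-1}$, fix $\eta>0$, choose $\ell$ large enough (e.g.\ $\ell\geq k^2/\eta+1$), apply \cref{thm:Ffactors_lower} to this fixed $F_\ell$ to rule out $b\leq n^{2k-1-\eta}/t^{k-1}$, and then let $\eta\to 0$.
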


Since a.a.s.\ the hitting time for having the $k$-th power of a Hamilton cycle is of order $\Theta(n^{2-1/k})$, \cref{thm:hamsquare_lower} shows that, ``close'' to the hitting time, the budget cannot be improved by more than a subpolynomial factor from the trivial strategy.
Moreover, the behaviour of these lower bounds on the budget as $t$ increases is similar to the lower bound for clique factors (see \cref{fig:HamPowers}).

\begin{figure}
\begin{tikzpicture}[scale=0.85]
\begin{axis}[
    axis lines = middle,
    xlabel={$\log_{n}t$},
    ylabel={$\log_{n}b$},
    label style = {below left},
    legend style={at={(1.5,0.7)}},
    xmin=0, xmax=1.3, 
    xtick={0.001,1/2,2/3,3/4,4/5,1},
    xticklabels={$1$, $3/2$, $5/3$, , , $2$},
    ymin=0, ymax=1.2,
    ytick={0.001,1/2,2/3,3/4,4/5,1},
    yticklabels={$1$, $3/2$, $5/3$, , , $2$}]
\addplot[red, ultra thick, domain=1/2:1] {1-x};
\addplot[blue, ultra thick, domain=2/3:1] {2-2*x};
\addplot[olive, ultra thick, domain=3/4:1] {3-3*x};
\addplot[purple, ultra thick, dashed, domain=4/5:1] {4-4*x};
\addplot[gray, thick, dotted, domain=0:1] {x};
\legend{$k=2$,$k=3$,$k=4$,$k=5$,$x\mapsto x$}
\end{axis}
\end{tikzpicture}
\caption{A depiction of the optimal budget $b$ for successful $(t,b)$-strategies for the $k$-th power of a Hamilton cycle, $k\in\{2,3,4,5\}$, as given by \cref{thm:hamsquare_lower,thm:hamsquare_upper}.}
\label{fig:HamPowers}
\end{figure}
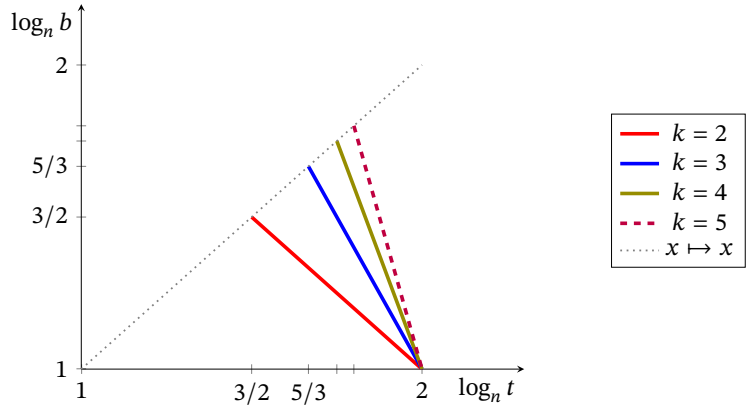

We also present a successful $(t,b)$-strategy for constructing powers of Hamilton cycles for parameters that almost match the lower bound in \cref{thm:hamsquare_lower}.

\begin{theorem}\label{thm:hamsquare_upper}
    Fix an integer $k\geq2$.
    If\/ $t=t(n)=\omega(n^{2-1/k})$, there exists a $b=n^{2k-1+o(1)}/t^{k-1}$ for which there is a successful $(t,b)$-strategy for the $k$-th power of a Hamilton cycle.
\end{theorem}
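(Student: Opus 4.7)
The plan is a multi-phase strategy in which we first build many disjoint copies of $K_{k+1}$ using the factor construction from \cref{thm:factor_upper}, and then assemble them into a $k$-th power of a Hamilton cycle via connecting and absorbing steps.

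In the first phase we set aside a reservoir $R \subseteq V(K_n)$ of size $n^{1-\Omega(1)}$ and, on $V \setminus R$, apply a variant of the strategy from \cref{thm:factor_upper} (with $r = k+1$) to construct an almost-$K_{k+1}$-factor consisting of vertex-disjoint tiles $Q_1, \ldots, Q_m$ with $m = (1-o(1))n/(k+1)$. This uses budget $O(n^k\log^{O(1)}n/t^{(k-1)/2})$, which is only an $n^{o(1)}\cdot(n^2/t)^{(k-1)/2}$-fraction of the overall budget $n^{2k-1+o(1)}/t^{k-1}$, leaving substantial room for the later phases.

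In the second phase we aim to link the tiles into a $k$-th power of a Hamilton path on $V\setminus R$. Between any two tiles $Q_i$ and $Q_j$, a \emph{connector} is a bipartite configuration of $\binom{k+1}{2}$ specific edges whose union with $Q_i$ and $Q_j$ forms a $k$-th power of a path on $2(k+1)$ vertices. Each tile pair admits $((k+1)!)^2$ possible connectors (one per pair of orderings of the tile vertices), and this flexibility is used to keep the accepted-edge count small. We form the auxiliary graph $H$ on the set of tiles, where two tiles are adjacent iff a valid connector between them has been offered, and show that a.a.s.\ $H$ contains a Hamilton cycle, which yields the desired cyclic arrangement of tiles. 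In the third phase, using the reservoir together with pre-built absorbing gadgets, we close the resulting $k$-th power of Hamilton path into a $k$-th power of Hamilton cycle on $V$.

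The main obstacle is the multi-stage analysis: each phase must succeed a.a.s.\ conditional on the outcome of the previous phases, but the random edges in later phases are sampled (without replacement) conditional on the earlier ones, introducing nontrivial dependencies. For this we rely on the novel multi-stage analysis tools developed elsewhere in the paper. A further subtlety is that when $t$ is close to the hitting time $\Theta(n^{2-1/k})$ the simple auxiliary-graph argument for the second phase becomes tight, and additional ingredients (such as oversampling tiles, using larger overlapping building blocks, or absorber-based constructions for parts of the cycle itself) are needed to close the remaining gap between the upper and lower bounds.
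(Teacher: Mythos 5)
Your second phase does not work: the auxiliary graph $H$ on tiles, where two tiles are adjacent iff a valid connector has been offered, is a.a.s.\ far too sparse to be Hamiltonian. A connector between two fixed $K_{k+1}$-tiles consists of $\binom{k+1}{2}$ specific edges with \emph{no} intermediate vertices, so the probability that some connector (over all $((k+1)!)^2$ orderings) is offered between a fixed pair is $O(p^{\binom{k+1}{2}})$ with $p = t/\binom{n}{2}$. When $t = n^{2-1/k+\delta}$ for small $\delta > 0$, this is $n^{-(k+1)/2 + O(\delta)}$, so the expected number of connectable tile pairs is $O\bigl(n^{2-(k+1)/2+O(\delta)}\bigr)$. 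For $k \ge 3$ this is $o(n)$, and even for $k = 2$ it is only $\Theta(n^{1/2})$, both far short of the roughly $n/(k+1)$ edges one would need in $H$ just to cover all tiles, let alone be Hamiltonian. More generally, the direct-connector graph is Hamiltonian only when $p \gg n^{-2/(k(k+1))}$, whereas the theorem must cover all $p = \omega(n^{-1/k})$, and $1/k > 2/(k(k+1))$ for $k \ge 2$; so there is a large range of $t$ for which your approach cannot succeed.

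The paper avoids this by connecting endsequences via linkages with $r$ \emph{internal} vertices drawn from a reserved set (for large constant $r$ depending on $k$ and the target exponent). Such a linkage has $kr + \binom{k+1}{2}$ edges but $\Theta(n^r)$ choices of internal vertices, so the expected number of them is $n^r p^{kr + \binom{k+1}{2}} = n^{-(k+1)/2 + \delta(kr + \binom{k+1}{2})}$, which is made large by choosing $r$ large; see \cref{lem:linkage}. Concomitantly, the paper does not build an auxiliary Hamilton cycle: it links the (absorber) spines in a fixed linear order, then covers the leftover vertices with $k$-th powers of paths, then closes everything into a cycle with a final round of linkages whose internal vertices are the absorption vertices, which can afterwards be swallowed by the absorbers. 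This also means the paper's building blocks are the $(j,\ell,k)$-absorbers of K\"uhn and Osthus, not $K_{k+1}$-tiles, since one needs the absorption mechanism built in from the start. So even aside from the connector problem, your overall architecture (tile $K_{k+1}$-factor, then auxiliary Hamiltonicity, then a reservoir) would need to be replaced rather than patched.
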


\subsection{General tools for the budget-constrained random graph process}\label{sect:toolsintro}

The budget-constrained random graph process is a very recent object of study, and we currently do not have many general tools to work specifically in this setting.
In particular, some strategies and ideas have been used without explicitly formalising the details involved in proving their success.
In this paper, we make two contributions to the general study of the model and towards this explicit formalisation.

We start with a simple observation about the nature of successful strategies.
When defining strategies in the budget-constrained random graph process, we noted that a strategy is a function which is allowed to make \emph{random} choices.
In \cref{sect:randomstrat}, we formalise our definition of a (random) $ (t, b) $-strategy, specifying the most general framework of the budget-constrained random graph process.
On the other hand, let us say, for now, that a strategy is \emph{deterministic} if it does not make use of any additional randomness (that is, external to the random graph process itself) in deciding whether to purchase a presented edge.
We will show that strategies that are allowed random choices, although seemingly stronger than deterministic strategies, do not give any real advantage to Builder.
In fact, our \cref{lem:deterministic} shows that, if there is any successful $ (t, b) $-strategy for property $ \mathcal{P} $, then there also exists a successful deterministic $ (t, b) $-strategy for $ \mathcal{P} $.
This may be useful for proving lower bounds, as it reduces the set of strategies that need to be analysed.

Our second contribution is to provide a formal approach to proving the success of our strategies.
Specifically, we consider \emph{multi-stage strategies}, in which the allocated time is split into intervals (of lengths $ t_i $), with the goal of purchasing a particular substructure in each stage, which may depend on the outcome of previous stages.
We want to treat the graph presented in each stage as behaving roughly like $ G(n, t_i / M) $, an idea which can be formalised using couplings.
However, when attempting to holistically prove the success of a multi-stage strategy, we cannot simply treat the stages as independent, as we implicitly condition upon having successfully built desired substructures in previous stages.
For example, it may be the case that some of the edges we would like to buy in a given stage have in fact appeared previously, and were not purchased.
We therefore need to show that this is not made substantially more likely by conditioning upon success in previous stages.
Overcoming these issues formally is one of the main technical challenges of this paper.

In \cref{sect:couplings}, we introduce a general tool to address this problem systematically for a particular family of strategies, by showing that the existence of desired structures in consecutive stages is in fact positively correlated.
Later, in \cref{sect:linking}, we exploit a different approach based upon the fact that, if $ t = o(M) $, the graph presented in any stage is very ``sparse'', in some appropriate sense, and so should not have a substantial effect on the distribution of the edges which are presented in subsequent stages.
A similar idea was used by \citet[Lemma~2.11]{KL25}.

\subsection{Organisation of the paper}

The remainder of this paper is organised as follows.
In \cref{section:prelims} we set our notation and list some standard probabilistic results which will be useful for us.
\Cref{sect:tools} is devoted to listing and proving several more auxiliary results that are key for our proofs, including the tools discussed in \cref{sect:toolsintro}.
We will prove our lower bounds on the budget of any successful $(t,b)$-strategy for graph factors (\cref{thm:factors_lower}) and powers of cycles (\cref{thm:hamsquare_lower}) in \cref{section:factors_low} as corollaries of a more general result (\cref{thm:lowerboundF}).
We prove our upper bounds for graph factors (\cref{thm:partial_factor_upper,thm:factor_upper}) in \cref{section:factors_upper}.
Lastly, \cref{thm:hamsquare_upper} is covered in \cref{sect:powers}.
We include intuition for the ideas behind each of the proofs in the relevant sections.
In \cref{sect:problems} we discuss several open problems and future lines of research.


\section{Preliminaries}\label{section:prelims}

\subsection{Notation}\label{sect:notation}

As usual in the area, for any $m\in\mathbb{R}$ we write $[m]\coloneqq\{i\in\mathbb{Z}:1\leq i\leq m\}$.
We will use standard $O$-notation for asymptotic comparisons.
When dealing with multiple parameters, we will sometimes use \emph{hierarchies} to introduce them.
When parameters are presented using a hierarchy, they are defined from right to left.
To be precise, if we say that a statement holds for $b$ sufficiently small with respect to $a$, denoted by $b\ll a$, we mean that for any $a>0$ there exists $b_0>0$ such that for any $0<b\leq b_0$ the subsequent statement holds.
This extends in a natural way to hierarchies involving more parameters.
If a parameter in a hierarchy is expressed as a fraction $1/a$, it should be interpreted that $a\in\mathbb{N}$.
To simplify the exposition, whenever we consider asymptotic statements we ignore rounding and treat reals as integers if this does not affect the validity of our claims.
Given the asymptotic nature of our results, we will often assume that $n$ is sufficiently large for our inequalities to hold without explicitly mentioning this. 
All our logarithms use base $\nume$, unless stated otherwise.

Most of our graph-theoretic notation is standard.
Given a graph $G$, we denote its number of vertices and of edges by $v(G)$ and $e(G)$, respectively.
We often identify graphs with their edge sets; for instance, given two graphs $G$ and $H$ (on not necessarily the same vertex sets), the notation $G\setminus H$ refers to the graph obtained from $G$ by deleting all edges of $H$.
Given a set $A\subseteq V(G)$, we will write~$G[A]$ to denote the subgraph of $G$ \defi{induced} by~$A$, that is, the graph on vertex set $A$ whose edges are all edges of $G$ which are contained in~$A$.
Given also $B\subseteq V(G)\setminus A$, we write $G[A,B]$ to denote the \defi{induced bipartite subgraph} of $G$ between $A$ and~$B$, that is, the graph on vertex set $A\cup B$ whose edges are all those of~$G$ which have one endpoint in~$A$ and the other in~$B$.
Given a vertex $v\in V(G)$, we write $N_G(v)\coloneqq\{u\in V(G):uv\in E(G)\}$ for the \defi{neighbourhood} of $v$ in $G$.
We define the \defi{neighbourhood} of a set $A\subseteq V(G)$ as $N_G(A)\coloneqq\left(\bigcup_{v\in A}N_G(v)\right)\setminus A$.

Given a graph $F$ on at least two vertices, we write 
\begin{equation}\label{equa:1density_def}
    d(F)\coloneqq \frac{e(F)}{v(F)-1}\qquad\text{ and }\qquad d^*(F)\coloneqq\max\{d(F'):F'\subseteq F, v(F')\geq2\}
\end{equation}
for the \defi{$1$-density} and the \defi{maximum $1$-density} of $F$, respectively.
We say that $F$ is \defi{strictly $1$-balanced} if for every $F'\subsetneq F$ with at least two vertices we have $d(F')<d(F)$.
For simplicity, we will assume throughout that all our fixed graphs $F$ are non-empty (which implies that $d^*(F)>0$).
We will sometimes want to count the number of copies of $ F $ in another graph $ G $; by this, we always mean labelled copies, that is, injective graph homomorphisms.

In some cases, we will partition the vertex set $[n]$ into smaller subsets and aim to find some spanning structure in each of these subsets.
We say that such a partition is an \defi{equipartition} into~$k$ parts if it is a partition into $k$ sets of sizes as equal as possible (that is, all subsets have size $\lfloor n/k\rfloor$ or~$\lceil n/k\rceil$).
In some occasions (say, if we are trying to construct an $F$-factor), we will require that the size of each set of the partition satisfies some divisibility conditions (in particular, that it is divisible by $v(F)$).
For simplicity, we will say that a partition of $[n]$ is an \defi{$F$-equipartition} into $k$ sets if it is a partition into $k$ sets, each of size divisible by $v(F)$, and such that the sizes of the sets are as equal as possible (that is, they are all equal to one of two values, which have difference $v(F)$).

For the budget-constrained random graph process, when considering $(t,b)$-strategies, it will often be useful to consider the ``density'' of the graphs that can be built, both in terms of the budget and the time.
Given a budget $ b $, we will often write $ d \coloneqq 2 b / n $ for the (maximum allowed) average degree of~$ B_t $.
Intuitively, we think of $ d = d(n) $ as the largest degree which ``most'' vertices can achieve.
Similarly, we write $ p \coloneqq t / M $ and think of $ p = p(n) $ as the probability that any given edge will be presented at some point throughout the process; this intuition can be formalised using couplings with $ G(n,p) $ (see \cref{sect:couplings}).

We will denote vectors using boldface type (for example, we may write $\vect{x}=(x_1,\ldots,x_k)$).
As previously discussed, a strategy is a function that takes all the information available to Builder at a given time and outputs a decision of whether to purchase the next presented edge or not.
It will be convenient to formalise this notion of ``available information''.
Recall that we denote the random edge presented to Builder at time $i$ by $e_i$.
Formally, when Builder is presented the edge $e_i$, the available information consists of a sequence of presented edges $\vect{e}^{(i)} \in E^{(i)} \coloneqq \{ \vect{e} \in \inbinom{[n]}{2}^i : e_j \ne e_{j'} \text{ for all } j \ne j' \} $, and a binary sequence $ \vect{b}^{(i - 1)} \in \{ 0, 1 \}^{i - 1} $ indicating which of these edges have been purchased.
Builder must make a binary decision as to whether the incoming edge $ e_i $ should be purchased, which determines the value of $b_i$ (we shall further formalise this in \cref{sect:randomstrat}).
Based on this, we refer to a pair $(\vect{e}^{(i)},\vect{b}^{(i - 1)})$ as a \defi{decision history} at time $i$.
In some parts of our proofs, it is useful to define sets \emph{after} Builder has made a decision, but \emph{before} the next edge has been presented. 
In these cases, it will be more useful to consider the \defi{update history} at time $i$, which refers to a pair of the form $(\vect{e}^{(i)},\vect{b}^{(i)})$ for some $i\in[t]$.


\subsection{Probabilistic tools}

We will often need to show that a.a.s.\ certain random variables are ``close'' to their expectation.
For this, we will use the following standard version of Chernoff's bound (see, e.g.,~\cite[Corollary~2.3 and Theorem~2.8]{JLR}).

\begin{lemma}[Chernoff's bound]\label{lem:Chernoff}
    Let\/ $X$ be a sum of mutually independent Bernoulli random variables, and let $\mu\coloneqq\mathbb{E}[X]$.
    Then, for all\/ $0<\delta<1$, we have that\/
    $\mathbb{P}[|X-\mu|\geq\delta\mu]\leq2\nume^{-\delta^2\mu/3}$.
\end{lemma}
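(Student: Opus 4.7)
The plan is to use the classical exponential moment method originally due to Chernoff, which will yield both tails at once up to minor adjustments. Write $X = \sum_{i=1}^N X_i$ where each $X_i$ is Bernoulli with parameter $p_i$, so that $\mu = \sum_i p_i$. The starting point is Markov's inequality applied to the nonnegative random variable $\nume^{\lambda X}$: for any $\lambda > 0$,
\[
\Pr[X \ge (1+\delta)\mu] \;=\; \Pr\bigl[\nume^{\lambda X} \ge \nume^{\lambda(1+\delta)\mu}\bigr] \;\le\; \nume^{-\lambda(1+\delta)\mu}\,\Exp\bigl[\nume^{\lambda X}\bigr].
\]
Independence of the $X_i$ factors the moment generating function, and for each Bernoulli term we can use $\Exp[\nume^{\lambda X_i}] = 1 + p_i(\nume^\lambda - 1) \le \exp(p_i(\nume^\lambda - 1))$, which collapses the product to $\Exp[\nume^{\lambda X}] \le \exp(\mu(\nume^\lambda - 1))$.

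Substituting back and optimising in $\lambda$ by the choice $\lambda = \log(1+\delta)$ yields the standard bound
\[
\Pr[X \ge (1+\delta)\mu] \;\le\; \left(\frac{\nume^\delta}{(1+\delta)^{1+\delta}}\right)^{\!\!\mu}.
\]
To reach the form in the statement, I would verify the elementary calculus inequality $(1+\delta)\log(1+\delta) - \delta \ge \delta^2/3$ valid for $0 < \delta < 1$ (check the endpoints and the sign of the derivative), which turns the right-hand side into $\nume^{-\delta^2\mu/3}$. The lower tail $\Pr[X \le (1-\delta)\mu]$ is handled symmetrically by applying Markov's inequality to $\nume^{-\lambda X}$ with $\lambda > 0$ and optimising at $\lambda = -\log(1-\delta)$, producing a bound of $\exp(-\delta^2\mu/2)$, which is stronger than the claimed bound. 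A union bound on the two tails gives the factor of $2$ in front of $\nume^{-\delta^2\mu/3}$.

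There is no real obstacle here; the proof is entirely routine and the only nontrivial step is the one-variable calculus inequality $(1+\delta)\log(1+\delta) - \delta \ge \delta^2/3$ on $(0,1)$, which is the only place where the particular constant $1/3$ enters. Since the paper invokes this bound only as a black box (citing \cite{JLR}), no further refinement is required.
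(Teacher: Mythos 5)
Your proof is correct. The paper does not prove this lemma at all but simply cites it as a standard result (from Janson, \L{}uczak and Ruci\'nski, Corollary~2.3 and Theorem~2.8 of \cite{JLR}); the exponential-moment argument you give, with the calculus step $(1+\delta)\log(1+\delta)-\delta\ge\delta^2/3$ on $(0,1)$ for the upper tail and the stronger $\delta^2/2$ bound for the lower tail followed by a union bound, is precisely the standard textbook derivation underlying that reference.
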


The simplest case of Janson's inequality will also be useful for this purpose.
Given $ n \in \mathbb{N} $, $ r \in [2^{\binom{n}{2}}] $ and $ p\in[0,1] $, let $ F_1, \ldots, F_r \subseteq K_n $ be distinct subgraphs and let $ X \coloneqq \sum_{i = 1}^r \mathds{1}(F_i \subseteq G(n,p)) $ be the number of $ F_i $ which are contained in the binomial random graph $ G(n,p) $.
For $i,j\in[r]$, say that $ i \sim j $ if $ i \ne j $ and $ e(F_i \cap F_j) > 0 $, and write 
\[\Delta \coloneqq \frac{1}{2} \sum_{i\in[r]}\sum_{j \sim i} \mathbb{P}[F_i \cup F_j \subseteq G(n,p)].\]
The following inequality bounds the probability that $X$ is (much) smaller than its expectation (see, e.g.,~\cite[Theorem~2.14]{JLR}).

\begin{lemma}[Janson's inequality] \label{lem:janson}
    Let $X$ and $ \Delta $ be as above, and write $ \mu \coloneqq \mathbb{E}[X] $ and $ \overline{\Delta} \coloneqq \mu + 2 \Delta $.
    Then, for any $ \lambda \in [0, \mu] $, we have that $ \mathbb{P}[X \le \mu - \lambda] \le \nume^{-\lambda^2 / 2 \overline{\Delta}}$.
\end{lemma}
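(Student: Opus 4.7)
The natural approach is the exponential moment (Chernoff-type) method, tailored to exploit the pair-correlation structure in $\Delta$. Fix $ s \ge 0 $. By Markov's inequality applied to $ e^{-sX} $,
\[
\mathbb{P}[X \le \mu - \lambda] = \mathbb{P}\bigl[e^{-sX} \ge e^{-s(\mu - \lambda)}\bigr] \le e^{s(\mu - \lambda)}\, \mathbb{E}[e^{-sX}],
\]
so the whole game reduces to an upper bound on $ \mathbb{E}[e^{-sX}] $. Writing $ I_i \coloneqq \mathds{1}(F_i \subseteq G(n, p)) $ and using $ I_i \in \{0, 1\} $, we have $ e^{-s I_i} = 1 - (1 - e^{-s}) I_i $, and so
\[
\mathbb{E}[e^{-sX}] = \mathbb{E}\Bigl[\prod_{i} \bigl(1 - (1 - e^{-s}) I_i\bigr)\Bigr].
\]

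The key technical step is to show that, for every $ s \ge 0 $,
\[
\log \mathbb{E}[e^{-sX}] \;\le\; -s \mu + \tfrac{1}{2} s^2\, \overline{\Delta}. \tag{$\ast$}
\]
This is where the combinatorics enters: when $ i \not\sim j $ (for $ i \ne j $), $ I_i $ and $ I_j $ depend on disjoint edge sets and are therefore independent, so those pairs contribute only to the ``diagonal'' term $ \mu $; correlated pairs, i.e.\ those with $ i \sim j $, contribute exactly the $ \Delta $ correction, because $ \mathbb{E}[I_i I_j] = \mathbb{P}[F_i \cup F_j \subseteq G(n, p)] $. To make this precise, my plan is to expand $ \log \mathbb{E}[e^{-sX}] $ as a cumulant series in $ s $: the linear coefficient is $ -\mu $, and the quadratic coefficient is $ \frac{1}{2}\operatorname{Var}(X) $, which is at most $ \mu + 2\Delta = \overline{\Delta} $ (the variance splits as independent diagonal contributions, giving $\le \mu$, plus the off-diagonal covariances, whose positive part is $ 2\Delta $). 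To control the higher-order terms and turn this into an honest inequality for \emph{all} $ s \ge 0 $, I would use that $ 1 - (1 - e^{-s}) I_i \in (0, 1] $ and that the events $ \{ F_i \subseteq G(n, p) \} $ are increasing in the underlying independent edge variables, so that Harris--FKG lets us bound products of $ (1 - (1 - e^{-s}) I_{i}) $ by the corresponding products of the $ (1 - (1 - e^{-s}) p_i) $ plus a controlled error from each correlated pair. This is essentially Janson's original trick (cf.~\cite{JLR}).

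Granting $(\ast)$, the remainder is a one-line optimisation: substituting $(\ast)$ into the Markov bound gives
\[
\mathbb{P}[X \le \mu - \lambda] \le \exp\bigl(-s \lambda + \tfrac{1}{2} s^2\, \overline{\Delta}\bigr),
\]
and taking $ s = \lambda/\overline{\Delta} \ge 0 $ (which is admissible since $ \lambda \ge 0 $ and $ \overline{\Delta} > 0 $) produces the claimed bound $ \exp(-\lambda^2 / 2 \overline{\Delta}) $. The condition $ \lambda \le \mu $ is used only to make the deviation event non-trivial; the exponential bound itself is valid for all $ \lambda \ge 0 $.

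The main obstacle is clearly $(\ast)$. Controlling the second cumulant is routine, but ensuring the bound holds for all $ s \ge 0 $ (not merely infinitesimally) requires the FKG-flavoured correlation argument above: a naive Taylor expansion would leave higher-order terms that are not obviously dominated by $ \frac{1}{2} s^2 \overline{\Delta} $, and dealing with these properly is precisely the heart of Janson's original proof. Since the result is classical, I expect to simply invoke the reference~\cite{JLR} rather than reproduce this argument in full.
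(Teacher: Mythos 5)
The paper does not prove this lemma at all; it is stated as a textbook fact and cited directly to [JLR, Theorem~2.14]. Your proposal therefore cannot be compared against a proof in the paper, but rather against the standard (Janson/Boppana--Spencer) argument that the cited reference contains. Your outline is correct and is indeed that argument: the Markov/Chernoff step is standard, the final optimisation $s=\lambda/\overline{\Delta}$ is carried out correctly, and you rightly identify $(\ast)$, i.e.\ $\log\mathbb{E}[e^{-sX}]\le -s\mu+\tfrac12 s^2\overline{\Delta}$, as the heart of the matter. Your observation that $\lambda\le\mu$ is not actually needed for the exponential bound is also correct.

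One small imprecision: the route you sketch to $(\ast)$ (cumulant expansion, then ``FKG lets us bound products $\ldots$ plus a controlled error from each correlated pair'') is not quite how the classical proof goes and would not by itself yield $(\ast)$ for all $s\ge 0$. The actual argument works at the level of the derivative $\Phi'(s)=\mathbb{E}[Xe^{-sX}]/\mathbb{E}[e^{-sX}]$: one decomposes $X=Y_i+Z_i$ with $Y_i=\sum_{j\sim i\text{ or }j=i}I_j$ independent of $Z_i$ given $I_i$, applies FKG in the conditional space given $I_i=1$ to split $\mathbb{E}[e^{-sY_i}e^{-sZ_i}\mid I_i=1]$, then uses $\mathbb{E}[e^{-sZ_i}]\ge\mathbb{E}[e^{-sX}]$ and Jensen's inequality to obtain $\Phi'(s)\ge\mu-s\overline{\Delta}$, which integrates to $(\ast)$. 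The second-cumulant identity $\operatorname{Var}(X)\le\overline{\Delta}$ that you cite is true but only controls the infinitesimal behaviour at $s=0$. Since you explicitly flag that you would defer to the reference for this step, as the paper itself does, the proposal is acceptable as written.
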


We will also make use of the following well-known correlation inequality, generally attributed to \citet{FKG71} (see also~\cite[Chapter~6]{AS16}).

\begin{lemma}[FKG inequality] \label{lem:fkg_inequality}
    Let $ p \in [0, 1] $, let $ \mathcal{G} $ denote the set of all graphs on $ [n] $, and
    let $ G \sim G(n, p) $.
    Suppose $ f, g\colon \mathcal{G} \to \mathbb{R} $ are increasing functions, that is, if $ G_1 \subseteq G_2 $, then $ f(G_1) \le f(G_2) $ and $ g(G_1) \le g(G_2) $.
    Then
    \[\mathbb{E}[f(G) g(G)] \ge \mathbb{E}[f(G)] \mathbb{E}[g(G)].\]
\end{lemma}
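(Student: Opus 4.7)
The plan is to prove this classical correlation inequality (Harris's special case of FKG for product measures) by induction on $ N \coloneqq \binom{n}{2} $, the number of potential edges. Identify each graph on $[n]$ with its indicator vector in $ \{0,1\}^N $; under $ G(n,p) $ these coordinates are i.i.d.\ Bernoulli$(p)$ random variables, and the subgraph order $ G_1 \subseteq G_2 $ corresponds to coordinatewise $ \leq $. Thus $ f, g $ become increasing functions on $ \{0,1\}^N $ in the product order, and we aim to show $ \mathrm{Cov}(f(X), g(X)) \geq 0 $ where $ X = (X_1, \dots, X_N) $ has i.i.d.\ Bernoulli$(p)$ coordinates.

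For the base case $ N = 1 $, I would use the standard symmetrisation trick: let $ Y $ be an independent copy of $ X_1 $. Since $ f $ and $ g $ are both nondecreasing on $ \{0,1\} $, the quantity $ (f(X_1) - f(Y))(g(X_1) - g(Y)) $ is nonnegative pointwise. Taking expectations and expanding yields
\[
0 \leq \mathbb{E}\bigl[(f(X_1) - f(Y))(g(X_1) - g(Y))\bigr] = 2\,\mathrm{Cov}(f(X_1), g(X_1)),
\]
which gives the case $ N = 1 $.

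For the inductive step, suppose the inequality holds for all functions on $ \{0,1\}^{N-1} $. Condition on $ X_N $, and define the one-variable functions
\[
F(x) \coloneqq \mathbb{E}[f(X_1, \dots, X_{N-1}, x)], \qquad G(x) \coloneqq \mathbb{E}[g(X_1, \dots, X_{N-1}, x)].
\]
Because $ f $ is monotone in $ x_N $, the map $ x \mapsto F(x) $ is nondecreasing, and similarly for $ G $. Applied slicewise (with $ X_N = x $ fixed), the inductive hypothesis yields
\[
\mathbb{E}[f(X)g(X) \mid X_N = x] \geq F(x)\, G(x),
\]
since the remaining $ N-1 $ coordinates are still i.i.d.\ Bernoulli$(p)$ and the sections $ (x_1, \dots, x_{N-1}) \mapsto f(x_1, \dots, x_{N-1}, x) $ are still increasing. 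Taking expectation over $ X_N $ and applying the base case to the increasing functions $ F, G $ of the single variable $ X_N $ gives
\[
\mathbb{E}[f(X)g(X)] \geq \mathbb{E}[F(X_N) G(X_N)] \geq \mathbb{E}[F(X_N)]\, \mathbb{E}[G(X_N)] = \mathbb{E}[f(X)]\, \mathbb{E}[g(X)],
\]
completing the induction.

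There is no real obstacle here: the only subtle point is verifying that after conditioning on $ X_N $, the resulting section functions remain monotone and defined on an independent product measure so that induction applies. Since the coordinates are independent under $ G(n,p) $, this is immediate, which is precisely why the argument works in the product-measure setting but requires the more delicate lattice-theoretic FKG argument only when one wants to replace independence by a log-supermodular joint density. For our purposes, the product-measure (Harris) version suffices, and the inductive proof above is the cleanest route.
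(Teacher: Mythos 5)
The paper does not prove this lemma; it is stated as a classical, known result with a citation to \citet{FKG71} and to Alon--Spencer's book, so there is no in-paper proof to compare against. Your inductive argument is the standard proof of the product-measure (Harris) case of the FKG inequality, and it is correct: the base case follows from the symmetrisation identity $\mathbb{E}[(f(X_1)-f(Y))(g(X_1)-g(Y))] = 2\,\mathrm{Cov}(f(X_1),g(X_1)) \geq 0$ (using the pointwise nonnegativity of the product of like-signed differences), and the inductive step correctly uses that (a) the section functions $x_1,\dots,x_{N-1} \mapsto f(x_1,\dots,x_{N-1},x)$ remain increasing for each fixed $x$, (b) the first $N-1$ coordinates remain i.i.d.\ after conditioning on $X_N$, and (c) the conditional expectations $F,G$ are nondecreasing in $x$, so the base case applies. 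Your closing remark is also accurate: independence of the coordinates is what makes this elementary inductive argument go through, whereas the general FKG theorem for log-supermodular measures requires the more delicate lattice argument. For the purposes of the paper, the product-measure case is all that is used, so this proof is a complete and self-contained justification of the cited lemma.
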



\section{Tools for the budget-constrained random graph process}\label{sect:tools}

In this section we introduce several tools which are useful for proving results in the budget-constrained random graph process.
In \cref{sect:randomstrat} we prove a result (\cref{lem:deterministic}) which is particularly useful for simplifying proofs of lower bounds for budgets of successful strategies.
In \cref{sect:couplings}, we introduce two useful tools for the analysis of multi-stage strategies (\cref{lemma:coupling_stages,lem:two_stage_real}), which we use in the proofs of our upper bounds on the optimal budget for successful strategies.

\subsection{Deterministic vs random strategies}\label{sect:randomstrat}

We previously mentioned that a strategy for the budget-constrained random graph process on $[n]$ is a function which is allowed to make random choices.
This may be interpreted in several ways; let us formalise our definition of a (random) $ (t, b) $-strategy.
Given a time $ i \in [t] $ and a decision history $(\vect{e}^{(i)},\vect{b}^{(i - 1)})$ (recall that here $\vect{e}^{(i)}$ is a sequence of presented edges $ \vect{e}^{(i)} \in E^{(i)} \coloneqq \{ \vect{e} \in \inbinom{[n]}{2}^i : e_j \ne e_{j'} \text{ for all } j \ne j' \} $ and $\vect{b}^{(i - 1)} \in \{ 0, 1 \}^{i - 1}$ is a binary sequence indicating which of these edges have been purchased), Builder must make a binary decision as to whether the incoming edge $ e_i $ should be purchased.
This decision may be a random one, but she cannot have access to any information about the future of the random graph process, and the sequence~$ \vect{e}^{(i)} $ has already been revealed; as such, we may assume that she simply outputs a number $ p_i \in [0, 1] $, and that the edge $e_i$ is then purchased with probability~$p_i$ \emph{independently of everything else in the process}.
In other words, we assume that Builder defines $\vect{b}^{(i)}$ by appending to~$\vect{b}^{(i-1)}$ an independent Bernoulli random variable of parameter~$p_i$ (whose outcome is revealed before proceeding to time~$i+1$).
With this in mind, we may write 
\[D \coloneqq \left\{ \big(i, \vect{e}^{(i)}, \vect{b}^{(i - 1)}\big) : i \in [t], \vect{e}^{(i)} \in E^{(i)}, \vect{b}^{(i - 1)} \in \{ 0, 1 \}^{i - 1},|\vect{b}^{(i - 1)}|<b \right\}\]
for the domain, and say that a \defi{$ (t, b) $-strategy} is a function $ \mathcal{S} \colon D \to [0, 1] $.
Recall from the introduction that, in a $(t,b)$-strategy, Builder's decisions are made subject to the constraint that $e(B_t)\leq b$; in other words, if at any time $i\in[t]$ we have a decision history $(\vect{e}^{(i)}, \vect{b}^{(i - 1)})\in E^{(i)}\times\{0,1\}^{i-1}$ such that $|\vect{b}^{(i - 1)}|=b$, we know that Builder will not accept any more edges.
This is the reason why we have the constraint $|\vect{b}^{(i - 1)}|<b$ in the definition of the domain for the strategy.
We say that a strategy $ \mathcal{S} $ is \defi{deterministic} if in fact $ \mathcal{S}(D) \subseteq \{ 0, 1 \} $.
Note also that we have only defined strategies for a specific value of~$n$.
However, we often abuse notation and refer to a sequence $ (\mathcal{S}_n)_{n\in\mathbb{N}} $ of strategies simply as ``a strategy''.

Although allowing for random decisions yields a larger family of strategies, there is convincing intuition to suggest that this does not in fact allow for any more efficiency.
Specifically, at each step, given the available information, since one of the two options should increase the probability that the budget-constrained random graph process under $\mathcal{S}$ succeeds, any optimal strategy $\mathcal{S}$ may as well make a deterministic decision.
The following lemma formalises this idea that, without loss of generality, we need only consider deterministic strategies.
Given a strategy~$\mathcal{S}$, let us write $ \mathcal{C}(\mathcal{S}) $ for the event that $ \mathcal{S} $ succeeds, that is, that it produces a graph with the desired property.

\begin{lemma} \label{lem:deterministic}
    Let $ \mathcal{S} $ be a $ (t, b) $-strategy for a property $ \mathcal{P} $.
    Then there exists a deterministic $(t,b)$-strategy $\mathcal{S}'$ such that
    \[ \mathbb{P}[\mathcal{C}(\mathcal{S}')] \ge \mathbb{P}[\mathcal{C}(\mathcal{S})] .\]
    In particular, if there exists a successful $ (t, b) $-strategy for $ \mathcal{P} $, then there also exists a successful deterministic $ (t, b) $-strategy for $ \mathcal{P} $.
\end{lemma}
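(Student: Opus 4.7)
The plan is to show that any $(t,b)$-strategy $\mathcal{S}$ can be viewed as a random mixture of deterministic $(t,b)$-strategies, indexed by an auxiliary source of independent randomness; a standard averaging argument then extracts one such deterministic strategy which succeeds at least as often as $\mathcal{S}$ itself.

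First I would couple the ``internal randomness'' of $\mathcal{S}$ to an explicit, independent auxiliary vector $\vect{U} = (U_1, \dots, U_t)$ of mutually independent $\mathrm{Uniform}[0,1]$ random variables, jointly independent of the edge sequence of the random graph process. Under this coupling, a run of $\mathcal{S}$ can be described as follows: at time $i \in [t]$, if $h = (i, \vect{e}^{(i)}, \vect{b}^{(i-1)}) \in D$, then Builder purchases $e_i$ if and only if $U_i \le \mathcal{S}(h)$, and otherwise (i.e.\ if the budget is already exhausted) she does nothing. This reproduces the correct Bernoulli distribution for the purchase decision with parameter $\mathcal{S}(h)$, independently of everything else. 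Next, for each fixed $\vect{u} \in [0,1]^t$, I would define a deterministic $(t,b)$-strategy $\mathcal{S}_{\vect{u}} \colon D \to \{0,1\}$ by setting $\mathcal{S}_{\vect{u}}(h) \coloneqq \mathds{1}(u_i \le \mathcal{S}(h))$, where $i$ is the first coordinate of $h$; its output lies in $\{0,1\}$ and the domain restriction ensures that the budget constraint is respected.

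With the coupling in place, conditioning on $\vect{U}$ and applying Fubini yields
\[
\mathbb{P}[\mathcal{C}(\mathcal{S})] = \mathbb{E}\bigl[\mathbb{P}[\mathcal{C}(\mathcal{S}) \mid \vect{U}]\bigr] = \int_{[0,1]^t} \mathbb{P}[\mathcal{C}(\mathcal{S}_{\vect{u}})] \, d\vect{u},
\]
where the inner probability on the right is taken only over the randomness of the edge sequence. Since a bounded measurable function cannot be strictly less than its own average at every point, there must exist some $\vect{u}^* \in [0,1]^t$ with $\mathbb{P}[\mathcal{C}(\mathcal{S}_{\vect{u}^*})] \ge \mathbb{P}[\mathcal{C}(\mathcal{S})]$; setting $\mathcal{S}' \coloneqq \mathcal{S}_{\vect{u}^*}$ then gives the desired deterministic $(t,b)$-strategy. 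The ``in particular'' statement follows by applying this construction for each $n$ in the sequence: if $\mathbb{P}[\mathcal{C}(\mathcal{S}_n)] \to 1$ as $n \to \infty$, then $\mathbb{P}[\mathcal{C}(\mathcal{S}'_n)] \to 1$ as well.

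I do not expect any substantial obstacle: the argument is a routine derandomization by averaging. The only point requiring care is formalising the ``internal randomness'' of a random strategy in a way compatible with \cref{sect:randomstrat}, which is achieved by the textbook device of generating independent Bernoulli trials with prescribed parameters via uniform-threshold indicators.
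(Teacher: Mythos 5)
Your proof is correct, and it takes a genuinely different route from the paper's. The paper derandomises \emph{iteratively}, in the spirit of the method of conditional probabilities: it builds a sequence $\mathcal{S}_0, \mathcal{S}_1, \ldots, \mathcal{S}_t$ of strategies, where $\mathcal{S}_i$ agrees with $\mathcal{S}_{i-1}$ except at time $i$, and at time $i$ it replaces the randomised choice $p_i$ by whichever deterministic choice $j^* \in \{0,1\}$ maximises the conditional success probability given the revealed edges; the law of total probability then shows $\mathbb{P}[\mathcal{C}(\mathcal{S}_i)] \ge \mathbb{P}[\mathcal{C}(\mathcal{S}_{i-1})]$. Your approach instead externalises \emph{all} of the internal randomness at once as an independent uniform vector $\vect{U}$, observes that each fixed realisation $\vect{u}$ induces a deterministic strategy $\mathcal{S}_{\vect{u}}$, and invokes Fubini plus a global averaging argument. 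Both are standard derandomisation techniques; the paper's is more constructive (it tells Builder exactly which bit to fix and why, at each step), while yours is shorter and arguably cleaner, at the cost of being a pure existence argument. The only detail your writeup leaves implicit is that $\vect{u} \mapsto \mathbb{P}[\mathcal{C}(\mathcal{S}_{\vect{u}})]$ is a measurable function of $\vect{u}$; this is routine here, since the domain $D$ and the sample space of edge orderings are finite, so the map is a finite linear combination of products of indicators $\mathds{1}(u_i \le \mathcal{S}(h))$, but it is worth a sentence since the averaging step depends on it.
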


\begin{proof} 
    Fix $ n \in \mathbb{N} $ and $t,b\in[M]$.
    Setting
    \[D^{(i)} \coloneqq \left\{ \big(j, \vect{e}^{(j)}, \vect{b}^{(j - 1)}\big)\in D : j \in [i] \right\}\]
    for any $i\in[t]$, we say that a $ (t, b) $-strategy $ \mathcal{S} $ is \defi{deterministic up to time $i$} if $ \mathcal{S}(D^{(i)}) \subseteq \{ 0, 1 \} $.
    In particular, a $(t,b)$-strategy is deterministic if it is deterministic up to time $t$.
    Given now an arbitrary $ (t, b) $-strategy $ \mathcal{S} $ for property $ \mathcal{P} $, our goal is to define a sequence of $(t,b)$-strategies $ \mathcal{S}_0, \mathcal{S}_1, \ldots, \mathcal{S}_t $ with $ \mathcal{S}_0 \coloneqq \mathcal{S} $ such that, for all $i\in[t]$, we have that $ \mathbb{P}[\mathcal{C}(\mathcal{S}_i)] \ge \mathbb{P}[\mathcal{C}(\mathcal{S}_{i - 1})] $ and $ \mathcal{S}_i $ is deterministic up to time $ i $.
    It is clear then that $ \mathcal{S}' \coloneqq \mathcal{S}_t $ will be the required deterministic $(t,b)$-strategy.
    We construct our sequence iteratively in the following natural way.
    
    Let $i\in[t]$ and let us assume, inductively, that $\mathcal{S}_{i-1}$ is deterministic up to time $i-1$.
    For all $(j, \vect{e}^{(j)}, \vect{b}^{(j - 1)})\in D$ with $ j \in [t] \setminus \{ i \} $, set $ \mathcal{S}_i(j, \vect{e}^{(j)}, \vect{b}^{(j - 1)}) \coloneqq \mathcal{S}_{i - 1}(j, \vect{e}^{(j)}, \vect{b}^{(j - 1)}) $, so it only remains to define $ \mathcal{S}_i(i, \vect{e}^{(i)}, \vect{b}^{(i - 1)}) $ for all $\vect{e}^{(i)}\in E^{(i)}$ and all $\vect{b}^{(i - 1)}\in\{0,1\}^{i-1}$ with $|\vect{b}^{(i - 1)}|<b$.
    Note that, since $\mathcal{S}_{i-1}$ is deterministic up to time $i-1$, for each possible $\vect{e}^{(i)}\in E^{(i)}$ there is a unique $\vect{b}^{(i-1)}\in\{0,1\}^{i-1}$ that corresponds to the decisions made by Builder during the first $i-1$ time steps if she is presented with the edges of $\vect{e}^{(i)}$; let us denote this unique $\vect{b}^{(i-1)}$ by $\vect{b}(\mathcal{S}_{i-1},\vect{e}^{(i)})$ (note that this unique $\vect{b}^{(i-1)}$ in fact only depends on the first $i-1$ entries of $\vect{e}^{(i)}$, even though this is not explicit in the notation).
    Given any $\vect{e}^{(i)}\in E^{(i)}$ and any $\vect{b}^{(i - 1)}\in\{0,1\}^{i-1}\setminus\{\vect{b}(\mathcal{S}_{i-1},\vect{e}^{(i)})\}$ with $|\vect{b}^{(i - 1)}|<b$, since we know the strategy will never be queried for the value of $\mathcal{S}_i(i,\vect{e}^{(i)},\vect{b}^{(i - 1)})$, we may simply set $\mathcal{S}_i(i,\vect{e}^{(i)},\vect{b}^{(i - 1)})\coloneqq0$ without altering the probability of success of the strategy.
    Now we must only consider the domain elements of the form $(i,\vect{e}^{(i)}, \vect{b}(\mathcal{S}_{i-1},\vect{e}^{(i)}))$.
    Recall that, if $|\vect{b}(\mathcal{S}_{i-1},\vect{e}^{(i)})|\geq b$, then we are outside the domain and there is nothing to define, so assume otherwise.
    If $ \mathcal{S}_{i - 1}(i, \vect{e}^{(i)}, \vect{b}(\mathcal{S}_{i-1},\vect{e}^{(i)})) \in \{ 0, 1 \} $, then we may set $ \mathcal{S}_i(i, \vect{e}^{(i)}, \vect{b}(\mathcal{S}_{i-1},\vect{e}^{(i)})) \coloneqq \mathcal{S}_{i - 1}(i, \vect{e}^{(i)}, \vect{b}(\mathcal{S}_{i-1},\vect{e}^{(i)})) $.
    Otherwise, let $ \mathcal{A}(\vect{e}^{(i)}) $ be the event that the random graph process outputs $\vect{e}^{(i)}$ up to time $ i $ and, for each $ j \in \{ 0, 1 \} $, let 
    \[p_j \coloneqq \mathbb{P}[\mathcal{C}(\mathcal{S}_{i - 1}) \mid \mathcal{A}(\vect{e}^{(i)}), b_i = j]\]
    be the probability that $ \mathcal{S}_{i - 1} $ succeeds depending on whether the $ i $-th edge is bought or not, and choose $ j^* \in \{0, 1\} $ to maximise this quantity, that is, such that $ p_{j^*} \ge p_{1 - j^*} $.
    Now, simply set $ \mathcal{S}_i(i, \vect{e}^{(i)}, \vect{b}(\mathcal{S}_{i-1},\vect{e}^{(i)})) \coloneqq j^* $.
    Since $ \mathcal{S}_{i - 1} $ is deterministic up to time $ i - 1 $, this definition ensures that~$ \mathcal{S}_i $ is deterministic up to time $ i $.

    Lastly, we verify the desired bounds on the probability of success.
    Fix any $i\in[t]$ and consider any $\vect{e}^{(i)}\in E^{(i)}$.
    If $\mathcal{S}_{i-1}(i, \vect{e}^{(i)}, \vect{b}(\mathcal{S}_{i-1},\vect{e}^{(i)}))\in\{0,1\}$, then clearly
    \[\mathbb{P}\left[\mathcal{C}(\mathcal{S}_i) \ \big|\  \mathcal{A}(\vect{e}^{(i)})\right] = \mathbb{P}\left[\mathcal{C}(\mathcal{S}_{i-1}) \ \big|\  \mathcal{A}(\vect{e}^{(i)})\right].\]
    Otherwise, by definition, we have that
    \begin{align*}
        \mathbb{P}\left[\mathcal{C}(\mathcal{S}_{i - 1}) \ \big|\  \mathcal{A}(\vect{e}^{(i)})\right] &= \mathcal{S}_{i - 1}(i, \vect{e}^{(i)}, \vect{b}(\mathcal{S}_{i-1},\vect{e}^{(i)})) \mathbb{P}\left[\mathcal{C}(\mathcal{S}_{i - 1}) \ \big|\  \mathcal{A}(\vect{e}^{(i)}), b_i = 1\right]\\
        &\phantom{=}+\left(1 - \mathcal{S}_{i - 1}(i, \vect{e}^{(i)}, \vect{b}(\mathcal{S}_{i-1},\vect{e}^{(i)}))\right) \mathbb{P}\left[\mathcal{C}(\mathcal{S}_{i - 1}) \ \big|\  \mathcal{A}(\vect{e}^{(i)}), b_i = 0\right]\\
        &\le \mathbb{P}\left[\mathcal{C}(\mathcal{S}_{i - 1}) \ \big|\  \mathcal{A}(\vect{e}^{(i)}), b_i = j^*\right]\\
        &= \mathbb{P}\left[\mathcal{C}(\mathcal{S}_i) \ \big|\  \mathcal{A}(\vect{e}^{(i)})\right].
    \end{align*}
    As this holds for all $ \vect{e}^{(i)} \in E^{(i)} $, it follows that $ \mathbb{P}[\mathcal{C}(\mathcal{S}_i)] \ge \mathbb{P}[\mathcal{C}(\mathcal{S}_{i - 1})] $ for each $ i \in [t] $, as required.
\end{proof}


\subsection{Couplings and stages}\label{sect:couplings}

As mentioned in \cref{sect:toolsintro}, each $ G_i $ in the random graph process can be thought of intuitively as behaving similarly to a binomial random graph with the appropriate expected number of edges.
It is often much easier to prove a statement for binomial random graphs (leveraging the independence between different edges) than it is to prove it (directly) in the random graph process, and thus it is very useful to be able to formalise this intuition.
In this section, we discuss several ways of doing so, particularly in the context of \emph{multi-stage} strategies.
Despite elaborate notation, the lemmas we prove capture quite intuitive ideas, but are imperative to be able to analyse our strategies in a formally correct way.

Recall firstly that the $m$-th graph of the random graph process, $G_m$, is distributed like the uniform random graph $\hat{G}(n,m)$.
The following standard coupling between binomial and uniform random graphs (which is a particular case of the more general \cref{lemma:coupling_stages}) states that a uniform random graph can be ``sandwiched'' between two binomial random graphs with similar parameters.

\begin{lemma}\label{lem:basiccoupling}
    Let $m\in[M]$ with $m=\omega(1)$.
    There exist $p=(1-o(1))m/M$ and $p'=(1+o(1))m/M$ and a coupling of random graphs $(H,\hat{G},H')$ such that $H\sim G(n,p)$, $\hat{G}\sim \hat{G}(n,m)$, $H'\sim G(n,p')$ and a.a.s.\ $H\subseteq \hat{G}\subseteq H'$.
\end{lemma}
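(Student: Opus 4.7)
The plan is to construct the coupling directly from the random graph process itself. Let $\sigma = (e_1, \ldots, e_M)$ be a uniformly random ordering of the edges of $K_n$ and set $\hat{G} \coloneqq \{e_1, \ldots, e_m\}$; this has the correct distribution $\hat{G}(n,m)$. The key observation is that, for any $q \in [0,1]$, a binomial random graph $G(n,q)$ is equidistributed with $\{e_1, \ldots, e_{\min(X,M)}\}$, where $X \sim \mathrm{Bin}(M, q)$ is drawn independently of $\sigma$: conditional on $X = k$, both sides are uniform over $k$-edge graphs. This will let me define all three graphs on a single probability space by a single ordering $\sigma$ and two binomial random variables.

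Concretely, I would draw $X \sim \mathrm{Bin}(M, p)$ and $X' \sim \mathrm{Bin}(M, p')$ (jointly with $\sigma$, and however we wish with respect to each other), and set $H \coloneqq \{e_1, \ldots, e_{\min(X, M)}\}$ and $H' \coloneqq \{e_1, \ldots, e_{\min(X', M)}\}$. By the observation above, $H \sim G(n,p)$ and $H' \sim G(n,p')$. Moreover, whenever $X \le m \le X'$ we automatically have $H \subseteq \hat{G} \subseteq H'$, so it suffices to choose $p = (1 - o(1))m/M$ and $p' = (1 + o(1))m/M$ for which $\mathbb{P}[X \le m \le X'] = 1 - o(1)$.

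For this, I take $p \coloneqq (1 - \eps) m/M$ and $p' \coloneqq (1 + \eps) m/M$, where $\eps = \eps(n) \to 0$ will be chosen. Then $\mathbb{E}[X] = (1-\eps)m$ and $\mathrm{Var}(X) \le Mp \le m$, so Chebyshev's inequality gives
\[
    \mathbb{P}[X > m] \;\le\; \mathbb{P}\bigl[|X - \mathbb{E}[X]| \ge \eps m\bigr] \;\le\; \frac{1}{\eps^2 m},
\]
and the analogous bound holds for $\mathbb{P}[X' < m]$. Since $m = \omega(1)$, any choice of $\eps$ with $\eps \to 0$ and $\eps^2 m \to \infty$ works (e.g.\ $\eps \coloneqq m^{-1/3}$), and a union bound completes the proof.

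There is no real obstacle here: this is a standard sandwiching argument between binomial and uniform random graphs, and the only mild care required is to verify that the slack parameter $\eps$ can simultaneously vanish and provide sufficient concentration, which is precisely what the hypothesis $m = \omega(1)$ buys.
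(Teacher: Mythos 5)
Your proof is correct, and it takes a genuinely different route from what the paper does. The paper states this lemma (\cref{lem:basiccoupling}) as a standard fact without proof, so there is no internal proof to compare against directly. However, the paper does prove a strictly more general multi-stage version (\cref{lemma:coupling_stages}) in the appendix, and that proof works differently: it samples the binomial graphs $H_i$ and $\mybar{H}_i$ independently, checks edge-count ``failure'' conditions, and when they hold, explicitly constructs each $\hat{H}_i$ by taking $H_i\setminus\bigcup_{j<i}\hat{H}_j$ and padding it with a random set of extra edges from $H_i'$. Your argument instead fixes a single uniformly random edge ordering $\sigma=(e_1,\ldots,e_M)$ and reads off all three graphs as prefixes, using the standard observation that $\{e_1,\ldots,e_X\}$ with $X\sim\mathrm{Bin}(M,q)$ independent of $\sigma$ is distributed as $G(n,q)$; the sandwich then reduces to the scalar event $X\le m\le X'$, which you control by Chebyshev using $m=\omega(1)$. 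Your approach is cleaner and more elementary for the one-stage statement, but note that it would not transparently extend to the multi-stage setting (a prefix of $\sigma$ beyond the first block is not distributed like an independent $G(n,p_i)$, which is exactly the complication the appendix construction is designed to handle).

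One small point to tidy: with $p'=(1+\eps)m/M$ you should take $p'\coloneqq\min\{1,(1+\eps)m/M\}$ so that it is a valid probability when $m$ is close to $M$; if the cap binds, then $H'=K_n$ and the upper containment is trivial, so nothing else changes. Also make sure to fix a monotone (or at least consistent) coupling of $X$ and $X'$ if you want to speak of a single joint distribution, though as you note all that matters is that the two marginal failure events $\{X>m\}$ and $\{X'<m\}$ are individually unlikely.
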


In the budget-constrained random graph process, for many properties, the most straightforward way to construct and analyse a strategy is by dividing it into \emph{stages}.
Specifically, we partition the time into $ k $ intervals (which correspond to ``segments'' of the random graph process), and in the $ i $-th interval we purchase only edges belonging to a particular set $ E_i $, attempting to build one of a particular set $ \mathcal{F}_i $ of structures.
A strategy of this form may be adaptive, in the sense that, while $ E_i $ and $ \mathcal{F}_i $ must both be specified before the corresponding interval, they may depend on the outcome of previous stages.
For the sake of example, consider the following simple (and meaningless) two-stage strategy.
In the first stage, Builder attempts to construct a path of length at least $ (2/3 - \eps) n $ within the vertex set $ V_1 \coloneqq [2n/3]$, purchasing any edges presented in the set $ V_1 $.
Builder then fixes one such path $ P $ (assuming that one exists).
In the second stage, she now attempts to find a triangle factor on the remaining (at most) $ (1/3 + \eps) n $ vertices, purchasing only edges which are vertex-disjoint from $ P $.
Applying \cref{lem:basiccoupling} to the underlying random graph process is not so helpful in this case, because the fact that an edge appears in $ H $ tells us nothing about the stage of the process in which it is presented.
For example, even if a triangle factor on the appropriate vertex set (of size at most $ (1/3 + \eps) n $) appears in the graph $ G_t $, it may be that some of its edges were presented during the first stage and were not purchased.
For this reason, it is often useful to be able to regard the two stages as independent binomial random graphs.

We start by introducing a general context for multi-stage strategies, which we will use for the remainder of this section.
Let $k\in\mathbb{N}$, the number of stages, be fixed.
Let $ t_1, \ldots, t_k \in [M] $, the amounts of time allocated to each stage, be such that $\sum_{i=1}^k t_i\leq M$.
Let $s_0\coloneqq 0$ and, for each $ i \in [k] $, set $ s_i \coloneqq \sum_{j=1}^i t_j $.
Let $ G_0, G_1, G_2, \ldots $ denote a random graph process on $ [n] $ and, for each $ i \in [k] $, let $ \hat{G}_i \coloneqq G_{s_i} \setminus G_{s_{i - 1}} $ be the graph of all edges presented during the $ i $-th stage.
Observe that each $ \hat{G}_i $ has the same (marginal) distribution as $ \hat{G}(n, t_i) $, but that they are not independent (as they are pairwise disjoint).

The following general coupling lemma allows us to sandwich each of the stages $\hat{G}_1,\ldots,\hat{G}_k$ between coupled binomial random graphs analogously to \cref{lem:basiccoupling}.
The key difference is that, for all but the first stage, we must remove a few edges from the ``lower bound''; specifically, those edges of the previous stages of the random graph process which also happen to appear in the independent binomial random graph (since they cannot reappear in $ \hat{G}_i $).
Despite this technical detail, the proof of this lemma is still fairly standard.
For completeness, we include the proof in \cref{appen:multi_stage}.

\begin{lemma} \label{lemma:coupling_stages}
    Suppose that $t_i=\omega(1)$ and $t_i=o(M)$ for every $i\in[k]$.
    Then for each $i\in[k]$ there exist $p_i=(1 - o(1)) t_i / M$ and $\mybar{p}_i=o(t_i / M)$, and there exists a coupling of random graphs $ (H_i, \hat{H}_i, \mybar{H}_i)_{i \in [k]} $, such that
    \begin{enumerate}[label={\ensuremath{(\mathrm{\roman*})}}]
        \item\label{lem:couplingproperty1} $ H_i \sim G(n, p_i) $ and $ \mybar{H}_i \sim G(n, \mybar{p}_i) $ for each $i\in[k]$;
        \item\label{lem:couplingproperty1.5} $(\hat{H}_i)_{i\in[k]}$ is distributed like $(\hat{G}_i)_{i\in[k]}$;
        \item\label{lem:couplingproperty2} the graphs $\{H_i,\mybar{H}_i:i\in[k]\}$ are mutually independent, and
        \item\label{lem:couplingproperty3} a.a.s.\ $ H_i\setminus\bigcup_{j=1}^{i-1}\hat{H}_j \subseteq \hat{H}_i \subseteq H_i\cup \mybar{H}_i $ for all $ i \in [k] $. 
    \end{enumerate}
\end{lemma}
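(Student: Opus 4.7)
The plan is to construct the coupling iteratively, one stage at a time, extending the standard coupling of \cref{lem:basiccoupling}.  For each $ i \in [k] $, I would first choose parameters $ p_i = (1 - \epsilon_i) t_i / M $ and $ \bar{p}_i $ (with $ \bar{p}_i = o(t_i/M) $) so that $ q_i \coloneqq 1 - (1 - p_i)(1 - \bar{p}_i) \ge (1 + \epsilon_i') t_i / M $, where $ \epsilon_i, \epsilon_i' \to 0 $ slowly enough for Chernoff-type bounds to apply (for instance $ \epsilon_i = \epsilon_i' = t_i^{-1/3} $, which is valid because $ t_i = \omega(1) $).  Set $ p_i + \bar{p}_i (1 - p_i) = q_i $ and sample independent $ H_i \sim G(n, p_i) $ and $ \bar{H}_i \sim G(n, \bar{p}_i) $ for every $ i \in [k] $, all mutually independent.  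This immediately realises properties \ref{lem:couplingproperty1} and \ref{lem:couplingproperty2}; the remaining task is to produce $ (\hat{H}_i)_{i \in [k]} $ satisfying \ref{lem:couplingproperty1.5} and \ref{lem:couplingproperty3}, which I propose to do by sampling $ \hat{H}_i $ sequentially, using fresh randomness and the past graphs $ \hat{H}_1, \ldots, \hat{H}_{i-1} $, together with the already-sampled $ H_i, \bar{H}_i $.

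At stage $ i $, let $ P_{i-1} \coloneqq \bigcup_{j < i} \hat{H}_j $ and $ \Omega_i \coloneqq K_n \setminus P_{i-1} $, with $ M' \coloneqq |\Omega_i| = M - s_{i-1} $.  Because $ k $ is fixed and each $ t_i = o(M) $, we have $ s_{i-1} = o(M) $, so $ M' = (1 - o(1)) M $ deterministically.  Conditional on $ P_{i-1} $, the restrictions $ H_i \cap \Omega_i $ and $ \bar{H}_i \cap \Omega_i $ are still independent binomials with parameters $ p_i, \bar{p}_i $ on $ \Omega_i $.  The goal is then to sample $ \hat{H}_i $ as a uniformly random $ t_i $-subset of $ \Omega_i $ (which, by the standard description of the random graph process as a uniformly random permutation, guarantees the joint distribution in \ref{lem:couplingproperty1.5}), while coupling it with $ H_i \cap \Omega_i $ and $ (H_i \cup \bar{H}_i) \cap \Omega_i $ to obtain $ H_i \cap \Omega_i \subseteq \hat{H}_i \subseteq (H_i \cup \bar{H}_i) \cap \Omega_i $ whp.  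Rewritten on $ K_n $, this is precisely~\ref{lem:couplingproperty3}, since $ H_i \setminus P_{i-1} = H_i \cap \Omega_i $ and $ \hat{H}_i \subseteq \Omega_i $ is disjoint from $ P_{i-1} $.

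The per-stage coupling is a conditional version of \cref{lem:basiccoupling}, applied with $ H_i \cap \Omega_i $ providing the lower bound and $ (H_i \cup \bar{H}_i) \cap \Omega_i $ providing the upper bound.  The standard proof of \cref{lem:basiccoupling} uses a common family of i.i.d.\ uniforms $ (U_e)_{e \in \Omega_i} $ from which $ \hat{H}_i $ is read off as the $ t_i $ edges with smallest $ U_e $; the inclusions then reduce to checking that the order statistic $ U_{(t_i)} $ lies in an appropriate range, which follows from Chernoff applied to the binomials $ |\{e : U_e \le p_i\}| $ and $ |\{e : U_e \le q_i\}| $ with means $ p_i M' = (1 - \epsilon_i - o(1)) t_i $ and $ q_i M' = (1 + \epsilon_i' - o(1)) t_i $.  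A union bound over the (constant number of) stages then upgrades this to a simultaneous whp statement.

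The main obstacle, and where care is most needed, is to marry the per-stage coupling with the requirement that $ H_i $ and $ \bar{H}_i $ be independent, and that this independence be preserved jointly across all stages.  Writing $ H_i \cup \bar{H}_i $ as a single binomial with parameter $ q_i $ handles the upper bound (since $ H_i \perp \bar{H}_i $), but the lower-bound coupling would naively produce dependencies between the $ U_e $'s driving $ \hat{H}_i $ and the variables defining $ H_i, \bar{H}_i $; the resolution is to construct $ \hat{H}_i $ using randomness conditioned on, but formally independent of, the already-sampled $ (H_j, \bar{H}_j)_{j \in [k]} $, and to verify via an explicit density computation that the marginal of $ \hat{H}_i $ given $ P_{i-1} $ is exactly uniform on $ \binom{\Omega_i}{t_i} $.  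This is the step that I would deem most technical, and is the reason the authors defer the full proof to \cref{appen:multi_stage}.
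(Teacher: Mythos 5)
Your proposal follows essentially the same route as the paper: sample $ (H_i, \mybar{H}_i)_{i\in[k]} $ mutually independently first, set $ q_i = p_i + \mybar{p}_i(1-p_i) $ so that $ H_i \cup \mybar{H}_i \sim G(n,q_i) $, and then build each $ \hat{H}_i $ sequentially as a uniformly random $ t_i $-subset of $ \Omega_i = K_n \setminus \bigcup_{j<i}\hat{H}_j $, sandwiched between $ H_i \cap \Omega_i $ and $ (H_i\cup\mybar{H}_i)\cap\Omega_i $ via fresh randomness, with Chernoff/hypergeometric concentration (and a union bound over the constant number of stages) giving the a.a.s.\ containments.

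That said, two things the paper handles carefully are not actually resolved in your write-up. First, the \emph{failure} case: when $ e(H_i\cap\Omega_i) > t_i $ or $ e((H_i\cup\mybar{H}_i)\cap\Omega_i) < t_i $, the sandwich is impossible, yet you still must output some $ \hat{H}_i $ with the correct conditional distribution to preserve \ref{lem:couplingproperty1.5}. The paper's construction declares a failure, discards the entire partial sequence, and resamples a fresh random graph process; you never say what your coupling does in this event, and a lazy choice here can break the uniformity you rely on. Second, and relatedly, you flag that the "most technical" step is to verify that the conditional law of $ \hat{H}_i $ given $ P_{i-1} $ (and given no failure) is exactly uniform on $ \binom{\Omega_i}{t_i} $ --- but you defer this computation rather than carry it out. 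That verification, together with the observation that the failure probability at step $ i $ given no earlier failure is a function only of $(t_j, p_j, p_j')_{j\le i}$ and not of the actual past graphs, is precisely the content of the proof; as written your proposal is an outline of the right strategy rather than a complete argument. Your $ (U_e) $-threshold implementation can in principle furnish this verification (encoding the three-way outcome $ \{U_e \le p_i\} $, $ \{p_i < U_e \le q_i\} $, $ \{U_e > q_i\} $ per edge), but it must still be married to the failure mechanism to give \ref{lem:couplingproperty1.5} unconditionally, not merely on the good event.
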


It turns out that, for rather subtle reasons, even this coupling lemma will not be sufficient to formally analyse our strategy in the proof of \cref{thm:hamsquare_upper}, and as such we require a stronger formal statement for a particular class of multi-stage strategies.
We introduce this more specific setup and prove such a statement in the remainder of this section.

Consider again the example two-stage strategy discussed above, and apply \cref{lemma:coupling_stages}.
Assume for now that we can prove that a.a.s.\ $ H_1 $ contains some long path as desired in the first stage, and (separately) that, for any fixed long path $ P $, a.a.s.\ $ H_2 \setminus (H_1 \cup \mybar{H}_1) $ (another binomial random graph) contains some triangle factor $ F $ which is vertex-disjoint from $P$, as desired in the second stage.
Then a.a.s.\ $ \hat{G}_1 $ contains some long path~$P$ and, for any given long path $ Q $, a.a.s.\ $ \hat{G}_2 $ contains some~$F$ which ``fits''~$Q$.
It may seem at first glance that this is sufficient to prove that the strategy succeeds, but in fact we need to show that a.a.s.~$ \hat{G}_1 $ contains some long path~$P$, in concert with $ \hat{G}_2 $ containing some~$F$ which ``fits''~$P$ specifically.
Note that this is a different statement in general, since conditioning on obtaining any specific path $ P $ in $ \hat{G}_1 $ may give us information about the graph $ \hat{G}_2 $.\COMMENT{Observe that, if we could prove that a.a.s., for every long path $ Q $ (simultaneously), $ H_2 \setminus (H_1 \cup \mybar{H}_1) $ contains some $F$ which ``fits'' $Q$, then this would also suffice; however, this is a much more powerful statement about the binomial random graph, requiring a union bound over $ Q $.}
We instead want to show that a.a.s., after the first stage, Builder has some way of choosing a path $ P $ such that a.a.s.\ some factor which ``fits'' $ P $ appears in the second stage.
We devote the remainder of this section to formalising one way of doing this, in a somewhat specific setup with three stages.
While the particular statements we give are tailored to our application (namely the proof of \cref{thm:hamsquare_upper}), the techniques employed may be applicable more generally.
It would also be very interesting to prove a more general result in this direction (see \cref{problem:multi_stage}).

We work in the setting of a multi-stage strategy as described above with $ k = 3 $; that is, $ t_1, t_2, t_3 $ are given.
Let $ m = m(n) \in\mathbb{N}$ and, for each $ i \in [m] $, let $ r_i = r_i(n) \in\mathbb{N}$.
Let $ \mathcal{E}^1 = \{ E_1, \ldots, E_m \} $, and $ \mathcal{E}^2_i = \{ E_{i, 1}, \ldots, E_{i, r_i} \} $, for each $ i \in [m] $, be collections of graphs on vertex set~$ [n] $.
Further, for each $ i \in [m] $ and each $ j \in [r_i] $, let $ \mathcal{F}_{i, j} $ be a collection of graphs on vertex set~$ [n] $.
We think of $ \mathcal{E}^1 $ as the set of all possible structures which Builder would be happy to construct in the first stage, and for each fixed $ i \in [m] $, we think of $ \mathcal{E}_i^2 $ as the set of all possible structures which she would be happy to construct in the second stage, having constructed $ E_i $ in the first stage (in our example strategy, $\cE^1$ would be the collection $\{P_1,\dots,P_m\}$ of all paths of length at least $(2/3-\eps)n$ on $[2n/3]$ and $\cE^2_i$ would be the set of all triangle factors on vertex set~$[n]\setminus V(P_i)$).
Likewise, having constructed $ E_i $ in the first stage and $ E_{i, j} $ in the second, Builder aims to build some structure of $ \mathcal{F}_{i, j} $ in the third stage.
For our application, we may suppose further that all possible structures for the first stage are edge-disjoint from all possible structures in the later stages, that is
\begin{equation} \label{eqn:zak6}
    K^2 \coloneqq \bigcup_{i \in [m]}\bigcup_{j \in [r_i]} \biggl( E_{i, j} \cup \bigcup_{F \in \mathcal{F}_{i, j}} F \biggr) \ \text{ is edge-disjoint from } \ K^1 \coloneqq \bigcup_{i \in [m]} E_i.
\end{equation}

Let $ \delta = \delta(n) = o(1) $, and assume that the following conditions hold whenever $ p_i = (1 - o(1)) t_i / M $ for each $ i \in [3] $ and $\mybar{p}_2 = o(t_2 / M) $.
\begin{enumerate}[label={\ensuremath{(\mathrm{P}\arabic*)}}]
    \item\label{cond:zak1} If $ G \sim G(n, p_1) $, with probability at least $ 1 - \delta $ there exists some $ i \in [m] $ for which $ E_i \subseteq G $.
    \item\label{cond:zak2} If $ G \sim G(n, p_2) $, then for each $ i \in [m] $, with probability at least $ 1 - \delta $, there exists some $ j \in [r_i] $ for which $ E_{i, j} \subseteq G $.\COMMENT{The reason why we introduce $\delta$ explicitly rather than stating things as simple `a.a.s.' statements is the following. In \ref{cond:zak2}, it could theoretically be possible that, for each $i\in[m]$, the probability of `success' is at least $1-\delta_i(n)$, where $\delta_i=o(1)$. However, this does not guarantee that $ \delta_{m(n)}(n) \not \to 0 $, which would be quite annoying to deal with. By explicitly defining a `global' $\delta$, we avoid this possible problem.}
    \item\label{cond:zak3} If $ G \sim G(n, p_3) \setminus (G(n, p_2) \cup G(n, \mybar{p}_2)) $,\COMMENT{Note that this is simply some $G(n,p^*)$ where $p^*=p_3(1-p_2-\mybar{p}_2+p_2\mybar{p}_2)$.} then for each $ i \in [m] $ and each $ j \in [r_i] $, with probability at least $ 1 - \delta $, there exists some $ F \in \mathcal{F}_{i, j} $ for which $ F\setminus E_{i,j} \subseteq G $.
\end{enumerate}

Let $ \mathcal{G} $ be the set of all graphs on vertex set $ [n] $, and let $ \mathcal{G}_1, \mathcal{G}_2 \subseteq \mathcal{G} $ be all those with exactly $ t_1 $ or~$ t_2 $ edges, respectively.
Let $ \mathcal{H} \coloneqq \{ (J_1, J_2) \in \mathcal{G}_1 \times \mathcal{G}_2: E(J_1)\cap E(J_2) = \varnothing \} $.
The following lemma formalises the idea that Builder can make suitable choices of structures found in the first two stages to a.a.s.\ obtain a suitable corresponding structure in the third stage.
This is the key lemma in formalising the proof of \cref{thm:hamsquare_upper}.

\begin{lemma} \label{lem:two_stage_real}
    Let $ (t_i)_{i \in [3]}$, $m$, $(r_i)_{i \in [m]}$, $\mathcal{E}^1$, $(\mathcal{E}^2_i)_{i \in [m]}$, $(\mathcal{F}_{i, j})_{i \in [m], j \in [r_i]} $, $K^1$ and $K^2$ be as in the setup described above.
    Suppose that for each $ i \in[3] $ and $ j \in [2] $ we have that $ t_i=\omega(M / e(K^j)) $.
    Then there exist functions $ f\colon \mathcal{G}_1 \to [m] $ and $ g\colon \mathcal{H} \to \mathbb{N} $ such that $ g(J_1, J_2) \in [r_{f(J_1)}] $ for all $ (J_1, J_2) \in \mathcal{H} $ and the following holds.
    Letting $ i \coloneqq f(\hat{G}_1) $ and $ j \coloneqq g(\hat{G}_1, \hat{G}_2) \in [r_i] $, we have that a.a.s.\ $ E_i \subseteq \hat{G}_1 $, $ E_{i, j} \subseteq \hat{G}_2 $, and there exists $ F \in \mathcal{F}_{i, j} $ for which $ F \setminus E_{i, j} \subseteq \hat{G}_3 $.
\end{lemma}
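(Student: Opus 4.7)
The plan is to apply \cref{lemma:coupling_stages} with $k=3$ to reduce the problem to three essentially independent binomial random graphs, and then combine \ref{cond:zak1}--\ref{cond:zak3} via a conditioning argument. From the coupling we obtain mutually independent binomial random graphs $H_1, H_2, H_3, \mybar{H}_1, \mybar{H}_2, \mybar{H}_3$ and coupled $\hat{H}_1, \hat{H}_2, \hat{H}_3$ distributed as $\hat{G}_1, \hat{G}_2, \hat{G}_3$, with the sandwich $H_i \setminus \bigcup_{j<i} \hat{H}_j \subseteq \hat{H}_i \subseteq H_i \cup \mybar{H}_i$ holding a.a.s.\ for each $i \in [3]$. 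I shall take $f(J_1)$ to be the lexicographically smallest $i \in [m]$ with $E_i \subseteq J_1$ (with a fallback value of $1$), and $g(J_1, J_2)$ to be the smallest $j \in [r_{f(J_1)}]$ with $E_{f(J_1), j} \subseteq J_2$.

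The key technical reduction is to introduce the auxiliary random graphs $\tilde{H}_2 \coloneqq H_2 \setminus (H_1 \cup \mybar{H}_1)$ and $\tilde{H}_3 \coloneqq H_3 \setminus (H_1 \cup \mybar{H}_1 \cup H_2 \cup \mybar{H}_2)$. By mutual independence of the edge-indicator variables across the $H_i$ and $\mybar{H}_i$, both $\tilde{H}_2$ and $\tilde{H}_3$ are (marginally) binomial random graphs with parameters of the form $(1-o(1)) t_i / M$ matching those required by \ref{cond:zak2} and \ref{cond:zak3}. The sandwich inclusions additionally yield $\tilde{H}_2 \subseteq H_2 \setminus \hat{H}_1 \subseteq \hat{H}_2$ and $\tilde{H}_3 \subseteq H_3 \setminus (\hat{H}_1 \cup \hat{H}_2) \subseteq \hat{H}_3$ a.a.s., so it suffices to locate $E_{i^*, j^*} \subseteq \tilde{H}_2$ and some $F \in \mathcal{F}_{i^*, j^*}$ with $F \setminus E_{i^*, j^*} \subseteq \tilde{H}_3$. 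Stage~1 is then immediate: applying \ref{cond:zak1} to $H_1 \subseteq \hat{H}_1$ ensures that $i^* = f(\hat{H}_1)$ is well-defined and $E_{i^*} \subseteq \hat{H}_1$ a.a.s.

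For the remaining stages, the principal difficulty is that $i^*$ and $j^* = g(\hat{H}_1, \hat{H}_2)$ are adaptive functions of the coupled random graphs, and are thus correlated with $\tilde{H}_2$ and $\tilde{H}_3$ through their shared underlying randomness. I plan to address this by conditioning on $(H_1, \mybar{H}_1)$: doing so fixes the conditional distribution of $i^*$ (which depends on $H_1, \mybar{H}_1$ together with independent coupling randomness), while $H_2$, being independent of $H_1, \mybar{H}_1$, remains distributed as $G(n, p_2)$; consequently the conditional distribution of $\tilde{H}_2$ given $(H_1, \mybar{H}_1)$ is simply $G(n, p_2)$ restricted to edges outside $H_1 \cup \mybar{H}_1$. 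Applying \ref{cond:zak2} for each admissible value of $i^*$ and averaging over its distribution then delivers the required structure in $\tilde{H}_2$ with high probability. A parallel argument, additionally conditioning on $(H_2, \mybar{H}_2)$ and invoking \ref{cond:zak3}, handles stage~3.

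The main obstacle is rigorously controlling the correlations between the adaptive indices $i^*, j^*$ and the conditional distributions of $\tilde{H}_2, \tilde{H}_3$ when summing the error probabilities over all values of these indices (which could a priori take $m$ and $\max_i r_i$ values, respectively). The edge-disjointness hypothesis \eqref{eqn:zak6} is crucial here: because $E_{i, j}, F \subseteq K^2$ while $K^1 \cap K^2 = \varnothing$, the interference of $H_1 \cup \mybar{H}_1$ with later stages is confined to the intersection $(H_1 \cup \mybar{H}_1) \cap K^2$, which by the density condition $t_i = \omega(M/e(K^j))$ is a sparse random subset of $K^2$; the concentration properties of this subset are what should ensure that enough structures $E_{i, j}$ and $F$ survive the interference to sustain the averaging argument.
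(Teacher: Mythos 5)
Your proposal correctly identifies the high-level strategy (couple to binomial random graphs, apply \ref{cond:zak1}--\ref{cond:zak3}, then de-condition), but there are two concrete gaps that the paper's proof was specifically engineered to close, and your sketch does not close either of them. First, you use the sandwich coupling of \cref{lemma:coupling_stages} and condition on $(H_1,\mybar{H}_1)$, hoping that $\tilde{H}_2 = H_2 \setminus (H_1 \cup \mybar{H}_1)$ still resembles $G(n,p_2)$. But conditional on a \emph{fixed} value of $(H_1,\mybar{H}_1)$, the graph $\tilde{H}_2$ is $G(n,p_2)$ \emph{restricted to} $K_n \setminus (H_1 \cup \mybar{H}_1)$, which is not what \ref{cond:zak2} addresses. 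You flag this and appeal to the sparsity of $(H_1 \cup \mybar{H}_1)\cap K^2$, but \ref{cond:zak2}--\ref{cond:zak3} carry no robustness guarantee against deletion of a sparse subgraph, so ``the concentration properties should ensure enough structures survive'' is exactly the missing content. The paper avoids this by proving the strictly stronger coupling \cref{lem:coupling_zak}, which couples only $\hat{H}_1 \cap K^1$ to $H_1$ and samples $\hat{H}_1 \cap K^2$ \emph{last}; this means stage 1 is genuinely unable to interfere with $K^2$, which is how the edge-disjointness of \eqref{eqn:zak6} is actually exploited. Using the weaker coupling, you would need an additional (and nontrivial) argument to remove the interference.

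Second, even granting a clean coupling, your conditioning plan ignores the correlation introduced by choosing $j^*$ \emph{minimally}: the event $\{j^* = j\}$ forces $E_{i,1},\ldots,E_{i,j-1} \not\subseteq H_2$, which is a non-trivial event on $H_2$, and the stage-3 target $F\setminus E_{i,j} \subseteq H_3 \setminus (H_2 \cup \mybar{H}_2)$ also depends on $H_2$. These two events are positively correlated, and the paper's \cref{lem:two_stage_indep} proves and exploits exactly this via the FKG inequality (\cref{claim:zak_cond}); your proposal does not invoke FKG or any substitute, so your ``averaging over the distribution of $j^*$'' cannot conclude. Finally, note that your $f,g$ (minimal index on $\hat{G}_1,\hat{G}_2$) are the auxiliary functions the paper calls $f',g'$, which it applies to the \emph{independent binomials} $H_1,H_2$ rather than the process graphs; the actual $f,g$ of the paper are conditional-probability \emph{maximisers} on $\hat{G}_1, (\hat{G}_1,\hat{G}_2)$, and the transfer from the $f',g'$ analysis to $f,g$ is a distinct step (\eqref{eqn:zak75} and~\eqref{eqn:zak8}) that hinges on property~\ref{cond:zak_coupling3_2} of \cref{lem:coupling_zak}, which has no analogue in \cref{lemma:coupling_stages}. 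Without that conditional-independence ingredient, conditioning on $i^* = f(\hat{H}_1)$ implicitly biases the distribution of $\hat{H}_2,\hat{H}_3$, and your argument leaves that bias uncontrolled.
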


The proof consists of three parts.
Since \ref{cond:zak1}--\ref{cond:zak3} are statements about binomial random graphs, we first apply a coupling lemma to switch to this setting.
However, we also want to be able to exploit~\eqref{eqn:zak6}, which we achieve via the following stronger coupling lemma.

\begin{lemma} \label{lem:coupling_zak}
    Let $ K^1 $ and $ K^2 $ be edge-disjoint subgraphs of the complete graph $ K_n $, and suppose that for each $ i \in[3] $ and $ j \in [2] $ we have that $ t_i=\omega(M / e(K^j)) $.
    Then there exist $p_i=(1 - o(1)) t_i / M$ for each $i \in [3]$ and $\mybar{p}_2=o(t_2 / M)$, and there exists a coupling of random graphs $ ((H_i)_{i \in [3]}, (\hat{H}_i)_{i \in [3]}, \mybar{H}_2) $, satisfying the following properties.
    \begin{enumerate}[label={\ensuremath{(\mathrm{\roman*})}}]
        \item\label{cond:zak_coupling1} For each $i\in[3]$ we have that $ H_i \sim G(n, p_i) $, and $ \mybar{H}_2 \sim G(n, \mybar{p}_2) $.
        \item\label{cond:zak_coupling2} The sequence $(\hat{H}_i)_{i\in[3]}$ has the same distribution as $(\hat{G}_i)_{i\in[3]}$.
        \item\label{cond:zak_coupling3} The graphs $H_1,H_2,\mybar{H}_2,H_3$ are mutually independent.
        \item\label{cond:zak_coupling3_2} Let\/ $ X \subseteq \mathcal{G}\times\mathcal{G}\times\mathcal{G} $, and let\/ $ \Gamma_1, \Lambda_1 \in \mathcal{G} $ with $ e(\Gamma_1) = t_1 $ be such that 
        \[\mathbb{P}[\hat{H}_1 \cap K^1 = \Gamma_1 \cap K^1, H_1 = \Lambda_1]\neq 0.\]
        Then
        \begin{align}\label{cond:zak_coupling3_2.1}
            &\mathbb{P}\left[(H_2, \hat{H}_2, \hat{H}_3) \in X \ \middle| \ \hat{H}_1 \cap K^1 = \Gamma_1 \cap K^1, H_1 = \Lambda_1\right] \nonumber\\
            =\,& \mathbb{P}\left[(H_2, \hat{H}_2, \hat{H}_3) \in X \ \middle| \ \hat{H}_1 \cap K^1 = \Gamma_1 \cap K^1\right].
        \end{align}
        Similarly, let\/ $ Y \subseteq \mathcal{G} $, and let\/ $ \Gamma_1,\Gamma_2,\Lambda_2 \in \mathcal{G} $ be such that $ e(\Gamma_1) = t_1 $, $ e(\Gamma_2) = t_2 $, and
        \[\mathbb{P}[\hat{H}_1 \cap K^1 = \Gamma_1 \cap K^1, \hat{H}_2 = \Gamma_2, H_2 = \Lambda_2]\neq 0.\]
        Then
        \begin{align}\label{cond:zak_coupling3_2.2}
        &\mathbb{P}\left[\hat{H}_3 \in Y \ \middle| \ \hat{H}_1 \cap K^1 = \Gamma_1 \cap K^1, \hat{H}_2 = \Gamma_2, H_2 = \Lambda_2\right] \nonumber\\
        =\,& \mathbb{P}\left[\hat{H}_3 \in Y \ \middle| \ \hat{H}_1 \cap K^1 = \Gamma_1 \cap K^1, \hat{H}_2 = \Gamma_2\right].
        \end{align}
        \item\label{cond:zak_coupling4} A.a.s.\ we have that $ H_1 \cap K^1 \subseteq \hat{H}_1 \cap K^1 $, $ H_2 \cap K^2 \subseteq \hat{H}_2 \cap K^2 \subseteq (H_2 \cup \mybar{H}_2) \cap K^2 $, and $ (H_3 \cap K^2) \setminus (H_2 \cup \mybar{H}_2)  \subseteq \hat{H}_3 \cap K^2 $.
    \end{enumerate}
\end{lemma}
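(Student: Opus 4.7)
The plan is to generalise the coupling in \cref{lemma:coupling_stages} by exploiting the edge-disjointness of $K^1$ and $K^2$. Since the sandwich assertions in \ref{cond:zak_coupling4} only involve the restrictions of the $\hat{H}_i$ to $K^1$ or $K^2$, I only need ``tight'' couplings on the edge sets $E(K^1)$ and $E(K^2)$, while on $R \coloneqq E(K_n) \setminus (E(K^1) \cup E(K^2))$ I have enough freedom to arrange the remaining randomness independently. Concretely, I would set $p_i \coloneqq (1 - \eps) t_i / M$ for some $\eps = \eps(n) = o(1)$ tending to zero slowly enough that Chernoff-type concentration applies (possible because $t_i e(K^j) = \omega(M)$), and $\mybar{p}_2 \coloneqq \eps t_2 / M$. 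First, sample $H_1, H_2, \mybar{H}_2, H_3$ independently with these parameters; this yields \ref{cond:zak_coupling1} and \ref{cond:zak_coupling3}.

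Next, I build the $\hat{H}_i$ in two logically separated blocks, matching the decomposition $E(K_n) = E(K^1) \sqcup E(K^2) \sqcup R$. I first sample the vector of ``block sizes'' $(|\hat{H}_i \cap E(K^1)|, |\hat{H}_i \cap E(K^2)|, |\hat{H}_i \cap R|)_{i \in [3]}$ from the distribution it inherits in the true random graph process; conditional on these sizes, the edges inside the three blocks are independent and uniform in the true process (because the multi-edge random choices in $\hat{G}(n, t_i)$ split into independent hypergeometric coordinates). On the $K^1$ block I then use $H_1 \cap K^1$ as anchor for $\hat{H}_1 \cap K^1$ in the standard greedy way (on the event $|\hat{H}_1 \cap K^1| \ge |H_1 \cap K^1|$, which holds a.a.s.\ by Chernoff), and I fill in $\hat{H}_2 \cap K^1$ and $\hat{H}_3 \cap K^1$ uniformly from the remaining edges of $K^1$. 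On the $K^2$ block I use $H_2 \cap K^2$ and $\mybar{H}_2 \cap K^2$ to sandwich $\hat{H}_2 \cap K^2$, and then use $H_3 \cap K^2 \setminus (H_2 \cup \mybar{H}_2)$ as anchor for $\hat{H}_3 \cap K^2$, filling in what remains uniformly (crucially, the edges of $H_1$ play no role here). On $R$, I sample uniformly from all feasible configurations, independently of everything else. Verifying \ref{cond:zak_coupling2} reduces to checking that, conditional on the sampled block sizes, the resulting triple $(\hat{H}_i \cap \cdot)_{i \in [3]}$ is uniformly distributed on the correct ordered partition; this follows by the standard fact that augmenting a uniform subset of prescribed size is again uniform of the new size.

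Property \ref{cond:zak_coupling4} is then established via \cref{lem:Chernoff,lem:janson} applied to the binomials $H_i$ and $\mybar{H}_2$ on $K^1$ and $K^2$, using $t_i e(K^j) = \omega(M)$ to guarantee that the expected sizes $p_i e(K^j)$ diverge and concentrate tightly around the target hypergeometric sizes of $\hat{H}_i \cap K^j$. The key point for the inclusion $\hat{H}_2 \cap K^2 \subseteq (H_2 \cup \mybar{H}_2) \cap K^2$ is that on $K^2$ we work within the block that is anchored by both $H_2$ and $\mybar{H}_2$, so the ``upper'' side of the sandwich is built in by construction rather than requiring a separate concentration argument against $\hat{H}_1$.

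The main obstacle I expect is the conditional independence property \ref{cond:zak_coupling3_2}, which does not follow from the construction in \cref{lemma:coupling_stages} and is really the raison d'être of this lemma. To obtain \eqref{cond:zak_coupling3_2.1}, the construction must be arranged so that $(H_2, \hat{H}_2, \hat{H}_3)$ is a function of $H_2, \mybar{H}_2, H_3$, the block sizes, the independent $R$-randomness, and only the restriction $\hat{H}_1 \cap K^1$ (and not the whole of $H_1$ or of $\hat{H}_1$). This is why in the $K^1$-block I draw $\hat{H}_2 \cap K^1$ and $\hat{H}_3 \cap K^1$ uniformly from $E(K^1) \setminus \hat{H}_1$, rather than from a set defined through $H_1$: once the uniform augmentation from $H_1 \cap K^1$ to $\hat{H}_1 \cap K^1$ has taken place, the remaining $K^1$ construction depends on $H_1$ only via $\hat{H}_1 \cap K^1$. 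The same bookkeeping gives \eqref{cond:zak_coupling3_2.2}, because $\hat{H}_3$ is built from $H_3$, the independent $R$-randomness, and the already-exposed $\hat{H}_1 \cap K^1$ and $\hat{H}_2$, with $H_2$ entering only through $\hat{H}_2$. Carrying out this verification amounts to writing the joint density of the coupling as a product that factorises through the stated conditioning events, and is where the bulk of the technical bookkeeping lies.
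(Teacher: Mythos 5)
Your construction is close in spirit to the paper's: the paper reduces to the case $K^2 = K_n \setminus K^1$, defines independent ``base variables'' (the $H_i$, $\mybar{H}_2$, the vector of $K^1$-block sizes $(M_i)$, and auxiliary uniforms $X_i$), and then samples $\hat{H}_1\cap K^1$, $\hat{H}_2$, $\hat{H}_3$, $\hat{H}_1\cap K^2$ sequentially with the sandwich constraints built in, with an explicit fallback branch when the a.a.s.\ concentration conditions fail. Your three-block version with $R$ left free is equivalent in principle (at the cost of tracking more block sizes), and your reason for anchoring the $K^2$-block only to $H_2, \mybar{H}_2, H_3$ is exactly the right idea.

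The genuine gap is in property~\ref{cond:zak_coupling3_2}, which you yourself call the raison d'\^etre but then dismiss as ``technical bookkeeping.'' It is not enough to observe that $(H_2, \hat H_2, \hat H_3)$ can be written as a function $\psi(\hat H_1 \cap K^1, W)$ for some collection $W$ of variables independent of $H_1$. The conditioning in \eqref{cond:zak_coupling3_2.1} is on \emph{both} $\hat H_1\cap K^1$ and $H_1$, and the variables in $W$ --- in particular the block sizes (which are correlated across stages, since they share the hypergeometric structure of the random graph process) and the auxiliary randomness used to augment $H_1\cap K^1$ to $\hat H_1 \cap K^1$ --- are \emph{not} independent of the pair $(\hat H_1\cap K^1, H_1)$. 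One must therefore argue that the conditional law of $W$ given $(\hat H_1\cap K^1 = \Gamma_1\cap K^1, H_1 = \Lambda_1)$ coincides with its conditional law given just $\hat H_1\cap K^1 = \Gamma_1\cap K^1$, and the same again one stage later for \eqref{cond:zak_coupling3_2.2}. The paper proves this via a functional-representation argument (Lemmas~\ref{claim:independence1} and~\ref{claim:independence2}): one re-expresses $H_1$ as a deterministic function of $\hat H_1\cap K^1$ together with a residual variable $Y_2$ that is independent of the entire tuple $(\hat H_1\cap K^1, H_2, \hat H_2, \hat H_3)$, so that conditioning on $H_1=\Lambda_1$ amounts to conditioning on $Y_2$ lying in a set, which can then be dropped. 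Establishing the hypotheses of that lemma --- in particular that $(H_2,\hat H_2,\hat H_3)$ is a function of $\hat H_1\cap K^1$ and variables independent of $(H_1, M_1, X_1)$, where the non-trivial point is that the stage-1 failure condition is determined by $(H_1, M_1, X_1)$ alone and the later failure conditions by stage-$\ge 2$ data alone --- is where the real work lies, and your sketch does not supply it.

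A smaller but necessary point: you must explicitly define the coupling on the (a.a.s.\ null but non-empty) failure event where the sandwiching is impossible, as the paper does with its ``otherwise'' branches. The coupling has to be well-defined on the whole probability space, and the structure of the fallback (in particular, that the failure condition at each stage depends only on data revealed up to that stage) is exactly what makes the functional-representation argument for~\ref{cond:zak_coupling3_2} go through.
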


The idea for the coupling itself is standard and not dissimilar from \cref{lemma:coupling_stages}, but care must be taken in formally proving \ref{cond:zak_coupling3_2}.
Having proved \cref{lem:coupling_zak}, property \ref{cond:zak_coupling4} allows us to treat $ H_1 \cap K^1 $ completely independently from $ H_2 \cap K^2 $ and $ H_3 \cap K^2 $.
However, it is still the case that \ref{cond:zak2} and \ref{cond:zak3} are statements about independent binomial random graphs, whereas $ H_2 $ and $ H_3 \setminus (H_2 \cup \mybar{H}_2) $ are not independent.
To overcome this difficulty, the second part of the proof of \cref{lem:two_stage_real} is to prove the following lemma.

\begin{lemma} \label{lem:two_stage_indep}
    Let $ H_1 \sim G(n, p_1)$, $H_2 \sim G(n, p_2)$, $H_3 \sim G(n, p_3) $, and $ \mybar{H}_2 \sim G(n, \mybar{p}_2) $ be mutually independent.
    Let $ \mathcal{L}_1 $ be the event that there exists $ i \in [m] $ for which $ E_i \subseteq H_1 $, and in this case let $ i^* \in [m] $ be the minimum such $ i $.
    Let $ \mathcal{L}_2 $ be the event that $ \mathcal{L}_1 $ occurs, and there exists $ j \in [r_{i^*}] $ for which $ E_{i, j} \subseteq H_2 $, and in this case let $ j^* $ be the minimum such $ j $.
    Recall the function $ \delta $ from \ref{cond:zak1}--\ref{cond:zak3}.
    With probability at least $ 1 - 3 \delta $, the event $ \mathcal{L}_2 $ occurs and there exists $ F \in \mathcal{F}_{i^*, j^*} $ for which $ F \setminus E_{i^*, j^*} \subseteq H_3 \setminus (H_2 \cup \mybar{H}_2) $.
\end{lemma}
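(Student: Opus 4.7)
The plan is to bound $\mathbb{P}[(\mathcal{L}_2 \cap \{\exists F \in \mathcal{F}_{i^*, j^*}\colon F \setminus E_{i^*, j^*} \subseteq H_3 \setminus (H_2 \cup \mybar{H}_2)\})^c]$ by $3\delta$ via a careful union bound, with each $\delta$ contribution arising from one of the properties \ref{cond:zak1}--\ref{cond:zak3}.

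First, \ref{cond:zak1} applied directly to $H_1 \sim G(n, p_1)$ gives $\mathbb{P}[\mathcal{L}_1^c] \leq \delta$. For the second contribution, since $i^*$ is a function of $H_1$ only and $H_1$ is independent of $H_2$, conditioning on $i^* = i$ and applying \ref{cond:zak2} to $H_2$ yields $\mathbb{P}[\mathcal{L}_2^c \mid i^* = i] \leq \delta$ for every admissible $i$; summing gives $\mathbb{P}[\mathcal{L}_1 \cap \mathcal{L}_2^c] \leq \delta$, and together with the previous step $\mathbb{P}[\mathcal{L}_2^c] \leq 2\delta$.

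Write $\mathcal{D}(i, j) \coloneqq \{\exists F \in \mathcal{F}_{i, j}\colon F \setminus E_{i, j} \subseteq H_3 \setminus (H_2 \cup \mybar{H}_2)\}$, so the remaining task is $\mathbb{P}[\mathcal{L}_2 \cap \mathcal{D}(i^*, j^*)^c] \leq \delta$. Summing over the possible values of $(i^*, j^*) = (i, j)$ and using that $i^*$ is determined by $H_1$ (independent of the other three graphs), this reduces to proving $\mathbb{P}[\{j^*_i = j\} \cap \mathcal{D}(i, j)^c] \leq \delta \cdot \mathbb{P}[j^*_i = j]$ for every $i, j$, where $j^*_i \coloneqq \min\{j'\colon E_{i, j'} \subseteq H_2\}$. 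Setting $\mathcal{Y} \coloneqq \{E_{i, j} \subseteq H_2\}$ and $\mathcal{X} \coloneqq \{\forall j' < j\colon E_{i, j'} \not\subseteq H_2\}$, one has $\{j^*_i = j\} = \mathcal{X} \cap \mathcal{Y}$.

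The hard part will be to control the conditional distribution given $j^*_i = j$, since the minimality event $\mathcal{X}$ biases $H_2$ in a non-trivial way, and $\mathcal{D}(i, j)$ is not monotone in $(H_2, \mybar{H}_2, H_3)$ in a uniform direction. My plan is to handle $\mathcal{Y}$ directly by conditioning: this fixes the edges of $E_{i, j}$ to be present in $H_2$ and leaves all other edges of $(H_2, \mybar{H}_2, H_3)$ independent Bernoullis. Because $F \setminus E_{i, j}$ is edge-disjoint from $E_{i, j}$, the event $\mathcal{D}(i, j)$ depends only on edges outside $E_{i, j}$, so it is independent of $\mathcal{Y}$, and therefore $\mathbb{P}[\mathcal{D}(i, j)^c \mid \mathcal{Y}] = \mathbb{P}[\mathcal{D}(i, j)^c] \leq \delta$ by \ref{cond:zak3}. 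To incorporate $\mathcal{X}$, I would further fix any realisations of $\mybar{H}_2$ and $H_3$: conditional on $\mathcal{Y}$ and these realisations, $\mathcal{X}$ is a decreasing event in the remaining $H_2$-edges, while $\mathcal{D}(i, j)^c$ is an increasing event in them. The FKG inequality (\cref{lem:fkg_inequality}) then gives negative correlation, $\mathbb{P}[\mathcal{X} \cap \mathcal{D}(i, j)^c \mid \mathcal{Y}, \mybar{H}_2, H_3] \leq \mathbb{P}[\mathcal{X} \mid \mathcal{Y}]\,\mathbb{P}[\mathcal{D}(i, j)^c \mid \mathcal{Y}, \mybar{H}_2, H_3]$; averaging over $\mybar{H}_2, H_3$ yields $\mathbb{P}[\mathcal{D}(i, j)^c \mid \mathcal{X} \cap \mathcal{Y}] \leq \mathbb{P}[\mathcal{D}(i, j)^c \mid \mathcal{Y}] \leq \delta$. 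Summing over $(i, j)$ delivers the third $\delta$-contribution, and the total union bound gives $3\delta$.
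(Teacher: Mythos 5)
Your proposal is correct and uses the same essential ingredients as the paper's proof — isolating $\mathcal{Y}=\mathcal{C}_{i,j}$ via edge-disjointness and then applying the FKG inequality to compare the minimality event with $\mathcal{D}(i,j)$ — with only cosmetic differences in presentation: you work with failure probabilities and a union bound, whereas the paper multiplies conditional success probabilities to reach $(1-\delta)^3$; you take complements so that FKG appears as a negative-correlation statement between the decreasing event $\mathcal{X}$ and the increasing event $\mathcal{D}(i,j)^c$, whereas the paper phrases it as positive correlation between the two decreasing events $\mathcal{B}_{i,j}$ and $\mathcal{M}_{i,j}$; and you apply FKG pointwise after fixing $(\mybar{H}_2,H_3)$ and then average, whereas the paper folds the average over $(\mybar{H}_2,H_3)$ into the function $f(G)=-\mathbb{P}[\mathcal{B}_{i,j}\mid H_2=G]$ before applying FKG.

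One step worth making explicit in a write-up: when you say ``conditional on $\mathcal{Y}$ and these realisations, $\mathcal{X}$ is a decreasing event in the remaining $H_2$-edges'', you are implicitly using that, on $\mathcal{Y}$, the event $\{E_{i,j'}\not\subseteq H_2\}$ coincides with $\{E_{i,j'}\setminus E_{i,j}\not\subseteq H_2\}$, which is a decreasing event depending only on edges outside $E_{i,j}$; this is precisely the role played by the auxiliary event $\mathcal{M}_{i,j}$ in the paper, and it is what guarantees both that the conditional measure on the relevant coordinates is a genuine product measure (so FKG applies) and that $\mathcal{X}$ is indeed measurable with respect to those coordinates. With that observation spelled out, the argument is complete.
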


This roughly says that, even when we condition upon having found the correct structure in both of the binomial random graphs coupled to the first and second stages, which may in theory restrict the edges available in the third stage, we are still guaranteed to a.a.s.\ find the desired structure in the graph $ H_3 \setminus (H_2 \cup \mybar{H}_2) $.
Note that this is stronger than the properties \ref{cond:zak1}, \ref{cond:zak2} and \ref{cond:zak3} alone.

It is now intuitive that \cref{lem:two_stage_real} should follow from \cref{lem:coupling_zak,lem:two_stage_indep}; the formal arguments are given in the following proof.

\begin{proof}[Proof of \cref{lem:two_stage_real}]
    We start by working in the coupling of random graphs $((H_i)_{i\in[3]},(\hat{H}_i)_{i\in[3]},\mybar{H}_2)$ provided by \cref{lem:coupling_zak}, with disjoint edge sets as defined in \eqref{eqn:zak6}, and write $ H_2' \coloneqq H_2 \cup \mybar{H}_2 $.
    By \cref{lem:coupling_zak}~\ref{cond:zak_coupling2}, it suffices for us to derive the desired conclusion for $(\hat{H}_i)_{i\in[3]}$.
    By choosing $ \delta=\delta(n) $ suitably, we may ensure that all containments in \cref{lem:coupling_zak}~\ref{cond:zak_coupling4} hold with probability at least $ 1 - \delta $.
    Define a function $ f'\colon \mathcal{G} \to [m] $ by taking $ f'(J_1) $ to be the minimal $ i \in [m] $ for which $ E_i \subseteq J_1 $ (and $ f'(J_1) \coloneqq 1 $ if no such~$ i $ exists).
    Likewise, define a function $ g'\colon \mathcal{G} \times [m] \to \mathbb{N} $ by taking $ g'(J_2, i) $ to be the minimal $ j \in [r_i] $ such that $ E_{i, j} \subseteq J_2 $ (and, again, $ g'(J_2, i) \coloneqq 1 $ if no such $ j $ exists).

    For each $ i \in [m] $ and each $ j \in [r_i] $, let $ \mathcal{A}_i $ be the event that $ E_i \subseteq \hat{H}_1 $, let $ \mathcal{B}_{i, j} $ be the event that $ E_{i, j} \subseteq \hat{H}_2 $, and let $ \mathcal{C}_{i, j} $ be the event that there exists $ F \in \mathcal{F}_{i, j} $ such that $ F \setminus E_{i, j} \subseteq \hat{H}_3 $.
    Write
    \[ \mathcal{D}_{i, j} \coloneqq \mathcal{A}_i \cap \mathcal{B}_{i, j} \cap \mathcal{C}_{i, j}. \]
    Similarly, let $ \mathcal{A}_i' $ be the event that $ E_i \subseteq H_1 $, let $ \mathcal{B}_{i, j}' $ be the event that $ E_{i, j} \subseteq H_2 $, and let $ \mathcal{C}_{i, j}' $ be the event that there exists $ F \in \mathcal{F}_{i, j} $ such that $ F \setminus E_{i, j} \subseteq H_3 \setminus H_2' $.
    \Cref{lem:two_stage_indep} says exactly that
    \[ \mathbb{P}\left[\mathcal{A}_{f'(H_1)}' \cap \mathcal{B}_{f'(H_1), g'(H_2, f'(H_1))}' \cap \mathcal{C}_{f'(H_1), g'(H_2, f'(H_1))}'\right] \ge 1 - 3 \delta. \]
    By \cref{lem:coupling_zak}~\ref{cond:zak_coupling4}, this means that
    \begin{equation} \label{eqn:zak9}
        \mathbb{P}\left[\mathcal{D}_{f'(H_1), g'(H_2, f'(H_1))}\right] \ge 1 - 4 \delta.
    \end{equation}

    Given two graphs $ J, J' $ on vertex set $ [n] $, we write that $ J =_1 J' $ to mean that $ J \cap K^1 = J' \cap K^1 $.
    Note that $=_1$ defines an equivalence relation on $\mathcal{G}_1$, and let us denote by $\mathcal{G}_1/K^1$ a set of graphs containing one representative from each of the equivalence classes of this relation.
    
    Now define a function $ f\colon \mathcal{G}_1 \to [m] $ by taking $ f(J_1) $ to be any value of $ i \in [m] $ maximising the expression
    \[ \mathbb{P}\left[\mathcal{D}_{i, g'(H_2, i)} \ \middle|\ \hat{H}_1 =_1 J_1\right] \]
    (noting that this probability may be zero, for instance in the case that $ E_i \not \subseteq J_1 $ for every $ i \in [m] $).
    Similarly, define a function $ g\colon \mathcal{H} \to \mathbb{N} $ by taking $ g(J_1, J_2) $ to be any value of $ j \in [r_{f(J_1)}] $ maximising the expression
    \[ \mathbb{P}\left[\mathcal{D}_{f(J_1), j} \ \middle|\ \hat{H}_1 =_1 J_1, \hat{H}_2 = J_2\right] \]
    (noting that, again, this probability may be zero, such as in the case that $ E_{f(J_1), j} \not \subseteq J_2 $ for every $ j \in [r_{f(J_1)}] $).
    
    Given $ i \in [m] $ and $ J_1 \in \mathcal{G}_1 $, conditional upon $ \hat{H}_1 =_1 J_1 $, observe that $ g'(H_2, i) $ is defined only in terms of $ H_2 $, the occurrence of~$ \mathcal{A}_i $ is determined by $ J_1\cap K^1 $, and the occurrence of the events $ \mathcal{B}_{i, g'(H_2, i)} $ and $ \mathcal{C}_{i, g'(H_2, i)} $ is defined only in terms of $H_2$, $ \hat{H_2} $ and $ \hat{H}_3 $, whereas $ f'(H_1) $ is defined only in terms of~$ H_1 $.
    Therefore, by \eqref{cond:zak_coupling3_2.1} in \cref{lem:coupling_zak}~\ref{cond:zak_coupling3_2}, we see that\COMMENT{In case it helps, here is an expansion:
    \begin{align*}
        \mathbb{P}[\mathcal{D}_{i, g'(H_2, i)} \mid \hat{H}_1 =_1 G_1, f'(H_1) = i]
            &= \sum_{\substack{J_1 \in \mathcal{G}\\f'(J_1)=i}} \mathbb{P}[\mathcal{D}_{i, g'(H_2, i)} \mid \hat{H}_1 =_1 G_1, f'(H_1) = i, H_1 = J_1] \mathbb{P}[H_1 = J_1 \mid \hat{H}_1 =_1 G_1, f'(H_1) = i]\\
            &= \sum_{\substack{J_1 \in \mathcal{G}\\f'(J_1)=i}} \mathbb{P}[\mathcal{D}_{i, g'(H_2, i)} \mid \hat{H}_1 =_1 G_1, H_1 = J_1] \mathbb{P}[H_1 = J_1 \mid \hat{H}_1 =_1 G_1, f'(H_1) = i]\\
            &= \sum_{\substack{J_1 \in \mathcal{G}\\f'(J_1)=i}} \mathbb{P}[\mathcal{D}_{i, g'(H_2, i)} \mid \hat{H}_1 =_1 G_1] \mathbb{P}[H_1 = J_1 \mid \hat{H}_1 =_1 G_1, f'(H_1) = i]\\
            &= \mathbb{P}[\mathcal{D}_{i, g'(H_2, i)} \mid \hat{H}_1 =_1 G_1] \sum_{\substack{J_1 \in \mathcal{G}\\f'(J_1)=i}} \mathbb{P}[H_1 = J_1 \mid \hat{H}_1 =_1 G_1, f'(H_1) = i]\\
            &= \mathbb{P}[\mathcal{D}_{i, g'(H_2, i)} \mid \hat{H}_1 =_1 G_1].
    \end{align*}}
    \begin{equation} \label{eqn:zak7}
        \mathbb{P}\left[\mathcal{D}_{i, g'(H_2, i)} \ \middle|\  \hat{H}_1 =_1 J_1, f'(H_1) = i\right] = \mathbb{P}\left[\mathcal{D}_{i, g'(H_2, i)} \ \middle|\  \hat{H}_1 =_1 J_1\right].
    \end{equation}
    Using \eqref{eqn:zak7} and the definition of $ f $, we may compute\COMMENT{For completeness:
    \begin{align*}
        &\,\mathbb{P}\left[\mathcal{D}_{f'(H_1), g'(H_2, f'(H_1))}\right]\nonumber\\
            =&\, \sum_{J_1 \in \mathcal{G}_1/K^1} \sum_{i \in [m]} \mathbb{P}\left[\mathcal{D}_{i, g'(H_2, i)} \ \middle|\ \hat{H}_1 =_1 J_1, f'(H_1) = i\right] \mathbb{P}\left[f'(H_1) = i \ \middle|\ \hat{H}_1 =_1 J_1\right] \mathbb{P}\left[\hat{H}_1 =_1 J_1\right]\nonumber\\
            =&\, \sum_{J_1 \in \mathcal{G}_1/K^1} \sum_{i \in [m]} \mathbb{P}\left[\mathcal{D}_{i, g'(H_2, i)} \ \middle|\ \hat{H}_1 =_1 J_1\right] \mathbb{P}\left[f'(H_1) = i \ \middle|\ \hat{H}_1 =_1 J_1\right] \mathbb{P}\left[\hat{H}_1 =_1 J_1\right]\nonumber\\
            \le&\, \sum_{J_1 \in \mathcal{G}_1/K^1} \sum_{i \in [m]} \mathbb{P}\left[\mathcal{D}_{f(J_1), g'(H_2, f(J_1))} \ \middle|\ \hat{H}_1 =_1 J_1\right] \mathbb{P}\left[f'(H_1) = i \ \middle|\ \hat{H}_1 =_1 J_1\right] \mathbb{P}\left[\hat{H}_1 =_1 J_1\right]\nonumber\\
            =&\, \sum_{J_1 \in \mathcal{G}_1/K^1} \mathbb{P}\left[\mathcal{D}_{f(J_1), g'(H_2, f(J_1))} \ \middle|\ \hat{H}_1 =_1 J_1\right] \mathbb{P}\left[\hat{H}_1 =_1 J_1\right]\nonumber\\
            =&\, \mathbb{P}\left[\mathcal{D}_{f(\hat{H}_1), g'(H_2, f(\hat{H}_1))}\right].
    \end{align*}}
    \begin{align} \label{eqn:zak75}
        &\,\mathbb{P}\left[\mathcal{D}_{f'(H_1), g'(H_2, f'(H_1))}\right]\nonumber\\
            =&\, \sum_{J_1 \in \mathcal{G}_1/K^1} \sum_{i \in [m]} \mathbb{P}\left[\mathcal{D}_{i, g'(H_2, i)} \ \middle|\ \hat{H}_1 =_1 J_1, f'(H_1) = i\right] \mathbb{P}\left[f'(H_1) = i \ \middle|\ \hat{H}_1 =_1 J_1\right] \mathbb{P}\left[\hat{H}_1 =_1 J_1\right]\nonumber\\
            =&\, \sum_{J_1 \in \mathcal{G}_1/K^1} \sum_{i \in [m]} \mathbb{P}\left[\mathcal{D}_{i, g'(H_2, i)} \ \middle|\ \hat{H}_1 =_1 J_1\right] \mathbb{P}\left[f'(H_1) = i \ \middle|\ \hat{H}_1 =_1 J_1\right] \mathbb{P}\left[\hat{H}_1 =_1 J_1\right]\nonumber\\
            \le&\, \sum_{J_1 \in \mathcal{G}_1/K^1} \mathbb{P}\left[\mathcal{D}_{f(J_1), g'(H_2, f(J_1))} \ \middle|\ \hat{H}_1 =_1 J_1\right] \mathbb{P}\left[\hat{H}_1 =_1 J_1\right]\nonumber\\
            =&\, \mathbb{P}\left[\mathcal{D}_{f(\hat{H}_1), g'(H_2, f(\hat{H}_1))}\right].
    \end{align}

    Using instead the definition of $ g $ and \eqref{cond:zak_coupling3_2.2} in \cref{lem:coupling_zak}~\ref{cond:zak_coupling3_2}, by the same argument,\COMMENT{Here an expansion:
    \begin{align*}
        &\mathbb{P}[\mathcal{D}_{f(\hat{H}_1), g'(H_2, f(\hat{H}_1))}]\\
            &= \sum_{(J_1, J_2) \in \mathcal{H}} \sum_{j \in [r_{f(J_1)}]} \mathbb{P}[\mathcal{D}_{f(J_1), j} \mid \hat{H}_1 =_1 J_1, \hat{H}_2 = J_2, g'(H_2, f(J_1)) = j] \mathbb{P}[g'(H_2, f(J_1)) = j \mid \hat{H}_1 =_1 J_1, \hat{H}_2 = J_2] \mathbb{P}[\hat{H}_1 =_1 J_1, \hat{H}_2 = J_2]\\
            &= \sum_{(J_1, J_2) \in \mathcal{H}} \sum_{j \in [r_{f(J_1)}]} \mathbb{P}[\mathcal{D}_{f(J_1), j} \mid \hat{H}_1 =_1 J_1, \hat{H}_2 = J_2] \mathbb{P}[g'(H_2, f(J_1)) = j \mid \hat{H}_1 =_1 J_1, \hat{H}_2 = J_2] \mathbb{P}[\hat{H}_1 =_1 J_1, \hat{H}_2 = J_2]\\
            &\le \sum_{(J_1, J_2) \in \mathcal{H}} \sum_{j \in [r_{f(J_1)}]} \mathbb{P}[\mathcal{D}_{f(J_1), g(J_1, J_2)} \mid \hat{H}_1 =_1 J_1, \hat{H}_2 = J_2] \mathbb{P}[g'(H_2, f(J_1)) = j \mid \hat{H}_1 =_1 J_1, \hat{H}_2 = J_2] \mathbb{P}[\hat{H}_1 =_1 J_1, \hat{H}_2 = J_2]\\
            &= \sum_{(J_1, J_2) \in \mathcal{H}} \mathbb{P}[\mathcal{D}_{f(J_1), g(J_1, J_2)} \mid \hat{H}_1 =_1 J_1, \hat{H}_2 = J_2] \sum_{j \in [r_{f(J_1)}]} \mathbb{P}[g'(H_2, f(J_1)) = j \mid \hat{H}_1 =_1 J_1, \hat{H}_2 = J_2] \mathbb{P}[\hat{H}_1 =_1 J_1, \hat{H}_2 = J_2]\\
            &= \sum_{(J_1, J_2) \in \mathcal{H}} \mathbb{P}[\mathcal{D}_{f(J_1), g(J_1, J_2)} \mid \hat{H}_1 =_1 J_1, \hat{H}_2 = J_2] \mathbb{P}[\hat{H}_1 =_1 J_1, \hat{H}_2 = J_2]\\
            &= \mathbb{P}[\mathcal{D}_{f(\hat{H}_1), g(\hat{H}_1, \hat{H}_2)}].
    \end{align*}} it is also easy to check that
    \begin{equation} \label{eqn:zak8}
        \mathbb{P}\left[\mathcal{D}_{f(\hat{H}_1), g'(H_2, f(\hat{H}_1))}\right] \le \mathbb{P}\left[\mathcal{D}_{f(\hat{H}_1), g(\hat{H}_1, \hat{H}_2)}\right].
    \end{equation}

    Combining \eqref{eqn:zak75}, \eqref{eqn:zak8}, and \eqref{eqn:zak9}, we conclude that
    \[ \mathbb{P}\left[\mathcal{D}_{f(\hat{H}_1), g(\hat{H}_1, \hat{H}_2)}\right] \ge \mathbb{P}\left[\mathcal{D}_{f'(H_1), g'(H_2, f'(H_1))}\right] = 1 - o(1), \]
    as required.
\end{proof}

It remains to prove \cref{lem:coupling_zak,lem:two_stage_indep}. 
To show that \cref{lem:coupling_zak} holds, we make use of the following version of the functional representation lemma\COMMENT{At a quick glance, it may seem that both statements are the same. There is a subtle difference, however, in what is really proved.
We show that, given two random variables $X,Y\colon\Omega\to\mathbb{R}$ (say), there exist a third random variable $Z\colon\Omega'\to\mathbb{R}$ and a function $f\colon\mathbb{R}^2\to\mathbb{R}$ such that $X=f(Y,Z)$.
What is shown in \cite{GK11} is that there exist a random variable $Z\colon\Omega'\to\mathbb{R}$ and a function $f\colon\mathbb{R}^2\to\mathbb{R}$ such that $(X,Y)$ and $(f(Y,Z),Y)$ have the same distribution.} which has found multiple applications in information theory (see, e.g.,~\cite[p.~626]{GK11}).
For completeness, we include a short proof in \cref{appen:multi_stage}.

\begin{lemma}\label{claim:independence1}
    Let $ A $ and $ B $ be discrete random variables.
    Then there exist a random variable~$ C $, which is independent of~$ B $, and a function $f$ such that $ A = f( B, C ) $.
\end{lemma}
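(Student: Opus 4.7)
The plan is the classical inverse-CDF coupling. Since $A$ is discrete, I would fix an enumeration $a_1, a_2, \ldots$ of its support and, for each $b$ in the support of $B$, write $p_b(a_k) \coloneqq \mathbb{P}[A = a_k \mid B = b]$ and $F_b(a_k) \coloneqq \sum_{i \leq k} p_b(a_i)$. The intervals $I_b(a_k) \coloneqq [F_b(a_{k-1}), F_b(a_k))$ then partition $[0, 1)$ into pieces of length $p_b(a_k)$.

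Next, I would enlarge the probability space (if needed) to carry a random variable $U$ uniform on $[0, 1)$ and independent of the pair $(A, B)$, and define
\[ C \coloneqq F_B(a_{K-1}) + U \cdot p_B(a_K), \qquad \text{where $K$ is the index such that $A = a_K$.} \]
Conditional on $B = b$ and $A = a_k$, the random variable $C$ is uniform on $I_b(a_k)$; summing over $k$, the conditional distribution of $C$ given $B = b$ is therefore uniform on $[0, 1)$, since the intervals $\{I_b(a_k)\}_k$ partition $[0, 1)$ with weights $p_b(a_k)$. Because this conditional distribution does not depend on $b$, the random variable $C$ is unconditionally uniform on $[0, 1)$ and independent of $B$.

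Finally, I would define $f(b, c)$ to be the unique $a_k$ with $c \in I_b(a_k)$ (with arbitrary choices when $b$ lies outside the support of $B$ or on the null set where $c$ is not in any such interval). By construction, $A = f(B, C)$ holds almost surely, and, after modification on a null set if required, everywhere.

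The only real subtlety is the bookkeeping needed to enlarge the original probability space so that an independent uniform $U$ is available; this is a standard device for producing additional independent randomness, and there is no deeper combinatorial or analytic obstacle. If one wished to avoid continuous randomness entirely, a purely discrete analogue is possible by letting $C$ record both the conditional rank of $A$ and enough additional tie-breaking randomness to make it independent of $B$, but the continuous version above is the cleanest.
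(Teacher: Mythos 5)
Your proof is correct and follows essentially the same inverse-CDF coupling used in the paper: you place a uniform variable $C$ inside the interval of length $\mathbb{P}[A=a_K\mid B]$ located at the conditional CDF of $A$ given $B$, observe that this makes $C$ uniform (hence independent of $B$), and recover $A$ via the quantile function. The only cosmetic difference is that the paper samples $C$ directly as $\mathcal{U}((F_B(A^-),F_B(A)])$ rather than via an auxiliary uniform $U$, but these constructions are equivalent.
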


We also need the following slightly more complex statement, which gives a similar result to \cref{claim:independence1} in a case where several variables are involved.
Again, a proof can be found in \cref{appen:multi_stage}.

\begin{lemma}\label{claim:independence2}
    Let $ A, B, C $ be mutually independent discrete random variables, let $ F = f(A, B) $ and $ G = g(F, C) $, for some functions $ f, g $.
    Then there exist a random variable $ D $ which is independent of~$ (F, G) $ and a function $ a $ such that $ A = a(D, F) $.
\end{lemma}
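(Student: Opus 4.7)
The plan is to apply Lemma~\ref{claim:independence1} to the pair $A$ and $F$ to obtain a candidate $D$ satisfying $A = a(D, F)$ and $D$ independent of $F$, and then to strengthen this to $D$ independent of $(F, G)$ using the hypothesis that $C$ is independent of $(A, B)$ together with the fact that $G = g(F, C)$ is a deterministic function of $(F, C)$. The key requirement is that $D$ be constructed ``without looking at $C$''.

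To achieve this, I would work on a probability space containing $(A, B, C)$ and enlarge it by an auxiliary random variable $U$ (say, uniform on $[0, 1]$) independent of $(A, B, C)$. Applying Lemma~\ref{claim:independence1} to $A$ and $F = f(A, B)$ on this extended space yields a function $a$ and a random variable $D$, which may be taken as a function of $(A, F, U)$ and hence of $(A, B, U)$, such that $D$ is independent of $F$ and $A = a(D, F)$. Since $C$ is independent of $(A, B)$ by hypothesis and $U$ is independent of $(A, B, C)$ by construction, $C$ is independent of $(A, B, U)$, and therefore $C$ is independent of the pair $(D, F)$, both of which are functions of $(A, B, U)$.

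Combining $D \perp F$ with $(D, F) \perp C$, a short calculation gives
\[ \Pr[D = d, F = y, C = c] = \Pr[D = d] \Pr[F = y] \Pr[C = c] \]
for all values $d, y, c$, so that $D, F, C$ are mutually independent. In particular, $D$ is independent of $(F, C)$, and since $G$ is a function of $(F, C)$, the variable $D$ is independent of $(F, G)$, as required. The main obstacle is the first step: ensuring that Lemma~\ref{claim:independence1} can be invoked so that $D$ depends only on $(A, B)$ and the auxiliary variable $U$, with no hidden dependence on $C$. This reduces to verifying that the construction of $D$ underlying Lemma~\ref{claim:independence1} (a standard inverse-CDF-style randomisation) can be carried out using $U$ alone on the enlarged probability space, which is routine once the space is set up carefully.
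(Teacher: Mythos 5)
Your proof is correct, and it takes a genuinely different route from the paper's. The paper does not invoke \cref{claim:independence1} at all: it first proves the conditional independence $\mathbb{P}[A=\lambda \mid F=x, G=y] = \mathbb{P}[A=\lambda \mid F=x]$ by a direct computation (using $G = g(F,C)$ and the independence hypotheses), then re-runs the inverse-CDF construction from scratch — sampling $D \sim \mathcal{U}\bigl((H_F(A^-), H_F(A)]\bigr)$ where $H_x$ is the conditional CDF of $A$ given $F=x$ — and finally verifies $D \perp (F,G)$ by showing $\mathbb{P}[D \le z \mid F=x, G=y] = z$, feeding in the conditional-independence identity. Your argument instead treats \cref{claim:independence1} as a black box applied to the pair $(A, F)$, observes that the resulting $D$ is measurable with respect to $(A, B, U)$ for an auxiliary uniform $U$, and deduces $C \perp (D,F)$ from $C \perp (A,B,U)$; combined with $D \perp F$, this gives mutual independence of $D, F, C$, and hence $D \perp (F,G)$ since $G$ is a function of $(F,C)$. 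This completely sidesteps any explicit conditional-probability computation. The trade-off is that you need the (true but unstated) strengthening of \cref{claim:independence1} that $D$ can be realised as a deterministic function of $(A, B)$ and one fresh uniform $U$; you correctly flag this and it does hold by inspecting the construction in the paper's proof of \cref{claim:independence1}. Your approach is more modular and conceptually cleaner; the paper's is self-contained and avoids appealing to the internal form of \cref{claim:independence1}'s proof.
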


With these tools at hand, we can now prove \cref{lem:coupling_zak}.

\begin{proof}[Proof of \cref{lem:coupling_zak}]
    By restricting to a subset, it suffices to consider the case that $ K^2 = K_n \setminus K^1 $.
    Fix probabilities $p_i=(1 - o(1)) t_i / M$, $i \in [3] $, and $\mybar{p}_2=o(t_2 / M)$, to be chosen later.
    Write $ N \coloneqq 2^M $.
    Let~$ (\hat{G}_i)_{i \in [3]} $ be a random variable distributed according to the random graph process, split into three stages according to $ (t_i)_{i \in [3]} $.
    We may then define the following mutually independent (discrete) random variables, which we call \defi{base variables}, where we treat $ (M_i)_{i\in[3]} $ as a single variable:
    \begin{enumerate}[label={\ensuremath{(\mathrm{V}\arabic*)}}]
        \item\label{basevariables1} $ (M_i)_{i \in [3]} \coloneqq (e(\hat{G}_i \cap K^1))_{i \in [3]} $;
        \item\label{basevariables2} $ H_i \sim G(n, p_i), i \in [3] $ and $ \mybar{H}_2 \sim G(n, \mybar{p}_2) $;
        \item\label{basevariables3} $ X_i $ sampled uniformly at random from $ [N!] $ for each $i \in [4] $.
    \end{enumerate}
    It is thus clear by definition from \ref{basevariables2} that \ref{cond:zak_coupling1} and \ref{cond:zak_coupling3} hold.

    We may define other random variables as functions of the base random variables, using \ref{basevariables3} as follows.
    Given $ r \in [N] $ and $ x \in [N!] $, partition the interval $ [N!] $ into $ r $ intervals $ I_1, \ldots, I_r $, each of size $ N!/r \in \mathbb{N} $, and write $ u_r(x) $ for the unique $ j\in[r] $ such that $ x \in I_j $; in particular, $ u_r(X_i) $ is uniformly distributed on $ [r] $ for each $i\in[4]$.
    Now, given a set $ Y $ with $|Y|=r$, label its elements arbitrarily as $ Y = \{ y_1, \ldots, y_r \} $ and write $ u_Y(x) \coloneqq y_{u_r(x)} $, so $ u_Y(X_i) $ is uniformly distributed on $ Y $.
    In particular, given any non-empty set $ Y $ with size at most $ N $, which in particular includes any set of graphs on $ n $ vertices, we may choose one of its elements uniformly at random entirely as a function of one of our independent random variables~$ X_i $; we refer to this as choosing a uniformly random element of $ Y $ \defi{according to $ X_i $}.

    Consider the following conditions:
    \begin{enumerate}[label={\ensuremath{(\mathrm{C}\arabic*)}}]
        \item\label{couplingcond1} $ e(H_1 \cap K^1) \le M_1 $;
        \item\label{couplingcond2} $ e(H_2 \cap K^2) \le t_2 - M_2 \le e((H_2 \cup \mybar{H}_2) \cap K^2) $;
        \item\label{couplingcond3} $ e(H_3 \cap K^2) \le t_3 - M_3 $.
    \end{enumerate}
    Using Chernoff's bound (\cref{lem:Chernoff}) and its version for hypergeometric random variables (see, e.g.,~\cite[Theorem~2.10]{JLR}), it is not hard to verify that, for suitably chosen $p_i=(1 - o(1)) t_i / M$ and $\mybar{p}_2=o(t_2 / M)$, a.a.s.\ \ref{couplingcond1}--\ref{couplingcond3} hold.\COMMENT{We will use both Chernoff's bound (\cref{lem:Chernoff}) and its version for hypergeometric random variables (see, e.g.,~\cite[Theorem~2.10]{JLR}).
    For Chernoff, for simplicity, consider here the following way of writing \cref{lem:Chernoff}:
    \begin{lemma}\label{lem:Chernoff_bis}
        Let $X$ be a sum of mutually independent Bernoulli random variables.
        Then, for all $0<t<\mathbb{E}[X]$,
        \[\mathbb{P}[|X-\mathbb{E}[X]|\geq t]\leq 2\nume^{-t^2/(3\mathbb{E}[X])}.\]
    \end{lemma}
    Recall that, for $m, n, N \in \mathbb{N}$ with $m, n < N$, a random variable $X$ is said to follow the hypergeometric distribution with parameters $N$, $n$ and $m$ if it can be defined as $X \coloneqq |S \cap [m]|$, where $S$ is a uniformly chosen random subset of $[N]$ of size~$n$.
    \begin{lemma}\label{lem:Chernoff_hyper}
        Suppose $X$ has a hypergeometric distribution with parameters $N$, $n$ and $m$.
        Then, for all $0<t<\mathbb{E}[X]$,
        \[\mathbb{P}[|X-\mathbb{E}[X]|\geq t]\leq 2\nume^{-t^2/(3\mathbb{E}[X])}.\]
    \end{lemma}
    Suppose first that $e(K^1)\leq M/2$.
    We begin by applying \cref{lem:Chernoff_hyper} to obtain concentration on the $M_i$.
    Note that, for each $i\in[3]$, $M_i$ follows a hypergeometric distribution with parameters $M$, $t_i$ and $e(K^1)$ (in particular, $\mathbb{E}[M_i]=t_ie(K^1)/M=\omega(1)$).
    (Note that the $M_i$ are coupled with each other, but each of them has a hypergeometric distribution as its marginal distribution; we will conclude that all of them are concentrated by a union bound.)
    Thus, by \cref{lem:Chernoff_hyper}, for some $ d_i= d_i(n)=o(t_ie(K^1)/M)$ such that $ d_i=\omega((t_ie(K^1)/M)^{1/2})$, we have that
    \[\mathbb{P}[M_i\neq t_ie(K^1)/M\pm d_i]\leq2\nume^{-d_i^2M/3t_ie(K^1)}=o(1),\]
    so a.a.s.\ all three are in these respective ranges.\\
    For each $i\in[3]$, say that $p_i=t_i/M-\lambda_i$, where $\lambda_i=\lambda_i(n)=o(p_i)$ (note, in particular, that $\lambda_iM=o(t_i)$).
    Then $X_1\coloneqq e(H_1\cap K^1)\sim\mathrm{Bin}(e(K^1),p_1)$.
    If $\lambda_1$ is chosen such that $\mathbb{E}[X_1]=t_1e(K^1)/M-2d_1$, by \cref{lem:Chernoff_bis} we have that
    \[\mathbb{P}[X_1>t_1e(K^1)/M-d_1]\leq2\nume^{-d_1^2/3\mathbb{E}[X_1]}=\nume^{-\Theta(d_1^2M/t_1e(K^1))}=o(1).\]
    In particular, \ref{couplingcond1} holds a.a.s.\\
    Similarly, for $X_2\coloneqq e(H_2\cap K^2)$, by \cref{lem:Chernoff_bis} we have that
    \[\mathbb{P}[X_2>t_2-t_2e(K^1)/M-\lambda_2(M-e(K^1))/2]\leq2\nume^{-\lambda_2^2(M-e(K^1))/12p_2}=\nume^{-\Theta(\lambda_2^2(M-e(K^1))/p_2)}=o(1).\]
    By choosing $\lambda_2$ and $d_2$ in such a way that $\lambda_2(M-e(K^1))\geq 2d_2$, we conclude that a.a.s.\ the first inequality in \ref{couplingcond2} holds.\\
    For \ref{couplingcond3} we proceed similarly.
    For $X_3\coloneqq e(H_3\cap K^2)\sim\mathrm{Bin}(e(K^2),p_3)$, by \cref{lem:Chernoff_bis} we have that
    \[\mathbb{P}[X_3>t_3-t_3e(K^1)/M-\lambda_3(M-e(K^1))/2]\leq2\nume^{-\lambda_3^2(M-e(K^1))/12p_3}=\nume^{-\Theta(\lambda_3^2(M-e(K^1))/p_3)}=o(1).\]
    By choosing $\lambda_3$ and $d_3$ in such a way that $\lambda_3(M-e(K^1))\geq 2d_3$, we conclude that a.a.s.\ \ref{couplingcond3} holds.\\
    Lastly, consider $p_2'=t_2/M+\lambda_2'$, where $\lambda_2'=o(p_2')$ (so $\lambda_2'M=o(t_2)$).
    We think of $p_2'$ as the probability that an edge appears in $H_2\cup\mybar{H}_2$.
    Let $X_2'\coloneqq e((H_2\cup\mybar{H}_2)\cap K^2)\sim\mathrm{Bin}(e(K^2),p_2')$.
    By \cref{lem:Chernoff_bis},
    \[\mathbb{P}[X_2'<t_2-t_2e(K^1)/M+\lambda_2'(M-e(K^1))/2]\leq2\nume^{-(\lambda_2')^2(M-e(K^1))/12p_2'}=\nume^{-\Theta((\lambda_2')^2(M-e(K^1))/p_2')}=o(1).\]
    The second inequality in \ref{couplingcond2} follows by choosing $\lambda_2'$ and $d_2$ in such a way that $\lambda_2'(M-e(K^1))\geq 2d_2$.\\
    Lastly, one needs to consider the case that $e(K^1)>M/2$.
    Here is a sketch (the calculations are analogous).
    In this case, we would define the $d_i$ with respect to the number of edges of $K^2$, and so the bounds for it would depend on $K^2$ instead of $K^1$.
    Similarly, in each subsequent calculation we would consider the concentration around the complementary values.
    All calculations should go through without issue.}

    Next, we sample the following graphs sequentially:
    \begin{enumerate}[label={\ensuremath{(\mathrm{G}\arabic*)}}]
        \item If \ref{couplingcond1} holds, then sample $ \hat{H}_1 \cap K^1 $ uniformly at random among all graphs $ G $ with exactly $ M_1 $ edges such that $ H_1 \cap K^1 \subseteq G \subseteq K^1 $, according to $ X_1 $.
            Otherwise, sample $ \hat{H}_1 \cap K^1 $ uniformly at random among all graphs $ G \subseteq K^1 $ with exactly $ M_1 $ edges, according to $ X_1 $.\label{stepg1}
        \item If \ref{couplingcond1} and \ref{couplingcond2} hold, then sample $ \hat{H}_2 $ uniformly at random among all graphs $ G $ such that $ e(G \cap K^1) = M_2 $, $ e(G) = t_2 $, $ H_2 \cap K^2 \subseteq G \cap K^2 \subseteq (H_2 \cup \mybar{H}_2) \cap K^2 $, and $ e(G \cap \hat{H}_1 \cap K^1) = 0 $, according to $ X_2 $.
            Otherwise, sample $ \hat{H}_2 $ uniformly at random among all graphs $ G $ such that $ e(G \cap K^1) = M_2 $, $ e(G) = t_2 $, and $ e(G \cap \hat{H}_1 \cap K^1) = 0 $, according to $ X_2 $.\label{stepg2}
        \item If \ref{couplingcond1}--\ref{couplingcond3} all hold, then sample $ \hat{H}_3 $ uniformly at random among all graphs $ G $ such that $ e(G \cap K^1) = M_3 $, $ e(G) = t_3 $, $ (H_3 \setminus \hat{H}_2) \cap K^2 \subseteq G \cap K^2 $, and $ e(G \cap ((\hat{H}_1 \cap K^1) \cup \hat{H}_2)) = 0 $, according to $ X_3 $.
            Otherwise, sample $ \hat{H}_3 $ uniformly at random among all graphs $ G $ such that $ e(G \cap K^1) = M_3 $, $ e(G) = t_3 $, and $ e(G \cap ((\hat{H}_1 \cap K^1) \cup \hat{H}_2)) = 0 $, according to $ X_3 $.\label{stepg3}
        \item Sample $ \hat{H}_1 \cap K^2 $ uniformly at random among all subgraphs of $ K^2 \setminus (\hat{H}_2 \cup \hat{H}_3) $ with exactly $ t_1-M_1 $ edges, according to $ X_4 $.\label{stepg4}
    \end{enumerate}
    Note that, using the assumed conditions in the respective cases, all of the sets of graphs from which we sample are non-empty and have size at most $ N $, so this is well defined.
    
    Now observe firstly that, assuming the coupling succeeds, all of the containments of \ref{cond:zak_coupling4} hold by definition.
    In order to see \ref{cond:zak_coupling2}, note that $ e(\hat{H}_i) = t_i $ and $ e(\hat{H}_i \cap K^1) = M_i = e(\hat{G}_i \cap K^1) $ for all $i\in[3]$, in all cases.
    Furthermore, conditional on any particular values of $ (M_i)_{i \in [3]} $, it follows from our definitions above that the distribution of $ (\hat{H}_i)_{i \in [3]} $ is uniform over all triples of edge-disjoint graphs $ (H'_i)_{i \in [3]} $ for which $ e(H'_i) = t_i $ and $ e(H'_i \cap K^1) = M_i $ for each $ i \in [3] $ (since any edge is equally likely to appear in any of the binomial random graphs).\COMMENT{Here's an outline of one way to formalise this; the short/moral version is `all edges are equivalent, so by symmetry'.\\
    First observe that nothing we do here really uses the fact that these objects are graphs; all of statements we make are the same if we have no concept of `vertices' and just imagine our `edges' as being arbitrary elements of a set $ E $ of size $ M $, which is partitioned into two subsets $ E_1, E_2 $ corresponding to $ K^1, K^2 $.
    In particular, we never make any reference to the labelling or ordering of $ E $, except its partition into the sets $ K^1 $ and $ K^2 $; in some sense the only identifying information about a given element is whether it belongs to $ K^1 $.\\
    Let $ \mathcal{H} $ be the set of triples of edge-disjoint graphs $ (H'_i)_{i \in [3]} $ for which $ e(H'_i) = t_i $ and $ e(H'_i \cap K^1) = M_i $ for each $ i \in [3] $.
    Any element of $ \mathcal{H} $ can be written as a permutation $ \sigma $ of any other element of $ \mathcal{H} $, where $ \sigma $ respects the partition $ E_1, E_2 $.
    Thus it suffices to show that $ \mathcal{P}[A] = \mathcal{P}[\sigma(A)] $ for any $ A \in \mathcal{H} $.\\
    Think of everything we define in this lemma as a function/process $ \alpha $ which takes as input two sequences $ X_1, X_2 $ of length $ |E_1|,|E_2| $ respectively (with pairwise distinct elements), and gives as output a distribution over $ \mathcal{H} $, which we want to show is uniform.
    Write $ S(X) $ for the set of elements contained in a sequence $ X $ (so $ X $ is an ordering of $ S(X)$).
    We may regard $ \mathcal{H} $ as triples of disjoint subsets of the set of elements of $ S(X_1) \cup S(X_2) $, with the correct intersections with $ S(X_1), S(X_2) $.
    Our goal is to show $ \mathcal{P}[A] = \mathcal{P}[\sigma(A)] $ for any such triple $ A $ and any permutation $ \sigma $ of $ E $ which respects the partition $ E_1, E_2 $.\\
    Choose $ X_1, X_2 $ to be arbitrary orderings of $ E_1, E_2 $.
    Run $ \alpha $ once with input $ X_1,X_2 $ and obtain probability distribution $ F $.
    Now run $ \alpha $ again with input $ \sigma(X_1),\sigma(X_2) $ and obtain probability distribution $ F' $.
    It is clear by definition that $ F'(\sigma(A)) = F(A) $.
    Now observe that $ \alpha $ is not really a function of $ X_1, X_2 $, but a function of $ S(X_1), S(X_2) $; as mentioned, we do not use the orderings at any point in our definitions.
    Therefore, $ \alpha(X_1, X_2) = \alpha(S(X_1), S(X_2)) = \alpha(S(\sigma(X_1)), S(\sigma(X_2))) = \alpha(\sigma(X_1), \sigma(X_2)) $,
    which says that $ F = F' $, so in particular $ F(A) = F'(A) $.
    Hence we obtain that $ F(\sigma(A)) = F'(\sigma(A)) = F(A) $, as required.}
    It thus only remains to prove that \ref{cond:zak_coupling3_2} holds.
    
    To show this, we use \cref{claim:independence1,claim:independence2}.
    Fix $ X \subseteq \mathcal{G}\times\mathcal{G}\times\mathcal{G} $ and $ \Gamma_1, \Lambda_1 \in \mathcal{G} $ with \mbox{$ e(\Gamma_1) = t_1 $} such that $\mathbb{P}[\hat{H}_1 \cap K^1 = \Gamma_1 \cap K^1, H_1 = \Lambda_1]\neq 0$.
    Since $ (M_i)_{i \in [3]} $ is independent of all other base variables, we may apply \cref{claim:independence1} with $ B = M_1 $ and $ A = (M_2, M_3) $ to obtain a random variable~$ Y_1 $, which is independent from $ M_1 $ and all base variables except $ (M_i)_{i \in [3]} $, such that $ (M_2, M_3) $ is a deterministic function of~$ (M_1, Y_1) $.
    Now, apply \cref{claim:independence2} with $ A = H_1$, $B = (M_1, X_1)$, $F = \hat{H}_1 \cap K^1$, \mbox{$G = (H_2, \hat{H}_2, \hat{H}_3)$}, and $C = (Y_1, H_2, H_3, \mybar{H}_2, X_2, X_3) $, for which \ref{stepg1} gives an $f$ such that $F=f(A,B)$, and \ref{stepg2} and~\ref{stepg3} give a $g$ such that $G=g(F,C)$.\COMMENT{Note that $ F $ is a deterministic function of~$ (A, B) $ in each case depending on whether there is a failure at stage~1, a condition which in itself is a deterministic function of~$ (A, B) $. This is where it is crucial that the failure condition is split into stages rather than global.}
    We thus obtain a random variable $ Y_2 $ and a deterministic function~$ a $ such that $ H_1 = a(\hat{H}_1 \cap K^1, Y_2) $ and $ Y_2 $ is independent of \mbox{$ (\hat{H}_1 \cap K^1, H_2, \hat{H}_2, \hat{H}_3) $}.
    Let~$ \mathcal{A} $ denote the set of all elements~$ x $ in the image of~$ Y_2 $ such that \mbox{$ a(\Gamma_1 \cap K^1, x) = \Lambda_1 $}.
    We may then compute
    \begin{align*}
        &\,\mathbb{P}\left[(H_2, \hat{H}_2, \hat{H}_3) \in X \ \middle|\ \hat{H}_1\cap K^1 = \Gamma_1 \cap K^1, H_1 = \Lambda_1\right]\\
        =&\, \mathbb{P}\left[(H_2, \hat{H}_2, \hat{H}_3) \in X \ \middle|\ \hat{H}_1\cap K^1 = \Gamma_1 \cap K^1, Y_2 \in \mathcal{A}\right]\\
        =&\, \mathbb{P}\left[(H_2, \hat{H}_2, \hat{H}_3) \in X \ \middle|\ \hat{H}_1\cap K^1 = \Gamma_1 \cap K^1\right],
    \end{align*}
    as required for \eqref{cond:zak_coupling3_2.1} in \ref{cond:zak_coupling3_2}.

    Similarly, apply \cref{claim:independence1} with $ B = (M_1, M_2) $ and $ A = M_3 $ to obtain $ Y_3 $, which is independent from $ (M_1, M_2) $ and all base variables except $ (M_i)_{i \in [3]} $, such that $ M_3 $ is a deterministic function of~$ (M_1, M_2, Y_3) $.
    Apply \cref{claim:independence2} with $ A = H_2$, $B = (M_1, M_2, H_1, \mybar{H}_2, X_1, X_2)$, \mbox{$F = (\hat{H}_1 \cap K^1, \hat{H}_2)$}, $G = \hat{H}_3$, and $C = (Y_3, H_3, X_3) $ to obtain a random variable $ Y_4 $ and a deterministic function $ b $ such that $ H_2 = b(\hat{H}_1 \cap K^1, \hat{H}_2, Y_4) $ and $ Y_4 $ is independent of $ (\hat{H}_1 \cap K^1, \hat{H}_2, \hat{H}_3) $.
    We may then verify \eqref{cond:zak_coupling3_2.2} in \ref{cond:zak_coupling3_2} similarly.\COMMENT{Fix an arbitrary $Y\subseteq\mathcal{G}$ and $ \Gamma_1,\Gamma_2,\Lambda_2 \in \mathcal{G} $ such that $ e(\Gamma_1) = t_1 $, $ e(\Gamma_2) = t_2 $, and $\mathbb{P}[\hat{H}_1 \cap K^1 = \Gamma_1 \cap K^1, \hat{H}_2 = \Gamma_2, H_2 = \Lambda_2]\neq 0$.
    Let $\mathcal{B}$ denote the set of all elements $y$ in the image of $Y_4$ such that $b(\Gamma_1\cap K^1,\Gamma_2,y)=\Lambda_2$.
    Then
    \begin{align*}
        &\,\mathbb{P}\left[\hat{H}_3\in Y \ \middle|\ \hat{H}_1\cap K^1 = \Gamma_1\cap K^1, \hat{H}_2 = \Gamma_2, H_2 = \Lambda_2\right]\\
        =&\,\mathbb{P}\left[\hat{H}_3\in Y \ \middle|\ \hat{H}_1\cap K^1 = \Gamma_1\cap K^1, \hat{H}_2 = \Gamma_2, Y_4 \in \mathcal{B}\right]\\
        =&\,\mathbb{P}\left[\hat{H}_3\in Y \ \middle|\ \hat{H}_1\cap K^1 = \Gamma_1\cap K^1, \hat{H}_2 = \Gamma_2\right].
    \end{align*}}
\end{proof}

Finally, we can prove \cref{lem:two_stage_indep}, which also completes the proof of \cref{lem:two_stage_real}.

\begin{proof}[Proof of \cref{lem:two_stage_indep}]
    For convenience, for each $i\in[m]$ and $j\in[r_{i^*}]$ set $\mathcal{L}_1^i\coloneqq\mathcal{L}_1\cap\{i^*=i\}$ and $\mathcal{L}_2^j\coloneqq\mathcal{L}_2\cap\{j^*=j\}$.
    By \ref{cond:zak1}, we have that
    \begin{equation}\label{equa:zak0}
        \mathbb{P}[\mathcal{L}_1]\geq1-\delta.
    \end{equation}
    Note that, since the different random graphs we consider are chosen independently, conditioning upon $ \mathcal{L}_1 $ and on the outcome of $i^*$ does not affect the probability of any events defined only over the choice of $H_2$, $H_3$ and $\mybar{H}_2$.
    
    Let $ H_2' \coloneqq H_2 \cup \mybar{H}_2 $.
    Now, for each $i\in[m]$ and each $ j \in [r_i] $, define the events
    \[ \mathcal{C}_{i,j} \coloneqq \{ E_{i, j} \subseteq H_2 \},\quad \mathcal{D}_{i,j} \coloneqq \{ E_{i, 1} \not \subseteq H_2, \ldots, E_{i, j - 1} \not \subseteq H_2 \},\quad \text{ and } \quad\mathcal{A}_{i,j} \coloneqq \mathcal{C}_{i,j} \cap \mathcal{D}_{i,j}. \]
    In particular, note that $\mathcal{A}_{i,j} \cap \mathcal{L}_1^i = \mathcal{L}_2^j \cap \mathcal{L}_1^i$ and that, for each $i\in[m]$, the events in the set $\{\mathcal{A}_{i,j}:j\in[r_i]\}$ are pairwise disjoint.
    With these definitions, and using the independence between $H_1$ and $H_2$ as well as \ref{cond:zak2}, for each $i\in[m]$ we have that
    \begin{equation}\label{equa:zak1}
        \mathbb{P}\left[\mathcal{L}_2\ \middle|\ \mathcal{L}_1^i\right]=\sum_{j\in[r_i]}\mathbb{P}\left[\mathcal{A}_{i,j}\ \middle|\ \mathcal{L}_1^i\right]=\sum_{j\in[r_i]}\mathbb{P}[\mathcal{A}_{i,j}]\geq1-\delta.
    \end{equation}
    Moreover, for each $i\in[m]$ and each $ j \in [r_i] $, let $\mathcal{B}_{i,j}$ be the event that there exists some $F \in \mathcal{F}_{i, j}$ such that $F \setminus E_{i, j} \subseteq H_3 \setminus H_2' $.
    Then \ref{cond:zak3} says that for every $i\in[m]$ and $ j \in [r_i] $ we have that
    \begin{equation}\label{equa:zak2}
        \mathbb{P}[\mathcal{B}_{i,j}] \ge 1 - \delta.
    \end{equation}
    
    We claim that the events $\mathcal{A}_{i,j}$ and $\mathcal{B}_{i,j}$ are positively correlated.

    \begin{claim} \label{claim:zak_cond}
        For every $i\in[m]$ and every $j\in[r_i]$ we have that $ \mathbb{P}[\mathcal{B}_{i,j} \cap \mathcal{A}_{i,j}] \ge \mathbb{P}[\mathcal{B}_{i,j}] \mathbb{P}[\mathcal{A}_{i,j}] $.
    \end{claim}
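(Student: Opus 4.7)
The plan is to exploit the independence of $\mathcal{B}_{i,j}$ from $\mathcal{C}_{i,j}$, and then use FKG applied conditional on $\mathcal{C}_{i,j}$ to show that $\mathcal{B}_{i,j}$ and $\mathcal{D}_{i,j}$ are positively correlated. The key observation is that, for any $F \in \mathcal{F}_{i,j}$, the set of edges $F \setminus E_{i,j}$ is by definition disjoint from $E_{i,j}$, so the event $\mathcal{B}_{i,j}$ is determined by $H_2 \setminus E_{i,j}$, $H_3$, and $\mybar{H}_2$, while $\mathcal{C}_{i,j}$ is determined solely by $H_2 \cap E_{i,j}$. Since the edges of $H_2$ are mutually independent and $H_2, H_3, \mybar{H}_2$ are mutually independent, this yields immediately
\[ \mathbb{P}[\mathcal{B}_{i,j} \cap \mathcal{C}_{i,j}] = \mathbb{P}[\mathcal{B}_{i,j}]\,\mathbb{P}[\mathcal{C}_{i,j}]. \]

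The next step is to condition on $\mathcal{C}_{i,j}$. If $\mathbb{P}[\mathcal{A}_{i,j}] = 0$ (in particular if some $E_{i,k}$ with $k < j$ is contained in $E_{i,j}$, whence $\mathcal{A}_{i,j} = \varnothing$), the claim is immediate, so I may assume $\mathbb{P}[\mathcal{C}_{i,j}] > 0$ and no such inclusion holds. Conditional on $\mathcal{C}_{i,j}$, the edges of $H_2 \setminus E_{i,j}$ remain independent $\mathrm{Bernoulli}(p_2)$ and independent of $H_3$ and $\mybar{H}_2$. The event $\mathcal{D}_{i,j}$ then becomes a decreasing event in $H_2 \setminus E_{i,j}$ alone, since it requires, for each $k < j$, that at least one edge of $E_{i,k} \setminus E_{i,j}$ be absent from $H_2$; likewise, for any fixed outcome of $H_3$ and $\mybar{H}_2$, the event $\mathcal{B}_{i,j}$ is a decreasing function of $H_2 \setminus E_{i,j}$ (adding edges to $H_2$ only shrinks $H_3 \setminus (H_2 \cup \mybar{H}_2)$).

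For each fixed outcome of $(H_3, \mybar{H}_2)$ I now apply the FKG inequality (\cref{lem:fkg_inequality}, in its natural extension to any product Bernoulli measure on a finite edge set, here the edges of $K_n \setminus E_{i,j}$) to the two decreasing events $\mathcal{B}_{i,j}$ and $\mathcal{D}_{i,j}$, and integrate over $H_3$ and $\mybar{H}_2$ (noting that $\mathcal{D}_{i,j}$ does not depend on them) to obtain
\[ \mathbb{P}[\mathcal{B}_{i,j} \cap \mathcal{D}_{i,j} \mid \mathcal{C}_{i,j}] \ge \mathbb{P}[\mathcal{B}_{i,j} \mid \mathcal{C}_{i,j}]\,\mathbb{P}[\mathcal{D}_{i,j} \mid \mathcal{C}_{i,j}]. \]
Multiplying through by $\mathbb{P}[\mathcal{C}_{i,j}]$, using $\mathcal{A}_{i,j} = \mathcal{C}_{i,j} \cap \mathcal{D}_{i,j}$, and inserting the independence $\mathbb{P}[\mathcal{B}_{i,j} \mid \mathcal{C}_{i,j}] = \mathbb{P}[\mathcal{B}_{i,j}]$ from the first step then completes the proof.

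The main conceptual obstacle is that a naive application of FKG directly to $\mathcal{B}_{i,j}$ and $\mathcal{A}_{i,j}$ fails: the event $\mathcal{C}_{i,j}$ is increasing in $H_2$ while $\mathcal{B}_{i,j}$ is decreasing, so FKG on the unconditional measure would push the correlation in the wrong direction. The fix, enabled by the disjointness of $E_{i,j}$ and $F \setminus E_{i,j}$, is to remove $\mathcal{C}_{i,j}$ from the analysis via the independence argument, after which only the ``aligned'' events $\mathcal{B}_{i,j}$ and $\mathcal{D}_{i,j}$ need to be compared and FKG applies in the correct direction.
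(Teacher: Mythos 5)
Your proof is correct and takes essentially the same approach as the paper's. The paper avoids conditioning on $\mathcal{C}_{i,j}$ by introducing the auxiliary event $\mathcal{M}_{i,j} = \bigcap_{k<j}\{E_{i,k}\setminus E_{i,j}\not\subseteq H_2\}$, which satisfies $\mathcal{C}_{i,j}\cap\mathcal{M}_{i,j} = \mathcal{C}_{i,j}\cap\mathcal{D}_{i,j}$ and is independent of $\mathcal{C}_{i,j}$; it then applies FKG on the unconditional product measure to $g(G) = -\mathds{1}(\mathcal{M}_{i,j}\mid H_2=G)$ and $f(G) = -\mathbb{P}[\mathcal{B}_{i,j}\mid H_2=G]$. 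Your conditioning on $\mathcal{C}_{i,j}$ is operationally the same (the conditioned law of $H_2\setminus E_{i,j}$ is the same product measure, and $\mathcal{D}_{i,j}$ restricted to $\mathcal{C}_{i,j}$ is exactly $\mathcal{M}_{i,j}$), and your "fix $(H_3,\mybar{H}_2)$, apply FKG to indicators, then integrate" step is a slightly more pedestrian but equivalent way to establish what the paper does in one stroke by taking $f$ to be a conditional expectation.
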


        \begin{claimproof}[Proof of \cref{claim:zak_cond}]
        Fix $i\in[m]$ and $j\in[r_i]$.
        By the definition of $\mathcal{A}_{i,j}$, we have that 
        \[ \mathbb{P}[\mathcal{B}_{i,j} \cap \mathcal{A}_{i,j}] = \mathbb{P}[\mathcal{B}_{i,j} \cap \mathcal{C}_{i,j} \cap \mathcal{D}_{i,j}]. \]
        Observe that $ \mathcal{C}_{i,j} $ is independent of $ \mathcal{B}_{i,j} $, since they are determined by disjoint sets of edges.
        Intuitively, we would like to use this fact to say that $ \mathbb{P}[\mathcal{B}_{i,j} \cap \mathcal{A}_{i,j}] = \mathbb{P}[\mathcal{B}_{i,j} \cap \mathcal{D}_{i,j}] \mathbb{P}[\mathcal{C}_{i,j}] $.
        However, we must first account for the fact that $ \mathcal{C}_{i,j} $ and $ \mathcal{D}_{i,j} $ may not be independent in general.
        To do so, we define a further event
        \[ \mathcal{M}_{i,j} \coloneqq \bigcap_{k \in [j - 1]} \{ E_{i,k} \setminus E_{i,j} \not \subseteq H_2 \}, \]
        which is independent from $ \mathcal{C}_{i,j} $ by edge-disjointness and satisfies that
        \[ \mathcal{C}_{i,j} \cap \mathcal{M}_{i,j} = \mathcal{C}_{i,j} \cap \mathcal{D}_{i,j}. \]
        Hence, in particular, we may write\COMMENT{Note that it is not true in general that, if an event is independent from two other events, then it is independent of their intersection. Formally, thus, here one needs to show that $\mathcal{B}_{i,j} \cap \mathcal{M}_{i,j}$ is independent of $\mathcal{C}_{i,j}$. This is fine because we have edge-disjointness between the sets of edges which define the events.}
        \begin{equation} \label{eqn:zak5}
            \mathbb{P}[\mathcal{B}_{i,j} \cap \mathcal{A}_{i,j}]
                = \mathbb{P}[\mathcal{B}_{i,j} \cap \mathcal{C}_{i,j} \cap \mathcal{D}_{i,j}]
                = \mathbb{P}[\mathcal{B}_{i,j} \cap \mathcal{C}_{i,j} \cap \mathcal{M}_{i,j}]
                = \mathbb{P}[\mathcal{B}_{i,j} \cap \mathcal{M}_{i,j}] \mathbb{P}[\mathcal{C}_{i,j}].
        \end{equation}

        We now make the intuitive observation that both $ \mathcal{B}_{i,j} $ and $ \mathcal{M}_{i,j} $ are ``decreasing with respect to~$ H_2 $'', in the sense that, if the event $ \mathcal{B}_{i,j} $ holds when $ (H_2, \mybar{H}_2, H_3) = (G_1, G_2, G_3) $ and $ G_1' \subseteq G_1 $, then $ \mathcal{B}_{i,j} $ also holds when $ (H_2, \mybar{H}_2, H_3) = (G_1', G_2, G_3) $, and likewise for $ \mathcal{M}_{i,j} $.
        We therefore aim to apply \cref{lem:fkg_inequality}.
        Recall that $ \mathcal{G} $ denotes the set of all graphs on vertex set $ [n] $.
        Define two functions $ f, g\colon \mathcal{G} \to \mathbb{R} $ by
        \[ f(G) \coloneqq -\mathbb{P}[\mathcal{B}_{i,j} \mid H_2 = G]\qquad \text{ and } \qquad g(G) \coloneqq -\mathds{1}(\mathcal{M}_{i,j} \mid H_2 = G), \]
        noting that $ \mathcal{M}_{i,j} $ is fully determined by $ H_2 $, whereas $ \mathcal{B}_{i,j} $ may also depend upon $ \mybar{H}_2 $ and $ H_3 $.
        By the observation above, both $ f $ and $ g $ are increasing functions.
        We therefore obtain that
        \begin{align}
            \mathbb{P}[\mathcal{B}_{i,j} \cap \mathcal{M}_{i,j}]
                &= \sum_{G \in \mathcal{G}} \mathbb{P}[H_2 = G] \mathds{1}(\mathcal{M}_{i,j} \mid H_2 = G) \mathbb{P}[\mathcal{B}_{i,j} \mid H_2 = G]\nonumber\\
                &= \sum_{G \in \mathcal{G}} \mathbb{P}[H_2 = G] (-1)^2 g(G) f(G)\nonumber\\
                &= \mathbb{E}[g(H_2) f(H_2)]\nonumber\\
                &\ge \mathbb{E}[g(H_2)] \mathbb{E}[f(H_2)]\nonumber\\
                &= (-1)^2 \mathbb{P}[\mathcal{M}_{i,j}] \sum_{G \in \mathcal{G}} \mathbb{P}[H_2 = G] \mathbb{P}[\mathcal{B}_{i,j} \mid H_2 = G]\nonumber\\
                &= \mathbb{P}[\mathcal{M}_{i,j}] \mathbb{P}[\mathcal{B}_{i,j}],\label{eqn:zak55}
        \end{align}
        where the inequality follows from \cref{lem:fkg_inequality}.
        The result now follows by combining \eqref{eqn:zak5} and \eqref{eqn:zak55} with the independence of the defined events:
        \begin{align*}
            \mathbb{P}[\mathcal{B}_{i,j} \cap \mathcal{A}_{i,j}] &= \mathbb{P}[\mathcal{B}_{i,j} \cap \mathcal{M}_{i,j}] \mathbb{P}[\mathcal{C}_{i,j}]\\
            &\ge \mathbb{P}[\mathcal{B}_{i,j}] \mathbb{P}[\mathcal{M}_{i,j}] \mathbb{P}[\mathcal{C}_{i,j}]
            = \mathbb{P}[\mathcal{B}_{i,j}] \mathbb{P}[\mathcal{M}_{i,j} \cap \mathcal{C}_{i,j}]
            = \mathbb{P}[\mathcal{B}_{i,j}] \mathbb{P}[\mathcal{A}_{i,j}].\qedhere
        \end{align*}
    \end{claimproof}
    
    Using the fact that $\mathcal{L}_2=\mathcal{L}_2\cap\mathcal{L}_1$, the definition of $\mathcal{A}_{i,j}$, the independence of the events $\mathcal{A}_{i,j}$ and~$\mathcal{B}_{i,j}$ from $H_1$, and \cref{claim:zak_cond}, for every $i\in[m]$ and $ j \in [r_i] $ we have that
    \begin{align}
        \mathbb{P}\left[\mathcal{B}_{i,j} \ \middle|\ \mathcal{L}_1^i\cap\mathcal{L}_2^j\right] &=\frac{\mathbb{P}\left[\mathcal{B}_{i,j} \cap\mathcal{L}_2^j\ \middle|\  \mathcal{L}_1^i\right]}{\mathbb{P}\left[\mathcal{L}_2^j\ \middle|\ \mathcal{L}_1^i\right]}
        =\frac{\mathbb{P}\left[\mathcal{B}_{i,j} \cap\mathcal{A}_{i,j}\ \middle|\ \mathcal{L}_1^i\right]}{\mathbb{P}\left[\mathcal{A}_{i,j}\ \middle|\ \mathcal{L}_1^i\right]}
        =\frac{\mathbb{P}[\mathcal{B}_{i,j} \cap\mathcal{A}_{i,j}]}{\mathbb{P}[\mathcal{A}_{i,j}]}\nonumber\\
        &\geq\frac{\mathbb{P}[\mathcal{B}_{i,j}]\mathbb{P}[\mathcal{A}_{i,j}]}{\mathbb{P}[\mathcal{A}_{i,j}]}
        =\mathbb{P}[\mathcal{B}_{i,j}].\label{equa:zak3}
    \end{align}

    For simplicity, let us denote by $\mathcal{E}$ the event that $ \mathcal{L}_2 $ occurs and there exists $ F \in \mathcal{F}_{i^*, j^*} $ for which $ F \setminus E_{i^*, j^*} \subseteq H_3 \setminus H_2' $.
    Using \eqref{equa:zak0}, \eqref{equa:zak1}, \eqref{equa:zak2} and~\eqref{equa:zak3} we then get that
    \begin{align*}
        \mathbb{P}[\mathcal{E}]&=\sum_{i=1}^m\sum_{j=1}^{r_i}\mathbb{P}\left[\mathcal{L}_1^i\cap\mathcal{L}_2^j\cap\mathcal{B}_{i,j}\right]
        =\sum_{i=1}^m\sum_{j=1}^{r_i}\mathbb{P}\left[\mathcal{B}_{i,j} \ \middle|\ \mathcal{L}_1^i\cap\mathcal{L}_2^j\right]\mathbb{P}\left[\mathcal{L}_2^j\ \middle|\ \mathcal{L}_1^i\right]\mathbb{P}\left[\mathcal{L}_1^i\right]\\
        &\geq(1-\delta)\sum_{i=1}^m\mathbb{P}\left[\mathcal{L}_1^i\right]\sum_{j=1}^{r_i}\mathbb{P}\left[\mathcal{L}_2^j\ \middle|\ \mathcal{L}_1^i\right]
        =(1-\delta)\sum_{i=1}^m\mathbb{P}\left[\mathcal{L}_1^i\right]\mathbb{P}\left[\mathcal{L}_2\ \middle|\ \mathcal{L}_1^i\right]\\
        &\geq(1-\delta)^3 \ge 1 - 3 \delta.\qedhere
    \end{align*}
\end{proof}


\section{Graph factors: lower bounds for successful strategies}\label{section:factors_low}

In this section, we prove lower bounds on the time and budget required for constructing $F$-factors. 
To do this, we show that there are no successful $(t,b)$-strategies for constructing $ F $-factors (and more generally any graph containing many vertex-disjoint copies of $F$) if $t$ and $b$ are not sufficiently large.
We can formulate such statements by considering some $t$ larger than the (likely) hitting time for $F$-factors and providing a lower bound on the budget $b$ for any successful $(t,b)$-strategies for $F$-factors.
In \cref{section:factors_upper} we will showcase some simple $(t,b)$-strategies which a.a.s.\ result in graphs containing the desired $F$-factors, where the values of $b$ are very close to the lower bounds proved in this section.
This shows that our results are essentially tight.

Let us begin by discussing the intuitive ideas behind our main result of the section (\cref{thm:lowerboundF}), which in particular shows that, for any property which requires that $B_t$ contains linearly many vertex-disjoint copies of a fixed connected graph $F$, the budget~$b$ for any successful $(t,b)$-strategy with $t$ ``close'' to the hitting time must also be ``close'' to the hitting time. 
We will later use this result to prove \cref{thm:factors_lower,thm:hamsquare_lower}.

Consider the simple problem of constructing a triangle in the budget-constrained random graph process.
This requires buying some edge~$ vw $ contained within the neighbourhood of some vertex~$ u $ (where the neighbourhood is formed by already bought edges incident to~$u$).
Roughly and intuitively, if we let $p\coloneqq t/M$ and~$ u $ has degree~$ d \in [n] $ in Builder's graph, then its neighbourhood contains~$ \inbinom{d}{2} $ potential edges, so the expected number of such edges which are presented up to time~$ t $ is approximately $ p \inbinom{d}{2} $.
If we want to be able to construct such a triangle a.a.s., then we need that, with high probability, at least one of these edges is presented by the random graph process, for which we want to ensure that $ p \inbinom{d}{2} = \omega(1) $.
In other words, to construct a triangle in time $ n^{2 - 2 \eps} $, we will probably need to have at least some vertex of degree at least $\omega(n^{\eps})$.

Now suppose instead that we want to construct \emph{many} vertex-disjoint triangles, say linearly many (which is required, for example, for a triangle factor or the square of a Hamilton cycle).
Then, by the same logic, to achieve such a structure a.a.s.\ in time $ n^{2 - 2 \eps} $, we will need to ensure that linearly many vertices have degree at least $ n^{\eps} $, which in particular means we have no hope of achieving this with a linear budget.

The following theorem formalises this idea, and generalises it from triangles to any fixed connected graph $ F $ of constant size.

\begin{theorem}\label{thm:lowerboundF}
    Let $ t=t(n) $ and $ b=b(n) $ be such that $t,b\in[M] $ and $ t = o(n^2) $.
    For all $ k_0 \in \mathbb{N} $ and $ \eps> 0 $, there exist $ \lambda > 0 $ and $ n_0 \in \mathbb{N} $ such that, for all $ n \ge n_0 $, the following holds.
    
    Let $ B_t $ be produced by the budget-constrained random graph process under a $(t, b)$-strategy.
    Then, with probability at least $ 1 - \eps $, there exists a set of vertices $ V \subseteq [n] $ of size $|V|\geq (1 - \eps) n $ satisfying the following property.
    
    For every connected graph~$ F $ on at most~$ k_0 $ vertices and every vertex~$ v \in V $, the number of copies of\/~$F$ in~$B_t[V]$ containing $v$ is at most $ \lambda b^{v(F) - 1} t^{e(F) - v(F) + 1} n^{v(F)-2e(F)-1}$.
\end{theorem}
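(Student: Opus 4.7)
The bound to establish, $\lambda b^{v(F)-1} t^{e(F)-v(F)+1} n^{v(F)-2e(F)-1}$, equals (up to a constant depending on $F$) $d^{k-1} p^{\ell-k+1}$, where $k \coloneqq v(F)$, $\ell \coloneqq e(F)$, $d \coloneqq 2b/n$, and $p \coloneqq t/M$. This reflects the intuition that a copy of $F$ through $v$ arises from embedding a spanning tree of $F$ in $B_t$ (contributing the $d^{k-1}$ factor via the degree cap) and having the remaining $\ell-k+1$ non-tree edges present in $G_t$ (each with probability $\approx p$). By \cref{lem:deterministic}, I may assume the strategy is deterministic. My first step is to set $V_0 \coloneqq \{v \in [n] : \deg_{B_t}(v) \leq Cd\}$ for a large constant $C = C(\eps, k_0)$; since $\sum_v \deg_{B_t}(v) = 2e(B_t) \leq nd$, a pigeonhole argument gives $|V_0| \geq (1 - 1/C)n$ deterministically, so $C$ can be chosen with $|V_0| \geq (1 - \eps/2)n$.

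Next, for each connected $F$ on $k \leq k_0$ vertices (finitely many isomorphism types) with a chosen spanning tree $T$, I bound the number of copies of $F$ in $B_t[V_0]$ by
\[Y_F \coloneqq \sum_{\phi\colon V(F) \hookrightarrow [n]} \mathds{1}[\phi(T) \subseteq B_t[V_0]] \mathds{1}[\phi(F\setminus T) \subseteq G_t],\]
using $B_t \subseteq G_t$. Deterministically, the number of embeddings of $T$ in $B_t[V_0]$ is at most $2 e(B_t[V_0])(Cd)^{k-2} = O(nd^{k-1})$ by a standard edge-then-neighbour count, and for each fixed $\phi$ the non-tree edges form a specific $(\ell - k + 1)$-set of pairs disjoint from the tree, each lying in $G_t$ with probability $\Theta(p)$. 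A decoupling argument (described below) should combine these into $\mathbb{E}[Y_F] = O(nd^{k-1} p^{\ell-k+1})$. Markov's inequality then gives $Y_F = O(nd^{k-1} p^{\ell-k+1})/\eps'$ with probability at least $1 - \eps'$, and a further pigeonhole step isolates at most $\eps'' n$ vertices $v \in V_0$ that host more than $O(d^{k-1}p^{\ell-k+1})/(\eps'\eps'')$ copies of $F$. Defining $V \subseteq V_0$ as $V_0$ minus these bad vertices over the (constantly many) isomorphism types of $F$, with $\eps'$ and $\eps''$ tuned so the total measure of bad events is at most $\eps$, yields $|V| \geq (1-\eps)n$ with probability at least $1 - \eps$ and the per-vertex bound (with $\lambda$ chosen to absorb all constants and the conversion $d^{k-1}p^{\ell-k+1} = \Theta(b^{k-1}t^{\ell-k+1}n^{k-2\ell-1})$). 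Restricting from $B_t[V_0]$ to $B_t[V]$ can only decrease the counts.

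The main technical obstacle is justifying the decoupling: the naive bound $\mathbb{P}[\phi(T) \subseteq B_t, \phi(F\setminus T) \subseteq G_t] \leq \mathbb{P}[\phi(F) \subseteq G_t] \approx p^\ell$ sums to $O(n^k p^\ell)$, which is too weak precisely because it ignores the budget constraint. The subtlety is that Builder's adversarial decisions on tree edges may in principle depend on the positions of non-tree edges in the random order of presentation. I would address this by directly analysing the underlying uniform random permutation of $E(K_n)$: conditional on the positions of the tree edges and on the strategy's decisions at those steps, the positions of the non-tree edges (which involve different, disjoint pairs) remain close to uniformly distributed among the remaining slots, yielding $\mathbb{P}[\phi(F\setminus T) \subseteq G_t \mid \phi(T) \subseteq B_t[V_0]] \leq C''p^{\ell-k+1}$ for a constant $C'' = C''(k_0)$. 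Alternatively, one could couple $G_t$ with a binomial random graph via \cref{lem:basiccoupling}, transferring the non-tree event to an independent binomial setting in which tree and non-tree events decouple exactly, and absorb the coupling error into the $1 - \eps$ probability slack.
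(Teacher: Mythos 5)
The structure you set up---the pigeonhole step defining $V_0$, the deterministic $O(nd^{k-1})$ bound on tree embeddings, and the conversion $\lambda b^{v(F)-1}t^{e(F)-v(F)+1}n^{v(F)-2e(F)-1}=\Theta(d^{v(F)-1}p^{e(F)-v(F)+1})$---is sound and matches the informal intuition the authors give just before their proof, which they caveat with ``the proof itself requires a little more subtlety, as formalising this argument depends upon the order in which the edges of a copy of $F$ appear in the process.'' The gap is exactly where you flagged it, and neither of your proposed resolutions closes it. Conditioning on ``the positions of the tree edges and on the strategy's decisions at those steps'' is not enough: Builder's decision about whether to buy a tree edge $e$ at time $i$ is a function of the \emph{entire} history $\vect{e}^{(i)}$, which may include non-tree edges of $\phi(F)$ presented before time $i$. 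So the event $\{\phi(T)\subseteq B_t\}$ can carry strong positive information about $\{\phi(F\setminus T)\subseteq G_t\}$; a deterministic strategy that buys $uv$ only when a fixed other pair $u'v'$ was presented earlier makes $\mathbb{P}[\phi(F\setminus T)\subseteq G_t\mid \phi(T)\subseteq B_t]$ close to $1$ for some $\phi$, not $O(p^{e(F)-v(F)+1})$, and averaging against the deterministic tree-count cannot repair a per-$\phi$ conditional bound that simply fails. Coupling $G_t$ with $G(n,p)$ does not help either, because $B_t$ is a function of the same random process, so the correlation survives the coupling.

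The paper's proof resolves this by never conditioning on $\{\phi(T)\subseteq B_t\}$. It instead reveals the process one step at a time and counts copies of $F$ by the time at which their last-arriving edge appears, splitting by whether that completing edge is a bridge of $F$. Bridge-completed copies are bounded deterministically: each purchased edge can only join two smaller connected pieces, each already bounded by an inductive filtration argument. Non-bridge-completed copies are bounded in expectation using that, conditional only on the history $\vect{e}^{(i-1)}$, the probability that $e_i$ equals the one specific missing edge is $(1+o(1))/\inbinom{n}{2}$; this forward-only conditioning is precisely what prevents the adversary from ``looking ahead.'' Making this work uniformly over all connected graphs on at most $k_0$ vertices requires the nested vertex-filtration machinery and a double induction on $(v(F),e(F))$. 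A fix for your approach would have to replace the fixed spanning tree $T$ by a decomposition of each copy of $F$ that respects the random arrival order of its edges---at which point you are essentially reconstructing the paper's argument.
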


Before proving \cref{thm:lowerboundF}, let us use it to derive lower bounds for time and budget of successful $(t,b)$-strategies for different properties.
In general, suppose that Builder tries to construct a structure which contains linearly many vertex-disjoint copies of some fixed connected graph $ F $ on $ k $ vertices with $ m \geq k $ edges.
Then, if we take $ \eps $ sufficiently small, every set of size $(1-\eps)n$ must contain some copy of~$ F $.
In particular, by \cref{thm:lowerboundF}, in order for Builder to a.a.s.\ find a copy of the large structure, it is a necessary condition that $ \lambda b^{k - 1} t^{m - k + 1} n^{k - 2m - 1} \ge 1 $.
Since~$ F $ is not a tree, given any budget~$b$ (resp.\ time~$t$), this yields a lower bound on the time~$t$ (resp.\ budget~$b$) required to have a successful $(t,b)$-strategy for such a structure (as showcased, for instance, in \cref{fig:HamPowers,fig:Krfactors}).
In general, we obtain the following, stronger result, which gives a negative answer to \cref{question:FKMgeneral}, posed by \citet{FKM25}, in a strong sense.

\begin{corollary}\label{thm:Ffactors_lower}
    Let\/ $F$ be a (not necessarily connected) fixed graph with at least one edge, and let $\alpha\in(0,1]$.
    Let\/ $t=t(n)$ and $b=b(n)$ be such that there exists a successful $ (t, b) $-strategy for an $\alpha$-$ F $-factor.
    Then we must have $ bt^{d^*(F)-1} = \Omega(n^{2d^*(F)-1}) $.
\end{corollary}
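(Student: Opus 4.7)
The plan is to reduce \cref{thm:Ffactors_lower} to \cref{thm:lowerboundF} by passing to a well-chosen connected subgraph of $F$ that realises the maximum $1$-density. My first step will be to argue that there exists a \emph{connected} $F^* \subseteq F$ with $v(F^*) \geq 2$ and $d(F^*) = d^*(F)$. Given any subgraph $F' \subseteq F$ realising the maximum, I would delete any isolated vertices (which does not decrease density); if the resulting graph had components $F_1, \dots, F_s$ with $s \geq 2$ and each $v(F_i) \geq 2$, then the identity
\[
d(F') = \frac{\sum_i d(F_i)(v(F_i)-1)}{\sum_i (v(F_i)-1) + (s-1)} < \max_i d(F_i)
\]
would force $\max_i d(F_i) > d^*(F)$, a contradiction. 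So $F'$ may be taken connected; write $k := v(F^*)$ and $m := e(F^*)$, so $k \geq 2$ and $m = d^*(F)(k-1)$.

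Before invoking the main theorem, I would dispense with the trivial regime $t = \Omega(n^2)$: any $\alpha$-$F$-factor has $\Omega(n)$ edges, so $b = \Omega(n)$ and hence $bt^{d^*(F)-1} = \Omega(n^{2d^*(F)-1})$ automatically. In the remaining range $t = o(n^2)$, assume for contradiction that $bt^{d^*(F)-1} = o(n^{2d^*(F)-1})$ and fix $\eps$ with $0 < \eps < \alpha/(2v(F))$. Applying \cref{thm:lowerboundF} with $k_0 := k$ yields some $\lambda > 0$ such that, for $n$ large, with probability at least $1 - \eps$ there is a set $V \subseteq [n]$ with $|V| \geq (1 - \eps) n$ in which every vertex lies in at most
\[
\lambda\, b^{k-1}\, t^{m-k+1}\, n^{k - 2m - 1} = \lambda\left(\frac{b\, t^{d^*(F)-1}}{n^{2d^*(F)-1}}\right)^{k-1} = o(1)
\]
copies of $F^*$ in $B_t[V]$, where the identity uses $m = d^*(F)(k-1)$. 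Since this upper bound is eventually strictly less than $1$ and the quantity it bounds is integer-valued, no vertex of $V$ lies in any copy of $F^*$ in $B_t[V]$, so $B_t[V]$ is $F^*$-free.

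To finish, I would combine this with success of the strategy. A.a.s.\ $B_t$ contains an $\alpha$-$F$-factor; intersecting with the event above still has positive probability for $n$ large. Under this intersection, at most $\eps n$ vertices lie outside $V$, and since the factor consists of $\alpha n / v(F)$ vertex-disjoint copies of $F$, at most $\eps n$ of them meet $[n] \setminus V$. By the choice of $\eps$, at least $\alpha n / v(F) - \eps n \geq 1$ copies of $F$ lie entirely inside $V$; each contains a copy of $F^*$ in $B_t[V]$, contradicting $F^*$-freeness. I expect the main obstacle to be the bookkeeping around $F^*$: verifying the connectivity reduction and translating the exponents in \cref{thm:lowerboundF} into the clean quantity $(bt^{d^*(F)-1}/n^{2d^*(F)-1})^{k-1}$ via $m = d^*(F)(k-1)$; the probabilistic content is essentially a pigeonhole argument once the correct $F^*$ has been identified.
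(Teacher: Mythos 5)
Your proposal is correct and follows essentially the same route as the paper: reduce to \cref{thm:lowerboundF} via a connected subgraph $F^*\subseteq F$ with $v(F^*)\geq 2$ and $d(F^*)=d^*(F)$, treat $t=\Omega(n^2)$ separately, and derive a contradiction from the resulting $F^*$-free set. The only cosmetic differences are that the paper packages the pigeonhole step via an $\alpha'$-$H$-factor and chooses $\eps=\alpha'/(2k)$, whereas you count the copies of $F$ falling inside $V$ directly; these are logically interchangeable, and your exponent identity $\lambda b^{k-1}t^{m-k+1}n^{k-2m-1}=\lambda\bigl(bt^{d^*(F)-1}/n^{2d^*(F)-1}\bigr)^{k-1}$ is exactly the change of variables implicit in the paper's argument.
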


\begin{proof}
    Let $H\subseteq F$ be a subgraph of $F$ with $d^*(F)=d(H)$.
    It can easily be checked that $H$ must be connected\COMMENT{Suppose $H$ is not connected, and suppose without loss of generality that it consists of two vertex-disjoint graphs $H_1$ and $H_2$.
    Say that $H$ has $k$ vertices and $m$ edges, and that $H_i$ has $k_i\geq2$ vertices and $m_i$ edges, with $k=k_1+k_2$ and $m=m_1+m_2$. 
    (If one of the two graphs, say $H_1$, only has $1$ vertex, then its $1$-density is not defined (or could be defined as equal to $0$ by convention), but then the other component would have fewer vertices and the same number of edges, and so have a higher $1$-density than $H$, contradicting the choice of $H$.)
    We are going to show that there is some $i\in[2]$ such that $d(H_i)>d(H)$, contradicting the choice of $H$.\\
    Indeed, suppose this is not the case.
    Then we have that
    \[d(H)\geq d(H_1)\iff\frac{m_1+m_2}{k_1+k_2-1}\geq \frac{m_1}{k_1-1}\iff m_1k_1+m_2k_1-m_1-m_2\geq m_1k_1+m_1k_2-m_1\iff m_2k_1-m_1k_2\geq m_2,\]
    and similarly
    \[d(H)\geq d(H_2)\iff\frac{m_1+m_2}{k_1+k_2-1}\geq \frac{m_2}{k_2-1}\iff m_1k_2+m_2k_2-m_1-m_2\geq m_2k_1+m_2k_2-m_2\iff m_1k_2-m_2k_1\geq m_1.\]
    Adding both expressions, we reach a contradiction: $0\geq m\geq1$.} and have at least one edge, and that, moreover, any graph containing an $\alpha$-$F$-factor must contain an $\alpha'$-$H$-factor, where $\alpha'\coloneqq\alpha v(H)/v(F)$.
    Let $k\coloneqq v(H)$ and $m\coloneqq e(H)$, and suppose that $ B_t $ is the result of the budget-constrained random graph process under some $ (t, b) $-strategy, where $ b^{k-1}t^{m-k+1} = o(n^{2m-k+1}) $.
    If $t=o(n^2)$ and $n$ is sufficiently large, by \cref{thm:lowerboundF} applied with $\eps=\alpha'/2k$, with probability at least~$1/2$,\COMMENT{Simply note that $\eps\leq1/2$ by definition.} there exists a set $ U \subseteq [n] $ of size at least $ (1 - \alpha'/2k) n $ containing no copy of $ H $.
    In particular, every copy of $ H $ has at least one vertex in the set $ [n] \setminus U $, which has size at most $ \alpha' n/2k =\alpha n/2|V(F)|$, so any collection of vertex-disjoint copies of $ F $ covers at most $ \alpha n/2 $ vertices.
    Hence, in this event there does not exist an $\alpha$-$ F $-factor.
    On the other hand, if $t=\Omega(n^2)$, then $b=o(n)$, so there cannot be an $\alpha$-$F$-factor since this requires at least a linear budget.
\end{proof}

Since for every $r\geq2$ we have that $d^*(K_r)=r/2$, \cref{thm:factors_lower} corresponds to the particular case of \cref{thm:Ffactors_lower} when $F=K_r$.
\Cref{thm:hamsquare_lower} follows by noting that powers of Hamilton cycles contain (almost) $F$-factors for different graphs $F$; we include the details next.

\begin{proof}[Proof of \cref{thm:hamsquare_lower}]
For each pair of integers $(k,q)$ with $k\geq2$ and $q\geq k+1$, let $ P_q^k $ denote the $k$-th power of a $ q $-vertex path, which is a connected graph with $ \inbinom{k}{2}+(q-k)k $ edges (see \cref{fig:hamsquare} for examples when $k=2$).
Note that, for any fixed such pair $ (k,q) $ and $ n $ sufficiently large, if a graph~$ G $ on~$ n $ vertices contains the $k$-th power of a Hamilton cycle, then it contains a $(1/2)$-$ P_q^k $-factor.

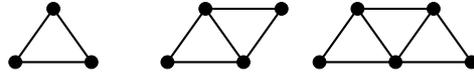
\begin{figure}[ht]
    \begin{tikzpicture}
        \node (x1) at (0, 0) [vtx] {};
        \node (x2) at (1, 0) [vtx] {};
        \node (x3) at (0.5, 0.707) [vtx] {};
        
        \draw[-, thick] (x1) -- (x2);
        \draw[-, thick] (x2) -- (x3);
        \draw[-, thick] (x3) -- (x1);

        \node (y1) at (2, 0) [vtx] {};
        \node (y2) at (3, 0) [vtx] {};
        \node (y3) at (2.5, 0.707) [vtx] {};
        \node (y4) at (3.5, 0.707) [vtx] {};
        
        \draw[-, thick] (y1) -- (y2);
        \draw[-, thick] (y2) -- (y3);
        \draw[-, thick] (y3) -- (y1);
        \draw[-, thick] (y2) -- (y4);
        \draw[-, thick] (y3) -- (y4);

        \node (z1) at (4, 0) [vtx] {};
        \node (z2) at (5, 0) [vtx] {};
        \node (z3) at (4.5, 0.707) [vtx] {};
        \node (z4) at (5.5, 0.707) [vtx] {};
        \node (z5) at (6, 0) [vtx] {};
        
        \draw[-, thick] (z1) -- (z2);
        \draw[-, thick] (z2) -- (z3);
        \draw[-, thick] (z3) -- (z1);
        \draw[-, thick] (z2) -- (z4);
        \draw[-, thick] (z3) -- (z4);
        \draw[-, thick] (z4) -- (z5);
        \draw[-, thick] (z2) -- (z5);
    \end{tikzpicture}
    \caption{The graphs $ P_3^2 $, $ P_4^2 $ and $ P_5^2 $.}\label{fig:hamsquare}
\end{figure}

Fix an integer $k\geq2$.
Let $\eta>0$ be arbitrarily small, and fix $ q \geq k^2/\eta $.
Suppose that $ B_t $ is the result of the budget-constrained random graph process under some $ (t, b) $-strategy with $ b \leq n^{2k-1-\eta} / t^{k-1} $.
Then, since we may assume that $t\geq b=\Omega(n)$, by the choice of $q$ we have that\COMMENT{Since $ b \leq n^{2k-1-\eta} / t^{k-1} $, we have that
\[b^{q - 1} t^{\genfrac{(}{)}{0pt}{2}{k}{2}+(q-k)k-q+1}=\left(\frac{b}{t}\right)^{q - 1} t^{\binom{k}{2}+(q-k)k}\leq\left(\frac{n^{2k-1-\eta}}{t^{k}}\right)^{q - 1} t^{\binom{k}{2}+(q-k)k}=\frac{n^{(2k-1-\eta)(q-1)}}{t^{\binom{k}{2}}}.\]
We wish to show that this expression is $o(n^{k(k-1)+2(q-k)k-q+1})$.
By reordering terms, it suffices to show that
\[n^{2\binom{k}{2}-\eta(q-1)}=o(t^{\binom{k}{2}}).\]
As we may assume that $t\geq b=\Omega(n)$, we will be done if $\eta(q-1)>\inbinom{k}{2}$.
And this is achieved by the choice of $q$. (Being more careful, it would also work for smaller choices of $q$, but this does not affect the result.)}
\[b^{q - 1} t^{\genfrac{(}{)}{0pt}{2}{k}{2}+(q-k)k-q+1} = o(n^{k(k-1)+2(q-k)k-q+1}),\]
which, since $ d^*(P_q^k)\geq d(P_q^k) $, implies that\COMMENT{In fact, we have that $d^*(P_q^k)=d(P_q^k)$, so the result follows immediately, but we do not want to prove that $d^*(P_q^k)=d(P_q^k)$. To avoid this, it suffices to note that
\begin{align*}
    &\,b^{q - 1} t^{\binom{k}{2}+(q-k)k-q+1} = o(n^{k(k-1)+2(q-k)k-q+1})\\
    \iff &\, b t^{d(P_q^k)-1} = o(n^{2d(P_q^k)-1})\\
    \iff &\, \frac{bn}{t} = o\left(\left(\frac{n^2}{t}\right)^{d(P_q^k)}\right)\\
    \implies&\,\frac{bn}{t} = o\left(\left(\frac{n^2}{t}\right)^{d^*(P_q^k)}\right)\\
    \iff &\, b t^{d^*(P_q^k)-1} = o(n^{2d^*(P_q^k)-1}),
\end{align*}
where the implication holds since $t\leq n^2$ and $d(P_q^k)\leq d^*(P_q^k)$.}
\[b t^{d^*(P_q^k)-1} = o(n^{2d^*(P_q^k)-1}).\]
Hence, by \cref{thm:Ffactors_lower}, the strategy is not successful for a $(1/2)$-$P_q^k$-factor, and thus, it is also not successful for the $k$-th power of a Hamilton cycle.
Since $\eta$ was arbitrary, this completes the proof.
\end{proof}

Let us now focus on the proof of \cref{thm:lowerboundF}.

\begin{remark}
    The following intuition may be useful when reading the proof of \cref{thm:lowerboundF}.
    Recall that we sometimes consider $d\coloneqq 2b/n$ as (an upper bound on) the average degree of $B_t$ and $p\coloneqq t/M$ as the probability that any edge is presented throughout the process.
    \Cref{thm:lowerboundF} tells us that the number of copies of $F$ containing a vertex $v\in V$ is $O(d^{v(F) - 1} p^{e(F) - v(F) + 1})$.
    Intuitively, the budget restriction gives us a bound on the number of copies of spanning trees of $ F $ rooted at $v$ in $ B_t $, counted by considering the vertices one at a time in such a way that each subsequent vertex belongs to the neighbourhood of one of the previous ones.
    Given such a spanning tree for which all edges are bought, the time restriction then bounds the probability that the remaining $ e(F) - v(F) + 1 $ edges required to complete the tree to a copy of $ F $ are ever presented.
    The proof itself requires a little more subtlety, as formalising this argument depends upon the order in which the edges of a copy of $ F $ appear in the process.
\end{remark}

\begin{proof}[Proof of \cref{thm:lowerboundF}]
We will make use of the following index set:
\[J\coloneqq\left\{(k,m)\in[k_0]\times\mathbb{Z} : 0\leq m\leq\binom{k}{2}\right\}=\{j_1,\ldots,j_{h}\},\]
where $h\coloneqq\sum_{k=1}^{k_0}\big(1+\inbinom{k}{2}\big)$ and $J$ is labelled in lexicographic order.
This means that, for $j=(k,m)$ and $j'=(k',m')$ with $j,j'\in J$, we have that $j<j'$ if and only if $k<k'$ or $k=k'$ and $m<m'$.
We consider constants $\eta,\lambda'>0$ and, for each $j\in J$, additional constants $\eps_j,\lambda_j>0$ which we will use throughout the proof adhering to the following hierarchy:
\begin{equation}\label{eq:hierarchy}
\frac{1}{n_0},\frac{1}{\lambda}\ll\frac{1}{\lambda'} \ll \frac{1}{\lambda_{j_{h}}}\ll\cdots\ll\frac{1}{\lambda_{j_1}}\ll\eta\ll\eps_{j_1}\ll\cdots\ll\eps_{j_{h}}\ll\frac{1}{k_0},\eps.
\end{equation}
Moreover, $n_0$ is chosen sufficiently large with respect to $t$ that all subsequent inequalities where this is necessary hold for all $n\geq n_0$.\COMMENT{Note we do not believe that $ n_0 $ depends on anything in the hierarchy, only on $ t $. We choose not to change the statement, as it is correct as written.}
Furthermore, for more convenient calculations during the proof, we introduce the density parameters $ p \coloneqq t/ M \in (0, 1) $ and $ d \coloneqq 2b/n \in (0, n - 1] $.
Note that by \eqref{eq:hierarchy}, in particular,
\begin{equation}\label{eq:simplterm}
\lambda' d^{v(F) - 1} p^{e(F) - v(F) + 1}\leq \lambda b^{v(F) - 1} t^{e(F) - v(F) + 1} n^{v(F)-2e(F)-1}. 
\end{equation}

Recall that, for each $i\in[t]$, we write $ e_i $ for the edge received at time~$ i $ in the random graph process.
Recall also the definition of the update history at time $i$, which consists of a pair $(\vect{e}^{(i)},\vect{b}^{(i)})\in E^{(i)}\times\{0,1\}^i$ encoding the sequence of edges presented so far, as well as the choices made by Builder (see \cref{sect:notation}).
We say that a sequence $ U = (U_i)_{i \in [t]} $ of subsets of $ [n] $ which are nested (that is, such that $ U_1 \supseteq \cdots \supseteq U_t $) is a \defi{vertex filtration} in the budget-constrained random graph process (under some strategy) if $U_i$ depends only on the update history at time $i$.
In general, we think of $ U $ as a subset of the vertices which evolves over time, depending both upon the randomly presented edges and the behaviour of the strategy, but in which each $ U_i $ does not depend on anything that happens after time $ i $.
Given $ \nu \in [0, 1] $, we say that a set $ U' \subseteq [n] $ is \defi{$ \nu $-incomplete} if $ |U'| \ge (1 - \nu) n$, and further that a vertex filtration $ U $ is \defi{$ \nu $-incomplete} if $ U_t $ is $ \nu $-incomplete (which, in particular, implies that $ U_i $ is $ \nu $-incomplete for all $i\in[t]$).
Note that, for any positive integer $ \ell \le 1 / \nu $, the intersection of $ \ell $ different $ \nu$-incomplete vertex filtrations is an $ (\ell \nu) $-incomplete vertex filtration.

Now, let $B_t$ be the graph resulting from applying a given $(t,b)$-strategy. 
By \cref{lem:deterministic}, we may assume without loss of generality that the strategy is deterministic.
Let $ F $ be a connected graph on $ k \le k_0 $ vertices with $ k - 1 \le m \le \inbinom{k}{2} $ edges. Suppose $ F' \subseteq B_t $ is a copy of $ F $.
We must have $ E(F') = \{ e_{i_1}, \ldots, e_{i_m} \} $ for some indices $ 1\leq i_1 < \cdots < i_m \leq t $.
We say that the final edge~$ e_{i_m} $ \defi{completes}~$ F' $, or that $ F' $ is \defi{completed at time $ i_m $}.
Given a vertex filtration $ U $, we say that $ F' $ is \mbox{\defi{$ U $-contained}} if $ V(F') \subseteq U_{i_m} $, that is, if $ F' $ is contained in $ U $ at the time of its completion.
At each time step $ i \in [t] $, for each vertex $ v \in [n] $, we write $ a^{F, U}_i(v) $ for the number of $ U $-contained copies of $ F $ containing $ v $ which have been completed at any time $ i' \le i $, that is, completed by Builder purchasing the edge $ e_{i'} $.
Note that the definition of $ U $-contained ensures that, for any fixed $ v \in [n] $, the quantity $ a^{F, U}_i(v) $ is non-decreasing over time $ i $.

We now define vertex filtrations $ U^{k, m} $ for all $ k \in [k_0] $ and $ 0 \le m \le \inbinom{k}{2} $, by using induction on $k$ and, for each value of $k$, induction on $m$.
We start with $ U^{1, 0}$ as a base case, by setting $ U^{1, 0}_i \coloneqq [n] $ for all $ i \in [t] $.
For the inductive step, suppose first that $0\leq m\leq k-2$ and that the vertex filtration $U^{k-1,\genfrac{(}{)}{0pt}{2}{k-1}{2}}$ is already defined. 
In this case, we set $ U^{k, m} \coloneqq U^{k - 1, \genfrac{(}{)}{0pt}{2}{k-1}{2}} $.\COMMENT{The reason we define things like this is that there are no connected graphs on $ k $ vertices with fewer than $ k - 1 $ edges.}
Next, suppose that $k-1\leq m\leq\inbinom{k}{2}$ and that the vertex filtration $U^{k,m-1}$ is already defined.
Then, for each connected graph $ F $ on $ k $ vertices with $ m $ edges and each $ i \in [t] $, we define
\[ U_i^F \coloneqq \left\{ v \in U_i^{k,m-1} : a^{F, U^{k,m-1}}_i(v) \le \lambda_{k, m} d^{k - 1} p^{m - k + 1} \right\}, \]
and then set
\[ U^{k, m}_i \coloneqq \bigcap_F U_i^F, \]
where the intersection is over all connected graphs $ F $ on $ k $ vertices with exactly $ m $ edges.
Since $U^{k,m-1}$ is a vertex filtration and $ a^{F, U^{k,m-1}}_i(v) $ is non-decreasing, each sequence $ (U_i^{F})_{i \in [t]} $ is a vertex filtration too, and therefore so is $ U^{k, m} \coloneqq (U^{k, m}_i)_{i \in [t]} $.

Lastly, we need to define some events.
For each $ (k,m)\in J $ and $i\in[t]$, we write $ \mathcal{E}_{k, m, i} $ for the event that $ U^{k, m}_i $ is $ \eps_{k,m} $-incomplete, and $ \mathcal{E}_{k, m} \coloneqq \mathcal{E}_{k, m, t} $ for the event that $ U^{k, m} $ is $ \eps_{k,m} $-incomplete.
Note that,
\begin{equation}
\label{eq:eventscont}\text{for each }2\leq k \leq k_0\text{ and }0 \le m \le k - 2,\text{ we have that }\mathcal{E}_{k - 1, \genfrac{(}{)}{0pt}{2}{k-1}{2}}\subseteq\mathcal{E}_{k, m},
\end{equation}
since in this case $U^{k, m}=U^{k - 1, \genfrac{(}{)}{0pt}{2}{k-1}{2}}$ by the definition of the vertex filtrations in the previous paragraph and since $\eps_{k - 1, \genfrac{(}{)}{0pt}{2}{k-1}{2}}<\eps_{k,m}$ by~\eqref{eq:hierarchy}.

The main proof step is to establish that each event $ \mathcal{E}_{k, m} $ holds with sufficiently high probability.

\begin{claim}\label{clm:Ekm}
    For each $ k \in [k_0] $ and $0\leq m \leq\inbinom{k}{2}$, the event $ \mathcal{E}_{k, m} $ holds with probability at least $ 1 - \eps_{k, m} $.
\end{claim}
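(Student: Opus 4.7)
The plan is to proceed by induction on $(k, m)$ in the lexicographic order on $J$. The base case $(k, m) = (1, 0)$ is immediate, since $U^{1, 0}_i = [n]$ for all $i$. For any $k \ge 2$ and $0 \le m \le k - 2$, the identity $U^{k, m} = U^{k-1, \binom{k-1}{2}}$ combined with \eqref{eq:eventscont} and the hierarchy $\eps_{k-1, \binom{k-1}{2}} < \eps_{k, m}$ from \eqref{eq:hierarchy} allows me to conclude directly from the inductive hypothesis that $\mathbb{P}[\mathcal{E}_{k, m}] \ge 1 - \eps_{k, m}$.

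The substance of the proof lies in the main inductive step, where $k \ge 1$ and $k - 1 \le m \le \binom{k}{2}$. I condition on $\mathcal{E}_{k, m-1}$ (which, by the inductive hypothesis, holds with probability at least $1 - \eps_{k, m-1}$). By the definition of $U^{k, m}$, a vertex $v \in U^{k, m-1}_t$ fails to lie in $U^{k, m}_t$ exactly when there is some connected graph $F$ on $k$ vertices with $m$ edges such that $a^{F, U^{k, m-1}}_t(v) > \lambda_{k, m} d^{k-1} p^{m-k+1}$. Since the number of such $F$ is bounded in terms of $k_0$ alone, it suffices to fix one such $F$ and prove that the aggregate
\[ Y_F \coloneqq \sum_{v \in [n]} a^{F, U^{k, m-1}}_t(v) = k \cdot \#\{U^{k, m-1}\text{-contained copies of } F \text{ in } B_t\} \]
satisfies $\mathbb{E}[Y_F] \le \lambda' n \, d^{k-1} p^{m-k+1}$. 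Markov's inequality, together with the hierarchy $1/\lambda_{k, m} \ll 1/\lambda'$ and a union bound over $F$, then guarantees that $|U^{k, m}_t| \ge (1 - \eps_{k, m}) n$ with probability at least $1 - \eps_{k, m}$.

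To bound $\mathbb{E}[Y_F]$, I will exploit the inductive hypothesis applied to the filtration $U^{k, m-1}$ itself: for every $v \in U^{k, m-1}_i$, the bound $a^{F^-, U^{k, m-2}}_i(v) \le \lambda_{k, m-1} d^{k-1} p^{m-k}$ holds for each connected $F^-$ on $k$ vertices with $m-1$ edges, together with analogous bounds for connected sub-structures on fewer vertices via the chain $U^{k, m-1}_i \subseteq U^{k', \binom{k'}{2}}_i$ for $k' < k$. I decompose each $U^{k, m-1}$-contained copy $F'$ of $F$ in $B_t$ by its completing edge $e_{i_m}$, whose removal leaves a graph $F^- \subseteq B_{i_m - 1}$ with one or two connected components, each on $\le k$ vertices with strictly fewer edges and with every vertex in $U^{k, m-1}_{i_m}$. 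The inductive bound, applied to each component, controls the number of $F^-$-copies anchored at any fixed pair of would-be endpoints of $e_{i_m}$, furnishing the $d^{k-1}$ factor. For the completing edge itself, I couple the random graph process with a binomial random graph $G(n, p')$ of density $p' = (1 + o(1)) t/M$ via \cref{lem:basiccoupling}, so that each specific edge appears in $G_t$ with probability $(1 + o(1)) p$; summing over the $\Theta(n^2)$ possible completing edges, over the $|E(F)|$ choices of which edge of $F$ is last, and over the bounded number of decompositions of $F$ yields the $p^{m-k+1}$ factor. Multiplying out and taking expectations delivers the desired bound on $\mathbb{E}[Y_F]$.

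The main obstacle is that Builder's strategy is adaptive: $B_t$ is entangled with the random order in which edges are presented, and the filtration $U^{k, m-1}$ itself depends on both. The resolution is that the inductive bound on the pre-completion structure $F^-$ is a purely deterministic consequence of membership in $U^{k, m-1}$ (which is defined exactly to encode such bounds), while conditional on the history up to time $i_m - 1$ the completing edge $e_{i_m}$ is uniformly distributed among the previously unseen edges, matching the couplings developed in \cref{sect:couplings}. This separation lets the ``budget'' contribution (applied conditional on the history) and the ``time'' contribution (controlled via the coupling with $G(n, p')$) decouple cleanly, delivering the bound on $Y_F$ and, via Markov's inequality and a union bound, completing the induction.
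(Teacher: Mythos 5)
Your proposal correctly identifies the inductive framework, the base case, the reduction for $0 \le m \le k-2$, the decomposition of a $U$-contained copy $F'$ by its completing edge $e_{i_m}$, and the endgame (Markov plus a union bound over the constantly many choices of $F$). However, the core estimate on $\mathbb{E}[Y_F]$ contains a gap that the paper's proof is specifically designed to avoid.

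You propose to treat the completing edge uniformly via the coupling with $G(n, p)$: for each potential completing edge, charge a factor of $p$, and control the count of pre-completion structures $F^-$ anchored at its endpoints via the filtration bounds. This works when $F^- = F \setminus e$ is \emph{connected} (i.e.\ $e \in C(F)$ in the paper's notation): the total number of $U$-contained copies of $F^-$ at any time is $O(n \cdot d^{k-1} p^{m-k})$ (one anchoring vertex), each determines a unique missing edge, and multiplying by $p$ delivers the target $O(n\,d^{k-1}p^{m-k+1})$. But when $e \in D(F)$ disconnects $F$, the pre-completion structure $F^- = F_x^e \cup F_y^e$ has \emph{two} components, each independently anchored. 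The filtration bounds then give at most $O(n\,d^{k_x-1}p^{m_x-k_x+1}) \cdot O(n\,d^{k_y-1}p^{m_y-k_y+1}) = O(n^2 d^{k-2} p^{m-k+1})$ potential pre-completion structures (using $k_x + k_y = k$ and $m_x + m_y = m-1$); multiplying by $p$ yields $O(n^2 d^{k-2}p^{m-k+2})$, which exceeds the target by a factor of $np/d \approx t/b \ge 1$. So the bound obtained from the time constraint alone is too weak for disconnecting completing edges, and your claimed estimate $\mathbb{E}[Y_F] \le \lambda' n\,d^{k-1}p^{m-k+1}$ does not follow.

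The paper resolves this by splitting copies into ``$U$-connected-contained'' and ``$U$-disconnected-contained'' according to whether the completing edge lies in $C(F)$ or $D(F)$, and treating them with genuinely different arguments. For $e \in D(F)$, the key point is \emph{not} that the completing edge appears in $G_t$ with probability $p$, but that it must actually be \emph{purchased}, and Builder purchases at most $b = dn/2$ edges in total. Each purchased edge $x'y'$ can complete at most $a^{F_x^e, U}_{i-1}(x') \cdot a^{F_y^e, U}_{i-1}(y') \le \lambda_{k,m-1}^2 d^{k-2} p^{m-k+1}$ such copies (deterministically, by the filtration), giving the \emph{deterministic} bound $d^{F,U}_t \le \frac{\lambda_{k,m-1}^2 k^2}{2} n\,d^{k-1} p^{m-k+1}$. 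Only the connected case uses the time constraint (expectation, conditional probability $\sim \binom{n}{2}^{-1}$ per step since $t = o(n^2)$, plus Markov). The trade-off is unavoidable: the budget argument is strictly better than the time argument when the completing edge disconnects $F$, and strictly worse when it does not. Your write-up does not make this split, and consequently the disconnected contribution to $Y_F$ is not controlled.

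A secondary imprecision: ``summing over the $\Theta(n^2)$ possible completing edges'' is not the right count even in the connected case (where the target contribution is $O(n\,d^{k-1}p^{m-k+1})$, not $O(n^2 \cdot \ldots)$). What should be counted is the total number of $U$-contained copies of $F^-$ (each with a \emph{unique} missing edge), which is $O(n\,d^{k-1}p^{m-k})$ by summing the per-vertex filtration bounds over the $n$ vertices. The paper's phrasing in terms of $a^{F\setminus e, U}_{i-1}$ and summing over $v$ and $i \le t$ makes this explicit.
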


Observe that $\sum_{j\in J}\eps_{j}\leq h\eps_{j_{h}}<\eps$ by~\eqref{eq:hierarchy}.
Therefore, by \cref{clm:Ekm}, with probability at least $1-\eps$ all the events $ \mathcal{E}_{j} $, $j\in J$, hold simultaneously.
In that case, by the definition of $ \mathcal{E}_{j} $, for $V\coloneqq\bigcap_{j\in J}U^j_t = U^{j_h}_t$ we have that $|V|\geq(1-\eps_{j_h})n\geq(1-\eps)n$; moreover, by the definition of the vertex filtrations, we have that for every $v\in V$ the number of copies of any connected graph $F$ on at most $k_0$ vertices containing $v$ is at most $\lambda_{(v(F),e(F))} d^{v(F) - 1} p^{e(F) - v(F) + 1}<\lambda' d^{v(F) - 1} p^{e(F) - v(F) + 1}$, where we again appeal to~\eqref{eq:hierarchy}.
Thus, by~\eqref{eq:simplterm}, in order to prove the theorem it suffices to show \cref{clm:Ekm}.

\begin{claimproof}[Proof of Claim~\ref{clm:Ekm}]
The proof is by induction on $ (k, m) $ in the lexicographic ordering.
We take $ k = 1 $ and $ m = 0 $ as the base case for the induction, and note that this case holds trivially, since \mbox{$U^{1,0}=([n])_{i\in[t]}$} is $\eps_{1,0}$-incomplete with probability~$1$.
Suppose first that $2\leq k\leq k_0$ and $0\leq m\leq k-2$ and assume as the inductive hypothesis that $ \mathcal{E}_{k-1, \genfrac{(}{)}{0pt}{2}{k-1}{2}} $ holds with probability at least $ 1 - \eps_{k-1,\genfrac{(}{)}{0pt}{2}{k-1}{2}} $.
By~\eqref{eq:eventscont} and since $\eps_{k,m}>\eps_{k-1,\genfrac{(}{)}{0pt}{2}{k-1}{2}}$ (by \eqref{eq:hierarchy}), this implies that $ \mathcal{E}_{k, m} $ also holds with probability at least $1-\eps_{k,m}$. 
Now, let $2\leq k\leq k_0$ and $k-1\leq m\leq\inbinom{k}{2}$ and assume as the inductive hypothesis that the filtration $ U = U^{k, m - 1} $ is $ \eps_{k, m - 1} $-incomplete with probability at least $ 1 - \eps_{k, m - 1} $.

Let $ F $ be a connected graph on $ k $ vertices with $ m $ edges. Write
\[ C(F) \coloneq \{ e \in E(F) : F \setminus e \text{ is connected} \} \]
for the set of edges of $F$ which can be removed without disconnecting $ F $, and $ D(F) \coloneqq E(F) \setminus C(F) $ for the edges which disconnect $ F $.
Suppose that a $ U $-contained copy $ F' $ of $ F $ is completed by an edge~$ e_{i_m} $, corresponding to the edge $ e \in E(F) $.
We say that $ F' $ is \defi{$ U $-connected-contained} if $ e \in C(F) $, and \defi{$ U $-disconnected-contained} if $ e \in D(F) $.
Given a vertex $ v \in [n] $ and a time $i\in[t]$, write $ c^{F, U}_i(v) $ and $ d^{F, U}_i(v) $, respectively, for the number of $ U $-connected-contained and $ U $-disconnected-contained copies of $ F $ containing $ v $ which were completed at any time $ i' \le i $, so
\begin{equation}\label{equa:a=c+d_v2}
    a^{F, U}_i(v) = c^{F, U}_i(v) + d^{F, U}_i(v).
\end{equation}
Write $ a^{F, U}_i \coloneqq \sum_{v \in [n]} a^{F, U}_i(v) $, and likewise for $ c $ and $ d $.

Let us first bound the number of $U$-disconnected-contained copies of $F$.
Suppose that a copy~$ F' $ of~$F$ is completed by an edge $ e_i = x'y' $ corresponding to $ e = xy \in D(F) $.
Let $ F_x^e $ and $ F_y^e $ be the connected components of $ F \setminus \{ e \} $ containing~$ x $ and~$ y $, respectively.
Suppose that $ F_x^e $ and $ F_y^e $ have~$ k_x $ and~$ k_y $ vertices and $ m_x $ and $ m_y $ edges, respectively, so 
\begin{equation}\label{eq:ksummsum}
 k_x + k_y = k\qquad \text{ and  }\qquad m_x + m_y + 1 = m .
\end{equation}
Note that, in order for $ F' $ to be $ U $-contained, the copies of $ F_x^e $ and $ F_y^e $ from which it is completed must also be $ U $-contained, by the nested property of the $ U_i $.
As such, given $ x', y' \in U_{i - 1} $, purchasing the edge $ x'y' $ at time $ i $ completes at most
\[ a^{F_x^e, U}_{i - 1}(x') \cdot a^{F_y^e, U}_{i - 1}(y') \]
$ U $-contained copies of $ F $ in which $ x' $ and $ y' $ correspond to $ x $ and $ y $, respectively.
By the definition of~$ U $, \eqref{eq:ksummsum} and \eqref{eq:hierarchy}, this quantity is at most
\[ \lambda_{k_x, m_x } d^{k_x - 1} p^{m_x - k_x + 1} \cdot \lambda_{k_y, m_y} d^{k_y - 1} p^{m_y - k_y + 1} \leq \lambda_{k, m - 1}^2 d^{k - 2} p^{m - k + 1}. \]
Since we buy at most $b = d n / 2 $ edges $ x'y' $ in total, there are at most $ k^2 $ choices for $ (x, y) $ in $V(F)$, and each $U$-contained copy of~$F$ is counted precisely $k$ times when considering $d_t^{F,U}$, this means that
\begin{equation}\label{eqn:lowerbound_bbound_v2}
    d^{F, U}_t \le \frac{\lambda_{k, m - 1}^2 k^3}{2} n d^{k - 1} p^{m - k + 1}.
\end{equation}

Next, we similarly bound the number of $U$-connected-contained copies of $F$.
Note that, if a copy~$ F' $ of~$F$ completed by an edge $ e_i $ for some $ i \in [t] $ corresponding to $ e \in C(F) $ is $U$-contained, then the copy of the connected graph $ F \setminus \{ e \} $ (with $ m - 1 $ edges on $ k $ vertices) from which it arises must also be $ U $-contained.
Thus, by definition, there are at most $ a^{F \setminus \{ e \}, U}_{i - 1} $\COMMENT{Here we could divide by $k$ since each copy is counted $k$ times (once from each vertex). But it is not necessary.} such $ U $-contained copies $ F'' $ of $ F \setminus \{ e \} $ present immediately before the edge~$e_i$ is presented, each with a unique corresponding missing edge $ e'(F'') \in \inbinom{[n]}{2} $.\COMMENT{Note that we might also have copies of $ F \setminus \{ e \} $ for which the ``missing'' edge has already been offered; since we only care about an upper bound, we may simply ignore these, or treat them as if the edge was not missing.}
For each $e\in C(F)$, let~$ \mathcal{F}_e $ be the set of all copies of ~$ F \setminus \{ e \} $ in $K_n$, and let $\mathcal{F}\coloneqq\bigcup_{e\in C(F)}\mathcal{F}_e$.\COMMENT{Note that, since we consider labelled embeddings of labelled graphs, this set may contain multiple copies of the same graph on $K_n$ as copies of different instances of $F\setminus\{e\}$.}
For each~$F'' \in \mathcal{F}$, let~$ \cA_{F'', i} $ denote the event that $ F'' $ is completed to a copy~$ F' $ of~$F$ by~$ e_i $, that is, the event that $E(F'')\subseteq B_{i - 1}$ and $ e_i = e'(F'') $.

For each $i\in[t]$, let us write $ X_i $ for the number of $ U $-connected-contained copies of~$ F $ completed at time~$ i $.
Observe that
\begin{align}
    \mathbb{E} \left[ c^{F, U}_t \ \big|\  \mathcal{E}_{k, m - 1} \right] = \sum_{i = 1}^t \mathbb{E} \left[ X_i \ \big|\  \mathcal{E}_{k, m - 1} \right]
        &= \sum_{i = 1}^t \frac{\mathbb{E} \left[ X_i \mathds{1}[\mathcal{E}_{k, m - 1}] \right]}{\mathbb{P}[\mathcal{E}_{k, m - 1}]}
        \le \frac{1}{1 - \eps_{k, m - 1}} \sum_{i = 1}^t \mathbb{E} \left[ X_i \mathds{1}[\mathcal{E}_{k, m - 1}] \right]\nonumber\\
        &\le \frac{1}{1 - \eps_{k, m - 1}} \sum_{i = 1}^t \mathbb{E} \left[ X_i \mathds{1}[\mathcal{E}_{k, m - 1, i - 1}] \right].\label{equa:U-conn-cont-fix}
\end{align}
Note that, since the $(t,b)$-strategy is deterministic, $ \mathcal{E}_{k, m - 1, i - 1} $ depends only upon the edges $ e_1, \ldots, e_{i - 1} $, and let $ \mathcal{G}_i $ be the set of all possible outcomes corresponding to different values of $ (e_1, \ldots, e_{i - 1}) $ for which $ \mathcal{E}_{k, m - 1, i - 1} $ holds.
Given $ \mathcal{H} \in \mathcal{G}_i $, let $ \mathcal{F}(\mathcal{H}) $ be the set of copies $ F'' \in \mathcal{F} $ with $E(F'')\subseteq B_{i - 1}$ which are $ U $-contained conditional upon the event $ \mathcal{H} $.
By definition and summing over each possible $ e \in C(F) $ and $v\in U_i$, we have that 
\[
    |\mathcal{F}(\mathcal{H})| \le \sum_{e\in C(F)}a^{F \setminus \{ e \}, U}_{i - 1} \le m \cdot n \cdot \lambda_{k, m - 1} d^{k - 1} p^{m - k}.
\]
Moreover, for each $ F'' \in \mathcal{F}(\mathcal{H}) $, observe that\COMMENT{This is using the fact that $i\leq t=o(n^2)$, which guarantees that the number of possible choices for the next edge is $(1-o(1))M$.}
\[
    \mathbb{P} \left[ \mathcal{A}_{F'', i} \ \big|\  \mathcal{H} \right] = \mathbb{P} \left[ \{ e_i = e'(F'') \} \ \big|\  \mathcal{H} \right] = (1 + o(1)) \binom{n}{2}^{-1}.
\]
Hence, for any $ \mathcal{H} \in \mathcal{G}_i $, we have that
\[
    \mathbb{E} \left[ X_i \ \big|\  \mathcal{H} \right] = \sum_{F'' \in \mathcal{F}(\mathcal{H})} \mathbb{P} \left[ \mathcal{A}_{F'', i} \ \big|\  \mathcal{H} \right] \le m \cdot n \cdot \lambda_{k, m - 1} d^{k - 1} p^{m - k} \cdot (1 + o(1)) \binom{n}{2}^{-1},
\]
and so, in particular,
\[
    \mathbb{E} \left[ X_i \mathds{1}[\mathcal{E}_{k, m - 1, i - 1}] \right] \le (1 + o(1)) \lambda_{k, m - 1} m d^{k - 1} p^{m - k} \cdot \frac{n}{\binom{n}{2}}.
\]
Substituting this into \eqref{equa:U-conn-cont-fix} and using \eqref{eq:hierarchy} yields
\[
    \mathbb{E} \left[ c^{F, U}_t \ \big|\  \mathcal{E}_{k, m - 1} \right] \le 2 p \binom{n}{2} \cdot \lambda_{k, m - 1} m d^{k - 1} p^{m - k} \cdot \frac{n}{\binom{n}{2}}.
\]
By Markov's inequality, this means that
\begin{equation}\label{eqn:lowerbound_cbound_v2}
    \mathbb{P}\left[c^{F, U}_t \le \eta^{-1} \cdot 2 \lambda_{k, m - 1} m n d^{k - 1} p^{m - k + 1}\ \big|\ \cE_{k,m-1}\right]\geq1-\eta.
\end{equation}

Combining \eqref{eqn:lowerbound_bbound_v2} and \eqref{eqn:lowerbound_cbound_v2} via \eqref{equa:a=c+d_v2} ensures that, by setting $\beta_{k,m}\coloneqq2\lambda_{k, m - 1} m\eta^{-1}+\lambda_{k,m-1}^2k^3/2$,
\[ \mathbb{P}\left[a^{F, U}_t \le \beta_{k,m} n d^{k - 1} p^{m - k + 1}\ \big|\ \cE_{k,m-1}\right]\geq1-\eta. \]
In particular, if the event above holds, there are at most $ \eps_{k, m - 1} n $ vertices $ v \in U_t $ for which
\[ a^{F, U}_t(v) \ge \beta_{k,m} \eps_{k, m - 1}^{-1} d^{k - 1} p^{m - k + 1}, \]
so, since by~\eqref{eq:hierarchy} we have that
\[ \lambda_{k, m} > (2m\lambda_{k,m-1}\eta^{-1}+\lambda_{k,m-1}^2k^3/2)\eps_{k, m - 1}^{-1} = \beta_{k,m} \eps_{k, m - 1}^{-1},\]
we conclude that, conditional upon $ \mathcal{E}_{k, m - 1} $, we have $|U_t^F| \ge (1 - 2\eps_{k, m - 1}) n$. 
Hence, summing over the at most $ k^{2m} $ many possible graphs~$ F $, we conclude that, conditional upon $ \mathcal{E}_{k, m - 1} $, with probability at least $ 1 - k^{2m} \eta $, the vertex filtration $ U^{k,m} $ is $ \eps_{k,m} $-incomplete, as $ \eps_{k,m} > 2 k^{2m} \eps_{k, m - 1} $ by~\eqref{eq:hierarchy}.
Therefore, by the induction hypothesis, $ \mathcal{E}_{k, m} $ holds with probability at least $1-(k^{2m}\eta + \eps_{k,m-1})\geq 1 - \eps_{k,m} $\COMMENT{We have that $\mathbb{P}[\mathcal{E}_{k,m-1}]\geq1-\eps_{k,m-1}$ by the induction hypothesis and that $\mathbb{P}[\mathcal{E}_{k,m}\mid\mathcal{E}_{k,m-1}]\geq1-k^{2m}\eta$. 
Therefore, by the law of total probability,
\[\mathbb{P}[\mathcal{E}_{k,m}]\geq\mathbb{P}[\mathcal{E}_{k,m}\mid\mathcal{E}_{k,m-1}]\mathbb{P}[\mathcal{E}_{k,m-1}]\geq(1-k^{2m}\eta)(1-\eps_{k,m-1})\geq1-(k^{2m}\eta + \eps_{k,m-1}).\]} (again by~\eqref{eq:hierarchy}), completing the inductive step.
\end{claimproof}

As shown above, this also concludes the main proof.
\end{proof}


\section{Successful strategies for graph factors}\label{section:factors_upper}

In this section we present our successful strategies for $F$-factors, including the proofs of \cref{thm:factor_upper,thm:partial_factor_upper}.
Recall from the introduction that the threshold for $F$-factors in (binomial) random graphs, when $F$ is strictly $1$-balanced, was determined by \citet{JKV08}.
The following version of their result gives an effective bound on the probability that the random graph does not contain an $F$-factor.

\begin{theorem}[{\citet[Theorem~2.3]{JKV08}}]\label{thm:JKVbound}
    Let $F$ be a fixed strictly $1$-balanced graph.
    For every constant $C_1>0$, there exists a constant $C_2>0$ such that, if $p\geq C_2n^{-1/d(F)}\log^{1/e(F)}n$ and~$n$ is a multiple of $v(F)$, then the probability that $G(n,p)$ does not contain an $F$-factor is $O(n^{-C_1})$.
\end{theorem}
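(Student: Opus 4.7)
The plan is to prove this via the standard absorption-plus-nibble framework for embedding spanning structures in random graphs, refined to deliver the sharp logarithmic factor.

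First, I would construct an \emph{absorbing structure} in $G(n,p)$: a vertex set $A$ of size $o(n)$ together with a family of $F$-copies in $G[A]$ with the property that, for any ``reservoir'' $L\subseteq V(G)\sm A$ of size $o(n)$ with $v(F)$ dividing $|A|+|L|$, the induced graph on $A\cup L$ contains an $F$-factor. To build $A$, one shows that typical vertices lie in many \emph{absorbing gadgets}---small configurations which can swap an arbitrary outside vertex into an $F$-copy---and then uses a probabilistic selection (random sampling followed by a deletion step, or Haxell's matching lemma) to assemble these gadgets into a coherent absorber. The strictly $1$-balanced hypothesis on $F$ is critical here: it ensures that the ``bottleneck'' subgraph controlling the appearance rate of absorbing gadgets in $G(n,p)$ is $F$ itself, so that the gadgets appear with the right density at $p=\Theta(n^{-1/d(F)}\log^{1/e(F)}n)$.

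Second, I would extract a \emph{near-$F$-factor} on $V(G)\sm A$ by running a semi-random (R\"odl-nibble) process on the auxiliary hypergraph $\cH$ whose vertices are those of $V(G)\sm A$ and whose hyperedges are the $F$-copies in $G[V(G)\sm A]$. Standard concentration arguments on $G(n,p)$ at this density give the vertex-regularity and pair-codegree bounds required to push the nibble through, producing a set of vertex-disjoint $F$-copies covering all but $o(n)$ vertices of $V(G)\sm A$. Feeding the uncovered set into the absorber $A$ via the defining property of the latter yields a full $F$-factor.

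The main obstacle is recovering the correct $\log^{1/e(F)}n$ factor, rather than a plain $\log n$. The generic absorption-plus-nibble pipeline---and indeed a direct application of the fractional Kahn--Kalai theorem of Frankston, Kahn, Narayanan and Park to the $F$-factor property---typically loses an extra logarithmic factor. Closing this gap requires either the original entropy-based ``shamrock decomposition'' counting of $F$-factors carried out by Johansson, Kahn and Vu, or, alternatively, applying Kahn's sharp results on perfect matchings in (nearly) regular hypergraphs directly to $\cH$, after verifying that $\cH$ satisfies the necessary quasirandomness conditions in $G(n,p)$ at the claimed density. Finally, the polynomial tail bound $O(n^{-C_1})$ (rather than mere $o(1)$) follows from a union bound: by choosing $C_2$ large enough, each individual failure event---absence of a suitable absorber, deviation of the nibble from its expected trajectory, or failure of the final absorption step---has probability at most $n^{-C_1-1}$, since each is controlled by Talagrand-type or martingale concentration inequalities whose exponents scale linearly in $C_2$.
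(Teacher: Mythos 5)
The paper does not prove this statement: it is cited verbatim as Theorem~2.3 of \citet{JKV08} and used as a black box throughout \cref{section:factors_upper}. There is therefore no internal proof to compare against, and recognizing that would have been the main point here.

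Taken on its own, your sketch describes a genuinely different route from what \citet{JKV08} actually do, and has a circular step at exactly the place where the stated logarithmic power matters. Johansson, Kahn and Vu do not use absorption; their argument is an entropy/second-moment computation (the ``shamrock'' counting of near-$F$-factors is precisely their contribution), and the absorption-plus-coupling proofs of sharp $F$-factor thresholds in $G(n,p)$ appeared only much later, via Kahn's theorem on near-perfect matchings in almost-regular hypergraphs (Heckel; Riordan; Heckel--Kaufmann--M\"uller--Pasch). More importantly, you acknowledge that the generic absorption-plus-nibble pipeline loses a logarithmic factor, and to recover $\log^{1/e(F)}n$ you propose to ``apply the original entropy-based shamrock decomposition of JKV'' or ``apply Kahn's sharp results on hypergraph matchings''. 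The first option simply invokes the theorem you set out to prove; the second is a legitimate alternative proof strategy but is itself the entire substance of the later papers cited above, and it hinges on verifying codegree and quasirandomness conditions for the auxiliary $F$-copy hypergraph at density $p=\Theta\bigl(n^{-1/d(F)}\log^{1/e(F)}n\bigr)$ -- a nontrivial calculation where the strictly $1$-balanced hypothesis is used in a specific way, not a sentence-level remark. Similarly, the explicit $O(n^{-C_1})$ tail with $C_2$ depending on $C_1$ does come out of the JKV argument by tracking exponents, but your attribution of it to ``Talagrand-type or martingale concentration with exponents scaling in $C_2$'' does not match how the failure probability actually arises in the entropy proof. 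In short, the sketch is plausible as an outline of a \emph{different} modern proof of the JKV bound, but it defers the delicate part to results whose proofs are as involved as the original.
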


In fact, \citet{JKV08} obtained an almost tight upper bound on the threshold for $F$-factors for any fixed graph $F$.
The following is an effective version which follows from their proof.

\begin{theorem}[{\citet[effective version of Theorem~2.2]{JKV08}}]\label{thm:JKVGenCase}
    Let $F$ be a fixed graph and $\eps>0$.
    If $p\geq n^{-1/d^*(F)+\eps}$ and $n$ is a multiple of $v(F)$, then the probability that $G(n,p)$ does not contain an $F$-factor is at most $n^{-\omega(1)}$.
\end{theorem}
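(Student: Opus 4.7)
The plan is to follow the original argument of Johansson, Kahn and Vu for their Theorem~2.2, tracking failure probabilities so that the polynomial slack in the hypothesis $p\geq n^{-1/d^*(F)+\eps}$ upgrades the polynomial bound of \cref{thm:JKVbound} to the super-polynomial conclusion. Let $F^*\subseteq F$ be a vertex-minimal subgraph achieving $d(F^*)=d^*(F)$; by minimality, $F^*$ is strictly $1$-balanced. Write $f\coloneqq v(F)$, $f^*\coloneqq v(F^*)$, and $r\coloneqq f-f^*$, and observe that the inequality $d(F)\leq d(F^*)$ rearranges to the key estimate
\[ r\cdot d^*(F)\;\geq\; e(F)-e(F^*), \]
which is the algebraic input driving the extension step below.

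First, I would split the randomness: expose $G(n,p)$ as (a supergraph of) $G_1\cup G_2$ with $G_i\sim G(n,p_i)$ independent and $p_1=p_2=(1/2-o(1))p$, and partition $V=[n]$ into a \emph{core} $A$ with $|A|=f^*n/f$ and a \emph{reservoir} $R$ with $|R|=rn/f$. Since $f\mid n$, $|A|$ is a multiple of $f^*$, so \cref{thm:JKVbound} can be applied to $G_1[A]$. The bound $p_1\geq n^{-1/d^*(F)+\eps/2}$ exceeds the threshold $C_2|A|^{-1/d(F^*)}\log^{1/e(F^*)}|A|$ by a polynomial factor for \emph{any} choice of $C_2$, so \cref{thm:JKVbound} produces an $F^*$-factor $\{X_1,\ldots,X_{n/f}\}$ in $G_1[A]$ with failure probability at most $n^{-C_1}$ for arbitrarily large $C_1$; letting $C_1\to\infty$ yields $n^{-\omega(1)}$.

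Second, I would use $G_2$ to extend each $X_i$ to a copy of $F$ by adjoining $r$ vertices from $R$. Process the $X_i$ in some order and, when processing $X_i$, pick an $r$-set $S_i\subseteq R\setminus\bigcup_{j<i}S_j$ so that $V(X_i)\cup S_i$ spans a copy of $F$ in $G_2$ extending $X_i$. At each step, the free reservoir has size $\Theta(n)$, so the number $Y_i$ of valid extensions is a sum of indicators whose expectation is at least
\[ \Omega\bigl(n^r\,p_2^{e(F)-e(F^*)}\bigr)\;\geq\;n^{\eps(e(F)-e(F^*))/2}, \]
by the key inequality above. Applying Janson's inequality (\cref{lem:janson}) to $Y_i$---with the correlation term $\Delta$ controlled using that every subgraph of the ``extension'' $F\setminus F^*$ relative to $V(X_i)$ has $1$-density at most $d^*(F)$---gives $\mathbb{P}[Y_i=0]\leq\exp(-n^c)$ for some $c=c(\eps,F)>0$. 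A union bound over $i\leq n/f$ keeps the failure probability at $n^{-\omega(1)}$.

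The main obstacle I expect is the correlation computation in the Janson step: one must verify that no subfamily of ``overlapping'' candidate extensions dominates $\Delta$, and this is precisely where the choice of $F^*$ as the densest subgraph (so that no subgraph of $F\setminus F^*$ has $1$-density exceeding $d^*(F)$) is essential. A secondary technical point is that the extensions $S_i$ must avoid previously chosen reservoir vertices and previously purchased edges; since $X_i$ depends only on $G_1$ and the $X_i$ use only $o(n)$ reservoir vertices in total before the final few steps, this can be absorbed into the bound on $\mathbb{E}[Y_i]$ with a slight increase in the reservoir size.
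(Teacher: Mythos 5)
The paper does not actually give a proof of \cref{thm:JKVGenCase}: it is stated as an effective version of Theorem~2.2 of \citet{JKV08} that ``follows from their proof,'' so there is no in-text argument to compare against, and I assess your proposal on its own terms. The skeleton is sensible: taking $F^*\subseteq F$ to be a vertex-minimal subgraph with $d(F^*)=d^*(F)$ correctly yields a strictly $1$-balanced $F^*$; the core/reservoir split with $|A|=v(F^*)\,n/v(F)$ (a multiple of $v(F^*)$) is the right set-up; \cref{thm:JKVbound} applied to $G_1[A]$ does give an $F^*$-factor with failure probability $n^{-\omega(1)}$ after a standard diagonalization over $C_1$; and the inequality $r\,d^*(F)\ge e(F)-e(F^*)$ (with $r=v(F)-v(F^*)$) is a correct consequence of $d(F)\le d^*(F)$.

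However, the greedy extension step has a genuine gap. You assert that ``at each step, the free reservoir has size $\Theta(n)$,'' but this is false: $|R|=rn/v(F)$ is \emph{exactly} consumed by the $n/v(F)$ extensions, so when processing $X_i$ the free reservoir has only $r(n/v(F)-i+1)$ vertices, which drops to $O(1)$ as $i\to n/v(F)$. For the final copies the expected number $\mathbb{E}[Y_i]$ of valid extensions is not a growing power of $n$ but $o(1)$, and Janson's inequality gives nothing. Your proposed mitigation --- ``a slight increase in the reservoir size'' --- does not repair this: any reservoir vertex left uncovered spoils the $F$-factor, so the reservoir cannot be padded, and your claim that ``only $o(n)$ reservoir vertices'' are used before the final steps is simply incorrect (a $\Theta(n)$ proportion is consumed). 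Resolving this endgame --- for example by stopping the greedy phase while $\Omega(n)$ reservoir vertices remain and finishing with a Hall-type matching or an absorption device, or by recasting the extension step as a hypergraph factor problem and invoking a factor theorem there --- is the actual crux of the reduction from general $F$ to the strictly $1$-balanced case, and it is precisely the part your proposal leaves unresolved.
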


Recall also that \citet{Ruc92} was the first to consider (partial) $F$-factors in random graphs, and he obtained the threshold for $G(n,p)$ to contain an $\alpha$-$F$-factor for any $\alpha\in(0,1)$.
In fact, by following his proof, we obtain the following effective bound on the probability that $G(n,p)$ does not contain an $\alpha$-$F$-factor.

\begin{theorem}[{\citet[Theorem~4~(a)]{Ruc92}}]\label{thm:Rucbound}
    Let $F$ be a fixed graph.
    For every constant $\alpha\in(0,1)$, there exists a constant $C>0$ such that, if $p\geq Cn^{-1/d^*(F)}$ and $n$ is sufficiently large, then the probability that $G(n,p)$ does not contain an $\alpha$-$F$-factor is at most $2^{-n}$.
\end{theorem}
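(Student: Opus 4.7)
The plan is to reduce the claim to a purely local assertion about subsets of $[n]$ and then combine Janson's inequality with a union bound. Set $\beta\coloneqq 1-\alpha$ and $n'\coloneqq\lceil\beta n\rceil$. A greedy argument shows it suffices to prove the following: with probability at least $1-2^{-n}$, for every subset $S\subseteq[n]$ with $|S|=n'$, the induced subgraph $G(n,p)[S]$ contains a copy of $F$. Indeed, given this, one can repeatedly select such a copy and remove its vertices; after $\alpha n/v(F)$ iterations exactly $\alpha n$ vertices have been used, and the remaining set (of size $n'$) still contains a copy of $F$ by assumption, so the process only stops once an $\alpha$-$F$-factor has been built.

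For each fixed $S$ with $|S|=n'$, I would apply Janson's inequality (\cref{lem:janson}) to the random variable $X$ counting labelled copies of $F$ in $G[S]$. The mean satisfies
\[\mu\coloneqq\mathbb{E}[X]=\Theta\bigl((n')^{v(F)}p^{e(F)}\bigr)\geq c\, C^{e(F)}\, n^{v(F)-e(F)/d^*(F)},\]
and the inequality $d(F)\leq d^*(F)$ rearranges to $v(F)-e(F)/d^*(F)\geq 1$, so in particular $\mu=\Omega(C^{e(F)} n)$. The quantity $\Delta$ decomposes according to the isomorphism type of the shared subgraph $F'=F_i\cap F_j$ of two labelled copies, giving
\[\Delta=O\!\left(\mu^{2}\sum_{\substack{F'\subseteq F\\ e(F')\geq 1}}\frac{1}{n^{v(F')}p^{e(F')}}\right).\]
By the definition of $d^*(F)$ and the hypothesis $p\geq Cn^{-1/d^*(F)}$, each denominator satisfies $n^{v(F')}p^{e(F')}\geq C^{e(F')} n^{v(F')-e(F')/d^*(F)}\geq Cn$. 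Summing over the constantly many relevant subgraphs $F'$ yields $\Delta=O(\mu^{2}/(Cn))$; for $C$ large this dominates $\mu$ (using $\mu\geq Cn$), and so $\bar{\Delta}=O(\mu^{2}/(Cn))$. Janson then delivers $\mathbb{P}[X=0]\leq\exp(-\mu^{2}/(2\bar{\Delta}))\leq\exp(-c'Cn)$ for some $c'>0$ depending only on $F$ and $\alpha$.

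The final step is a union bound over the at most $\binom{n}{n'}\leq 2^n$ candidate subsets $S$, giving total failure probability at most $2^{n}\exp(-c'Cn)$, which is bounded by $2^{-n}$ provided $C$ is chosen large enough (depending on $F$ and $\alpha$). The main technical obstacle is the estimation of $\Delta$: one must carefully verify that the maximum $1$-density condition really does control every nontrivial intersection pattern $F'$, uniformly over subgraphs of $F$. Note that, unlike in \cref{thm:JKVbound}, we do not need $F$ to be strictly $1$-balanced, precisely because we are only seeking an $\alpha$-factor and can afford to ``waste'' the $\beta n$ vertices not covered.
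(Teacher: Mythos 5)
Your proof is correct in substance and follows the same route one would take by unwinding Ruciński's argument, which the paper cites without reproducing: reduce to the local statement that every $\lceil(1-\alpha)n\rceil$-set spans a copy of $F$, bound the failure probability for a single set via Janson, and close with a union bound over the $\leq 2^n$ sets. All three ingredients are used appropriately, and the crucial numerical point — that $\mu^2/\bar\Delta = \Omega(Cn)$ with a constant depending only on $F$ and $\alpha$ but not on $C$, so one can choose $C$ large enough to beat the union bound — comes through correctly. Your closing remark about why strict $1$-balancedness is unnecessary here is also accurate: wasting $\beta n$ vertices is exactly what lets one work with $d^*(F)$ rather than $d(F)$.

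Two small cleanups are needed. First, the greedy step as written overshoots slightly: with $n'=\lceil\beta n\rceil$ the greedy process can halt after only $\lfloor(n-n')/v(F)\rfloor$ copies, which may be one short of $\lceil\alpha n/v(F)\rceil$. The fix is routine — run the whole argument with $\alpha'=(1+\alpha)/2$ in place of $\alpha$, or equivalently set $n'$ a bit smaller, say $n'=\lfloor(1-\alpha)n\rfloor - v(F)$ — but it should be said. Second, the parenthetical ``for $C$ large this dominates $\mu$'' is not quite right when $e(F)=1$ (e.g.\ $F=K_2$, where $\Delta=0$); fortunately the conclusion you actually use, namely $\bar\Delta=O(\mu^2/(Cn))$, holds in all cases because $\mu\leq\mu^2/(Cn)$ whenever $\mu\geq Cn$, and indeed $\mu\geq cC^{e(F)}n\geq cCn$. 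You should also make explicit that in the Janson setup one sums over distinct \emph{subgraph} copies of $F$ rather than labelled copies (they differ only by the constant factor $|\mathrm{Aut}(F)|$, so nothing changes), and observe that the decomposition of $\Delta$ runs only over \emph{proper} non-edgeless subgraphs $F'\subsetneq F$ since $i\sim j$ requires $F_i\neq F_j$.

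Since the paper derives this theorem by citing Ruciński rather than giving a self-contained proof, there is no paper argument to contrast with; your version is the natural and correct self-contained one.
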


In a similar way, we can retrieve an effective bound for the probability that $G(n,p)$ does not contain an $F$-factor when $F$ is not vertex-balanced from the work of \citet{GM15}.
For a fixed graph $F$ and any $v\in V(F)$ we write
\[d^*(v,F)\coloneqq\max\{d(F'):F'\subseteq F, v(F')\geq2,v\in V(F')\}.\]
We say that $F$ is \defi{vertex-balanced} if $d^*(v,F)=d^*(F)$ for every $v\in V(F)$.
Note that every (strictly) $1$-balanced graph is also vertex-balanced, so a result about thresholds for $F$-factors when $F$ is not vertex-balanced is not covered by the work of \citet{JKV08}.
By retracing the proof of \citet{GM15} (applying our \cref{thm:Rucbound} at the end of the proof of \cite[Lemma~3.2]{GM15} and then \cite[Theorem~4.1]{GM15}), we reach the following statement.

\begin{theorem}[\citet{GM15}]\label{thm:GMbound}
    Let $F$ be a fixed non-vertex-balanced graph.
    For every constant $C_1>0$, there exists a constant $C_2>0$ such that, if $p\geq C_2n^{-1/d^*(F)}$ and $v(F)$ divides~$n$, then the probability that $G(n,p)$ does not contain an $F$-factor is at most $O(n^{-C_1})$.\COMMENT{A bit of justification for this theorem: Essentially, the proof starts by first finding an appropriate $\alpha$-$H$-factor, where $H$ is a subgraph of $F$ of maximum density. They simply need to have one a.a.s., but we want to have some control over the probability of failure; we can get one with very high probability by applying our \cref{thm:Rucbound} instead of what they do in the proof of their Lemma~3.2.
    Then, they need to ensure that they can embed the remaining edges, also in a sort of ``factor-like'' way. For this, they essentially use (a generalisation of) the theorem of \citet{JKV08}, for which we have \cref{thm:JKVbound}, which gives us effective bounds on the probability too. In their paper, this is written differently as Theorem~4.1; it should be possible to rewrite this similarly to our \cref{thm:JKVbound} via a standard argument (which can be found in the appendix of \cite{JKV08}).}
\end{theorem}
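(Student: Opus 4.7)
The plan is to retrace the proof of~\citet{GM15}, upgrading each qualitative a.a.s.\ step with a quantitative tail bound drawn from the effective results already at our disposal. The whole argument is split into two phases via a two-round exposure: write $G(n,p)$ as the union of two independent binomial random graphs $G(n,p_1)$ and $G(n,p_2)$, each with $p_i=\Theta(n^{-1/d^*(F)})$, so that we may treat the two rounds as independent.

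In the first round, fix a subgraph $H\subseteq F$ that achieves $d(H)=d^*(F)$; by taking a densest subgraph with the minimum number of vertices, we may assume $H$ is strictly $1$-balanced. For a suitable constant $\alpha\in(0,1)$ (close to $1$, to be fixed later), applying Theorem~\ref{thm:Rucbound} to $G(n,p_1)$ produces an $\alpha$-$H$-factor except with probability at most $2^{-n}$. The essential use of the non-vertex-balanced hypothesis is that some vertex $v\in V(F)$ satisfies $d^*(v,F)<d^*(F)$, so that completing an embedded copy of $H$ into a copy of $F$ requires attaching a gadget of strictly smaller $1$-density.

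In the second round, I would exploit this density gap to attach, to each previously embedded $H$-copy, the remaining vertices and edges needed to extend it to a copy of $F$, while simultaneously covering the few vertices left uncovered by the partial $H$-factor. Following the auxiliary-graph reduction in~\cite{GM15}, this task can be encoded as finding an $F'$-factor in a suitable auxiliary graph whose edges correspond to valid extensions present in $G(n,p_2)$; by independence, this auxiliary random graph is not conditioned by the outcome of the first round. Because each extension needs only a subgraph of $1$-density strictly less than $d^*(F)$, the relevant threshold in the auxiliary problem is smaller than $n^{-1/d^*(F)}$ by a polynomial factor, so one can apply Theorem~\ref{thm:JKVbound} (or the variant used for the extension factor in~\cite{JKV08}) to deduce that such a completion exists except with probability $O(n^{-C_1})$, provided $C_2$ is chosen large enough as a function of $C_1$, $F$ and the gap $d^*(F)-\max\{d^*(v,F):v\in V(F)\}>0$. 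A union bound over the two rounds yields the theorem.

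The main obstacle is ensuring that the reduction of the extension step matches the hypotheses of Theorem~\ref{thm:JKVbound}: concretely, verifying that the auxiliary graph used in~\cite{GM15} is (or can be replaced by one that is) strictly $1$-balanced, and that the slack provided by the density gap survives the reduction in a quantitative form. Once this bookkeeping is done, every invocation of an a.a.s.\ statement in~\cite{GM15} can be mechanically replaced by the corresponding effective bound given by Theorems~\ref{thm:Rucbound} and~\ref{thm:JKVbound}, delivering the claimed $O(n^{-C_1})$ failure probability.
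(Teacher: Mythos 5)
Your proposal takes essentially the same route as the paper: both proofs proceed by retracing the argument of Gerke and McDowell, replacing the a.a.s.\ statement of their Lemma~3.2 (finding a partial $H$-factor for a densest subgraph $H\subseteq F$) with the effective bound of Theorem~\ref{thm:Rucbound}, and then upgrading their Theorem~4.1 (the extension/completion step) to an effective tail bound via the same argument that underlies Theorem~\ref{thm:JKVbound}. The paper does not carry out the bookkeeping in detail either, noting only that the rewrite of~\cite[Theorem~4.1]{GM15} should follow ``via a standard argument'' from the appendix of~\cite{JKV08}, which is exactly the obstacle you flag at the end.

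One small remark: your observation that a densest subgraph with the minimum number of vertices is strictly $1$-balanced is correct, but it is not needed for the first phase, since Theorem~\ref{thm:Rucbound} already applies to arbitrary fixed~$F$ at threshold $n^{-1/d^*(F)}$.
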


In the $(t,b)$-strategies that we consider below, we will partition the vertex set of the random graph into smaller subsets.
Our aim will be to find an $F$-factor in each of the subsets, so we will consider $F$-equipartitions (recalling the definition from \cref{sect:notation}).
We will simply purchase all edges that fall inside each one of the sets of the partition.
Roughly speaking, this allows us to treat the graph resulting from each of the parts as an independent binomial random graph whose probability $p$ is ``boosted'' compared to the average density of $B_t$.
Thus, we may apply the results about random graphs to each of these sets to prove that a.a.s.\ this leads to a graph $B_t$ with the desired $F$-factor.

The following result is a simple corollary of \cref{thm:JKVbound} and shows that \cref{thm:lowerboundF} is best possible, up to polylogarithmic factors.
\Cref{thm:factor_upper} is a particular case of this more general result.
We remark that, in \cref{thm:factoruppergeneral}, as well as \cref{thm:partialfactoruppergeneral,thm:factoruppergeneralnon-balanced} below, the constant $ K_1 $ does not really depend on~$ \delta $; this dependency in our statements is a quirk of our proof, and a more general version can be easily deduced by applying our statement twice.

\begin{theorem}\label{thm:factoruppergeneral}
    Let $F$ be a fixed strictly $1$-balanced graph.\COMMENT{Note that the only strictly $1$-balanced tree is the graph consisting of a single edge (so, the result is about perfect matchings). The result we obtain in this case is not very good, but it is correct. All other strictly $1$-balanced graphs contain at least one cycle, and thus have $1$-density larger than $1$ and the optimal $(t,b)$-strategies must have parameters exhibiting the ``slope'' we have for cliques.}
    For every $\delta>0$, there exist constants $K_1,K_2>0$ such that, if $K_1n^{2-1/d(F)}\log^{1/e(F)}n\leq t\leq n^{2-\delta}$ with $n$ restricted to multiples of~$v(F)$, then there exists a successful $(t,K_2t^{1-d(F)}n^{2d(F)-1}\log^{1/(v(F)-1)}n)$-strategy for an $F$-factor.
\end{theorem}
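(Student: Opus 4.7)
The plan is to equipartition $[n]$ into $k$ parts, each of size divisible by $v(F)$, and to instruct Builder to purchase every edge presented between two vertices lying in the same part. Under this strategy, the graph $B_t$ restricted to any part $V_i$ coincides exactly with the induced subgraph $G_t[V_i]$ of the random graph process. We will choose $k$ so that the effective edge density inside each part just crosses the threshold of \cref{thm:JKVbound}, so that applying that theorem to each part, together with a union bound, yields an $F$-factor in each part and hence in $B_t$.

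Concretely, I would set $k := c \cdot t^{d(F)} n^{1 - 2 d(F)} \log^{-1/(v(F)-1)} n$ for a sufficiently small constant $c = c(F) > 0$, and apply \cref{thm:JKVbound} with a fixed large exponent, say $C_1 = 3$, obtaining the corresponding $C_2$. Using the identity $d(F)/e(F) = 1/(v(F)-1)$, a short calculation shows that the lower-bound density $p = (1 - o(1)) t / M$ coming from \cref{lem:basiccoupling} satisfies $p \geq C_2 (n/k)^{-1/d(F)} \log^{1/e(F)}(n/k)$. Since $t \leq n^{2 - \delta}$ forces $n/k = \Omega(n^{\delta d(F)})$ and these sizes are divisible by $v(F)$ by construction, \cref{thm:JKVbound} implies that each of the $k$ independent binomial subgraphs provided by \cref{lem:basiccoupling} contains an $F$-factor with probability at least $1 - O((n/k)^{-3})$; as $k \leq n$, a union bound over parts succeeds with probability $1 - o(1)$.

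For the budget, the number of purchased edges equals the number of edges of $G_t$ lying inside some part, a hypergeometric random variable with mean $\mu = (1 + o(1)) t / k$. The hypothesis $t \geq K_1 n^{2 - 1/d(F)} \log^{1/e(F)} n$ combined with the range of $k$ guarantees $\mu = \omega(\log n)$, so a hypergeometric analogue of \cref{lem:Chernoff} shows that the purchase count is at most $K_2 t / k$ with probability $1 - o(1)$, provided $K_2 \geq 2/c$. Unfolding $t/k = c^{-1} t^{1 - d(F)} n^{2 d(F) - 1} \log^{1/(v(F)-1)} n$ then matches the stated budget, and combining the three failure events (coupling, factor existence in each part, and purchase concentration) gives the success of the strategy.

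The main difficulty I expect is coordinating the three constants: $c$ must be small enough for the density condition of \cref{thm:JKVbound} to hold with the given $C_2$; $K_1$ must be large enough that the hypergeometric mean $\mu$ dominates $\log n$ strongly enough for concentration to survive the union bound over parts; and $K_2$ must absorb both $1/c$ and the concentration slack. A minor secondary issue is arranging divisibility of the part sizes by $v(F)$, which is handled by the $F$-equipartition framework introduced in \cref{sect:notation}: allowing two part sizes differing by $v(F) = O(1)$ perturbs only constants in the calculations above.
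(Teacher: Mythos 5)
Your overall approach — equipartition into parts of size $\Theta(n/k)$, buy every intra-part edge, apply \cref{thm:JKVbound} to each part via the sandwich coupling, and control the budget by concentration — is exactly the route the paper takes, and the identity $d(F)/e(F) = 1/(v(F)-1)$ together with the range of $k$ are used the same way.

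However, the choice $C_1 = 3$ is a genuine gap. With parts of size $|A_j| = \Theta(n/k) = \Theta(n^{\delta d(F)})$, \cref{thm:JKVbound} gives per-part failure probability $O(|A_j|^{-C_1}) = O(n^{-C_1\delta d(F)})$, and you union-bound over $k = \Theta(n^{1-\delta d(F)})$ parts, so the total failure probability is $O(n^{1 - \delta d(F)(C_1+1)})$. For a fixed $C_1 = 3$ this is $o(1)$ only when $\delta > 1/(4d(F))$, but the theorem must hold for \emph{every} $\delta > 0$; take $\delta$ small and the union bound blows up. The fix is to let $C_1$ depend on $\delta$, e.g.\ $C_1 \ge 1/(\delta d(F))$ (this is what the paper does: it sets $C_1 \coloneqq 1/(\delta d(F))$ and then obtains per-part failure probability $o(1/n)$ since $|A_j| = \omega(n^{\delta d(F)})$). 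Since $C_2$ — and hence the constant $c$ in your choice of $k$ — depends on $C_1$, you should select $C_1$ first, as a function of $\delta$, before pinning down $c$, $K_1$, and $K_2$; with that repair, your argument goes through.
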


\begin{proof}
    Let $\delta>0$ be fixed.
    Let $C_1\coloneqq 1/(\delta d(F))$, and let $C_2$ denote the constant given by \cref{thm:JKVbound} with inputs $F$ and $C_1$.
    Let $K>2C_2^{d(F)}$ and choose $K_1> K^{1/d(F)}/2$ and $K_2=9K$.
    Let $b\coloneqq K_2t^{1-d(F)}n^{2d(F)-1}\log^{1/(v(F)-1)}n$.
    By \cref{lem:basiccoupling}, there exist a $p=(1+o(1))t/M$ and a coupling $(G_t,G)$ such that $G\sim G(n,p)$ and a.a.s.\ $G_t\subseteq G$; we will use this coupling to bound the number of purchased edges below.
    Set
    \[k=k(n)\coloneqq\left\lfloor\frac{p^{d(F)}n}{K\log^{1/(v(F)-1)}n}\right\rfloor.\]
    Observe that our choice of $K_1$ guarantees that $k\geq 1$.\COMMENT{Actually turns out that we could choose $K_1=C_2$, if $K=2C_2^{d(F)}$ were true (which it is not). Here the calculation: 
    \begin{align*}
    &\;\frac{p^{d(F)}n}{K\log^{1/(v(F)-1)}n}\geq 1\;
    \Leftrightarrow\;p^{d(F)}\geq K\frac{\log^{1/(v(F)-1)}n}{n}\\
    \Leftarrow\;&(1-o(1))t/\binom{n}{2}\geq K^{1/d(F)}\frac{\log^{1/e(F)}n}{n^{1/d(F)}}\;
    \Leftarrow\;(1-o(1))2K_1n^{-1/d(F)}\log^{1/e(F)}n\geq K^{1/d(F)}\frac{\log^{1/e(F)}n}{n^{1/d(F)}}\\
    \Leftarrow\;& K_1>K^{1/d(F)}/2\;.
    \end{align*}
    }
    Consider an arbitrary $F$-equipartition $[n]=A_1\cupdot\cdots\cupdot A_k$ of the set of vertices, so that for all $j\in[k]$ we have that\COMMENT{We simply want general upper and lower bounds. Because of the floors, it may be that the sets have size at most twice as large as $n/(p^{d(F)}n)$, and we write $3$ to avoid any issue with the rounding for the upper bound. For the lower bound, the $2$ is only there to deal with the rounding.} 
    \[\frac{K}{2}\frac{\log^{1/(v(F)-1)}n}{p^{d(F)}}\leq|A_j|=(1\pm o(1))\frac{n}{k}\leq 3K\frac{\log^{1/(v(F)-1)}n}{p^{d(F)}}.\]
    Note, in particular, that $k=o(n^{1-\delta d(F)})$ and $|A_j|=\omega(n^{\delta d(F)})$.
    
    Consider the following $(t,b)$-strategy: at each time $i\in[t]$, when the edge $e_i$ is presented, purchase it if and only if $e(B_{i-1})<b$ and there exists some $j\in[k]$ such that $e_i\subseteq A_j$.
    The coupling of random graphs introduced above ensures that a.a.s.\ $B_t\subseteq\bigcup_{j=1}^kG[A_j]$, where $G[A_j]\sim G(|A_j|,p)$.
    By applying Chernoff's bound (\cref{lem:Chernoff}), we thus have that a.a.s.\COMMENT{We apply Chernoff to each graph. We are bounding the probability of having (roughly) twice the expected number of edges in each $G[A_j]$, which by Chernoff can be bounded by $\exp(-\Theta(|A_j|^2p))=\exp(-\tilde{\Theta}(p^{1-2d(F)}))$. By the upper bound on $t$, we have that $p$ is polynomial in $n$ (meaning, never too close to $1$ asymptotically), so a union bound ensures that a.a.s.\ the bound holds simultaneously for all graphs. This gives the first inequality.
    For the last inequality, we use that $p\geq t/M\geq t/n^2$, though we never made this explicit anywhere (since the exponent of $p$ is negative, to obtain an upper bound on the expression we must use a lower bound on $p$).}
    \begin{align*}
        e(B_t)<\sum_{j=1}^k|A_j|^2p&\leq\frac{p^{d(F)}n}{K\log^{1/(v(F)-1)}n}\cdot9K^2\frac{\log^{2/(v(F)-1)}n}{p^{2d(F)}}\cdot p\\
        &\leq9Kt^{1-d(F)}n^{2d(F)-1}\log^{1/(v(F)-1)}n=b.
    \end{align*}
    In particular, it follows that a.a.s.\ $B_t=\bigcup_{j=1}^kG_t[A_j]$.
    
    Now, by another application of \cref{lem:basiccoupling}, there exists a $p'=(1-o(1))t/M$ and a coupling $(G',G_t)$ such that $G'\sim G(n,p')$ and a.a.s.\ $G'\subseteq G_t$; in particular, a.a.s.\ $\bigcup_{j=1}^kG'[A_j]\subseteq B_t$.
    Note that, for each $j\in[k]$, we have that $G'[A_j]\sim G(|A_j|,p')$.
    For a fixed $j\in[k]$, a straightforward calculation using the definition of $K$ confirms that $p'\geq C_2|A_j|^{-1/d(F)}\log^{1/e(F)}|A_j|$,\COMMENT{We must verify that \[p'\geq C_2|A_j|^{-1/d(F)}\log^{1/e(F)}|A_j|.\]
    Substituting the lower bound on $|A_j|$ in the polynomial term, it suffices to verify that 
    \[p'\geq C_2(2/K)^{1/d(F)}p\log^{-1/e(F)}n\log^{1/e(F)}|A_j|.\]
    But this clearly holds, since 
    \[C_2(2/K)^{1/d(F)}\log^{-1/e(F)}n\log^{1/e(F)}|A_j|\leq C_2(2/K)^{1/d(F)}<1-o(1)=p'/p\]
    by the definition of $K$.} so by \cref{thm:JKVbound} the probability that $G'[A_j]$ does not contain an $F$-factor is $O(|A_j|^{-C_1})=o(n^{-\delta d(F)C_1})=o(1/n)$.
    Thus, by a union bound over all $j\in[k]$, a.a.s.\ all $G'[A_j]\sim G(|A_j|,p')$ contain an $F$-factor, so we conclude that $B_t$ does too.
    Therefore, the statement holds for our choice of $K_1$ and $K_2$.
\end{proof}

By using \cref{thm:Rucbound} instead of \cref{thm:JKVbound}, we obtain the following result, which shows that \cref{thm:lowerboundF} is best possible, up to constant factors.
Since the proof is analogous to that of \cref{thm:factoruppergeneral}, we omit the details.\COMMENT{But here they are!
Let $\delta>0$ be fixed.
Let $C$ denote the constant given by \cref{thm:Rucbound} with inputs $F$ and $\alpha$.
Let $K>2C^{d^*(F)}$.
Let $b\coloneqq9Kt^{1-d^*(F)}n^{2d^*(F)-1}$.
By \cref{lem:basiccoupling}, there exists a $p=(1+o(1))t/M$ and a coupling $(G_t,G)$ such that $G\sim G(n,p)$ and a.a.s.\ $G_t\subseteq G$.
Set
\[k=k(n)\coloneqq\left\lfloor\frac{p^{d^*(F)}n}{K}\right\rfloor.\]
Consider an arbitrary $F$-equipartition $[n]=A_1\cupdot\cdots\cupdot A_k$ of the set of vertices, so that for all $j\in[k]$ we have
\[\frac{K}{2p^{d^*(F)}}\leq|A_j|=(1\pm o(1))\frac{n}{k}\leq \frac{3K}{p^{d^*(F)}}.\]
Note, in particular, that $k=O(n^{1-\delta d^*(F)})$ and $|A_j|=\Omega(n^{\delta d^*(F)})$.\\
Consider the following strategy: at each time $i\in[t]$, when the edge $e_i$ is presented, purchase it if and only if $e(B_{i-1})<b$ and there exists some $j\in[k]$ such that $e_i\subseteq A_j$.
The coupling of random graphs introduced above ensures that a.a.s.\ $B_t\subseteq\bigcup_{j=1}^kG[A_j]$, where $G[A_j]\sim G(|A_j|,p)$.
By standard concentration arguments, we thus have that a.a.s.
\[e(B_t)<\sum_{j=1}^k|A_j|^2p\leq\frac{p^{d^*(F)}n}{K}\cdot\frac{9K^2}{p^{2d^*(F)}}\cdot p\leq9Kt^{1-d^*(F)}n^{2d^*(F)-1}=b.\]
In particular, it follows that a.a.s.\ $B_t=\bigcup_{j=1}^kG_t[A_j]$.\\
Now, by another application of \cref{lem:basiccoupling}, there exists a $p'=(1-o(1))t/M$ and a coupling $(G',G_t)$ such that $G'\sim G(n,p')$ and a.a.s.\ $G'\subseteq G_t$; in particular, a.a.s.\ $\bigcup_{j=1}^kG'[A_j]\subseteq B_t$.
Note that, for each $j\in[k]$, we have that $G'[A_j]\sim G(|A_j|,p')$.
For a fixed $j\in[k]$, a straightforward calculation confirms that $p'\geq C|A_j|^{-1/d^*(F)}$, so by \cref{thm:Rucbound} the probability that $G'[A_j]$ does not contain an $\alpha$-$F$-factor (for sufficiently large $n$) is at most 
\[2^{-|A_j|}\leq2^{-K/2p^{d^*(F)}}\leq2^{-K/2(4n^{-\delta})^{d^*(F)}}=2^{-cn^{\delta d^*(F)}}=o(1/n),\]
where the first inequality uses the lower bound on $|A_j|$, the second uses the upper bound on $t$ (and the fact that $p\leq4t/n^2$), $c$ is an appropriate constant, and the last inequality holds since $d^*(F)>0$.
Note it is in the last inequality where we are being very wasteful.
Thus, by a union bound over all $j\in[k]$, a.a.s.\ each $G'[A_j]\sim G(|A_j|,p')$ contains an $\alpha$-$F$-factor, so we conclude that $B_t$ does too.
Therefore, the statement holds taking $K_1=(9K)^{1/d^*(F)}$ (which simply ensures that $b\leq t$ in our proof) and $K_2=9K$.}
\Cref{thm:partial_factor_upper} is a particular case of this more general result.

\begin{theorem}\label{thm:partialfactoruppergeneral}
    Let $F$ be a fixed graph and $\alpha\in(0,1)$.
    For every $\delta>0$, there exist constants $K_1,K_2>0$ such that, if $K_1n^{2-1/d^*(F)}\leq t\leq n^{2-\delta}$,\COMMENT{Based on the proof, I think here this $n^{2-\delta}$ (with $\delta$ being a constant) can be improved to anything where $n^\delta$ is larger than any polylogarithm, i.e., to $\delta=\delta(n)=\omega(\log\log n/\log n)$. (This is because the effective bound in \cref{thm:Rucbound} is much stronger than the bound in \cref{thm:JKVbound}.)} then there exists a successful $(t,K_2t^{1-d^*(F)}n^{2d^*(F)-1})$-strategy for an $\alpha$-$F$-factor.
\end{theorem}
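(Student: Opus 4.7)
The plan is to adapt the argument of \cref{thm:factoruppergeneral} essentially verbatim, with \cref{thm:Rucbound} in place of \cref{thm:JKVbound}, the maximum $1$-density $d^*(F)$ in place of $d(F)$, and the polylogarithmic correction dropped (which is exactly what \cref{thm:Rucbound} allows, since it gives an exponentially small failure probability as soon as $p \ge C n^{-1/d^*(F)}$).

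First I would fix $\delta > 0$, let $C = C(F,\alpha)$ be the constant from \cref{thm:Rucbound}, choose $K > 2 C^{d^*(F)}$, and define
\[
b \coloneqq 9 K\, t^{\,1-d^*(F)} n^{\,2d^*(F)-1},\qquad p \coloneqq (1+o(1)) t / M,\qquad k \coloneqq \lfloor p^{d^*(F)} n / K \rfloor.
\]
I would then fix an arbitrary $F$-equipartition $[n] = A_1 \cupdot \cdots \cupdot A_k$, so that each $|A_j| = (1 \pm o(1)) K/p^{d^*(F)}$. The hypothesis $t \leq n^{2-\delta}$ then forces $|A_j| = \Omega(n^{\delta d^*(F)}) \to \infty$, while choosing $K_1 \geq (9K)^{1/d^*(F)}$ ensures both $b \leq t$ and $k \geq 1$ in the allowed regime of $t$. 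The strategy is the greedy one: at time $i$, purchase $e_i$ iff it lies inside some $A_j$ and the budget has not been exhausted, so that $B_t = \bigcup_{j \in [k]} B_t[A_j]$.

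Next, I would invoke \cref{lem:basiccoupling} twice to sandwich $G_t$ between $G \sim G(n, p)$ and $G' \sim G(n, p')$, where $p' = (1 - o(1)) t/M$. Chernoff's bound (\cref{lem:Chernoff}) applied to each $G[A_j]$, together with a union bound over $j$, shows that a.a.s.\ $\sum_j e(G[A_j]) < b$, so the budget is never saturated and a.a.s.\ $B_t \supseteq \bigcup_j G'[A_j]$, with each $G'[A_j] \sim G(|A_j|, p')$. The choice $K > 2 C^{d^*(F)}$ then yields, for $n$ large, the inequality $p' \geq C |A_j|^{-1/d^*(F)}$, so \cref{thm:Rucbound} bounds the probability that $G'[A_j]$ lacks an $\alpha$-$F$-factor by $2^{-|A_j|} \leq \exp(-\Omega(n^{\delta d^*(F)}))$. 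A union bound over the at most $n$ parts then guarantees that a.a.s.\ every $B_t[A_j]$ contains an $\alpha$-$F$-factor; concatenating these produces at least $\alpha n / v(F)$ vertex-disjoint copies of $F$ in $B_t$, i.e., an $\alpha$-$F$-factor. Setting $K_2 \coloneqq 9K$ and $K_1 \coloneqq (9K)^{1/d^*(F)}$ (or any larger constant) finishes the argument.

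No step poses a real obstacle: the proof is structurally identical to that of \cref{thm:factoruppergeneral}, and the only work is parameter bookkeeping to verify the simultaneous validity of $b \leq t$, $|A_j| \to \infty$, and $p' \geq C |A_j|^{-1/d^*(F)}$, each of which is immediate from $K > 2 C^{d^*(F)}$ and the hypothesis $K_1 n^{2-1/d^*(F)} \leq t \leq n^{2-\delta}$. The minor quantitative improvement over \cref{thm:factoruppergeneral} (no log factor, $d^*$ in place of $d$) comes directly from the stronger conclusion of \cref{thm:Rucbound} for partial factors.
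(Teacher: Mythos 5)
Your proposal is correct and essentially identical to the paper's (omitted) proof: you run the same greedy strategy over the same $F$-equipartition with $k = \lfloor p^{d^*(F)} n / K \rfloor$ parts, use the same two couplings from \cref{lem:basiccoupling} to sandwich $B_t$ between the induced subgraphs of $G(n,p)$ and $G(n,p')$, and invoke \cref{thm:Rucbound} with the same constant $K>2C^{d^*(F)}$ to get the $2^{-|A_j|}$ failure bound and union-bound it away, finishing with $K_1 = (9K)^{1/d^*(F)}$ and $K_2 = 9K$. The parameter bookkeeping, including the verification of $p' \ge C|A_j|^{-1/d^*(F)}$ and the budget check via Chernoff plus a union bound over the parts, matches the paper's argument exactly.
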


Lastly, following the same proof but applying \cref{thm:GMbound} instead of \cref{thm:JKVbound}, we also obtain a successful strategy for $F$-factors, where $F$ is not vertex-balanced, which again matches the lower bound provided by \cref{thm:lowerboundF}.
In fact, this shows that, in general, the bounds in \cref{thm:Ffactors_lower} cannot be improved by more than a constant factor when considering complete $F$-factors.
We again omit the details of the proof.\COMMENT{We define $C_1$ analogously as in the proof of \cref{thm:factoruppergeneral}, let $C_2$ be the constant given by \cref{thm:GMbound} with inputs $F$ and $\delta$, and define $K$ analogously as in the proof of \cref{thm:factoruppergeneral}. We then define $k$ (and thus an equipartition) analogously as in the proof of \cref{thm:partialfactoruppergeneral}. The rest proceeds analogously.}

\begin{theorem}\label{thm:factoruppergeneralnon-balanced}
    Let $F$ be a fixed non-vertex-balanced graph.
    For every $\delta>0$, there exist constants $K_1,K_2>0$ such that, for $n$ restricted to multiples of $v(F)$, if $K_1n^{2-1/d^*(F)}\leq t\leq n^{2-\delta}$, then there exists a successful $(t,K_2t^{1-d^*(F)}n^{2d^*(F)-1})$-strategy for an $F$-factor.
\end{theorem}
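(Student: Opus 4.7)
The proof will follow exactly the same template as Theorem~\ref{thm:factoruppergeneral}, only substituting the application of Theorem~\ref{thm:JKVbound} with the non-vertex-balanced analogue Theorem~\ref{thm:GMbound}. Observe that Theorem~\ref{thm:GMbound} gives the same qualitative polynomial-in-$n$ tail bound as Theorem~\ref{thm:JKVbound}, but applies at the threshold $p \sim n^{-1/d^*(F)}$ in place of the (polylog-enhanced) $1$-balanced threshold. Consequently the polylogarithmic factors appearing in Theorem~\ref{thm:factoruppergeneral} disappear, and the budget $b = \Theta(t^{1-d^*(F)} n^{2d^*(F)-1})$ in the conclusion matches the lower bound of Theorem~\ref{thm:Ffactors_lower} up to a constant factor.

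More concretely, fix $\delta > 0$, set $C_1 \coloneqq 1/(\delta d^*(F)) + 1$, let $C_2$ be the constant produced by Theorem~\ref{thm:GMbound} with inputs $F$ and $C_1$, and choose $K > (2 C_2)^{d^*(F)}$. Let $p \coloneqq t/M$, set $b \coloneqq 9K t^{1 - d^*(F)} n^{2d^*(F)-1}$, and define $k \coloneqq \lfloor p^{d^*(F)} n / K \rfloor$. I will then fix an $F$-equipartition $[n] = A_1 \cupdot \cdots \cupdot A_k$; this exists because $v(F) \mid n$, and the parts satisfy $|A_j| = \Theta(p^{-d^*(F)})$, which by $t \leq n^{2-\delta}$ is in particular $\Omega(n^{\delta d^*(F)})$ (so each $A_j$ is large and has size divisible by $v(F)$). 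Builder's strategy will be the greedy one: at each time $i$, purchase $e_i$ if and only if $e(B_{i-1}) < b$ and $e_i \subseteq A_j$ for some $j \in [k]$.

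The analysis proceeds by two applications of Lemma~\ref{lem:basiccoupling} (the standard sandwiching). The upper coupling with $G \sim G(n, (1+o(1))p)$, together with a Chernoff bound on $e(G) = \sum_j e(G[A_j])$, shows that a.a.s.\ the total number of presented in-part edges is at most $b$, so the budget constraint is never binding and $B_t = \bigcup_j G_t[A_j]$ a.a.s. The lower coupling with $G' \sim G(n, (1-o(1))p)$ yields $\bigcup_j G'[A_j] \subseteq B_t$ a.a.s., and each $G'[A_j]$ is distributed as $G(|A_j|, (1-o(1))p)$. A short computation using the choice of $K$ verifies that $(1-o(1))p \geq C_2 |A_j|^{-1/d^*(F)}$, so Theorem~\ref{thm:GMbound} is applicable to each $G'[A_j]$ (using $v(F) \mid |A_j|$) and gives failure probability $O(|A_j|^{-C_1}) = o(n^{-\delta d^*(F) C_1}) = o(1/n)$ per part. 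A union bound over the at most $n$ parts then shows that $B_t$ a.a.s.\ contains an $F$-factor, and taking $K_1 \coloneqq (9K)^{1/d^*(F)}$ (so that $b \leq t$) and $K_2 \coloneqq 9K$ completes the proof.

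Because the strategy and the coupling-based analysis are structurally identical to those used in Theorem~\ref{thm:factoruppergeneral} and Theorem~\ref{thm:partialfactoruppergeneral}, no essential new obstacle arises. The only point deserving attention is the substitution of Theorem~\ref{thm:GMbound} for Theorem~\ref{thm:JKVbound}: one must verify both that $F$ being non-vertex-balanced is preserved under the restriction to $G(|A_j|, \cdot)$ (this is a property of $F$ alone, so it is automatic) and that the divisibility hypothesis $v(F) \mid |A_j|$ of Theorem~\ref{thm:GMbound} is met, which is exactly why we take an $F$-equipartition rather than an arbitrary equipartition. Everything else is a routine Chernoff-and-union-bound computation.
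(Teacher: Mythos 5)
Your proposal is correct and takes essentially the same approach the paper indicates: partition $[n]$ into $k=\lfloor p^{d^*(F)}n/K\rfloor$ parts of size $\Theta(p^{-d^*(F)})$ (an $F$-equipartition to guarantee divisibility), buy all in-part edges subject to the budget cap, couple above and below to binomial random graphs via Lemma~\ref{lem:basiccoupling}, bound $e(B_t)$ by Chernoff, and apply Theorem~\ref{thm:GMbound} to each induced part with a union bound. The paper omits this proof precisely because it proceeds exactly as you describe, mirroring Theorems~\ref{thm:factoruppergeneral} and~\ref{thm:partialfactoruppergeneral} with \cref{thm:GMbound} substituted for \cref{thm:JKVbound}; your minor constant adjustments (the $+1$ in $C_1$ so the per-part failure probability is genuinely $o(1/n)$ rather than $O(1/n)$, and choosing $K_1=(9K)^{1/d^*(F)}$ to ensure $b\le t$) are sound.
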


We remark that, in more generality, using \cref{thm:JKVGenCase} instead of \cref{thm:JKVbound}, the same proof strategy as for \cref{thm:factoruppergeneral} yields successful $(t,b)$-strategies for $F$-factors, for any fixed graph $F$, where the upper bound on $b$ is tight (with respect to that given in \cref{thm:Ffactors_lower}) up to subpolynomial factors.
Any improvement on \cref{thm:JKVGenCase} for any family of graphs $F$ will lead to corresponding improvements for successful $(t,b)$-strategies.
In fact, when considering couplings in the next section, it will be useful to have the following lemma for binomial random graphs, which is a by-product of the proof of this general upper bound for successful strategies.
As this lemma will be relevant later, we include its proof for completeness.

\begin{lemma}\label{lem:factorGnp}
    Let $F$ be a fixed graph.
    For every $\delta>0$, for all $n^{-1/d^*(F)+\delta}\leq p\leq n^{-\delta}$ with $n$ restricted to multiples of\/~$v(F)$, there exist $\theta=\theta(n)\in\mathbb{N}$ with $\theta = \Theta(p^{d^*(F)}n^{1-\delta d^*(F)/2})$ and $\delta'=\delta'(n)=o(1)$ such that the following statements hold for any $F$-equipartition $[n]=T_1\cupdot\cdots\cupdot T_\theta$.
    \begin{enumerate}[label={\ensuremath{(\mathrm{F}\arabic*)}}]
        \item\label{lem:factorGnpitem1} If\/ $G_1\sim G(n,(1-o(1))p)$, then with probability at least $1-\delta'$ we have that $G_1$ contains an $F$-factor which is entirely contained in $\bigcup_{i\in[\theta]} G_1[T_i]$.
        \item\label{lem:factorGnpitem2} If\/ $G_2\sim G(n,(1+o(1))p)$, then a.a.s.\ $\lvert\bigcup_{i\in[\theta]} E(G_2[T_i])\rvert<p^{1-d^*(F)}n^{1+\delta d^*(F)/2}$.
    \end{enumerate}
\end{lemma}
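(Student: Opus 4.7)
The plan is to follow the same blueprint as in the proof of \cref{thm:factoruppergeneral}, replacing the application of \cref{thm:JKVbound} by that of \cref{thm:JKVGenCase}. Concretely, I would choose $\theta\coloneqq p^{d^*(F)}n^{1-\delta d^*(F)/2}$ (rounded to an integer compatible with the existence of an $F$-equipartition), so that for any $F$-equipartition $[n]=T_1\cupdot\cdots\cupdot T_\theta$, each part satisfies $|T_j|=(1\pm o(1))n/\theta=(1\pm o(1))n^{\delta d^*(F)/2}p^{-d^*(F)}$. Note that the hypotheses on $p$ ensure both $|T_j|=\omega(1)$ and $\theta=\omega(1)$, and in fact $|T_j|=n^{\Omega(\delta)}$, which will matter later.

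To prove \ref{lem:factorGnpitem2}, a direct computation gives $\mathbb{E}\bigl[\lvert\bigcup_{i\in[\theta]}E(G_2[T_i])\rvert\bigr]\leq \theta\cdot\binom{|T_j|}{2}\cdot(1+o(1))p\leq (1+o(1))(n^2p/(2\theta))=(1+o(1))p^{1-d^*(F)}n^{1+\delta d^*(F)/2}/2$. Since this mean is polynomial in $n$ (using $p\leq n^{-\delta}$ and $d^*(F)>0$), an application of Chernoff's bound (\cref{lem:Chernoff}) to the sum of independent edge indicators gives the desired tail bound a.a.s.

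For \ref{lem:factorGnpitem1}, since $G_1[T_j]\sim G(|T_j|,(1-o(1))p)$ are mutually independent across~$j$, it suffices by a union bound to show that each $G_1[T_j]$ contains an $F$-factor with probability $1-n^{-\omega(1)}$. For this, I would verify that $(1-o(1))p\geq |T_j|^{-1/d^*(F)+\eta}$ for some constant $\eta=\eta(\delta,F)>0$. Using $|T_j|=\Theta(n^{\delta d^*(F)/2}p^{-d^*(F)})$, this inequality simplifies to $|T_j|^\eta=O(n^{\delta/2})$, which holds for sufficiently small $\eta$ because $|T_j|\leq n$. Then \cref{thm:JKVGenCase} applied to each $T_j$ (with $\eta$ in the role of $\eps$) yields that, with probability at least $1-|T_j|^{-\omega(1)}=1-n^{-\omega(1)}$, there is an $F$-factor in $G_1[T_j]$. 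A union bound over the $\theta\leq n$ parts gives total failure probability $\delta'=n^{-\omega(1)}=o(1)$, and the resulting $F$-factor is contained in $\bigcup_{i\in[\theta]}G_1[T_i]$ by construction.

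I do not anticipate a genuine obstacle: the main technical point is the arithmetic verification that $p$ lies above the threshold given by \cref{thm:JKVGenCase} for the induced parts, which is straightforward once the relation $|T_j|^{-1/d^*(F)}=\Theta(pn^{-\delta/2})$ is written down. The hypothesis $p\leq n^{-\delta}$ is what prevents $|T_j|$ from becoming too small (which would force $\eta$ to shrink below~$0$), so one should keep track of this bound carefully when choosing~$\eta$. The divisibility of each $|T_j|$ by $v(F)$ required to apply \cref{thm:JKVGenCase} is guaranteed by the definition of an $F$-equipartition.
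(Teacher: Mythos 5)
Your proposal is correct and follows essentially the same route as the paper's (commented-out) proof: the same choice of $\theta$, Chernoff's bound on $\sum_j\binom{|T_j|}{2}$ independent edge indicators for \ref{lem:factorGnpitem2}, and the verification $p'\geq|T_j|^{-1/d^*(F)+\eta}$ followed by \cref{thm:JKVGenCase} and a union bound for \ref{lem:factorGnpitem1}. Your reduction of the threshold check to $|T_j|^\eta=O(n^{\delta/2})$ is a slightly cleaner rendering of the same arithmetic; one should just make sure to pick $\eta<\delta/2$ strictly so that the implicit constant stays below~$1$ after accounting for the $(1\pm o(1))$ factors in $p'$ and $|T_j|$.
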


\begin{proof}
Let $\delta>0$ be fixed and sufficiently small.
Set $\theta=\theta(n)\coloneqq p^{d^*(F)}n^{1-\delta d^*(F)/2}$.\COMMENT{Note that $\theta=\omega(1)$ so we do not need floors here.}
Consider an arbitrary $F$-equipartition $[n]=T_1\cupdot\cdots\cupdot T_\theta$ of the set of vertices, so that for all $j\in[\theta]$ we have that
\[|T_j|=(1\pm o(1))\frac{n}{\theta}=(1\pm o(1))n^{\delta d^*(F)/2}p^{-d^*(F)}.\]
Note, in particular, that $\theta=O(n^{1-3\delta d^*(F)/2})$ and $|T_j|=\Omega(n^{3\delta d^*(F)/2})$.

By Chernoff's inequality (\cref{lem:Chernoff}), we have that a.a.s.\COMMENT{Note that the expected number of edges in each $G_2[T_j]$ is roughly of the order of magnitude of 
\[|T_j|^2p=n^{\delta d^*(F)}p^{1-2d^*(F)}\geq n^{\delta d^*(F)}\]
where in the equality we are ignoring smaller order terms, and the inequality uses the fact that $d^*(F)\geq1$ for every $F$, so $1-2d^*(F)<0$ and, to obtain a lower bound on the expression, we make use of the trivial upper bound $p\leq1$.
Therefore, when applying Chernoff bounds, we have enough concentration for a union bound.}
\[\left\lvert\bigcup_{j\in[\theta]} E(G_2[T_j])\right\rvert<\sum_{j=1}^\theta\frac23|T_j|^2p\leq p^{d^*(F)}n^{1-\delta d^*(F)/2}\cdot n^{\delta d^*(F)}p^{-2d^*(F)}\cdot p=p^{1-d^*(F)}n^{1+\delta d^*(F)/2},\]
thus proving \ref{lem:factorGnpitem2}.

On the other hand, letting $p'=(1-o(1))p$ so that $G_1\sim G(n,p')$, for each $j\in[\theta]$ we have that $G_1[T_j]\sim G(|T_j|,p')$.
For a fixed $j\in[\theta]$, a straightforward calculation confirms that $p'\geq |T_j|^{-1/d^*(F)+\eta}$ for some constant $\eta>0$.\COMMENT{Indeed, given the bounds on $|T_j|$ and $p'$, it suffices to verify that \[p\geq2\left(n^{\delta d^*(F)/2}p^{-d^*(F)}\right)^{-1/d^*(F)+\eta}=2\left(\frac{p}{n^{\delta/2}}\right)^{1-\eta d^*(F)}\iff p^{\eta d^*(F)}\geq2n^{-\delta(1-\eta d^*(F))/2}.\]
Since $p\geq n^{-1/d^*(F)+\delta}$, it suffices to verify that
\[n^{-\eta d^*(F)(1/d^*(F)-\delta)}\geq2n^{-\delta(1-\eta d^*(F))/2},\]
which holds for $n$ sufficiently large if
\begin{align*}
    \eta d^*(F)\left(\frac{1}{d^*(F)}-\delta\right)<\frac{\delta}{2}(1-\eta d^*(F))&\iff\eta d^*(F)\left(\frac{1}{d^*(F)}-\frac{\delta}{2}\right)<\frac{\delta}{2}\\
    &\iff\eta<\frac{\delta}{2d^*(F)\left(\frac{1}{d^*(F)}-\frac{\delta}{2}\right)},
\end{align*}
and such positive $\eta$ exist by choosing $\delta$ sufficiently small, as we wanted to see.
(Note that all the places where one would need $\delta$ sufficiently small actually correspond to cases where, for larger values of $\delta$, we would fall outside the allowed range for $p$, and thus the statement would be vacuously true.)}
Thus, by \cref{thm:JKVGenCase} with $\eta$ playing the role of $\eps$, the probability that $G_1[T_j]$ does not contain an $F$-factor is at most $|T_j|^{-\omega(1)}=n^{-\omega(1)}$.
Thus, by a union bound over all $j\in[\theta]$, a.a.s.\ each $G_1[T_j]\sim G(|T_j|,p')$ contains an $F$-factor, which results in the desired global $F$-factor with probability at least $1-\delta'$, where $\delta'=n^{-\omega(1)}$, completing the proof of \ref{lem:factorGnpitem1}.
\end{proof}

\section{Successful strategies for powers of Hamilton cycles}\label{sect:powers}

The successful $(t,b)$-strategies showcased in the proofs of \cref{thm:factoruppergeneral,thm:partialfactoruppergeneral,thm:factoruppergeneralnon-balanced} rely on the simple fact that, by our coupling to $ G(n, p) $, if we split the vertex set $ [n] $ into sets of size $ m = o(n) $ and purchase all edges which fall inside each of the sets, we may think of the graph induced on each of the sets as a binomial random graph $ G(m, p'(m)) $, where $ p'(m) = p(n) = o(p(m)) $.
By choosing the sizes $ m $ so that $ p'(m) $ is slightly above the threshold for having an $F$-factor, we obtain an $F$-factor in each of the smaller graphs and, thus, a complete $F$-factor (while reducing the total number of purchased edges).
We believe this represents the simplest case of what may be a more general phenomenon: that if there exists a constructive proof of the existence of a spanning structure $H$ in $G(n,p)$ where the construction relies on joining small local structures (where by ``constructive'' we do not necessarily mean that the small local structures can be built explicitly, but only that the global structure can be built explicitly by joining the small local structures), then it is possible to adapt such a proof to obtain successful $(t,b)$-strategies for Builder to construct a graph containing a copy of $H$.
As one further example of this phenomenon, we consider powers of Hamilton cycles.

Recall that, for $k\geq2$, the threshold for containing the $k$-th power of a Hamilton cycle is $n^{-1/k}$~\cite{riordan00,KO12,KNP21}.
The arguments of \citet{riordan00} and \citet{KNP21} do not provide a description of the construction of such a structure.
\citet{KO12} provided a constructive proof slightly above the threshold, showing that a.a.s.\ $G(n,n^{-1/k+\eps})$ contains the $k$-th power of a Hamilton cycle, for all fixed $\eps>0$.
Here, we adapt the proof strategy of \citet{KO12} to obtain nearly optimal successful $(t,b)$-strategies for powers of Hamilton cycles.
We will prove the following statement, which is equivalent to \cref{thm:hamsquare_upper}.

\begin{theorem}\label{thm:upperboundPowers}
    Fix an integer $k\geq2$.
    For every $\eps>0$ and every $t=t(n)=\omega(n^{2-1/k})$,
    there exists $b\leq n^{2k-1+\eps}/t^{k-1}$ such that there is a successful $(t,b)$-strategy for the $k$-th power of a Hamilton cycle.
\end{theorem}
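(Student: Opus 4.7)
My plan is to adapt the constructive proof of Kühn and Osthus~\cite{KO12} that $G(n,p)$ contains the $k$-th power of a Hamilton cycle slightly above the threshold to the budget-constrained setting, combined with the three-stage coupling framework developed in \cref{sect:couplings}.

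Fix $\eps>0$ and let $p\coloneqq t/M=\omega(n^{-1/k})$. Choose a small $\delta>0$ depending on $\eps$ and set $m\coloneqq p^{-k/(1-k\delta)}$, so that $p$ is safely above the threshold $m^{-1/k}$ for the $k$-th power of a Hamilton cycle in a graph of size $m$. Partition $[n]$ into a small reservoir $R$ of size $n^{1-c}$ (for some small $c>0$ depending on $\eps$) and equal parts $V_1,\ldots,V_s$ of size $m$ where $s=\Theta(n/m)$; split the allocated time into three stages of lengths $t_1=(1-o(1))t$ and $t_2,t_3=o(t)$ with $t_2,t_3=\omega(M/(|R|n))$. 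Let $K^1\coloneqq\bigcup_{i\in[s]} K_n[V_i]$ and let $K^2$ be the set of edges incident to $R$; these are edge-disjoint, as required to apply \cref{lem:two_stage_real}.

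The strategy proceeds in three stages. In Stage~1 Builder purchases every presented edge of $K^1$. By \cref{lem:basiccoupling}, each $B_{t_1}[V_i]$ stochastically contains $G(m,(1-o(1))p)$, and so by a version of~\cite{KO12} allowing one to prescribe endpoint $k$-tuples (which can be established by a standard absorption argument localised to $V_i$), a.a.s.\ each $V_i$ contains a $k$-th power of a Hamilton path $P_i$ with any specified $k$-tuples of first and last vertices. The set $\cE^1$ consists of all possible configurations of such powers of paths. In Stage~2, given a choice of each $P_i$, Builder buys edges of $K^2$ so as to find, between the last $k$ vertices of $P_i$ and the first $k$ vertices of $P_{i+1}$, a short $k$-th power of a path running through a constant number of reserve vertices; since $|R|$ is polynomially large and $p$ is polynomially bounded below, a standard second-moment argument in the Stage~2 binomial coupling shows that such linkers exist with high probability for each fixed pair of endpoint tuples. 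Stage~3 is then used to purchase any last edges required to complete the linkers and close up the whole structure into a single $k$-th power of a Hamilton cycle. Conditions~(P1)--(P3) follow from the above three claims in the corresponding binomial random graphs.

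An application of \cref{lem:two_stage_real} then guarantees that, despite the conditioning between stages, Builder can make consistent choices so that a.a.s.\ $B_t$ contains the required $k$-th power of a Hamilton cycle. The budget is dominated by Stage~1: the expected number of edges purchased is $s\binom{m}{2}p=\Theta(nmp)=\Theta(n^{2k-1}/t^{k-1})$, concentrated via Chernoff's bound; Stages~2 and~3 add only $O(|R|\cdot n\cdot p\cdot n^{o(1)})$ edges, of lower order. The $n^{o(1)}$ slack from the choice of $m$ is absorbed into the $n^\eps$ factor, giving $b\le n^{2k-1+\eps}/t^{k-1}$. The main obstacle I anticipate is the correct formal setup of $\cE^1$, $\cE^2_i$, and $\mathcal{F}_{i,j}$ and the verification of (P1)--(P3), since the conditioning on Stage~1 structures could in principle distort the edges available in later stages; this is precisely the scenario that \cref{lem:two_stage_real} is designed to handle, via the edge-disjointness of $K^1$ and $K^2$. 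A secondary, self-contained technical point is proving the prescribed-endpoints version of~\cite{KO12}, which should follow by a localised absorption argument at the two ends of each~$V_i$.
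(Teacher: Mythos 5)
Your overall plan — tile $[n]$ into small parts, build $k$-th powers of Hamilton paths locally, and stitch them together through a reservoir, invoking \cref{lem:two_stage_real} to handle the conditioning between stages — is in the right family of ideas, but there is a genuine gap that your argument does not address: \emph{leftover reservoir vertices}. In Stage~2 you propose to find, for each consecutive pair $(P_i, P_{i+1})$, a short linker through a constant number of reservoir vertices. You have no control over \emph{which} reservoir vertices those linkers use. Even if the total number of internal linker vertices $sr$ equals $|R|$ exactly, a standard second-moment argument only gives you the existence of \emph{some} linker between each pair of endpoint tuples, with no mechanism to force the union of linkers to be a perfect cover of $R$. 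Any vertex of $R$ not used by a linker is stranded outside the would-be Hamilton cycle, and "Stage~3 is then used to purchase any last edges required to close up the structure" does not describe a way to absorb those vertices. This is precisely the difficulty that the paper's proof circumvents: Stage~I there buys a partial factor of $(j,\ell,k)$-\emph{absorbers} (not Hamilton paths), Stage~II links the spines, Stage~III covers the remaining vertices with $k$-th powers of short paths, and Stage~IV links those paths back into the cycle using the \emph{absorption vertices} as the reservoir; any absorption vertex that remains uncovered is then inserted by swapping each absorber's spine for its augmented path. Without something playing the role of the absorbers, your construction cannot turn a near-Hamilton power into a Hamilton power.

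Two secondary points. First, your framing treats \cref{lem:two_stage_real} as covering all three stages, but that lemma only permits \emph{one} pair of stages to overlap in edge usage (the edge-disjointness of $K^1$ and $K^2$ is essential); the paper's four-stage strategy uses \cref{lem:two_stage_real} for Stages~I--III and then a separate mechanism — the sparse-partitionable notion plus \cref{lem:linkage} — to handle the fourth stage, and your argument would face the same structural constraint. Second, the "prescribed-endpoints version of~\cite{KO12}" that you invoke in Stage~1 is neither obviously true nor needed: the whole point of \cref{lem:two_stage_real} is to let Builder observe the endpoint tuples produced by Stage~1 and adapt Stage~2 to them, so there is no reason to demand that the endpoint tuples be fixable in advance, and you should not rely on such a lemma without proving it.
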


\subsection{Notation and proof sketch}

Let us describe the notation we need for proving this theorem.
Given an $\ell$-vertex path $P$ (which we in general denote by~$P_\ell$), we denote its vertices in a sequence $P=x_1\ldots x_\ell$, and view $x_1$ as the first vertex of $P$ and $x_\ell$ as its last vertex.
Suppose that $\ell\geq k\geq2$.
When considering the $k$-th power~$P^k$ of~$P$, we refer to $x_1,\ldots,x_k$ as the \defi{initial endsequence} of $P^k$, and to $x_{\ell-k+1},\ldots,x_\ell$ as its \defi{final endsequence}.

The proof of \cref{thm:upperboundPowers} relies on the absorption method.
We will construct a set of \emph{absorbers} which can be used to incorporate (or \emph{absorb}) a few ``leftover'' vertices into the $k$-th power of a large cycle.
To be more precise, for integers $k\geq2$, $j\geq3$ and $\ell\geq2k$, we consider a \defi{$(j,\ell,k)$-absorber} $\mathcal{A}_{j,\ell,k}$, which is a graph consisting of the union of the $k$-th powers of two paths $P$ and $Q$, where $P$ is a path on $s\coloneqq j(2\ell+4)+\ell$ vertices which we call the \defi{spine}, $V(Q)=V(P)\cup\{v\}$ where $v\notin V(P)$ is the \defi{absorption vertex}, and the initial endsequences and the final endsequences of $P^k$ and $Q^k$ coincide.
The path $Q$ is called the \defi{augmented path}.
For an explicit description of $(j,\ell,k)$-absorbers, see \cite[Section~3]{KO12}.
While we do not need to present the explicit definition of $(j,\ell,k)$-absorbers, we will want to use the following property that they satisfy.

\begin{lemma}[{\cite[Lemma~6.1]{KO12}}]\label{lem:absorberdensity}
    For every $k\geq2$ and $\delta>0$, there exist $j=j(k,\delta)\geq3$ and \mbox{$\ell_0=\ell_0(k,\delta)\geq2k$} such that, for all $\ell\geq\ell_0$, we have that $d^*(\mathcal{A}_{j,\ell,k})\leq k+\delta$.
\end{lemma}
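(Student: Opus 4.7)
The plan is to bound the 1-density $d(F') = e(F')/(v(F')-1)$ for every subgraph $F'$ of $\mathcal{A}_{j,\ell,k}$ with $v(F') \ge 2$, exploiting the fact that both $P^k$ and $Q^k$ are $k$-th powers of paths and that $Q$ differs from $P$ only through bounded "local" modifications. The baseline is that $d^*(P_n^k) \le k$ for every $n \ge 2$: given any subgraph $R \subseteq P_n^k$ on $m$ vertices, its edges correspond to pairs of vertices at distance at most $k$ in $P_n$, a count maximised when the vertex set of $R$ is an interval, yielding $e(R) \le \binom{k}{2} + (m-k)k$ for $m \ge k$ (and $e(R) \le \binom{m}{2}$ otherwise). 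A routine computation shows $d(R) < k$ in both cases, and the same bound applies to subgraphs of $Q^k$.

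The key input from the explicit KO12 construction is that the symmetric difference $E(Q^k) \triangle E(P^k)$ is controlled: the paths $P$ and $Q$ coincide outside a bounded number of local "gadget" regions, arising from the insertion of the absorption vertex $v$ and the auxiliary rearrangements that match the endsequences. Consequently, $|E(Q^k) \triangle E(P^k)| \le \Lambda(j,k)$ for a function $\Lambda$ depending only on $j$ and $k$, not on $\ell$. Fixing $j = j(k,\delta)$ as dictated by the other combinatorial requirements of the absorber (independent of density), $\Lambda$ becomes a constant $\Lambda = \Lambda(k,\delta)$.

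Given a subgraph $F' \subseteq \mathcal{A}_{j,\ell,k}$, I would split into two regimes. For "large" $F'$ with $v(F') \ge V_0 := 1 + \Lambda/\delta$, set $F'' := F' \cap P^k$; then $v(F'') \ge v(F') - 1$ and $e(F') \le e(F'') + \Lambda$, so
\[ d(F') \;\le\; d(F'') + \frac{\Lambda}{v(F')-1} \;\le\; k + \delta. \]
For "small" $F'$ with $v(F') < V_0$, the subgraph lies in a ball of bounded radius around some vertex; writing $F' = (F' \cap P^k) \cup (F' \cap Q^k)$ and applying the baseline bound to each piece, one obtains $d(F') \le k$ provided the local rearrangements in the KO12 gadgets do not create subgraphs of density exceeding $k$ (a finite case check of the explicit construction). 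Choosing $\ell_0$ large enough that the gadgets and their neighbourhoods fit comfortably inside the spine then completes the argument.

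The main obstacle is the structural claim that $|E(Q^k) \triangle E(P^k)|$ is bounded independently of $\ell$, together with the verification that no local gadget produces a subgraph of density above $k$. Both are features of the explicit design of the absorber in \cite[Section~3]{KO12} and reduce to finite inspection once the gadget is laid out; modulo this verification, the density bound follows from the clean accounting above.
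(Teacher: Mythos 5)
The paper does not prove this lemma; it is cited verbatim from Kühn and Osthus \cite{KO12}, so there is no in-paper argument to compare against. Evaluating your sketch on its own terms, the overall architecture (global slack for large subgraphs, finite inspection for small ones) is reasonable, but two steps are problematic. First, the load-bearing claim that $|E(Q^k)\triangle E(P^k)|\le\Lambda(j,k)$ uniformly in $\ell$ is asserted, not argued. It is far from obvious from the black-box description used in this paper: the spine has $j(2\ell+4)+\ell$ vertices, and if the $j$ gadget regions occupy $\Theta(\ell)$ vertices each, the reordering of $Q$ relative to $P$ could in principle touch $\Theta(j\ell)$ edges, which would invalidate your choice of $V_0$ and make the large-regime bound degenerate for subgraphs of any fixed proportion of the absorber. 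Whether $Q$ is a constant-size perturbation of $P$ is a feature of the explicit construction in \cite[Section~3]{KO12}, and the whole argument turns on it.

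Second, your treatment of small $F'$ is vacuous as stated. Writing $F'=(F'\cap P^k)\cup(F'\cap Q^k)$ and ``applying the baseline bound to each piece'' cannot yield $d(F')\le k$: a union of two graphs each of $1$-density at most $k$ can have $1$-density close to $2k$ (take two edge-disjoint $k$-th powers of paths on the same $m$ vertices and let $m\to\infty$). The caveat ``provided the local rearrangements do not create subgraphs of density exceeding $k$'' is precisely what needs to be proved, so the decomposition adds nothing; and since $V_0=1+\Lambda/\delta$ grows as $\delta\to0$, the ``finite'' check must cover all connected subgraphs on up to $\Theta(\Lambda/\delta)$ vertices, so it is not a bounded inspection of a single gadget unless one further argues that every such subgraph meets at most one gadget plus a surrounding window of a path power. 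The large-$F'$ computation itself is fine given your claim on $\Lambda$ (indeed $d(F'')\le k-\binom{k}{2}/(v(F'')-1)$ gives a little extra slack), but as a whole this is a proof outline resting on unverified properties of a construction you have not reproduced, rather than a proof.
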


Crucially, if a graph $G$ contains a $(j,\ell,k)$-absorber with spine $P$, augmented path $Q$, and absorption vertex $v$, and contains the $k$-th power of a cycle which itself contains $P^k$ but avoids $v$, then, by replacing $P^k$ by $Q^k$, we obtain the $k$-th power of a cycle which now contains $v$ and otherwise has the same vertices as the original.
This process can be iterated to absorb multiple vertices into a long power of a cycle.
With this property in mind, the proof of \cref{thm:upperboundPowers} (as well as the main theorem of \citet{KO12}) goes roughly as follows.
We consider four stages.
In the first stage, for suitable parameters $j$ and $\ell$, Builder finds a $(1/3)$-$\mathcal{A}_{j,\ell,k}$-factor.
We let $A$ denote the set of absorption vertices from all the copies of $\mathcal{A}_{j,\ell,k}$ Builder constructed, and consider the $k$-th power of the spine of each of them.
In the second stage, Builder joins all the powers of the spines into a single $k$-th power of a path, by finding \emph{linking structures} between them (see \cref{sect:linking} for the definition).
These linking structures will use vertices not covered in the first stage, and in total cover a small constant fraction of the vertices.
In the third stage, Builder covers all the remaining vertices with vertex-disjoint copies of the $k$-th power of a path of a suitable length.
In the fourth stage, analogously to what she did in the second, Builder finds linking structures between all the current powers of paths, yielding the $k$-th power of a very long cycle.
In this stage, all the new vertices of the linking structures that are used belong to $A$.
At this point, any vertices of $A$ that do not belong to the $k$-th power of the cycle can be absorbed into it by replacing the $k$-th power of their respective spines by the $k$-th power of the augmented path in each corresponding absorber.
In each of the rounds described above, Builder will actually find the desired structures restricting the purchased edges to those spanned by sets of an appropriate size, so that the total number of edges that she considers does not exceed the budget.

As discussed in \cref{sect:couplings}, to prove that our strategy succeeds, we will apply \cref{lem:two_stage_real} to show that, in the third stage, a.a.s.\ we can obtain a factor of $ k $-th powers of paths covering all remaining vertices, after conditioning upon having found a particular set of linking structures in the second stage.
The very general setup of \cref{lem:two_stage_real} is important here since we use (one of) the main results of \citet{JKV08} (\cref{thm:JKVGenCase}) as a black box to find factors (of $ k $-th powers of paths) in binomial random graphs.
However, the major drawback of this approach is that, with our formulation, it is inherently limited to strategies in which at most two stages interact with each other in a non-edge-disjoint way (in this case, the second and third stages).
As such, we require a different approach to prove the success of the fourth stage in finding the desired linking structures, conditional upon having obtained a given collection of powers of paths in previous stages; we explain this below in \cref{sect:linking}.

\subsection{Linking structures} \label{sect:linking}

Let us focus first on the linking structures mentioned above.
Given \mbox{$ k \ge 2 $} and $ V \subseteq [n] $, define a \defi{$ k $-endsequence pair in~$ V $} to be a pair $ (A, B) = ((a_1,\ldots,a_k), (b_1,\ldots,b_k)) $ such that all $2k$ vertices are pairwise distinct and lie in~$ V $ (if the set~$V$ is not specified, one should interpret that we refer to $k$-endsequence pairs in~$[n]$).
Given further $ \eta \in \mathbb{N} $, define a \defi{$(k, \eta)$-endsequence family in $ V $} to be a set of $ \eta $ pairwise disjoint $ k $-endsequence pairs $ (A_i, B_i)_{i \in [\eta]} $ in $ V $.
Now, for any integer $r \geq 0$, an \defi{$(A,B)$-linkage} $L$ of \defi{length} $r$ is defined as follows.
First, we consider the $k$-th power $L'$ of a path on $r+2k$ vertices whose initial and final endsequences are $A$ and $B$, respectively.
Then we obtain~$L$ by removing all edges of $L'$ spanned by $A$ and by $B$.
We refer to the vertices of an $(A,B)$-linkage which are contained neither in $A$ nor in $B$ as its \defi{internal vertices} (so the length of an $(A,B)$-linkage coincides with its number of internal vertices).
Given $ r \in \mathbb{N} $, a $(k, \eta)$-endsequence family $ \mathcal{L} $ in $V$ and a set $ U \subseteq [n]\setminus V$, an \defi{$ \mathcal{L} $-linkage of length $ r $ in $ U $} is a set consisting of one $ (A_i, B_i) $-linkage for each $i\in[\eta]$, all of length~$ r $ and pairwise vertex-disjoint, with internal vertices in~$ U $.

We devote this section to stating and proving some auxiliary results which will guarantee we can find the desired linkages for the proof of \cref{thm:upperboundPowers}.
In order to state these results, we must first introduce some more ideas required to formally prove that our strategy succeeds.
As discussed previously, in order to prove \cref{thm:upperboundPowers} we will employ a strategy consisting of four stages, with each subsequent stage building upon the structures found during the previous stages.
Recall that our coupling (\cref{lemma:coupling_stages}) allows us to think of the edge set presented during the fourth stage of the process as containing $ G \setminus G' $, where $ G' \sim G(n, p') $ contains all edges presented in the first three stages, and $ G $ is an independent binomial random graph.
When considering $ p' = o(1) $, the effect of subtracting $ G' $ on the probability of certain structures appearing should be negligible.
One way to formalise this idea is to prove that a.a.s.\ $ G' $ is ``sparse'' in some appropriate sense, and separately prove that, given an arbitrary ``sparse'' graph $ H $, the required structure appears a.a.s.\ in $ G \setminus H $.
We now define the particular characterisation of ``sparseness'' which we need in our application.

\begin{definition}\label{def:sparse-partitionable}
    Let $ n, m, r \in \mathbb{N} $ and $ \lambda > 0 $, let $ R \subseteq [n] $, and let $H$ be a graph on vertex set $[n]$.
    We say that a set $ S \subseteq [n] $ is \defi{$ R $-independent} in $H$ if $ H[S \cup R] $ contains no edges incident to any vertex in~$ S $.
    We say that a set $ V \subseteq [n] $ is \defi{$ (R, r, m, \lambda) $-sparse} in $ H $ if every subset $ U \subseteq V $ of size $ |U| \ge m $ contains at least $ \lambda |U|^r $ subsets $ S \subseteq U $ of size $ |S| = r $ which are $ R $-independent in $H$.
    Given integers $ k, \eta \in\mathbb{N} $ and a $(k, \eta)$-endsequence family $ \mathcal{L} = (A_i, B_i)_{i \in [\eta]} $, we say that $ \mathcal{L} $ is \defi{$ (r, m, \lambda) $-linkable over $ V $ despite~$ H $} if~$ V $ is $ (A_i \cup B_i, r, m, \lambda) $-sparse in $ H $ for every $ i \in[\eta] $.

    Now let $ k, r, \nu, \xi, \zeta \in \mathbb{N} $ and $ \chi, \lambda > 0 $.
    We say that $ H $ is \defi{$ (k, r, \nu, \xi, \zeta, \chi, \lambda) $-sparse-partitionable} if the following holds.
    Let $ X_1, \ldots, X_{\nu} $ and $ Y_1, \ldots, Y_{\xi} $ be any collection of pairwise disjoint subsets of~$ [n] $, with $ |X_i| = k $ and $ |Y_a| = \zeta $ for every $ i \in [\nu] $ and $ a \in [\xi] $.
    Then there exists an equipartition $ [\nu] = J_1 \cupdot \cdots \cupdot J_{\xi} $ such that, for every $ a \in [\xi] $ and $ i \in J_a $, the set $ Y_a $ is $ (X_i, r, \chi \zeta, \lambda) $-sparse in $ H $.
\end{definition}

To motivate the definition of sparse-partitionable, suppose that Builder has a $ (k/2, \nu) $-endsequence family $ (A_i, B_i)_{i \in [\nu]} $, for which she seeks to find linkages of length $ r $, while only buying edges contained within the sets $ \{ Y_a \}_{a \in [\xi]} $.
In general, if $ |Y_a| = \zeta $ is small (say, $ o(pn) $), then we cannot guarantee that~$ Y_a $ is $ (X_i, r, \chi \zeta, \lambda) $-sparse in $ H $ for arbitrary $ i \in [\nu] $ and $ a \in [\xi] $, because it could be the case that $ Y_a \subseteq N_H(X_i) $.
However, if $ |\bigcup_{a \in [\xi]} Y_a| = \xi \zeta $ is linear and $ p = o(1) $, then given $ i \in [\nu] $ this must hold for most $ a \in [\xi] $; furthermore, we can match these~$ i $ and~$ a $ to give the desired equipartition.
We can later use this matching to find the desired linkages.

\begin{remark}\label{rmk:sparse-partitionable}
    The property of being sparse-partitionable for any given parameters is decreasing, in the sense that it is inherited by subgraphs.
    Furthermore, our definition of sparseness has the following monotonicity property: if $ V_1 $ is $ (R_1, r, m_1, \lambda_1) $-sparse in $ H_1 $ and $ H_2 \subseteq H_1$, $ V_2 \subseteq V_1 $, $ R_2 \subseteq R_1 $, $ m_2 \ge m_1 $, and $ \lambda_2 \le \lambda_1 $, then $ V_2 $ is $ (R_2, r, m_2, \lambda_2) $-sparse in $ H_2 $.
    Moreover, if $ H $ is empty, then trivially~$ V $ is $ (R, r, m, \lambda) $-sparse for any $ V, R \subseteq [n] $, $ r, m \in \mathbb{N} $ with $ r \le m $, and $ \lambda \ll 1 / r $.
\end{remark}

We may now proceed to state the lemmas which we will use in the proof of \cref{thm:upperboundPowers} to show that the desired linkages exist.
Specifically, we use a version of the connecting lemma of \citet[Lemma~3.3]{NS19}.
Our version allows us to remove a sparse graph~$ H $ from the random graph~$ G $ and still derive the desired conclusion.
However, our version is simplified substantially by considering only the particular case which we need.

\begin{lemma} \label{lem:linkage}
	Let $ 1/n \ll 1/r \ll \delta, 1/k \leq 1/2 $.
    Let $ H $ be an arbitrary graph on $ [n] $ and $ V \subseteq [n] $ be a subset of size $|V| \ge n / \log{n} $.
    If $ G \sim G(n, p) $ for $ p \ge n^{-1/k + \delta} $, then with probability at least $1 - n^{-\log{n}}$ the graph $ G \setminus H $ satisfies the following property.
    
    For every $\eta \in \mathbb{N}$ such that $ 4 r \eta \le |V|$ and every $(k, \eta)$-endsequence family $ \mathcal{L} $ in $ [n] \setminus V $ which is $ (r, n / \log^4{n}, 1 / \log{n}) $-linkable over $ V $ despite~$ H $, the graph $G \setminus H $ contains an $ \mathcal{L} $-linkage of length~$ r $ in~$ V $.
\end{lemma}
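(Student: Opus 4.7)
The plan is to combine a per-pair existence result via Janson's inequality with a greedy embedding that constructs the $\eta$ linkages one at a time. The starting observation removes $H$ from the probabilistic analysis: if $S \subseteq V$ is $R$-independent in $H$ with $R \coloneqq A \cup B$, then every edge of any $(A, B)$-linkage with internal vertex set $S$ lies in $H[S \cup R]$ and is incident to some vertex of $S$, and therefore cannot be in $H$. Consequently, the containment $L \subseteq G \setminus H$ reduces to $L \subseteq G$ whenever internal vertices are chosen $R$-independently.

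For the per-pair estimate, fix an endsequence pair $(A, B)$ and a subset $V' \subseteq V$ with $|V'| \geq 3|V|/4$, which inherits sparseness via \cref{rmk:sparse-partitionable}. Let $X$ count ordered $r$-tuples $(v_1, \ldots, v_r)$ of distinct elements of $V'$ whose underlying set is $R$-independent in $H$ and whose corresponding $(A, B)$-linkage, of $E \coloneqq rk + \binom{k+1}{2}$ edges, is contained in $G$. Sparseness yields at least $|V'|^r / \log n$ such $R$-independent subsets, giving
\[
\mu \coloneqq \mathbb{E}[X] \geq r! \cdot |V'|^r p^E / \log n \geq n^{\Omega(rk\delta)},
\]
via $|V'| \geq n/(2\log n)$, $p \geq n^{-1/k + \delta}$, and the hierarchy $1/r \ll \delta, 1/k$. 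I would then apply Janson's inequality (\cref{lem:janson}), parametrising pairs of tuples whose linkages share an edge by their vertex overlap pattern and exploiting the strict $1$-balancedness of the $k$-th power of a path (density tending to $k$) to control both "internal" and "boundary" overlaps. This should yield $\bar{\Delta} \leq \mu^2/n^{c}$ for some $c = c(k, \delta) > 0$, hence $\mathbb{P}[X = 0] \leq \exp(-n^{c})$ for each fixed pair and subset.

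Given this per-pair bound, I would process the pairs of $\mathcal{L}$ greedily in arbitrary order, with $V_i \coloneqq V \setminus \bigcup_{j < i} W_j$, where $W_j$ is the set of internal vertices of the linkage constructed at step $j$. Since $4r\eta \leq |V|$ we have $|V_i| \geq 3|V|/4$, and sparseness of $V_i$ with respect to $A_i \cup B_i$ is inherited from $V$. To decouple the adaptive choice of $V_i$ from the probabilistic analysis, I would split $G$ into sufficiently many independent binomial copies at densities still above the threshold $n^{-1/k}$, so that each batch of greedy steps can be analysed using a fresh copy of $G$ independent of the history. A final union bound over endsequence pairs, greedy steps, and batches then gives overall failure probability at most $n^{-\log n}$.

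The main obstacle is obtaining the Janson exponent $n^{c}$ strongly enough to cover the required union bounds. The dominant contributions to $\bar{\Delta}$ come from overlap patterns with a single shared internal vertex close to $A$ or $B$, contributing up to $k$ shared edges into the endsequences; one must verify that the strictly $1$-balanced structure of powers of paths, together with the polynomial slack in $p$ above $n^{-1/k}$, always produces a strict polynomial gain. A secondary delicate point is calibrating the multi-exposure decomposition of $G$ so that each independent copy remains above the threshold even when $\eta$ is large.
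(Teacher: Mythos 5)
Your reduction to the binomial graph via $R$-independence is exactly right, and the overall greedy framework (build the $\eta$ linkages one at a time, maintaining a forbidden set $U$ of size at most $r\eta$) is the same high-level plan the paper inherits from Nenadov--Škorić. The gap is in the probabilistic estimate that has to support that greedy argument. Your per-pair Janson bound gives, for a single pair $(A,B)$ and a fixed $V'$, a failure probability of the form $\exp(-n^{c})$ where $c$ comes from the ratio $\mu^2/\bar\Delta$. That ratio does \emph{not} scale with $r$: although $\mu\ge n^{\Omega(rk\delta)}$, the overlap term $\Delta$ also scales like $\mu^2\cdot n^{-\Theta(k\delta)}$, so the exponent $\mu^2/\bar\Delta$ is only $n^{\Theta(k\delta)}$, and since $p\le 1$ forces $\delta\le 1/k$ this exponent is at most roughly $n^{1}$, and in the typical regime strictly smaller. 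The greedy step, however, is adaptive: the forbidden set $U=\bigcup_{j<i}W_j$ depends on the randomness already used, so to make the argument rigorous you must union-bound over all possible $U\subseteq V$ of size at most $r\eta$, i.e.\ over roughly $n^{r\eta}$ choices. With $\eta$ as large as $|V|/(4r)=\Theta(n/r)$ this is $n^{\Theta(n)}$ many sets, and $\exp(-n^{c})$ with $c<1$ cannot beat it. Your proposed fix via multiple exposures does not close this: splitting $G(n,p)$ into $m$ independent copies reduces each density to $\approx p/m$, and staying above the $P^k$-path threshold $n^{-1/k}$ forces $m\le n^{\delta-o(1)}$; but within each batch the adaptive $U$ still ranges over $n^{\Theta(r\eta/m)}$ sets, and the two constraints are simultaneously satisfiable only when $\delta$ is very close to its maximal value $1/k$, not for the full range $1/r\ll\delta$ demanded by the statement.

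The way the paper (following \cite{NS19}) resolves this is to run a single Janson estimate with $X$ counting candidate linkages \emph{aggregated over all $i\in[\eta]$ simultaneously}, not per pair. This inflates $\mu$ by a factor $\eta$; and crucially, the cross terms in $\Delta$ for $i_1\ne i_2$ (which involve $\eta^2$ index pairs but at most $k(j-1)$ shared edges when the linkages share $j\ge 2$ vertices) still satisfy $\Delta\lesssim n^{-k\delta/2}\mu^2/\eta$. Consequently the Janson exponent becomes $\mu^2/\bar\Delta\gtrsim\eta\, n^{k\delta/3}$, i.e.\ it scales \emph{linearly in $\eta$}, which is precisely what is needed to absorb the $n^{O(\eta)}$ union bound over choices of $\eta$, $U$, and $\mathcal{L}$. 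With that universal high-probability statement in hand, the greedy argument becomes deterministic (conditioned on the good event) and needs no fresh randomness at all. So the key idea you are missing is not a cleverer per-pair estimate nor a multi-exposure trick, but the aggregation over all $\eta$ endsequence pairs inside the single application of Janson.
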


We omit the proof of \cref{lem:linkage}, since it is identical to that given in \cite{NS19}, restricting to the graph case and a particular range of $ |V| $ and $ p $, and taking a linkage $ L $ (as defined at the start of this section) in place of their general graph $ F $.
The only substantial change is to replace their Claim~3.4 with the following analogous statement to account for the removal of the sparse graph $ H $.

\begin{claim} \label{claim:linkage_single}
	Let $ 1/n \ll 1/r \ll \delta, 1/k \leq 1/2 $.
    Let $ H $ be an arbitrary graph on $ [n] $ and $V \subseteq [n]$ be a subset of size $|V| \ge n / \log^3{n}$.
    If $ G \sim G(n, p) $ for $ p \ge n^{-1/k + \delta} $, then with probability at least $1 - n^{-\log^2{n}}$ the graph $ G \setminus H $ satisfies the following property.
    
    For every $\eta \in \mathbb{N} $ such that $ 2 r \eta \le |V| $, every subset $U \subseteq V$ of size $|U| \le r \eta $, and every $(k, \eta)$-endsequence family $ \mathcal{L} $ in $ [n] \setminus V $ which is $ (r, n / \log^4{n}, 1 / \log{n}) $-linkable over $ V $ despite $ H $, there is some $i \in [\eta]$ such that $ G \setminus H $ contains an $ (A_i, B_i) $-linkage of length $ r $ in $ V \setminus U $.
\end{claim}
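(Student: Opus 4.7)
The plan is to fix any admissible triple $(\eta, U, \mathcal{L})$ and apply Janson's inequality (\cref{lem:janson}) to show that already for the first endsequence pair $(A_1, B_1)$ an $(A_1, B_1)$-linkage of length $r$ in $V \setminus U$ exists in $G \setminus H$ with probability at least $1 - \exp(-n^{\Omega(1)})$; the conclusion then follows by a union bound over admissible triples. Set $V' := V \setminus U$; since $|U| \le r\eta \le |V|/2$ we have $|V'| \ge |V|/2 \ge n/(2\log^3 n) \ge n/\log^4 n$. As $V$ is $(A_1 \cup B_1, r, n/\log^4 n, 1/\log n)$-sparse in $H$ by the linkability hypothesis, applying the definition of sparseness directly to the subset $V' \subseteq V$ gives a family $\mathcal{S}$ of at least $|V'|^r/\log n$ size-$r$ subsets of $V'$ which are $(A_1 \cup B_1)$-independent in $H$. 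Crucially, any $(A_1, B_1)$-linkage whose internal vertex set lies in $\mathcal{S}$ has \emph{all} of its edges outside $H$, so such a linkage is contained in $G \setminus H$ if and only if its edge set is contained in $G$.

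I would then apply Janson's inequality to the random variable $X$ counting the ordered sequences $(s_1, \ldots, s_r)$ with pairwise distinct entries, underlying set in $\mathcal{S}$, and corresponding linkage fully contained in $G$. Each such linkage has exactly $m = kr + \binom{k+1}{2}$ edges, so
\[
\mu := \mathbb{E}[X] \ge r! \cdot \frac{|V'|^r}{\log n} \cdot p^{m} \ge \frac{n^{r k \delta - O_k(1)}}{(\log n)^{O_r(1)}},
\]
which exceeds any prescribed polynomial in $n$ once $r$ is chosen sufficiently large in terms of $\delta$ and $k$, as permitted by the hierarchy $1/r \ll \delta, 1/k$. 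A standard overlap computation — classifying pairs of candidate linkages by the number $t \in \{1, \ldots, r\}$ of shared internal vertices, bounding the overlap edge set by that of a $k$-th power of a path on $t + O_k(1)$ vertices, and summing the resulting contributions — yields $\overline{\Delta} \le \mu^2/n^{\Omega(1)}$. Janson then gives $\Pr[X = 0] \le \exp(-n^{\Omega(1)})$, which easily dominates the crude $\exp(O_k(n\log n))$ bound on the number of admissible triples $(\eta, U, \mathcal{L})$.

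The main obstacle is the overlap estimate on $\overline{\Delta}$. Although the calculation is routine, it requires carefully verifying that the $k$-th power of a path is sufficiently $1$-balanced: a subgraph on $t + O_k(1)$ vertices arising as the overlap of two candidate linkages contributes a term of order at most $|V'|^{2r - t} p^{2m - (tk + O_k(1))}$ to $\overline{\Delta}$, and with $p \ge n^{-1/k + \delta}$ the sum of these over $t \in \{1, \ldots, r\}$ is dominated by the $t = 0$ contribution, which equals $\mu^2$ up to a $(\log n)^{O_r(1)}$ factor. This is essentially the same density calculation as in the proof of the connecting lemma in \cite{NS19}; restricting the candidate internal vertex sets to $\mathcal{S}$ (rather than all of $\binom{V'}{r}$) shrinks the ensemble by at most a polylogarithmic factor, so all asymptotics are inherited.
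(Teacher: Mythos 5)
There is a genuine gap in your proposal, and it lies precisely in the union-bound step that you dismiss as "easy". You propose to apply Janson's inequality only to the ensemble of candidate $(A_1,B_1)$-linkages, obtaining a failure probability of $\exp(-n^{\Omega(1)})$. Let us track what that $\Omega(1)$ exponent actually is. The dominant contribution to $\Delta$ comes from pairs of candidate linkages sharing a single internal vertex, which share about $k$ edges; this gives
\[
\Delta \gtrsim n^{2r-1}\,p^{2m-k}\qquad\text{so}\qquad \frac{\mu^2}{\overline{\Delta}} \lesssim n\,p^k\cdot (\log n)^{O(1)} .
\]
Since $\delta < 1/k$ is forced by $p\le 1$, we have $np^k\le n^{k\delta}$ with $k\delta < 1$, so the exponent $\Omega(1)$ you obtain is strictly less than~$1$. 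In particular the failure probability for a single triple $(\eta,U,\mathcal{L})$ is at best $\exp(-n^{c})$ for some $c<1$, and this bound does not improve as $\eta$ grows. On the other side of the union bound, $\eta$ can be as large as $|V|/(2r) = \Theta(n/\log^3 n)$, and for a fixed such $\eta$ the number of admissible pairs $(U,\mathcal{L})$ is of order $n^{r\eta + 2k\eta}$, which for $\eta=\Theta(n/\log^3 n)$ is $\exp\bigl(\Theta(n/\log^2 n)\bigr)$. Thus the number of triples you must union over genuinely does reach $\exp(\Theta(n/\log^2 n))$, while your per-triple failure bound is $\exp(-n^{c})$ with $c<1=o(1-o(1))$ relative to that count. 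The product diverges; the union bound fails.

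The fix is to make the Janson failure probability scale with $\eta$, which is exactly what the paper does: it runs Janson once on the combined ensemble of all candidate $(A_i,B_i)$-linkages over \emph{all} $i\in[\eta]$, not just $i=1$. This inflates $\mu$ by a factor of $\eta$, while the cross terms $\Delta_1$ (pairs with $i_1\ne i_2$) and $\Delta_2$ (pairs with $i_1=i_2$) are each bounded by roughly $\mu^2 n^{-k\delta/2}/\eta$, so $\mu^2/\overline{\Delta}\gtrsim \eta\, n^{k\delta/3}$ and the failure probability becomes $\exp(-\eta\, n^{\Omega(1)})\le n^{-\eta r\log^2 n}$. That extra factor of $\eta$ in the exponent is precisely what is needed to cancel the $n^{\Theta(\eta)}$ count of triples sharing that value of $\eta$. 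Your treatment of the sparsity hypothesis, the restriction to $(A_1\cup B_1)$-independent internal vertex sets, and the overlap estimate classified by number of shared vertices are all sound; but without the $\eta$-fold amplification of $\mu$, the argument does not close.
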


This follows via Janson's inequality very similarly to \cite[Claim~3.4]{NS19} (in fact, it is substantially simpler in our special case), but using the sparseness of $ H $ in the estimate for $ \mu $.
We include the proof in \cref{appen:linkage} for completeness.

We close this section by showing that binomial random graphs are indeed sparse-partitionable, for an appropriate range of the parameters.

\begin{lemma} \label{lem:upperBoundPowers_sparse}
    Given $ r \in \mathbb{N} $ with $ r \ge 2 $, there exists $ \lambda > 0 $ such that the following holds for all $ k, h \in \mathbb{N} $.
    Let $ \nu, \xi, \zeta \in \mathbb{N} $ and $ \chi, p > 0 $ all be functions of $ n $ such that
    \begin{enumerate}[$(\mathrm{S}\arabic*)$]
        \item\label{lem:upperBoundPowers_sparse_item1} $ p = \omega(\log{n} / n) $ and $ p = o(n^{-2/h}) $,
        \item\label{lem:upperBoundPowers_sparse_item2} $ \chi \xi \zeta = \omega(n) $,
        \item\label{lem:upperBoundPowers_sparse_item3} $ \chi p = \omega(\nu^{-1/h}\log n) $,
        \item\label{lem:upperBoundPowers_sparse_item4} $ \zeta \ge \nu^{1/h} $, and
        \item\label{lem:upperBoundPowers_sparse_item5} $\nu\geq\xi$.
    \end{enumerate}
    Then a.a.s.\ $ G(n, p) $ is $ (k, r, \nu, \xi, \zeta, 2 \chi p, \lambda) $-sparse-partitionable.
\end{lemma}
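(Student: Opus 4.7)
The plan is to establish two a.a.s.\ structural properties of $G = G(n,p)$ (a uniform degree bound and a uniform edge density bound in large induced subgraphs), and then, for an arbitrary adversarial disjoint collection $(X_i)_{i\in[\nu]}, (Y_a)_{a\in[\xi]}$, construct the required equipartition by reducing to a Hall-type condition on a bipartite "good-pair" graph.

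First I would show, via Chernoff's inequality and a union bound, that a.a.s.\ (i) $\Delta(G) \leq 2np$ and (ii) $e(G[U]) \leq c_r |U|^2$ for every $U \subseteq [n]$ with $|U| \geq 2 \chi p \zeta$, where $c_r > 0$ is a sufficiently small constant depending only on $r$. Property (i) uses only the lower bound on $p$ in \ref{lem:upperBoundPowers_sparse_item1}. For property (ii), combining \ref{lem:upperBoundPowers_sparse_item3} and \ref{lem:upperBoundPowers_sparse_item4} yields $\chi p \zeta = \omega(\log n)$, which is exactly what is needed to close the union bound over all subsets of $[n]$ of size at least $2\chi p \zeta$, since $p = o(1)$ by \ref{lem:upperBoundPowers_sparse_item1} guarantees that the expected density $\binom{|U|}{2} p$ lies well below the target $c_r |U|^2$.

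Conditional on (i) and (ii), fix an adversarial collection and, for each $i \in [\nu]$, define the bad set $B_i \coloneqq \{a \in [\xi] : |N_G(X_i) \cap Y_a| > \chi p \zeta\}$. Using the disjointness of the $X_i$'s together with (i), $|N_G(X_i)| \leq 2knp$, and the disjointness of the $Y_a$'s gives $\sum_a |N_G(X_i) \cap Y_a| \leq |N_G(X_i)| \leq 2knp$, so $|B_i| \leq 2kn/(\chi \zeta) = o(\xi)$ by \ref{lem:upperBoundPowers_sparse_item2}. A symmetric double count (fixing $T \subseteq [\xi]$, summing $|N_G(X_i) \cap \bigcup_{a \in T} Y_a|$ over those $i$ with $B_i \supseteq T$, and bounding it by $\sum_{v \in \bigcup_{a \in T} Y_a} \deg_G(v) \leq 2 |T| \zeta n p$) yields the more general estimate $|\{i : B_i \supseteq T\}| \leq 2n/\chi$, which is moreover $0$ whenever $|T| > 2kn/(\chi \zeta)$. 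The equipartition $[\nu] = J_1 \cupdot \cdots \cupdot J_\xi$ will then be produced by invoking the defect form of Hall's theorem for $b$-matchings on the bipartite graph $H$ in which $i \sim a$ iff $a \notin B_i$, with supply $1$ at each $i$ and demand $\nu/\xi$ at each $a$; the Hall condition reduces to $|\{i : B_i \supseteq T\}| \leq \nu(1 - |T|/\xi)$ for every $T \subseteq [\xi]$, which I will verify by combining the two estimates above with \ref{lem:upperBoundPowers_sparse_item2}, \ref{lem:upperBoundPowers_sparse_item4} and \ref{lem:upperBoundPowers_sparse_item5}. Once the equipartition is fixed, $|N_G(X_i) \cap Y_a| \leq \chi p \zeta$ for every $i \in J_a$, so any $U \subseteq Y_a$ with $|U| \geq 2 \chi p \zeta$ contains the subset $U' \coloneqq U \setminus N_G(X_i)$ of size at least $|U|/2$; every $r$-subset of $U'$ which is independent in $G$ is automatically $X_i$-independent, and property (ii) gives at least $\binom{|U'|}{r} - e(G[U'])\binom{|U'|-2}{r-2} \geq \lambda |U|^r$ such sets, for a constant $\lambda = \lambda(r) > 0$ depending only on $r$.

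The main obstacle will be the verification of the Hall-type inequality in the intermediate regime $1 \leq |T| \leq 2kn/(\chi \zeta)$, where neither the uniform bound $|\{i : B_i \supseteq T\}| \leq 2n/\chi$ nor the vanishing beyond $|T| > 2kn/(\chi \zeta)$ is by itself sharp enough; here the hypotheses $\nu \geq \xi$ and $\zeta \geq \nu^{1/h}$ have to be used jointly with $\chi \xi \zeta = \omega(n)$. Should the pure Hall argument prove too rigid, a fallback is to start from a uniformly random equipartition $f_0$ (for which $\mathbb{E}[\{i : f_0(i) \in B_i\}] = \sum_i |B_i|/\xi = o(\nu)$) and then repair the $o(\nu)$ badly-assigned indices by an iterative swap procedure, exploiting the slack $|B_i| = o(\xi)$ established in the previous paragraph to guarantee that each repair can be performed without creating a new bad assignment.
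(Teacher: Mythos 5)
Your derivation of the structural properties (i) and (ii), the double-count giving $|B_i|\le 2kn/(\chi\zeta)=o(\xi)$ and $|\{i:B_i\supseteq T\}|<2n/\chi$, and the final inclusion--exclusion step producing $\ge\lambda|U|^r$ many $X_i$-independent $r$-subsets of $U$ are all sound (modulo whether the threshold in (ii) should be $\chi p\zeta$ rather than $2\chi p\zeta$, since you apply it to $U'$ with $|U'|\ge|U|/2$). The gap is precisely where you flagged it: verifying the defect Hall inequality $|\{i:B_i\supseteq T\}|\le\nu(1-|T|/\xi)$. Since $|B_i|=o(\xi)$ forces $|T|=o(\xi)$ whenever the left-hand side is positive, the inequality essentially reduces to $2n/\chi\lesssim\nu$, but (S1)--(S5) do \emph{not} imply $n/\chi=O(\nu)$. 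For a concrete obstruction, take $h=3$, $p=n^{-2/3-\alpha}$ for small $\alpha\in(0,1/9)$, $\nu=\xi=n^{1/3}$, $\zeta=n^{1/9}$, $\chi=n^{5/9+\alpha}\log^2n$; one checks (S1)--(S5) hold, yet $n/\chi=n^{4/9-\alpha}/\log^2n=\omega(\nu)$. Your property (ii), a quadratic density bound $e(G[U])\le c_r|U|^2$, cannot close this gap either: to bound $|\{i:a\in B_i\}|$ for a single $a$ one needs a bipartite edge count of order $|Z|^{2-\Omega(1)}$ between a set $Z$ of candidate vertices drawn from the $X_i$'s and the single part $Y_a$, which no quadratic-type bound can supply.

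The paper closes this gap by introducing a third structural property, namely that a.a.s.\ $G(n,p)$ is $K_{h,h}$-free---this is what the hypothesis $p=o(n^{-2/h})$ in (S1) is for, and your proposal never uses it. If more than $\nu/2$ of the $X_i$ were bad for a fixed $a$, picking a high-degree vertex from each bad $X_i$ yields a set $Z$ with $|Z|\ge\nu/2$ and $e_G(Z,Y_a)\ge|Z|\chi p\zeta/k$; using (S3) and (S4) this exceeds the K\H{o}v\'ari--S\'os--Tur\'an bound for $K_{h,h}$-free bipartite graphs, a contradiction. Combined with $|B_i|=o(\xi)$, this gives minimum degree $\ge\nu/2$ on both sides of a balanced $\nu\times\nu$ auxiliary bipartite graph (built from an initial arbitrary equipartition of $[\nu]$ into $\xi$ parts), so a perfect matching---and hence the required equipartition---exists. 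Your fallback (random equipartition plus iterative swap repair) does not obviously avoid the difficulty, since showing that each repair can be carried out without creating a new bad assignment is essentially the same Hall-type obstruction in augmenting-path form: it needs a bound of the strength the KST argument provides.
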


The proof of \cref{lem:upperBoundPowers_sparse} makes use of the following version of the well-known K\H{o}v\'ari--S\'os--Tur\'an theorem~\cite{KST54}.

\begin{lemma}[\citet{KST54}] \label{lem:kst_thm}
    For any $ t \in \mathbb{N} $, there exists $ C > 0 $ such that, for any positive integers $ m\ge n $, every subgraph $ G $ of the complete bipartite graph $ K_{m, n} $ with at least $ C (m^{1 - 1/t} n + m) $ edges contains a copy of the complete bipartite graph $ K_{t, t} $ as a subgraph.
\end{lemma}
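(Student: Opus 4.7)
The plan is to prove this by the classical double-counting argument of Kővári, Sós and Turán. Let $G$ be a bipartite graph with parts $A$ of size $m$ and $B$ of size $n$, and assume for contradiction that $e(G) \ge C(m^{1-1/t}n + m)$ while $G$ is $K_{t,t}$-free. Write $d_v$ for the $G$-degree of $v \in A$, and note $\sum_{v \in A} d_v = e(G)$.

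The key step is to count pairs $(v, T)$ where $v \in A$ and $T$ is a $t$-subset of $N_G(v) \subseteq B$. On the one hand, each $v \in A$ contributes $\binom{d_v}{t}$ such pairs. On the other hand, for each fixed $t$-subset $T \subseteq B$, the number of $v \in A$ with $T \subseteq N_G(v)$ must be at most $t-1$, since otherwise $T$ together with $t$ such vertices would form a forbidden $K_{t,t}$. Summing the second count over all $t$-subsets of $B$ yields
\[
\sum_{v \in A} \binom{d_v}{t} \;\le\; (t-1)\binom{n}{t} \;\le\; \frac{(t-1)\, n^t}{t!}.
\]

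Next, I would apply Jensen's inequality (using convexity of $x \mapsto \binom{x}{t}$ on $x \ge t-1$) to obtain $\sum_{v \in A}\binom{d_v}{t} \ge m \binom{\bar d}{t}$, where $\bar d \coloneqq e(G)/m$ is the average degree. Splitting into two regimes handles the shape of the bound. If $\bar d < 2t$, then $e(G) < 2tm$, which contradicts the hypothesis as soon as $C > 2t$. Otherwise $\bar d \ge 2t$, and using the standard estimate $\binom{x}{t} \ge ((x-t+1)/t)^t \ge (x/(2t))^t$ for $x \ge 2t$, I obtain
\[
m \left( \frac{\bar d}{2t} \right)^t \;\le\; \frac{(t-1)\, n^t}{t!},
\]
which rearranges to $\bar d \le C_t \, n / m^{1/t}$, and hence $e(G) \le C_t \, m^{1-1/t} n$, for an explicit constant $C_t$ depending only on $t$. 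Choosing $C \coloneqq \max\{2t+1, C_t+1\}$ makes both regimes contradict the assumption $e(G) \ge C(m^{1-1/t}n + m)$, proving the lemma.

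There is essentially no obstacle: this is a textbook computation and all steps are elementary. The only minor care needed is in choosing $C$ large enough so that the "low average degree" case (handled by the $+\,m$ summand in the bound) and the "Jensen" case (handled by the $m^{1-1/t}n$ summand) can be combined into a single clean statement. The hypothesis $m \ge n$ is not actually needed for the argument above; it is a harmless normalisation indicating which side plays the role of $A$.
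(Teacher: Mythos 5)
Your proof is correct: this is the classical K\H{o}v\'ari--S\'os--Tur\'an double-counting argument, and every step checks out --- the count of pairs $(v,T)$ against the $K_{t,t}$-free condition, the split into the low-average-degree regime (absorbed by the $+\,m$ term) and the Jensen regime with the estimate $\binom{x}{t}\ge (x/(2t))^t$ for $x\ge 2t$, and the final choice of $C$. The paper gives no proof of this lemma at all (it is quoted directly from the cited reference), so there is nothing to compare against; your write-up is the standard proof of the cited result, and your observation that the hypothesis $m\ge n$ is not actually needed for this form of the bound is also accurate.
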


\begin{proof}[Proof of \cref{lem:upperBoundPowers_sparse}]
    Let $ \lambda \ll \delta \ll 1/r $, and let $ G \sim G(n,p) $.
    We claim that a.a.s.\ $ G $ satisfies the following three properties:
    \begin{enumerate}[label={\ensuremath{(\mathrm{R}\arabic*)}}]
        \item\label{lem:upperBoundPowers_sparse_item1bis} $ |N_G(v)| \le 2pn $ for all $ v \in [n] $,
        \item\label{lem:upperBoundPowers_sparse_item2bis} $ G $ contains no copy of $ K_{h,h} $, and
        \item\label{lem:upperBoundPowers_sparse_item3bis} $ [n] $ is $ (\varnothing, r, \chi p \zeta, \delta) $-sparse in $ G $.
    \end{enumerate}
    Indeed, \ref{lem:upperBoundPowers_sparse_item1bis} follows from a straightforward application of Chernoff's bound (\cref{lem:Chernoff}) and a union bound over all $ v \in [n] $ by \ref{lem:upperBoundPowers_sparse_item1}.
    Moreover, again by \ref{lem:upperBoundPowers_sparse_item1}, the expected number of copies of~$ K_{h, h} $ in~$ G $ is at most $ o(n^{2h - 2h^2 / h}) = o(1) $, so \ref{lem:upperBoundPowers_sparse_item2bis} follows by Markov's inequality.
    Lastly, we focus on~\ref{lem:upperBoundPowers_sparse_item3bis}.
    Fix any set $ V \subseteq [n] $ of size $ |V| \ge \chi p \zeta $.
    We aim to apply \cref{lem:janson} to bound the number of independent sets of size $ r $ contained in $ V $, so recall the definitions of $ \mu $ and $ \Delta $.\COMMENT{Note that $\mu$ and $\Delta$ are actually defined on the complement of $G$.}
    For $ S \subseteq V $ of size~$ r $, the probability that $ S $ is independent in $G$ is $ (1 - p)^{\binom{r}{2}} = 1 - o(1) $, so the expected number~$ \mu $ of such independent $ S $ satisfies $ \mu \ge 2 \delta |V|^r $.
    Observe also that $ \Delta = O(|V|^{2r - 2}) $, since the number of choices for two sets $ S_1, S_2 \subseteq V $ of size $r$ with at least two vertices in their intersection is $ O(|V|^{2r - 2}) $.
    Hence, by \cref{lem:janson}, with probability at least $ 1 - \nume^{-\Omega(|V|^2)} $, there are at least $ \delta |V|^{r} $ many independent sets $ S \subseteq V $ of size $r$.\COMMENT{Let $X$ denote the number of independent sets of size $r$ in $V$.
    We have that
    \[\mathbb{P}[X<\delta |V|^{r}]\leq\mathbb{P}[X<\mu-\delta|V|^r]\leq\nume^{-\mu^2/2(\mu+2\Delta)}\leq\nume^{-\Theta(|V|^{2r}/|V|^{2r-2})}=\nume^{-\Theta(|V|^{2})},\]
    where the third inequality follows e.g.\ from the fact that $\mu<|V|^r=O(|V|^{2r-2})$ since $r\geq2$.}
    By taking a union bound over all possible choices for~$ V $ and using \ref{lem:upperBoundPowers_sparse_item3} and \ref{lem:upperBoundPowers_sparse_item4}, the probability that every set $V\subseteq [n]$ of size $ |V| \geq \chi p \zeta $ contains at least $\delta|V|^r$ independent sets of size~$r$ is at least $ 1 - \sum_{i=\chi p \zeta}^n\inbinom{n}{i}\nume^{-\Omega(i^2)} = 1-n^{-\omega(1)} $,\COMMENT{For sufficiently large $n$,
    \[1 - \sum_{i=\chi p \zeta}^n\binom{n}{i}\nume^{-\Omega(i^2)} \geq 1 - \sum_{i=\chi p \zeta}^n n^i\nume^{-\Omega(i^2)} = 1 - \sum_{i=\chi p \zeta}^n \nume^{i\log n-\Omega(i^2)} = 1 - \sum_{i=\chi p \zeta}^n \nume^{-\Omega(i^2)} \geq 1 - n \nume^{-\Omega(\chi^2 p^2 \zeta^2)} = 1-n^{-\omega(1)}.\]} so \ref{lem:upperBoundPowers_sparse_item3bis} holds.
    
    Henceforth, we assume that \ref{lem:upperBoundPowers_sparse_item1bis}--\ref{lem:upperBoundPowers_sparse_item3bis} hold.
    We aim to show that any graph $ G $ satisfying these conditions is $ (k, r, \nu, \xi, \zeta, 2 \chi p, \lambda) $-sparse-partitionable.
    Let $ X_1, \ldots, X_\nu $ and $ Y_1, \ldots, Y_\xi $ be as in \cref{def:sparse-partitionable}.
    We first find an equipartition $ [\nu] = J_1 \cupdot \cdots \cupdot J_{\xi} $ such that, for every $ a \in [\xi] $ and $ i \in J_a $, we have $ |N_{G}(X_i) \cap Y_a| \le \chi p \zeta $.
    To do so, consider first an arbitrary equipartition $ [\nu] = J_1' \cupdot \cdots \cupdot J_{\xi}' $ (say, into intervals) and, for each $i\in[\nu]$, let $a(i)$ denote the (unique) $a\in[\xi]$ such that $i\in J_a'$.
    Consider now an auxiliary bipartite graph $ H $ with parts $ A \coloneqq \{ X_i: i \in [\nu] \} $ and $ B \coloneqq [\nu] $, where for each $ X \in A $ and $ i \in B $ we have $ Xi \in E(H) $ if and only if $ |N_{G}(X) \cap Y_{a(i)}| \le \chi p \zeta $.
    Note that any perfect matching~$ M $ in~$ H $ corresponds exactly to a desired equipartition $ [\nu] = J_1 \cupdot \cdots \cupdot J_{\xi} $ in which, for every $ a \in [\xi] $ and $ i \in J_a $, we have $ |N_{G}(X_i) \cap Y_a| \le \chi p \zeta $ (indeed, this equipartition can be obtained by letting $J_a\coloneqq\{i\in[\nu]:X_i\in N_M(J_{a}')\}$ for each $a\in[\xi]$).
    If all degrees in $ H $ are at least $ \nu / 2 $, the existence of one such perfect matching will follow from Hall's theorem, so we need only check that this condition holds.

    Note firstly that, by \ref{lem:upperBoundPowers_sparse_item1bis} and \ref{lem:upperBoundPowers_sparse_item2}, for any $ i \in [\nu] $ we have that $ |N_{G}(X_i)| \le 2kpn = o(\chi p \zeta \xi) $.
    Since the sets $ Y_1, \ldots, Y_\xi $ are pairwise disjoint, it follows that the number of $ a \in [\xi] $ for which $ |N_{G}(X_i) \cap Y_a| \ge \chi p \zeta $ is at most $ o(\xi) $.
    Thus, by \ref{lem:upperBoundPowers_sparse_item5} the minimum degree in $ H $ of any $ X \in A $ is $ (1 - o(1)) \nu $.
    Secondly, we claim that the minimum degree in $ H $ of any $ i \in B $ is at least $ \nu / 2 $.
    To prove this, suppose for a contradiction that there exist $ i \in B $ and a set $ A' \subseteq A $ of size $ |A'| \ge \nu / 2 $ for which $ e_H(A', i) = 0 $.
    Then, for each $ X \in A' $, there exists some $ x \in X $ with $ |N_{G}(x) \cap Y_{a(i)}| \ge \chi p \zeta / k $, so we obtain a set~$ Z $ of $ |Z| \ge \nu / 2 $ vertices $ x \in [n] $ with $ |N_{G}(x) \cap Y_{a(i)}| \ge \chi p \zeta / k $.
    It follows using \ref{lem:upperBoundPowers_sparse_item3} and then \ref{lem:upperBoundPowers_sparse_item4} that
    \[ e_G(Z, Y_{a(i)}) \ge |Z| \cdot \chi p \zeta / k = \omega(|Z| \cdot \nu^{-1/h} |Y_{a(i)}|) = \omega(|Z|^{1 - 1/h} |Y_{a(i)}| + |Z|). \]
    Hence, by \cref{lem:kst_thm}, $ G[Z, Y_{a(i)}] $ contains a copy of the complete bipartite graph $ K_{h, h} $, contradicting \ref{lem:upperBoundPowers_sparse_item2bis}.
    We thus conclude that the minimum degree in $ B $ is at least $ \nu / 2 $, as desired.\COMMENT{In fact, the exact same proof shows that the minimum degree is $(1-o(1))\nu$ again.}

    To finish the proof, fix $ a \in [\xi] $ and $ i \in J_a $.
    Since $ [n] $ is $ (\varnothing, r, \chi p \zeta, \delta) $-sparse in $ G $ by \ref{lem:upperBoundPowers_sparse_item3bis}, this is also true of the set $ Y_a $.
    We aim to show that it is in fact $ (X_i, r, 2\chi p \zeta, \lambda) $-sparse.
    By our choice of equipartition, we have that $ |N_G(X_i) \cap Y_a| \le \chi p \zeta $.
    Hence, given any set $ V \subseteq Y_a $ of size $ |V| \ge 2 \chi p \zeta $, the set $ U \coloneqq V \setminus N_G(X_i) $ has size $ |U| \ge |V| / 2 \ge \chi p \zeta $.
    Now by the original sparseness property, we obtain at least $ \delta |U|^r \ge \lambda |V|^r $ independent sets $ S \subseteq U $ of size $ |S| = r $ in the graph $ G $.
    By definition, such sets are also $ X_i $-independent and contained in $ V $, so this implies that $ Y_a $ is $ (X_i, r, 2\chi p \zeta, \lambda) $-sparse in~$ G $, as required.
\end{proof}

We can now proceed to exhibit our strategy and prove that it is successful.

\subsection{Proof of \texorpdfstring{\cref{thm:upperboundPowers}}{Theorem 6.1}}

If $t\leq n^{2-1/k+\eps/k}$, we may take $b=t$ and the statement holds trivially by simply purchasing every edge given by the random graph process, since in this case a.a.s.~$G_t$ contains the $k$-th power of a Hamilton cycle~\cite{riordan00,KO12,KNP21}.
Moreover, if for every $\eps>0$ we can prove the statement for $t\leq n^{2-\eps/k}$, then it follows in full generality for $t=O(n^2)$ by applying the result with $\eps/k$ playing the role of $\eps$ and considering $t=n^{2-\eps/k}$.\COMMENT{This is using the monotonicity observation made in the introduction (and assuming $\eps<1$, say). 
Indeed, by applying the (assumed) result with $\eps/k$ playing the role of $\eps$, we conclude that there exists a successful $(t,b)$-strategy for the $k$-th power of a Hamilton cycle for all $n^{2-\eps/k^2}\geq t=\omega(n^{2-1/k})$ with $b\leq n^{2k-1+\eps/k}/t^{k-1}$.
In particular, for $t=n^{2-\eps/k}\leq n^{2-\eps/k^2}$, we have a $(t,b)$-strategy with $b\leq n^{1+\eps}$.
By the monotonicity, for all $t\geq n^{2-\eps/k}$ there exists a successful $(t,b)$-strategy for the $k$-th power of a Hamilton cycle with $b\leq n^{1+\eps}$.
Now note that $n^{1+\eps}\leq n^{2k-1+\eps}/t^{k-1}$ for all $t\leq n^2$; thus, the successful strategy above covers all the missing range.}
Thus, we may assume that $n^{2-1/k+\eps/k} \le t \le n^{2-\eps/k} $ and prove the statement for $b\coloneqq n^{2k-1+\eps}/t^{k-1}$.

We split the strategy into four stages.
For each $ i \in [4] $, let $t_i\coloneqq t/4$; during the $i$-th stage we run the random graph process for $t_i$ steps.
We shall denote by $\hat{G}_i$ the (random) graph consisting of all edges presented during stage $i$, and by $\hat{B}_i$ the (random) graph consisting of all edges purchased by Builder.
In each stage, Builder aims to obtain a certain structure; if in some stage this does not succeed, then we terminate and say that the strategy has failed; we do the same if at any point Builder attempts to buy more than $ b $ edges.
For simplicity, we will show that a.a.s.\ each $ \hat{B}_i $ contains an appropriate structure for stage $ i $ to succeed, and separately that a.a.s.\ the budget is not exceeded; this is clearly sufficient.
\vspace{0.5em}

\noindent
\textbf{Preparation.\ }
We will work with parameters adhering to the following hierarchy:
\begin{align}
    1/n \ll 1/q \ll 1/\ell \ll 1/r \ll \eps' \ll \eps \ll 1/k \leq 1/2.\label{equa:hierarchy1}
\end{align}
We further specify some of these (and other) parameters next.
The parameter $r$ represents the length of each linkage to be found in the second and fourth stages of the strategy.
Apply \cref{lem:absorberdensity} with~$\eps/3$ in place of~$\delta$ to obtain $j=j(k,\eps/3)$ such that\COMMENT{This is already using that $1/\ell\ll\eps,1/k$.}
\begin{equation}\label{eq:densabsstr}
    d^*(\mathcal{A}_{j,\ell,k})\leq k+\eps/3.
\end{equation}
Set $ s\coloneqq j(2\ell+4)+\ell $, noting that $ s + 1 = v(\mathcal{A}_{j,\ell,k}) $ is the order of each $(j,\ell,k)$-absorber to be found in the first stage.
For simplicity, from here on we will refer to $(j,\ell,k)$-absorbers simply as \emph{absorbers}.
The parameter $q$ is chosen to be a prime and such that $ 1/q \ll 1/s $, and represents the order of the $k$-th powers of paths with which to cover all remaining vertices in the third stage.
Choose $\eta\in\{\lfloor n/(3(s+1))\rfloor-q+1,\ldots,\lfloor n/(3(s+1))\rfloor\}$ such that $n-\eta(s+1)-(\eta-1)r = \nu q $ for some $ \nu \in \mathbb{N} $ (note that this is always possible, since $q$ is prime and greater than $s+1+r$).
Here, $\eta$ represents the number of absorbers to be bought in the first stage, and~$\nu$ represents the number of powers of paths~$P_q^k$ to be bought in the third stage.

Fix an arbitrary set $ U_1 \subseteq [n] $ of size $ |U_1| = \eta(s+1) $ (which is roughly $n / 3$), in which we will buy absorbers in the first stage.
Let~$ \pi $ and~$ \delta_1 $ be the values of~$ \theta $ and~$ \delta' $, respectively, given by \cref{lem:factorGnp} with input $(|U_1|,\mathcal{A}_{j,\ell,k},\eps',t_1/M)$ in place of $(n,F,\delta,p)$, where the required bounds on $ p $ follow from \eqref{equa:hierarchy1}.\COMMENT{Note that, in order to be able to apply \cref{lem:factorGnp}, we must ensure that the $p$ we use is within the prescribed interval.
Using the assumed bounds on $t$, we have that $t_1/M\gtrsim n^{-(1-\eps)/k}/2$, and in order to apply the lemma we must have $t_1/M\geq|U_1|^{-1/d^*(\mathcal{A}_{j,\ell,k})+\eps'}\sim (n/3)^{-1/d^*(\mathcal{A}_{j,\ell,k})+\eps'}$.
Since $2$ and $3^{-1/d^*(\mathcal{A}_{j,\ell,k})+\eps'}$ are constants, the desired inequality will hold (for sufficiently large $n$) if 
\[-\frac{1-\eps}{k}>-\frac{1}{d^*(\mathcal{A}_{j,\ell,k})}+\eps'\iff\frac{1}{d^*(\mathcal{A}_{j,\ell,k})}>\frac{1-\eps+k\eps'}{k}\iff d^*(\mathcal{A}_{j,\ell,k})<\frac{k}{1-\eps+k\eps'},\]
which holds since, by \eqref{eq:densabsstr}, $d^*(\mathcal{A}_{j,\ell,k})\leq k+\eps/3<k+\eps$ and
\[k+\eps<\frac{k}{1-\eps+k\eps'}\iff -(k-1+\eps-k\eps')\eps+k^2\eps'<0,\]
which holds since $k\geq2$ by the choice that $\eps'\ll\eps\ll1/k$ in \eqref{equa:hierarchy1}.\\
For the upper bound, we have that $t_1/M\lesssim n^{-\eps/k}/2$ and must have $t_1/M\leq |U_1|^{-\eps'}\sim(n/3)^{-\eps'}$, which holds for $n$ sufficiently large if $-\eps/k<-\eps'$, i.e., $\eps'<\eps/k$, which holds by choice from \eqref{equa:hierarchy1}.}
Choose an arbitrary $\mathcal{A}_{j,\ell,k}$-equipartition $U_1=V_1\cupdot\cdots\cupdot V_{\pi}$.
Let $U_2\subseteq[n]\setminus U_1$ be an arbitrary set of size $|U_2|=n/3$, in which we will buy linkages in the second stage.
Choose an arbitrary equipartition $U_2=W_1\cupdot\cdots\cupdot W_{\xi}$, for
\begin{equation} \label{eqn:powers_xidefn}
    \xi=\xi(n)\coloneqq\left\lfloor\left(\frac{t}{n^2}\right)^{\frac{k}{1-\eps/2}}n\right\rfloor.
\end{equation}
Similarly, let $I_1\cupdot\cdots\cupdot I_{\xi}$ be an arbitrary equipartition of the set $[\eta-1]$ of indices of the endsequences between which we aim to buy linkages.
Finally, let $ \sigma $ and $ \delta_3 $ be the values of~$ \theta $ and~$ \delta' $ given by \cref{lem:factorGnp} with input $(\nu q,P_{q}^k,\eps',t_3/M)$ in place of $(n,F,\delta,p)$ (again using \eqref{equa:hierarchy1}).\COMMENT{The verification that the probability is within range is like above, the only difference is in some constants that do not change the validity of the asymptotic arguments.
Indeed, note that, since $1/\ell\ll1/r$, we have that $\nu q=n-\eta (s+1)-(\eta-1)r\geq n/2$.}
\vspace{0.5em}

\noindent
\textbf{Strategy.\ }
We now define our strategy in four separate stages, as discussed.
In order to define each stage, we assume that all previous stages have succeeded and that we have access to their output; we will prove later that the overall probability of failure is $ o(1) $.
\begin{enumerate}[label=Stage~\Roman*,leftmargin=5.5em,labelsep=1.5em]
    \item \label{item:upperboundPowers_stage1} Mimic the strategy of \cref{thm:factoruppergeneral} to obtain an $\mathcal{A}_{j,\ell,k}$-factor on $ U_1 $. 
        That is, for time~$t_1$ buy every edge from $\bigcup_{a\in[\pi]}\inbinom{V_a}{2}$ which is presented.
        Assume that the strategy succeeds, so there exists an $\mathcal{A}_{j,\ell,k}$-factor on $ U_1 $; Builder now chooses one in a particular (deterministic) way, which will be specified later in \cref{claim:upperboundPowers_stages1-3}, and fixes an arbitrary labelling $\mathcal{A}_1,\ldots,\mathcal{A}_\eta$ of its vertex-disjoint copies of $\mathcal{A}_{j,\ell,k}$.
    \item \label{item:upperboundPowers_stage2} For each $i\in[\eta]$, let $S_{i-1}$ and $T_i$ denote the initial and final endsequences of $\mathcal{A}_i$, respectively.
        For each $a\in[\xi]$, let $W_a' \coloneqq W_a\cup\bigcup_{i\in I_a}(S_i\cup T_i)$ and $\mathcal{L}_a\coloneqq\{(S_i,T_i):i\in I_a\}$.
        Now, for time $ t_2 $, purchase every edge presented which is contained in one of the sets~$W_a'$.
        Assume that, for each $ a \in [\xi] $, the graph $ \hat{B}_2[W_a'] $ contains an $ \mathcal{L}_a $-linkage of length~$r$ in~$W_a$.
        Builder now chooses one such $ \mathcal{L}_a $-linkage $ (L_i)_{i\in I_a} $; again, the way Builder makes this (deterministic) choice is specified in \cref{claim:upperboundPowers_stages1-3}.
        Then the union of the linkages $ (L_i)_{i \in[\eta - 1]} $ and the $k$-th powers of the spines of the absorbers $ (\mathcal{A}_i)_{i\in[\eta]} $ forms the $k$-th power of one long path, which we denote by $Q_0$.
    \item \label{item:upperboundPowers_stage3} Let $ U_2' \coloneqq \bigcup_{i \in [\eta - 1]} V(L_i) $, so $ |U_2'\setminus U_1| = (\eta - 1) r $.\COMMENT{Note that $U_1$ and $U_2'$ are NOT disjoint, as the endsequences of the linking structures are contained in $U_1$.}
        Let $U_3 \coloneqq[n]\setminus(U_1\cup U_2')$, and recall that $|U_3|=n-\eta(s+1)-(\eta-1)r = \nu q $, where $\nu\in\mathbb{N}$.
        Choose an arbitrary $P_q^k$-equipartition $U_3=X_1\cupdot\cdots\cupdot X_{\sigma}$.
        Now mimic the strategy of \cref{thm:factoruppergeneral} to obtain a $P_{q}^k$-factor on $ U_3 $; that is, for time $t_3$, purchase exactly the presented edges which are contained within $ X_a $ for some $ a \in [\sigma] $.
        Assume that this succeeds, in the sense that Builder may (arbitrarily) choose pairwise vertex-disjoint paths $Q_1,\ldots,Q_{\nu}$ of order $q$ whose $ k$-th powers are contained in $ \hat{B}_3 $.
    \item \label{item:upperboundPowers_stage4} 
        For each $i\in\{ 0 \} \cup [\nu]$, let $S_{i-1}'$ and $T_i'$ denote the initial and final endsequences of $Q_i^k$, respectively, with indices taken modulo $ \nu + 1 $.
        Let $A\subseteq U_1$ denote the set consisting of the absorption vertices of all absorbers $ \mathcal{A}_i $ ($i \in [\eta]$).
        Let $A''\subseteq A$ be an arbitrary set of size $|A''| < \xi $ with $|A''|\equiv|A|\pmod{\xi}$ and let $A'\coloneqq A\setminus A''$.
        Choose an arbitrary equipartition of $A'$ into $\xi$ sets, $A'=Y_1\cupdot\cdots\cupdot Y_{\xi}$.
        Then choose a specific equipartition $\{0\}\cup[\nu]=J_1\cupdot\cdots\cupdot J_{\xi}$, to be specified later (see \cref{claim:upperboundPowers_stage4}).
        For each $a\in[\xi]$, let $Y_a' \coloneqq Y_a\cup\bigcup_{i\in J_a}(S_i'\cup T_i')$ and $\mathcal{L}_a'\coloneqq\{(S_i',T_i'):i\in J_a\}$.
        Now, for time $ t_4 $, purchase every presented edge which is contained in one of the sets~$Y_a'$.
        Assume that, for each $ a \in [\xi] $, the graph $ \hat{B}_4[Y_a'] $ contains an $ \mathcal{L}_a' $-linkage $ (L_i')_{i\in J_a} $ of length~$r$ in $Y_a$.
\end{enumerate}
\vspace{0.5em}

\noindent
\textbf{Absorption.\ }
At this point, assuming for now that all stages of the strategy succeeded, the union of all~$Q_i^k$ and~$L_i'$ for $i\in\{0\}\cup[\nu]$ results in the $k$-th power $\cC^k$ of a cycle $\cC$ which contains all vertices in~$[n]$ except some leftover vertices in $A$ (note that $ A' $ is large enough to contain the required number of linkages in \ref{item:upperboundPowers_stage4}, by \eqref{equa:hierarchy1}).  
Moreover, $\cC$ contains the spine of every absorber $\mathcal{A}_i$ ($i\in[\eta]$).
Now, since all the $\mathcal{A}_i$ are vertex-disjoint, for each absorption vertex $v\in A\setminus V(\cC)$, we may replace the $k$-th power of the spine of its corresponding absorber by the $k$-th power of its augmented path, which now contains $v$.
This results in the $k$-th power of a Hamilton cycle in $B_t$, as desired.
\vspace{0.5em}

\noindent
\textbf{Proof of the success of the strategy.\ }
It now suffices to show that a.a.s.\ Builder succeeds in purchasing the specified structures in each of the four stages, and that the total number of edges purchased throughout does not exceed the budget.
Since this will be shown via coupling arguments, we prove first that the desired structures appear in corresponding binomial random graphs.

\begin{claim} \label{claim:upperboundPowers_stage2}
    Let $ p_2 = (1 - o(1)) t_2 / M $ and $ H_2 \sim G(n, p_2) $.
    Then there exists $\delta_2 = o(1) $ such that the following holds.
    Let $ \mybar{\mathcal{L}}=\{(\mybar{R}_i, \mybar{S}_i)\}_{i \in [\eta - 1]} $ be a (fixed, arbitrary) $(k,\eta-1)$-endsequence family in $ U_1 $.
    For each $ a \in [\xi] $, define $ \mybar{W}_a \coloneqq W_a \cup \bigcup_{i\in I_a} (\mybar{R}_i\cup \mybar{S}_i) $ and $ \mybar{\mathcal{L}}_a\coloneqq\{(\mybar{R}_i, \mybar{S}_i):i\in I_a\} $.
    With probability at least $1-\delta_2$, for every $ a \in [\xi] $, the graph $ H_2[\mybar{W}_a] $ contains an $ \mybar{\mathcal{L}}_a $-linkage of length $r$ in $W_a$.
\end{claim}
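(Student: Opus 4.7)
The plan is to apply \cref{lem:linkage} separately to the induced subgraph $H_2[\bar{W}_a]$ for each $a \in [\xi]$, with the graph $H$ in the lemma taken to be empty on $\bar{W}_a$, and then to conclude the simultaneous statement via a union bound over $a$. The key observation is that $H_2[\bar{W}_a]$ has the distribution of $G(n_a, p_2)$ with $n_a \coloneqq |\bar{W}_a|$, and that within this induced subgraph we are precisely in the setting of \cref{lem:linkage} taking $V \coloneqq W_a$, $\eta_{\text{lem}} \coloneqq |I_a|$, and the family $\bar{\mathcal{L}}_a$, whose endsequences all lie in $U_1 \subseteq \bar{W}_a \setminus W_a$ by construction.

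Before invoking the lemma I would verify each of its hypotheses. Writing $p \coloneqq t/M$ for convenience, the definition of $\xi$ in \eqref{eqn:powers_xidefn} gives $|W_a| = |U_2|/\xi = \Theta(p^{-k/(1-\eps/2)})$. Combined with $|I_a| \leq \eta/\xi = O(|W_a|/(s+1))$ and the fact that $s = j(2\ell+4)+\ell \gg r$ by the hierarchy \eqref{equa:hierarchy1}, this yields both $|W_a| \geq n_a/2 \geq n_a/\log n_a$ and $4r|I_a| \leq |W_a|$. Setting $\delta \coloneqq \eps/(2k)$, the hierarchy gives $1/r \ll \delta$ and $\delta \leq 1/k \leq 1/2$. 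A short computation using the bounds $n^{2-1/k+\eps/k} \leq t \leq n^{2-\eps/k}$ together with $n_a = n^{\Theta(1)}$ then translates into $p_2 \geq n_a^{-1/k+\delta}$ (the choice $\delta = \eps/(2k)$ leaves the required slack since $(1-\eps/2)^{-1} - 1 = \Theta(\eps)$). Finally, because $H$ is empty, every subset of $W_a$ is automatically $R$-independent for every $R$, so the $(r, n_a/\log^4 n_a, 1/\log n_a)$-linkability condition reduces to the trivial statement that every $U \subseteq W_a$ with $|U| \geq n_a/\log^4 n_a$ contains at least $|U|^r/\log n_a$ subsets of size $r$, which follows from $\binom{|U|}{r} \geq |U|^r/(2^r r!)$ and the fact that $r$ is fixed.

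\Cref{lem:linkage} then supplies, with probability at least $1 - n_a^{-\log n_a}$, the stronger property that an $\bar{\mathcal{L}}'$-linkage of length $r$ in $W_a$ exists for every admissible family $\bar{\mathcal{L}}'$; in particular, it exists for our fixed family $\bar{\mathcal{L}}_a$. Since $n_a = n^{\Omega(1)}$, we have $n_a^{-\log n_a} = n^{-\omega(1)}$, and a union bound over $a \in [\xi]$, using $\xi \leq n$, yields the claim with $\delta_2 \coloneqq \xi \cdot n^{-\omega(1)} = n^{-\omega(1)} = o(1)$. There is essentially no real obstacle here beyond correctly tracking scales through the hierarchy \eqref{equa:hierarchy1}; all the analytic content sits inside \cref{lem:linkage}, which we use as a black box. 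The only mildly delicate point is that the endsequences live in $U_1$ (not in $\bar{W}_a \setminus W_a$ \emph{a priori}), but this is absorbed into the very definition of $\bar{W}_a$, which was set up precisely to make this application go through cleanly.
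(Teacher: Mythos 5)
Your proof follows the same route as the paper: apply \cref{lem:linkage} to each $H_2[\mybar{W}_a]$ with $H$ taken to be empty, verify the hypotheses using~\eqref{eqn:powers_xidefn} and the hierarchy~\eqref{equa:hierarchy1}, and conclude by a union bound over $a\in[\xi]$.

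There is one small slip in the choice of $\delta$. From $n_a = \Theta\bigl((n^2/t)^{k/(1-\eps/2)}\bigr)$ and $p_2 = \Theta(t/n^2)$ one gets $p_2 = \Theta\bigl(n_a^{-(1-\eps/2)/k}\bigr) = \Theta\bigl(n_a^{-1/k + \eps/(2k)}\bigr)$. Choosing $\delta = \eps/(2k)$ therefore makes the required inequality $p_2 \ge n_a^{-1/k+\delta}$ a comparison of two quantities of the \emph{same} polynomial order, and it can fail depending on the implicit constants (note $p_2 \approx t/(2n^2)$ while $n_a \approx n/(3\xi)$, so the constants are not $1$). The parenthetical justification that $(1-\eps/2)^{-1} - 1 = \Theta(\eps)$ does not produce any slack: it merely recovers the exponent $\eps/(2k)$, which is exactly what you set $\delta$ equal to. The fix is trivial and already built into the hierarchy: take $\delta \coloneqq \eps'$ (as the paper does), or any constant strictly smaller than $\eps/(2k)$; then $n_a^{\eps/(2k)-\delta}\to\infty$ absorbs all constants. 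Everything else — the reduction of linkability to a trivial binomial count since $H$ is empty, the bound $4r|I_a| \le |W_a|$ via $s \gg r$, and the union bound — is correct.
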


\begin{claimproof}
    We aim to apply \cref{lem:linkage} to $H_2[\mybar{W}_a]$ (taking $ H $ to be empty; recall that this satisfies the desired condition by \cref{def:sparse-partitionable} and \cref{rmk:sparse-partitionable}) for each $a\in[\xi]$, with $(\eps',\mybar{W}_a,W_a,|I_a|)$ in place of $(\delta,[n],V,\eta)$.
    By \eqref{eqn:powers_xidefn} and \eqref{equa:hierarchy1} and since $ |\mybar{W}_a| = n / 3 \xi + 2k $, it is a simple task to check that \mbox{$p_2=\Omega(|\mybar{W}_a|^{-(1-\eps/2)/k})\geq|\mybar{W}_a|^{-1/k+\eps'}$} for all $a\in[\xi]$.\COMMENT{Indeed,
    \[|\mybar{W}_a|^{-(1-\eps/2)/k}=\Theta\left(\left(\frac{n}{\xi}\right)^{-(1-\eps/2)/k}\right)=\Theta\left(\frac{t}{n^2}\right)\]
    by the definition of $\xi$.}
    Moreover, again by \eqref{equa:hierarchy1}, observe that $ 4 r |I_a| = 4 r \eta / \xi < n / 3 \xi = |W_a| $.
    Therefore, by \cref{lem:linkage} and a union bound over all $a\in[\xi]$, we conclude that, with probability at least $1-n^{-\sqrt{\log{n}}} $,\COMMENT{Since $ n/\xi $ is a fixed polynomial in terms of $ \eps, k $, the failure probabilities we get are $ n^{-\Omega(\log{n})} $, and we union bound over $ n^{O(1)} $ many elements.} for every $a\in[\xi]$ the graph $ H_2[\mybar{W}_a] $ contains an $\mybar{\mathcal{L}}_a$-linkage of length~$r$ in~$W_a$.
\end{claimproof}

\begin{claim} \label{claim:upperboundPowers_stage3}
    Let $ p_2' = (1 + o(1)) t_2 / M$, $ p_3 = (1 - o(1)) t_3 / M $, and $ H_2' \sim G(n, p_2'), H_3 \sim G(n, p_3) $ be sampled independently.
    Let\/ $ \mybar{U}_3 \subseteq [n]\setminus U_1 $ have size $ |\mybar{U}_3| = \nu q $, and let\/ $ \mybar{X}_1\cupdot \cdots\cupdot \mybar{X}_{\sigma} $ be a $P_q^k$-equipartition of\/~$ \mybar{U}_3 $.
    Then, with probability at least $1-\delta_3$ (recall that $\delta_3$ was defined already in the preparation phase), we have that $ H_3 \setminus H_2' $ contains a $ P^k_{q} $-factor on $ \mybar{U}_3 $ consisting only of edges in $ \bigcup_{a \in [\sigma]} \inbinom{\overline{X}_a}{2} $.
\end{claim}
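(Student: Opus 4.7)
The plan is very short: this claim is essentially a direct consequence of \cref{lem:factorGnp}, with the only subtlety being that we must absorb the removal of $H_2'$ into the edge probability. Since $H_3$ and $H_2'$ are independent binomial random graphs on $[n]$, the graph $H_3 \setminus H_2'$ is also a binomial random graph on $[n]$: an edge appears independently with probability
\[ p_3(1 - p_2') = (1 - o(1)) \frac{t_3}{M}, \]
where the simplification uses that $p_2' = (1 + o(1)) t_2 / M = o(1)$ by the upper bound $t \le n^{2 - \eps/k}$ and the fact that $t_2 = t/4$. In particular, restricting to the vertex set $\overline{U}_3 \subseteq [n]$, the induced subgraph $(H_3 \setminus H_2')[\overline{U}_3]$ has distribution $G(\nu q, (1 - o(1)) t_3 / M)$.

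Now I would simply apply property~\ref{lem:factorGnpitem1} of \cref{lem:factorGnp}, with inputs $(\nu q, P_q^k, \eps', t_3/M)$ in place of $(n, F, \delta, p)$, to this induced binomial random graph together with the given $P_q^k$-equipartition $\overline{U}_3 = \overline{X}_1 \cupdot \cdots \cupdot \overline{X}_\sigma$. This is exactly the same invocation of \cref{lem:factorGnp} that was used in the preparation phase to define $\sigma$ and $\delta_3$; the hypothesis that $p$ lies in the valid range $[n^{-1/d^*(P_q^k)+\eps'}, n^{-\eps'}]$ (with $\nu q$ in the role of~$n$) was already verified there via the hierarchy~\eqref{equa:hierarchy1} and the bounds on~$t$.

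The conclusion of \ref{lem:factorGnpitem1} is precisely what we want: with probability at least $1 - \delta_3$, the graph $(H_3 \setminus H_2')[\overline{U}_3]$ contains a $P_q^k$-factor consisting entirely of edges from $\bigcup_{a \in [\sigma]} \binom{\overline{X}_a}{2}$. There is no real obstacle here beyond the bookkeeping of checking that the $(1 - o(1))$ factor arising from removing $H_2'$ does not push the density outside the admissible window of \cref{lem:factorGnp}, which follows immediately from $p_2' = o(1)$.
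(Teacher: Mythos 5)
Your proof is correct and follows essentially the same route as the paper's: observe that $H_3 \setminus H_2'$ is a binomial random graph with edge probability $(1-o(1))t_3/M$, then invoke \cref{lem:factorGnp}~\ref{lem:factorGnpitem1} with the exact parameters used to define $\sigma$ and $\delta_3$ in the preparation phase. The paper simply spells out the admissible-range inequalities for $p'$ in a displayed chain rather than deferring to the preparation-phase verification, but the substance is identical.
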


\begin{claimproof}
    Note that $ H_3 \setminus H_2' \sim G(n, p') $ for some $ p' = (1 - o(1)) t_3 / M $.\COMMENT{This holds since $t=o(n^2)$ by assumption.}
    Furthermore, $d^*(P^k_{q})< k$ and therefore, using \eqref{equa:hierarchy1}, 
    \[|\mybar{U}_3|^{-1/d^*(P_q^k)+\eps'}\leq\frac{n^{-1/k+\eps/k}}{4}\leq p'\leq n^{-\eps/k}\leq |\mybar{U}_3|^{-\eps'}.\]
    Hence, by \cref{lem:factorGnp}~\ref{lem:factorGnpitem1} with $(\mybar{U}_3,P_q^k,\eps',t_3/M)$ playing the roles of $([n],F,\delta,p)$, we conclude that with probability at least $1-\delta_3$ the binomial random graph $ H_3 \setminus H_2' $ contains a $ P^k_{q} $-factor on~$ \mybar{U}_3 $ consisting only of edges in $ \bigcup_{a \in [\sigma]} \inbinom{\overline{X}_a}{2} $.
\end{claimproof}

We can now use these claims together with \cref{lem:two_stage_real} to show that a.a.s.\ Builder will successfully construct the desired structures in the first three stages of the strategy.

\begin{claim} \label{claim:upperboundPowers_stages1-3}
    Builder can choose $ (\mathcal{A}_i)_{i \in [\eta]} $ in \ref{item:upperboundPowers_stage1} and $ (L_i)_{i \in[\eta - 1]} $ in \ref{item:upperboundPowers_stage2} in such a way that a.a.s.\ the first three stages succeed.
\end{claim}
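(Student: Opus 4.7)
The plan is to apply \cref{lem:two_stage_real} to the three stages \ref{item:upperboundPowers_stage1}--\ref{item:upperboundPowers_stage3}; the functions $f, g$ it produces will dictate Builder's deterministic choices of absorber factor in \ref{item:upperboundPowers_stage1} and linkage family in \ref{item:upperboundPowers_stage2}.

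I would set up the abstract framework of \cref{lem:two_stage_real} as follows. Let $\mathcal{E}^1$ enumerate all $\mathcal{A}_{j,\ell,k}$-factors on $U_1$ whose edges lie in $\bigcup_{a \in [\pi]} \binom{V_a}{2}$. Each $E_i \in \mathcal{E}^1$ determines its $\eta$ absorbers and thus the endsequences $(S_{h-1}^{(i)}, T_h^{(i)})_{h \in [\eta]}$; let $\mathcal{E}^2_i$ enumerate all graphs $E_{i,j}$ that are unions, over $a \in [\xi]$, of an $\mathcal{L}_a^{(i)}$-linkage of length $r$ in $W_a$ (with edges in $\binom{W_a'}{2}$). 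Each $E_{i,j}$ in turn determines $U_3 = [n] \setminus (U_1 \cup V(E_{i,j}))$ and an associated $P_q^k$-equipartition $X_1 \cupdot \cdots \cupdot X_\sigma$ of $U_3$; let $\mathcal{F}_{i,j}$ enumerate all $P_q^k$-factors on $U_3$ whose edges lie in $\bigcup_{a \in [\sigma]} \binom{X_a}{2}$.

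I would then verify the hypotheses of \cref{lem:two_stage_real}. The edge-disjointness condition \eqref{eqn:zak6} is clear: every edge of any $E_i$ lies in $\binom{U_1}{2}$, whereas every edge of any $E_{i,j}$ has at least one endpoint in $U_2 \subseteq [n] \setminus U_1$ (an internal linkage vertex), and every edge of any $F \in \mathcal{F}_{i,j}$ lies in $\binom{U_3}{2} \subseteq \binom{[n] \setminus U_1}{2}$. Conditions \ref{cond:zak1}, \ref{cond:zak2} and \ref{cond:zak3} follow respectively from \cref{lem:factorGnp}~\ref{lem:factorGnpitem1} applied within $U_1$ with $\mathcal{A}_{j,\ell,k}$ in place of $F$ (where \eqref{eq:densabsstr} together with \eqref{equa:hierarchy1} ensures the density requirements), from \cref{claim:upperboundPowers_stage2}, and from \cref{claim:upperboundPowers_stage3} (using that $F$ and $E_{i,j}$ are vertex-disjoint, so $F \setminus E_{i,j} = F$). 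The time bound $t_i = \omega(M/e(K^j))$ reduces to $t \cdot e(K^j) = \omega(n^2)$, which follows easily from $t = \omega(n^{2-1/k})$ given that $e(K^2) = \Theta(n^2)$ and $e(K^1) = \Theta(|U_1|^2/\pi)$ is a sufficiently large polynomial factor of $n$ (using $t \le n^{2-\eps/k}$).

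Applying \cref{lem:two_stage_real} then supplies $f$ and $g$, which Builder fixes at the outset of the strategy: $(\mathcal{A}_h)_{h \in [\eta]}$ is chosen to match $E_{f(\hat{G}_1)}$ and $(L_h)_{h \in [\eta-1]}$ to match $E_{f(\hat{G}_1), g(\hat{G}_1, \hat{G}_2)}$ (breaking ties arbitrarily if no such factor or linkage lies in $\hat{G}_1$ or $\hat{G}_2$ respectively). The conclusion of \cref{lem:two_stage_real} then encodes exactly the success of Stages~I--III. The subtlest point of the argument is that in \ref{cond:zak3} both $U_3$ and the equipartition $(X_a)_{a\in[\sigma]}$ depend on $E_{i,j}$, but this is precisely what the dependence of $\mathcal{F}_{i,j}$ on both $i$ and $j$ in the abstract setup of the lemma is designed to accommodate; consequently, the main technical obstacle is already handled by the formulation of \cref{lem:two_stage_real} itself.
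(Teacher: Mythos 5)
Your proposal is correct and follows essentially the same route as the paper: you set up $\mathcal{E}^1$, $(\mathcal{E}^2_i)$ and $(\mathcal{F}_{i,j})$ exactly as the paper does, verify \eqref{eqn:zak6} and the time bounds, and discharge \ref{cond:zak1}--\ref{cond:zak3} via \cref{lem:factorGnp}, \cref{claim:upperboundPowers_stage2} and \cref{claim:upperboundPowers_stage3} before invoking \cref{lem:two_stage_real}. The only cosmetic difference is that the paper certifies $t_i=\omega(M/e(K^2))$ by lower-bounding $e(K^2)$ using only the third-stage edges (giving $M/e(K^2)=O(\sigma)=o(n)$) rather than asserting $e(K^2)=\Theta(n^2)$, but either way the condition is clearly satisfied.
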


\begin{claimproof}
    We aim to apply \cref{lem:two_stage_real}.
    Let $ \mathcal{E}^1 = \{ E_1, \ldots, E_m \} $ be the set of all possible $\mathcal{A}_{j,\ell,k}$-factors on $ U_1 $ consisting of edges belonging to $\bigcup_{a\in[\pi]}\inbinom{V_a}{2}$.
    For each $ i \in [m] $, fix an arbitrary labelling of the copies of $\mathcal{A}_{j,\ell,k}$ which constitute~$E_i$ (which corresponds to the labelling considered at the end of \ref{item:upperboundPowers_stage1}), let $\mathcal{L}\coloneqq\bigcup_{a\in[\xi]}\mathcal{L}_a$, and let $ \mathcal{E}^2_i = \{ E_{i, 1}, \ldots, E_{i, r_i} \} $ be the collection of all possible $\mathcal{L}$-linkages of length $r$ which consist of edges belonging to $ \bigcup_{a \in [\xi]} \inbinom{W_a'}{2} $.
    Then, for each $i\in[m]$ and each $ h \in [r_i] $, given the set $U_3$ determined by  $ E_{i, h} $ (and after having fixed the arbitrary $P_q^k$-equipartition $U_3=X_1\cupdot\cdots\cupdot X_\sigma$ in \ref{item:upperboundPowers_stage3}), let $ \mathcal{F}_{i, h} $ be the collection of all possible $ P^k_{q} $-factors on $ U_3 $ consisting of edges belonging to $ \bigcup_{a \in [\sigma]} \inbinom{X_a}{2} $.
    Note that $E(F)\cap E(E_{i,h})=\varnothing$ for all $F\in\mathcal{F}_{i,h}$.
    Now let $ K^1 $ and $ K^2 $ be as defined in \eqref{eqn:zak6}.
    We must verify that all the conditions required to apply \cref{lem:two_stage_real} hold.
    
    Note that from \ref{item:upperboundPowers_stage1} we have $E_i\subseteq\inbinom{U_1}{2}$ for every $i\in[m]$, and therefore $ K^1 \subseteq \inbinom{U_1}{2} $.
    On the other hand, note that in \ref{item:upperboundPowers_stage2} each $ W_a $ is disjoint from $ U_1 $, and a linkage does not involve any edges contained within its endsequences, so no potential linkage $E_{i,h}$ involves any edge contained within the set $~U_1 $.
    Since in \ref{item:upperboundPowers_stage3} the potential $P^k_{q} $-factors $ \mathcal{F}_{i, h} $ are contained in $U_3$ and $U_3\cap U_1=\varnothing$, it follows that $ K^2 $ is edge-disjoint from $ K^1 $, so \eqref{eqn:zak6} is satisfied.
    Moreover, note that $e(K^1)=\Theta(\pi(n/\pi)^2)=\Theta(n^2/\pi)$, hence
    \[\frac{M}{e(K^1)}=\Theta(\pi)=\Theta\left(\left(\frac{t_1}{M}\right)^{d^*(\mathcal{A}_{j,\ell,k})}n^{1-\eps'd^*(\mathcal{A}_{j,\ell,k})/2}\right)=o(n),\]
    and thus $t_i=\omega(n) = \omega(M/e(K^1))$ for all $i\in[3]$.
    Similarly,\COMMENT{The reason why here we write $O$ instead of $\Theta$ is that we have not (nor want to) consider the edges coming from \ref{item:upperboundPowers_stage2}, so the calculations are done taking solely \ref{item:upperboundPowers_stage3} into account.}
    \[\frac{M}{e(K^2)}=O(\sigma)=O\left(\left(\frac{t_1}{M}\right)^{d^*(P_q^k)}n^{1-\eps'd^*(P_q^k)/2}\right)=o(n),\]
    so $t_i=\omega(n) = \omega(M/e(K^2))$ for all $i\in[3]$.
    
    By using \eqref{equa:hierarchy1} and \eqref{eq:densabsstr} we have that\COMMENT{For the second inequality it suffices to check that 
    \[-\frac{1}{k+\eps/3}+2\eps'<-\frac{1-\eps}{k}\iff\frac{3k}{3k+\eps}+2\eps'k>1-\eps.\]
    In particular, it suffices to have $3k>(3k+\eps)(1-\eps)=3k-(3k-1)\eps-\eps^2$, which clearly holds.}
    \[|U_1|^{-1/d^*(\mathcal{A}_{j,\ell,k})+\eps'} \leq n^{-1/(k+\eps/3)+2\eps'} \leq \frac{n^{-1/k+\eps/k}}{4} \leq \frac{t_1}{M} \leq n^{-\eps/k} \leq |U_1|^{-\eps'},\]
    and therefore \cref{lem:factorGnp}~\ref{lem:factorGnpitem1} applied with $(U_1,\mathcal{A}_{j,\ell,k},\eps',t_1/M)$ playing the roles of $([n],F,\delta,p)$ ensures that with probability at least $1-\delta_1$, for any $p_1=(1 - o(1)) t_1 / M$, the graph $ G(n, p_1) $ contains an $\mathcal{A}_{j,\ell,k}$-factor on~$ U_1 $ consisting of edges which lie in $\bigcup_{a\in[\pi]}\inbinom{V_a}{2}$; this in particular means that condition~\ref{cond:zak1} in the setup for \cref{lem:two_stage_real} holds.
    The hypotheses \ref{cond:zak2} and \ref{cond:zak3} follow immediately from \cref{claim:upperboundPowers_stage2} and \cref{claim:upperboundPowers_stage3}, respectively.

    We may thus apply \cref{lem:two_stage_real}, which gives deterministic functions $ i \colon \mathcal{G}_1 \to [m], j \colon \mathcal{H} \to \mathbb{N} $, where $ i(\hat{G}_1) $ corresponds to a factor $\mathcal{A}_1,\ldots,\mathcal{A}_\eta$ in \ref{item:upperboundPowers_stage1}, and $ j(\hat{G}_1, \hat{G}_2) $ corresponds to a set of linkages $ L_1, \ldots, L_{\eta - 1} $ in \ref{item:upperboundPowers_stage2}.
    This determines the aforementioned choices of these objects which Builder must make in each stage.
    By the conclusion of \cref{lem:two_stage_real}, with these choices, a.a.s.\ the first three stages all succeed.
\end{claimproof}

For the remainder of the proof, consider instead the coupling provided by \cref{lemma:coupling_stages}: for each \mbox{$i\in[4]$} there exist $p_i=(1-o(1))t_i/M$ and $\mybar{p}_i=o(t_i/M)$ and a coupling of random graphs $(H_i,\hat{G}_i,\mybar{H}_i)_{i\in[4]}$ satisfying properties \ref{lem:couplingproperty1}--\ref{lem:couplingproperty3} of \cref{lemma:coupling_stages}; write $ H_i' \coloneqq H_i \cup \mybar{H}_i $.

\begin{claim} \label{claim:upperboundPowers_stage4}
    Assume that the first three stages succeed, and further that $ H \coloneqq H_1' \cup H_2' \cup H_3' $ is \mbox{$ (2k, r, \nu + 1, \xi, \lfloor\eta / \xi\rfloor, 1 / \log^5{n}, 1/\log^{1/2}{n}) $}-sparse-partitionable.
    Then there exists a choice of equipartition $ \{0\} \cup [\nu] = J_1 \cupdot \cdots \cupdot J_{\xi} $ such that a.a.s.\ \ref{item:upperboundPowers_stage4} also succeeds.
\end{claim}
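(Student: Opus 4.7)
The plan is to use the sparse-partitionability of $H$ to deterministically produce the equipartition $(J_a)_{a \in [\xi]}$, and then independently on each piece $Y_a'$ reduce Stage IV to a single application of \cref{lem:linkage}.

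For the deterministic step, for each $i \in \{0\} \cup [\nu]$ set $X_i \coloneqq S_i' \cup T_i'$, a set of size $2k$. Since the endsequences of $Q_0^k$ lie on the spines of the absorbers (which are disjoint from $A$) and the endsequences of $Q_1, \ldots, Q_\nu$ lie in $U_3$, the $X_i$ are $\nu+1$ pairwise disjoint sets contained in $(U_1 \setminus A) \cup U_3$, hence disjoint from every $Y_a \subseteq A'$. The assumed $(2k, r, \nu+1, \xi, \lfloor \eta/\xi \rfloor, 1/\log^5 n, 1/\log^{1/2} n)$-sparse-partitionability of $H$ applied to the families $(X_i)_{i \in \{0\} \cup [\nu]}$ and $(Y_a)_{a \in [\xi]}$ yields an equipartition $\{0\} \cup [\nu] = J_1 \cupdot \cdots \cupdot J_\xi$ such that, for all $a \in [\xi]$ and $i \in J_a$, $Y_a$ is $(X_i, r, (1/\log^5 n) \lfloor \eta/\xi \rfloor, 1/\log^{1/2} n)$-sparse in $H$. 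This is the equipartition Builder uses in \ref{item:upperboundPowers_stage4}.

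For the probabilistic step, fix $a \in [\xi]$ and set $n_a \coloneqq |Y_a'|$. By \cref{lemma:coupling_stages} we may assume $\hat{G}_4 \supseteq H_4 \setminus H$; since Stage IV purchases every edge of $\hat{G}_4$ inside some $Y_a'$, it suffices to prove that, a.a.s., $(H_4 \setminus H)[Y_a']$ contains an $\mathcal{L}_a'$-linkage of length $r$ in $Y_a$ for every $a \in [\xi]$. Crucially, $H_4 \sim G(n, p_4)$ with $p_4 = (1 - o(1)) t_4/M$ is independent of $(H_i, \mybar{H}_i)_{i \in [3]}$ by \cref{lemma:coupling_stages}, so the distribution of $H_4[Y_a'] \sim G(n_a, p_4)$ is unaffected by the (deterministic or random) choices made up to and including the definition of the $J_a$. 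We therefore apply \cref{lem:linkage} to $H_4[Y_a']$ with $H[Y_a']$ in the role of the sparse graph, $Y_a$ in the role of $V$, and $\mathcal{L}_a'$ in the role of the endsequence family.

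To verify the hypotheses, \cref{rmk:sparse-partitionable} lets us weaken the sparseness from the first step: since $(1/\log^5 n) \lfloor \eta/\xi \rfloor \le n_a / \log^4 n_a$ and $1/\log^{1/2} n \ge 1/\log n_a$ for large $n$, $Y_a$ is $(X_i, r, n_a/\log^4 n_a, 1/\log n_a)$-sparse in $H$ for each $i \in J_a$, making $\mathcal{L}_a'$ the required linkable family. The bounds $|Y_a| \ge n_a/\log n_a$ and $4 r |J_a| \le |Y_a|$ follow from $|Y_a| = \Theta(\eta/\xi)$, $|J_a| = O(\nu/\xi)$, $\nu = \Theta(n/q)$, $\eta = \Theta(n/s)$, and $r \ll q$. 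The delicate verification is the density condition $p_4 \ge n_a^{-1/k + \delta}$: writing $t = n^{2 - \gamma}$ with $\gamma \in [\eps/k, 1/k - \eps/k]$, one computes $n_a = \Theta(n^{\gamma k/(1 - \eps/2)})$ and $p_4 = \Theta(n^{-\gamma})$, whence $p_4 \cdot n_a^{1/k - \delta} = n^{\gamma(\eps/2 - k\delta)/(1 - \eps/2)} = \omega(1)$ whenever $\delta < \eps/(2k)$ is a positive constant. Granted these hypotheses, \cref{lem:linkage} yields the required linkage with probability $1 - n_a^{-\log n_a}$, and a union bound over $a \in [\xi] = n^{O(1)}$ concludes the proof. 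The main technical point is this density check: the exponent $k/(1 - \eps/2)$ in the definition of $\xi$ has been tuned precisely so that the budget constraint is respected while still leaving a polynomial margin above the power-of-path linkage threshold.
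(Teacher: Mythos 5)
Your proposal is correct and follows essentially the same route as the paper's proof: use the sparse-partitionability hypothesis to construct the equipartition deterministically, then apply \cref{lem:linkage} to each $Y_a'$ and union-bound over $a \in [\xi]$. The only (cosmetic) difference is that you take $H = H_1' \cup H_2' \cup H_3'$ as the sparse graph to remove in \cref{lem:linkage}, whereas the paper uses $\hat{G} = \hat{G}_1 \cup \hat{G}_2 \cup \hat{G}_3$; since a.a.s.\ $\hat{G} \subseteq H$ and the sparse-partitionability hypothesis is stated for $H$, both choices give the linkage inside $\hat{G}_4$ via the same coupling containment.
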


\begin{claimproof}
    We assume that all information from the first three stages (in particular, $ (\hat{G}_i)_{i \in [3]} $ and $ H $) has already been revealed, that the containments in \cref{lemma:coupling_stages}~\ref{lem:couplingproperty3} occur for $i\in[3]$, and that $ H $ is indeed $ (2k, r, \nu + 1, \xi, \lfloor\eta / \xi\rfloor, 1 / \log^5{n}, 1/\log^{1/2}{n}) $-sparse-partitionable, which implies the same about $ \hat{G} \coloneqq \hat{G}_1 \cup \hat{G}_2 \cup \hat{G}_3 \subseteq H $\COMMENT{The containment holds since we just assumed that \cref{lemma:coupling_stages}~\ref{lem:couplingproperty3} holds for all $i\in[3]$.} by \cref{rmk:sparse-partitionable}.
    Therefore (recall \cref{def:sparse-partitionable}), Builder may choose some equipartition $ \{0\}\cup[\nu]=J_1\cupdot\cdots\cupdot J_{\xi} $ such that, for every $ a \in [\xi] $ and $ i \in J_a $, the set $ Y_a $\COMMENT{Recall from \ref{item:upperboundPowers_stage4} that we ensured all the sets $Y_a$ have size exactly $\lfloor\eta/\xi\rfloor$.} is \mbox{$ (S_i' \cup T_i', r, \eta / (\xi \log^5{n}), 1 / \log^{1/2}{n}) $}-sparse in $ \hat{G} $; in other words, for every $a\in[\xi]$, the endsequence family $ \mathcal{L}_a' $ is $ (r, \eta / (\xi \log^5{n}), 1 / \log^{1/2}{n}) $-linkable over $ Y_a $ despite $ \hat{G} $.
    
    We now aim to apply \cref{lem:linkage} to the graph $(H_4\setminus \hat{G})[Y_a']$ for each $ a \in [\xi] $, with \mbox{$ |Y_a'| = \eta / \xi + 2k $} in place of $ n $, $\eps'$ in place of $\delta$, $ Y_a $ in place of $ V $, and $ |J_a| = (\nu + 1) / \xi $ in place of $ \eta $.
    It is easy to check that $p_4=(1-o(1))t_4/M=\Omega(|Y_a|^{-(1-\eps/2)/k})\geq|Y_a|^{-1/k+\eps'}$ for all $a\in[\xi]$ by \eqref{eqn:powers_xidefn}.
    Note also that \mbox{$ 4 r (\nu + 1) / \xi \le \eta / \xi $} since $ 1 / q \ll 1/s, 1 / r $, and that $ 1/\log^5{n} = o ( 1/\log^4(\eta / \xi) ) $ and \mbox{$ 1/\log(\eta / \xi) = o ( 1 / \log^{1/2}{n} ) $}.
    Therefore, by \cref{rmk:sparse-partitionable}, we may apply \cref{lem:linkage} to each $(H_4\setminus \hat{G})[Y_a']$ to conclude that, with probability at least $ 1 - n^{-\log^{1/2}{n}} $, it contains an $\mathcal{L}_a'$-linkage $(L_i')_{i\in J_a}$ of length~$ r $ in~$ Y_a $.
    The conclusion now follows by a union bound over all $a\in[\xi]$, and since a.a.s.\ $ H_4 \setminus \hat{G} \subseteq \hat{G}_4 $ by \cref{lemma:coupling_stages}~\ref{lem:couplingproperty3} and thus $ (H_4 \setminus \hat{G})[Y_a'] \subseteq \hat{B}_4 $.
\end{claimproof}

We may now combine all the previous claims.
Firstly, by \cref{claim:upperboundPowers_stages1-3}, a.a.s.\ the first three stages succeed.
By applying \cref{lem:upperBoundPowers_sparse} with $ h = \lceil1 / \eps'\rceil $, we conclude that a.a.s.\ the graph $H$ defined in \cref{claim:upperboundPowers_stage4} is $ (2k, r, \nu + 1, \xi, \lfloor\eta / \xi\rfloor, 1 / \log^5{n}, 1/\log^{1/2}{n}) $-sparse-partitionable; the required conditions are easy to check.\COMMENT{Here:
\begin{enumerate}
    \item $ p \le n^{-\eps / k} \ll n^{-2\eps'} \leq n^{-2/h}$, lower bound is clear;
    \item $ \chi \xi \zeta = \eta \chi $ and note $ \eta = \Omega(n) $ and $\chi=1/(2p\log^5n)=\omega(1)$ by the upper bound above;
    \item $ \chi p = 1 / 2 \log^5{n} $ and $ \nu $ is polynomial;
    \item Have $ \eta = \Omega(n) $ and $ \xi \le p^k n \le n^{1-\eps} $ so $ \zeta = \eta / \xi = \Omega(n^{\eps}) $ whereas $ \nu^{1/h} \le n^{\eps'} \ll n^{\eps} $.
\end{enumerate}}
This means that, by \cref{claim:upperboundPowers_stage4}, a.a.s.\ Builder also succeeds in buying the desired structures in the fourth stage.

All that remains to show is that a.a.s.\ the budget is not exceeded.
It follows from \cref{lem:factorGnp}~\ref{lem:factorGnpitem2} (with input $(U_1,\mathcal{A}_{j,\ell,k},\eps',t_1/M)$ in place of $([n],F,\delta,p)$) and by using \cref{lemma:coupling_stages}~\ref{lem:couplingproperty3}, \eqref{equa:hierarchy1} and~\eqref{eq:densabsstr} that a.a.s.\ the number of edges purchased in \ref{item:upperboundPowers_stage1} is
\begin{align*}
    e(\hat{B}_1) \leq e\left(\bigcup_{a\in[\pi]}H_1'[V_a]\right)&\leq \left(\frac{t_1}{M}\right)^{1-d^*(\mathcal{A}_{j,\ell,k})}n^{1+\eps'd^*(\mathcal{A}_{j,\ell,k})/2}\\
    &=O\left(\left(\frac{n^2}{t}\right)^{k+\eps/3-1}n^{1+\eps/5}\right)=o(n^{2k-1+\eps}/t^{k-1})<b/4.
\end{align*}
In a very similar way (but using now that $d^*(P_q^k)<k$), we have that a.a.s.\ $e(\hat{B}_3) < b/4$.\COMMENT{We have that
\[e(\hat{B}_3) \leq e\left(\bigcup_{a\in[\sigma]}H_3'[X_a]\right)\leq \left(\frac{t_3}{M}\right)^{1-d^*(P_q^k)}n^{1+\eps'd^*(P_q^k)/2}=O\left(\left(\frac{n^2}{t}\right)^{k-1}n^{1+\eps/5}\right)=o(n^{2k-1+\eps}/t^{k-1})<b/4.\]}
Now, similarly as \cref{lem:factorGnp}~\ref{lem:factorGnpitem2} is proved through Chernoff's inequality, the same can be used to bound the number of purchased edges in \ref{item:upperboundPowers_stage2} and \ref{item:upperboundPowers_stage4}.
For \ref{item:upperboundPowers_stage2}, by \cref{lemma:coupling_stages}~\ref{lem:couplingproperty3}, Chernoff's inequality (\cref{lem:Chernoff}), and a union bound over all $a\in[\xi]$, we have that a.a.s.\COMMENT{The application of Chernoff's bound should be fine. For the last inequality, we must verify that
\[\frac{t}{\xi}=o(b)=o\left(\frac{n^{2k-1+\eps}}{t^{k-1}}\right)\iff t^k=o\left(n^{2k+\eps}\left(\frac{t}{n^2}\right)^{\frac{k}{1-\eps/2}}\right)\iff\left(\frac{t}{n^2}\right)^{k-\frac{k}{1-\eps/2}}=o(n^\eps).\]
Using that $k-k/(1-\eps/2)=-\eps k/(2(1-\eps/2))<0$ and $t\geq n^{2-1/k}$ by assumption (we could use the stronger lower bound here, but it is not needed), we have that
\[\left(\frac{t}{n^2}\right)^{k-\frac{k}{1-\eps/2}}\leq\left(n^{-1/k}\right)^{-\eps k/(2(1-\eps/2))}=n^{\eps/(2-\eps)}=o(n^\eps),\]
as we wanted (since we assume $\eps<1$).
}
\[e(\hat{B}_2)\leq \xi p_2'\left(\frac{n}{3\xi}\right)^2=\Theta\left(\frac{t}{\xi}\right)=o(b),\]
so a.a.s.\ the budget is not exceeded in \ref{item:upperboundPowers_stage2}.
Similarly, a.a.s.\ $e(\hat{B}_4)=O(t/\xi)=o(b)$.
It follows that a.a.s.\ $e(B_t)<b$, as desired.
\qed

\begin{remark}
    It seems plausible that one could improve the upper bound on the budget in \cref{thm:upperboundPowers} by considering the strategy of \citet{NS19} instead of the one we have used here.
    As this would not lead to optimal bounds for the budget, we have not explored this direction.
\end{remark}


\section{Final remarks and open problems}\label{sect:problems}

As we already discussed in the introduction, our results give a negative answer to the general question of whether any bounded-degree $n$-vertex graph $H$ can be constructed in the budget-constrained random graph process with linear budget when slightly above the hitting time for containing a copy of~$H$ (\cref{question:FKMgeneral}).
In more generality, we believe that the examples we know of so far give evidence towards the opposite behaviour to what \citet{FKM25} proposed: when considering perfect matchings, Hamilton cycles or connectivity, the budget can be improved by a logarithmic factor; for $K_r$-factors, it can be improved by at most some polylogarithmic factor with exponent less than $1$; for $\alpha$-$K_r$-factors and fixed trees or cycles, it cannot be improved by more than a constant factor.
We suggest that this may be a general phenomenon for bounded-degree $H$: when $t$ is of the order of the hitting time for $H$, no successful strategies exist with a budget substantially lower than the hitting time itself.
To address this problem, we propose the following concrete question.
We say that a monotone increasing property $\mathcal{P}$ has \defi{bounded degree} if there exists some $\Delta\in\mathbb{N}$ which does not depend on $n$ such that every edge-minimal graph $G\in\mathcal{P}$ satisfies that $\Delta(G)\leq\Delta$.

\begin{problem}
    Does there exist a bounded-degree monotone property $\mathcal{P}$ for which, if~$t$ is of the order of the hitting time for $\mathcal{P}$, there exists a successful $(t,b)$-strategy for $\mathcal{P}$ with $b=o(t/\log n)$?\COMMENT{Note that, a priori, one should also enforce that a.a.s.\ the hitting time for $\mathcal{P}$ is subquadratic, as otherwise the answer would trivially be `yes' (for instance, the answer to the question is `yes' for the containment of a spanning hypercube, which does not have bounded degree). However, there do not exist any bounded-degree properties whose hitting time is quadratic (this follows, e.g., from the work on the threshold for $\Delta$-universality). In fact, a.a.s.\ the hitting time for any such property is polynomially smaller than $n^2$.}
\end{problem}

We note that the bounded degree condition is necessary.
Indeed, suppose that Builder wishes to construct, say, a star with $\sqrt{n}$ leaves (so $H$ consists of a star with $\sqrt{n}$ leaves and $n-\sqrt{n}-1$ isolated vertices).
A very simple greedy strategy gives an essentially optimal result here: fix an arbitrary vertex $v$ and purchase only those edges incident to $v$ until the desired star is created.
Due to the high concentration in the degrees, a.a.s.\ the hitting time for this property is $(1-o(1))n^{3/2}/2$, and the strategy above is a successful $((1+o(1))n^{3/2}/2,\sqrt{n})$-strategy, which is clearly optimal.

We also believe that it would be interesting to understand what time is required in order to have successful strategies that use an ``almost optimal'' budget.
Our results for $F$-factors with $d^*(F)>1$ show that the time needs to be quadratic for a linear budget to suffice (\cref{thm:Ffactors_lower}). 
Is this a general phenomenon for other graph properties?
Let us say that the \defi{size} $ s = s(n) $ of a monotone property $\mathcal{P}$ is the minimum number of edges of a graph in $\mathcal{P}$, and that $ \mathcal{P} $ is \defi{affordable} if there exists a successful $(t,b)$-strategy for $ \mathcal{P} $ with $ b = s n^{o(1)} $ and $t\leq n^{2-\Omega(1)}$.
A general problem would be to classify which properties are affordable.

Note that all properties with $ \tau = s n^{o(1)} $, such as Hamiltonicity or connectivity, are trivially affordable.
In the case that $ s $ is constant, \citet{FKM25} showed that there exist strategies for fixed trees with constant budget and linear time (so this is an affordable property), whereas for small cycles any strategy with constant budget requires quadratic time (so this is not).
As another example, \cref{thm:Ffactors_lower} tells us that, if all graphs satisfying $ \mathcal{P} $ contain linearly many disjoint cycles of bounded size, then $ \mathcal{P} $ is not affordable.
This is indeed the case for many natural bounded-degree properties $ \mathcal{P} $ with $ s = \Omega(n) $ and $ \tau = n^{1 + \Omega(1)} $; it may therefore be tempting to conjecture that no such properties are affordable.
However, this is not true in general: for example, consider the property of containing both a Hamilton cycle and a copy of $ K_4 $.
The Hamilton cycle ensures that $ s = \Omega(n) $ and the clique ensures that $ \tau = n^{1 + \Omega(1)} $.
Nonetheless, there are $ (O(n\log n), O(n)) $-strategies for Hamiltonicity, and we can construct a $ K_4 $ in time $ n^{3/2 + o(1)} $ with budget $ n^{1/2 + o(1)} $ with a simple two-stage strategy.
Namely, in the first stage, buy any edges incident to a single fixed vertex and, in the second, buy all edges presented which are contained in its neighbourhood.
This example avoids the core of the problem by taking a non-affordable property and artificially making it affordable, by intersecting it with a trivially affordable property of larger size.
In general, this idea leads to a huge collection of affordable properties, but remains rather dissatisfying as a counterexample; we therefore ask the following question.

\begin{problem}
    Does there exist a `natural' (in particular, which is not an intersection similar to the one described above) bounded-degree property $\mathcal{P}$ of linear size which is affordable in a non-trivial way?
\end{problem}

Note that here we focus only on the bounded-degree case, because our upcoming work~\cite{EGNS25b} will show that triangle covers are an example of a `natural' affordable property (of unbounded degree).

There also remain more specific open questions related to our results.
For partial $F$-factors, \cref{thm:Ffactors_lower,thm:partialfactoruppergeneral} show that the corresponding upper and lower bounds on the budget~$b$ needed for successful $(t,b)$-strategies for each $t$ are tight (up to some constant factor which we make no effort to optimise).
However, for complete factors, this is not always the case.
\Cref{thm:Ffactors_lower,thm:factoruppergeneral} leave a polylogarithmic gap for $F$-factors when $F$ is strictly $1$-balanced, which in particular includes clique factors (as in \cref{thm:factors_lower,thm:factor_upper}).
Given that for perfect matchings ($K_2$-factors) the logarithmic term can be removed from the budget for all possible times~\cite{FKM25,Anastos22}, one could hope that these polylogarithmic terms can be removed for all $F$-factors for strictly $1$-balanced~$F$ when the time is of the order of the (expected) hitting time.
We believe that closing this gap is a very interesting open problem, even for the simplest case of triangle factors.

\begin{problem}\label{prob:close_gap}
    Does there exist a successful $(t,b)$-strategy for triangle factors with $t=\Theta(n^{4/3}\log^{1/3}n)$ and $ b = o(n^{4/3} \log^{1/3}n)$?
\end{problem}

One possible approach to tackling this problem is to consider the simpler task of purchasing a triangle cover (that is, the property that every vertex is contained in some triangle) in the context of the budget-constrained random graph process.
Having a triangle cover is clearly a necessary condition for having a triangle factor.
In the random graph process, it also turns out to be ``essentially'' sufficient: indeed, the hitting time result of \citet{HKMP23} states that a.a.s.\ the hitting times for these two properties coincide (and in fact gives the analogous statement for general cliques $K_r$ with $r\geq3$).
Therefore, one might hope that understanding triangle covers would at least allow for a conjectural answer to \cref{prob:close_gap}.
This, however, turns out not to be the case: there exists a successful $(t,b)$-strategy for triangle covers with $t$ of the order of the hitting time and a budget $b$ which is asymptotically smaller than the lower bound $ b = \Omega(n^{4/3} \log^{-1/6}n) $ for triangle factors provided by \cref{thm:factors_lower}.
We will detail our contributions towards clique covers in an upcoming work~\cite{EGNS25b}.

In a similar direction to \cref{prob:close_gap}, it would be interesting to close the gap between the lower and the upper bound for powers of Hamilton cycles (\cref{thm:hamsquare_lower,thm:hamsquare_upper}).

Lastly, as we discussed in \cref{sect:toolsintro}, we have introduced some general tools which may be applicable for different multi-stage strategies.
Our first approach was to obtain a general statement which ensures that, if one expects each stage to succeed a.a.s.\ ``independently'', then they do indeed a.a.s.\ succeed successively.
However, our \cref{lem:two_stage_real} is currently limited to applications in which only two stages can overlap (in the sense that the edge sets considered for each of them are non-disjoint).

\begin{problem} \label{problem:multi_stage}
    Is it possible to develop a version of \cref{lem:two_stage_real} which holds for arbitrary multi-stage strategies with a constant number of stages?
\end{problem}

Our second approach was to show that the removal of an arbitrary ``sparse'' graph from the edge-set presented in the process does not substantially affect the hitting times for the properties of interest.
This we applied in the very specific setting of finding linkages in the proof of \cref{thm:upperboundPowers} (see \cref{lem:linkage}).
It would be useful to have some form of ``meta-theorem'' which ensures this is the case in general, so as to avoid the need to prove such results for each specific property of interest.
We thus propose the following (somewhat vague) question.

\begin{problem} \label{problem:multi_stage2}
    Given a function $ p^* = p^*(n) $, does there exist a class $ \mathcal{C} $ of monotone properties, which is large enough to include all natural properties with threshold at most $ p^* $ which one might consider in the multi-stage process, as well as a notion of $p^*$-sparseness, such that a.a.s.\ $ G(n, p^* n^{o(1)}) $ is $p^*$-sparse and, for any property $ \mathcal{P} \in \mathcal{C} $ and any $p^*$-sparse graph $ H $, a.a.s.\ $ G(n, p) \setminus H $ satisfies $ \mathcal{P} $ whenever $ p = \omega(p^*) $?
\end{problem}

Even describing such a general class of properties appears to pose a significant challenge.
Note that, in general, one cannot simply choose $ \mathcal{C} $ to be the class of all properties with threshold at most~$ p^* $.
Indeed, suppose $ p^* = \omega(1/n) $, and given a notion of sparseness, fix a sparse graph $ H $ with $ |E(H)| = \Theta(p^* n^2) $ (which must exist), and consider the property $ \mathcal{P} $ of having non-empty intersection with $ H $.
Then, the threshold for $ \mathcal{P} $ is $ n^{o(1)} / |E(H)| \le p^* $, but clearly $ G(n, p) \setminus H $ never satisfies $ \mathcal{P} $.\COMMENT{The sparseness property should be the same for every $\mathcal{P}\in\mathcal{C}$. In our proof each $\mathcal{P}$ represents finding a linkage collection for a particular partition given in the previous stages.}

We remark that a positive answer to either one of these two problems in sufficient generality would be able to handle many multi-stage strategies, including ours.
Such general tools would be particularly helpful to allow us to reuse known results (such as those of \citet{JKV08}) as a black box, without having to reprove them in the multi-stage context.

\section*{Acknowledgements}

We would like to thank the referees for their careful reading of our manuscript, as well as for their many constructive comments.

We made use of ChatGPT Pro 5.5 to discover reference~\cite{GK11} related to \cref{claim:independence1}, as well as to find some typographical errors in a previous version of this work.
It also noticed that one condition in the statement of an earlier version of \cref{lem:deterministic} was not necessary.

\bibliographystyle{mystyle}
\bibliography{bib}


\appendix

\section{Proofs of \texorpdfstring{\cref{lemma:coupling_stages,claim:independence1,claim:independence2}}{Lemmas 3.3, 3.7 and 3.8}}\label{appen:multi_stage}

We present here the proofs which were left out in \cref{sect:couplings}.

\begin{proof}[Proof of \cref{lemma:coupling_stages}]
    For each $i\in[k]$, let $p_i=(1 - o(1)) t_i / M$ and $\mybar{p}_i=o(t_i / M)$ (the actual functions will be chosen below) and sample two graphs $H_i\sim G(n,p_i)$ and $\mybar{H}_i\sim G(n,\mybar{p}_i)$ independently, so~\ref{lem:couplingproperty1} and~\ref{lem:couplingproperty2} hold by definition.
    Let $p_i'\coloneqq p_i+\mybar{p}_i-p_i\mybar{p}_i$.
    Note that, in this case, $H_i'\coloneqq H_i\cup \mybar{H}_i\sim G(n,p_i')$.
    Now, for each $i\in[k]$, we sample $\hat{H}_i$ iteratively as follows.

    Suppose we have already generated $\hat{H}_1,\ldots,\hat{H}_{i-1}$.
    If $e(H_i\setminus\bigcup_{j=1}^{i-1}\hat{H}_j)>t_i$ or $e(H_i'\setminus\bigcup_{j=1}^{i-1}\hat{H}_j)<t_i$, we say that there is a \defi{failure at step $i$}.
    We split the definition of the coupling into two cases.
    If there is a failure in step $i$, then we completely stop the iterative process, discarding all its previous steps, and sample a random graph process $G_0,G_1,G_2\ldots$ independently from everything that came before, thus also generating a sequence of graphs $(\hat{H}_i)_{i\in[k]}\sim(\hat{G}_i)_{i\in[k]}$.
    If there is no failure at step $i$, then we sample $\hat{H}_i$ by taking a uniformly random set $\hat{E}_i$ of $t_i-e(H_i\setminus\bigcup_{j=1}^{i-1}\hat{H}_j)$ edges from $E(H_i'\setminus(H_i\cup\bigcup_{j=1}^{i-1}\hat{H}_j))$ and letting $\hat{H}_i\coloneqq (H_i\setminus\bigcup_{j=1}^{i-1}\hat{H}_j)\cup \hat{E}_i$.
    
    Note that, by Chernoff's bound (\cref{lem:Chernoff}), for an appropriate choice of $p_i=(1-o(1))t_i/M$ and of $\mybar{p}_i$ such that $p_i'=(1+o(1))t_i/M$, a.a.s.\ there is no failure at step $i$\COMMENT{Say that $p_i=t_i/M-\lambda_i$, where $\lambda_i=\lambda_i(n)=o(p_i)$ (so $\lambda_iM=o(t_i)$).
    Note that $X_i\coloneqq e(H_i\setminus\bigcup_{j=1}^{i-1}\hat{H}_j)$ is stochastically dominated by a random variable $Y_i\sim\mathrm{Bin}(M,p_i)$.
    Thus, by \cref{lem:Chernoff}, 
    \begin{align*}
        \mathbb{P}[X_i>t_i]\leq\mathbb{P}[Y_i>t_i]&\leq\mathbb{P}\left[|Y_i-Mp_i|\geq\frac{\lambda_iM}{t_i-\lambda_iM}(t_i-\lambda_iM)\right]\\
        &\leq2\exp\left(-\left(\frac{\lambda_iM}{t_i-\lambda_iM}\right)^2\frac{t_i-\lambda_iM}{3}\right)=2\exp\left(-\frac{\lambda_i^2M^2}{3(t_i-\lambda_iM)}\right)=\exp\left(-(1\pm o(1))\lambda_i^2M^2/3t_i\right).
    \end{align*}
    And, since $t_i=\omega(1)$, we can choose $\lambda_i$ so that this tends to $0$ (say, $\lambda_i=t_i^{3/4}/M=o(p_i)$).\\
    Consider now $p_i'=t_i/M+\lambda_i$, where $\lambda_i=o(p_i')$ (so $\lambda_iM=o(t_i)$).
    Let $X_i'\coloneqq e(H_i'\setminus\bigcup_{j=1}^{i-1}\hat{H}_j)\sim\mathrm{Bin}(M-s_{i-1},p_i')$.
    Note that we must ensure that $\mathbb{E}[X_i']>t_i$, so we must have
    \[(M-s_{i-1})\left(\frac{t_i}{M}+\lambda_i\right)>t_i\iff\lambda_iM-s_{i-1}t_i/M-s_{i-1}\lambda_i>0,\]
    which can be achieved by taking $\lambda_i=\omega(s_{i-1}t_i/M^2)$; since $s_{i-1}/M=o(1)$, this can be achieved while also maintaining $\lambda_i=o(t_i/M)$.
    (Note that this is where we use the upper bound on the $t_i$ and that, therefore, the upper bound on $t_k$ is not really needed.)
    Now, using Chernoff's inequality, we have that
    \begin{align*}
        \mathbb{P}[X_i'<t_i]&\leq\mathbb{P}\left[|X_i'-(M-s_{i-1})p_i'|\geq\frac{\lambda_iM-s_{i-1}t_i/M-s_{i-1}\lambda_i}{(M-s_{i-1})(t_i/M+\lambda_i)}\left((M-s_{i-1})(t_i/M+\lambda_i)\right)\right]\\
        &\leq2\exp\left(-\left(\frac{\lambda_iM-s_{i-1}t_i/M-s_{i-1}\lambda_i}{(M-s_{i-1})(t_i/M+\lambda_i)}\right)^2\frac{(M-s_{i-1})(t_i/M+\lambda_i)}{3}\right)=\exp\left(-(1\pm o(1))\frac{\lambda_i^2M^2}{3t_i}\right),
    \end{align*}
    which tends to $0$ by choosing $\lambda_i$ sufficiently large if necessary (say, $\lambda_i\geq t_i^{3/4}/M$).} (and thus, by a union bound over all steps, a.a.s.\ there will be no failure at any step).
    We remark that it is for these calculations that the assumptions on the $t_i$ are used.
    Moreover, if is there is no failure at any step, then property \ref{lem:couplingproperty3} holds by construction.
    Thus, it only remains to verify that \ref{lem:couplingproperty1.5} holds.
    Property \ref{lem:couplingproperty1.5} holds by definition conditional upon there being a failure at any step, so let us assume that there is no failure.

    Observe that, when considering the random graph process, for any $i\in[k]$ and conditional on the outcome of the (already sampled) graphs $\hat{G}_1,\ldots,\hat{G}_{i-1}$, the graph $\hat{G}_i$ is a graph chosen uniformly at random among those which have exactly $t_i$ edges and are edge-disjoint from $\bigcup_{j=1}^{i-1}\hat{G}_j$.
    Thus, we must prove that the graphs $\hat{H}_i$ sampled by our process also have this distribution.
    For each $i\in[k]$, let us denote by $\mathcal{C}_i$ the event that the coupling does not fail at any step at most $i$; moreover, let $\mathcal{C}_0$ denote the (trivial) event of probability $1$.
    Fix some $i\in[k]$ and condition on $\mathcal{C}_{i-1}$ and on the outcomes of $(H_j,\hat{H}_j,H_j')_{j\in[i-1]}$, which have already been sampled.
    If we write $\mathbb{P}^*$ to denote probabilities in this conditional space, for an arbitrary graph $G\subseteq K_n\setminus\bigcup_{j=1}^{i-1}\hat{H}_j$ with $e(G)=t_i$, it is not hard to verify that $\mathbb{P}^*[\hat{H}_i=G\mid\mathcal{C}_i]$ is a function of $ t_i $, $ p_i $, $ p_i' $ and $ s_i $ which does not depend on the graph $ G $ nor on the specific $\hat{H}_1,\ldots,\hat{H}_{i-1}$ that we conditioned on.\COMMENT{Fix an arbitrary graph $G\subseteq K_n\setminus\bigcup_{j=1}^{i-1}\hat{H}_j$ with $e(G)=t_i$.
    Note that if the coupling fails at step $i$ it cannot be that $H_i\setminus\bigcup_{j=1}^{i-1}\hat{H}_j\subseteq G\subseteq H_i'$.
    This implies that
    \begin{equation}\label{equa:auxiliaryappendixcomment1}
        \mathbb{P}^*\left[\hat{H}_i=G,\mathcal{C}_i\ \middle|\  H_i\setminus\bigcup_{j=1}^{i-1}\hat{H}_j\subseteq G\subseteq H_i'\right]=\mathbb{P}^*\left[\hat{H}_i=G\ \middle|\  H_i\setminus\bigcup_{j=1}^{i-1}\hat{H}_j\subseteq G\subseteq H_i'\right],
    \end{equation}
    that is, we can remove the intersection with $\mathcal{C}_i$ in the expression (this will be used below).
    Therefore, we have that
    \begin{align*}
        &\,\mathbb{P}^*[\hat{H}_i=G\mid\mathcal{C}_i]=\frac{\mathbb{P}^*[\hat{H}_i=G,\mathcal{C}_i]}{\mathbb{P}^*[\mathcal{C}_i]}\\
        =&\,\frac{\mathbb{P}^*[\hat{H}_i=G\mid H_i\setminus\bigcup_{j=1}^{i-1}\hat{H}_j\subseteq G\subseteq H_i']\mathbb{P}^*[H_i\setminus\bigcup_{j=1}^{i-1}\hat{H}_j\subseteq G\subseteq H_i']}{\mathbb{P}^*[\mathcal{C}_i]}\\
        =&\,\frac{(1-p_i)^{\binom{n}{2}-s_i}(p_i')^{t_i}}{\mathbb{P}^*[\mathcal{C}_i]}\sum_{a=0}^{t_i}\sum_{b=t_i}^{\binom{n}{2}-s_{i-1}}\mathbb{P}^*\left[\hat{H}_i=G\ \middle|\  H_i\setminus\bigcup_{j=1}^{i-1}\hat{H}_j\subseteq G\subseteq H_i',e\left(H_i\setminus\bigcup_{j=1}^{i-1}\hat{H}_j\right)=a,e\left(H_i'\setminus\bigcup_{j=1}^{i-1}\hat{H}_j\right)=b\right]\\
        \phantom{=}&\phantom{\,\frac{(1-p_i)^{\binom{n}{2}-s_i}(p_i')^{t_i}}{\mathbb{P}^*[\mathcal{C}_i]}\sum_{a=0}^{t_i}\sum_{b=t_i}^{\binom{n}{2}-s_{i-1}}}\cdot\mathbb{P}^*\left[e\left(H_i\setminus\bigcup_{j=1}^{i-1}\hat{H}_j\right)=a,e\left(H_i'\setminus\bigcup_{j=1}^{i-1}\hat{H}_j\right)=b\ \middle|\ H_i\setminus\bigcup_{j=1}^{i-1}\hat{H}_j\subseteq G\subseteq H_i'\right]\\
        =&\,\frac{(1-p_i)^{\binom{n}{2}-s_i}(p_i')^{t_i}}{\mathbb{P}^*[\mathcal{C}_i]}\sum_{a=0}^{t_i}\sum_{b=t_i}^{\binom{n}{2}-s_{i-1}}\frac{\binom{t_i}{a}p_i^a(1-p_i)^{t_i-a}\binom{\binom{n}{2}-s_{i}}{b-t_i}\mybar{p}_i^{b-t_i}(1-\mybar{p}_i)^{\binom{n}{2}-s_{i-1}-b}}{\binom{b-a}{t_i-a}}.
    \end{align*}
    Clearly, this expression is independent of the choice of $G$, and also of the specific $\hat{H}_1,\ldots,\hat{H}_{i-1}$ that we conditioned on.
    (In particular, note that $\mathcal{C}_i$ only depends on the number of edges of two random graphs which, in principle, may depend on previous random graphs. 
    However, given that the binomial random graphs are chosen independently of the graphs $\hat{H}_j$ and that, no matter which graphs we condition on, these will necessarily have the same number of edges, we conclude that indeed $\mathbb{P}[\mathcal{C}_i]$ is independent of the choice of $\hat{H}_1,\ldots,\hat{H}_{i-1}$.)
    These steps require some explanation, though:\\
    In the second equality we use \eqref{equa:auxiliaryappendixcomment1} to delete the term $\mathcal{C}_i$ from the expression.
    Moreover, when applying the law of total probability in the second equality, if we condition on the complement of $H_i\subseteq G\subseteq H_i'$, then the probability that $\hat{H}_i=G$ and $\cC_i$ holds is $0$, so we may omit this term.\\
    In the third equality, we use that
    \[\mathbb{P}^*\left[H_i\setminus\bigcup_{j=1}^{i-1}\hat{H}_j\subseteq G\subseteq H_i'\right]=(1-p_i)^{\binom{n}{2}-s_i}(p_i')^{t_i}.\]
    This holds because the graphs $H_i,H_i'$ are independent from what happened at previous steps, so we do not need to consider the effect of being in a conditional probability space. 
    For having $H_i$ be a subgraph of $G$, we must simply ensure that all non-edges of $G$ (of which there are $\inbinom{n}{2}-s_i$) are also non-edges of $H_i$ (the intersection of $H_i$ with $\bigcup_{j=1}^{i-1}\hat{H}_j$ is irrelevant here); this occurs with probability $(1-p_i)^{\binom{n}{2}-s_i}$.
    For having $H_i'$ be a supergraph of $G$, we must ensure every edge of $G$ is an edge of $H_i'$, which occurs with probability $(p_i')^{t_i}$.
    These two events involve disjoint sets of edges, so they are independent and we may simply multiply the probabilities.\\
    Still in the third equality, we apply once more the law of total probability to further split the space, noting that we only need to consider the values for $a$ and $b$ that we do since conditioning on $H_i\setminus\bigcup_{j=1}^{i-1}\hat{H}_j\subseteq G\subseteq H_i'$ implies that $\cC_i$ holds.\\
    For the fourth equality, we simply note that
    \[\mathbb{P}^*\left[\hat{H}_i=G\ \middle|\  H_i\setminus\bigcup_{j=1}^{i-1}\hat{H}_j\subseteq G\subseteq H_i',e\left(H_i\setminus\bigcup_{j=1}^{i-1}\hat{H}_j\right)=a,e\left(H_i'\setminus\bigcup_{j=1}^{i-1}\hat{H}_j\right)=b\right]=\frac{1}{\binom{b-a}{t_i-a}},\]
    by our definition of how $\hat{H}_i$ is sampled, and that
    \[\mathbb{P}^*\left[e\left(H_i\setminus\bigcup_{j=1}^{i-1}\hat{H}_j\right)=a,e\left(H_i'\setminus\bigcup_{j=1}^{i-1}\hat{H}_j\right)=b\ \middle|\ H_i\setminus\bigcup_{j=1}^{i-1}\hat{H}_j\subseteq G\subseteq H_i'\right]=\binom{t_i}{a}p_i^a(1-p_i)^{t_i-a}\binom{\binom{n}{2}-s_{i}}{b-t_i}\mybar{p}_i^{b-t_i}(1-\mybar{p}_i)^{\binom{n}{2}-s_{i-1}-b}\]
    since the graphs $H_i$ and $\mybar{H}_i$ are independent binomial random graphs and all the edges that we must consider for $H_i'$ do not lie in $H_i$ by the conditioning so they are only included with probability $\mybar{p}_i$.}
    Therefore, the sampled graph is chosen uniformly from all the possible options, just as we wanted to see.
\end{proof}

\begin{proof}[Proof of \cref{claim:independence1}]
    Let $\mathcal{A}$ and $\mathcal{B}$ denote the image sets (that is, the set of elements taken with non-zero probability) of $A$ and $B$, respectively.
    By considering an arbitrary injection, we may assume that $\mathcal{A}\subseteq\mathbb{N}$.
    For each $x\in\mathcal{A}$, let $a^-$ denote the maximum $a'\in\{0\}\cup\mathcal{A}$ such that $a'<a$.
    For each $y\in\mathcal{B}$, consider the cumulative distribution function $F_y\colon\{0\}\cup\mathcal{A}\to\mathbb{R}$ of~$A$ conditional on $B=y$, obtained by setting $F_y(x)\coloneqq\mathbb{P}[A\leq x\mid B=y]$.

    Now, given the pair of random variables $(A,B)$, sample $C\sim\mathcal{U}((F_B(A^-), F_B(A)])$.
    We define $f\colon\mathcal{B}\times(0,1]\to\mathcal{A}$ by setting $f(b,c)\coloneqq\min\{a\in\mathcal{A}:F_b(a)\geq c\}$.
    Then, given $(B,C)$, we have that $A=f(B,C)$.
    It remains to show that $C$ is independent of $B$.
    For every $c\in(0,1]$ and $b\in\mathcal{B}$ we have that
    \COMMENT{In the second equality we use first the fact that, if $A<f(b,c)$ then $C\leq F_b(f(b,c)^-)<c$, and second the fact that, if $A>f(b,c)$, then $A^-\geq f(b,c)$ and thus by definition we have that $C>F_b(f(b,c))\geq c$.
    Note that this implies that, in fact, $C\sim\mathcal{U}((0,1])$ (though it is not independent of $A$).}
    \begin{align*}
        \mathbb{P}[C\leq c\mid B=b]&=\mathbb{P}[C\leq c\mid B=b,A<f(b,c)]\mathbb{P}[A<f(b,c)\mid B=b]\\
        &\quad+\mathbb{P}[C\leq c\mid B=b,A=f(b,c)]\mathbb{P}[A=f(b,c)\mid B=b]\\
        &\quad+\mathbb{P}[C\leq c\mid B=b,A>f(b,c)]\mathbb{P}[A>f(b,c)\mid B=b]\\
        &=\mathbb{P}[A<f(b,c)\mid B=b]\\
        &\quad+\mathbb{P}[C\leq c\mid B=b,A=f(b,c)]\mathbb{P}[A=f(b,c)\mid B=b]+0\\
        &=F_b(f(b,c)^-)+\frac{c-F_b(f(b,c)^-)}{F_b(f(b,c))-F_b(f(b,c)^-)}(F_b(f(b,c))-F_b(f(b,c)^-))\\
        &=c.
    \end{align*}
    As this holds for every $b\in\mathcal{B}$, $C$ is indeed independent of $B$.
\end{proof}

\COMMENT{Proof by Zak under the added hypothesis that the variables take finitely many values and all probabilities in the underlying probability space are rationals with denominator at most $ q \in \mathbb{N} $. This was done before reading \cite{GK11}.
\begin{proof}
    Let $ N \coloneqq q! $.
    For each $x\in X$ and each $ y \in Y $ such that $\mathbb{P}[B=y]\neq0$, let $ p^y_x \coloneqq \mathbb{P}[A = x \mid B = y] $, and let $ \{I^y_x\}_{x \in X} $ be a partition of $ [N] $ into intervals of sizes $ p^y_x N $, respectively, noting these are all integers and sum to~$ N $. \aed{Sum to $N$ yes, but unclear that they would be integers. We currently have no information about the conditional probabilities. To change to a restriction that all probabilities in some underlying probability space are rationals. Plus change current setup for the coupling (since some variables are continuous).}
    Define $ C $ to be uniformly distributed among $ I^B_A $.
    For each $ i \in [N] $ and $y\in Y$, write $ x_i(y) $ for the unique $ x \in X $ such that $ i \in I^y_x $.
    Now, given any $ i \in [N] $ and $ y \in Y $, we see that 
    \[\mathbb{P}[C = i \mid B = y] = \mathbb{P}[C = i \mid A = x_i(y), B = y] \mathbb{P}[A = x_i(y) \mid B = y] = \frac{1}{|I^y_{x_i(y)}|} \cdot \frac{|I^y_{x_i(y)}|}{N} = 1 / N,\]
    which means that $ C $ is uniformly distributed over $ [N] $, and independent of $ B $.
    Furthermore, observe that, given $ B $ and $ C $, we have that $ A $ is the unique $ x \in X $ such that $ C \in I^B_x $, which proves the claim.
\end{proof}
}

\begin{proof}[Proof of \cref{claim:independence2}]
    Let $\Lambda$ denote a set which contains the images of $A,B,C,F,G$.
    We may assume that $\Lambda\subseteq\mathbb{N}$ (again, by taking an arbitrary injection).
    We first note that $A$ is independent of $G$ conditional upon $F$.
    Indeed, fix any $(x,y)\in\Lambda^2$ such that $\mathbb{P}[F=x,G=y]\neq0$, and let 
    \[\Phi\coloneqq\{v\in\Lambda:\mathbb{P}[C=v\mid F=x,G=y]\neq0\}.\]
    Now, using the fact that $G=g(F,C)$, as well as the independence assumptions, for any $\lambda\in\Lambda$ we have that
    \begin{align}\label{equa:zakconditionalindep}
        \mathbb{P}[A=\lambda\mid F=x, G=y]&=\sum_{v \in \Phi}\mathbb{P}[A=\lambda\mid F=x, C=v]\mathbb{P}[C=v\mid F=x, G=y]\nonumber\\
        &=\sum_{v\in\Phi}\mathbb{P}[A=\lambda\mid F=x]\mathbb{P}[C=v\mid F=x, G=y]\nonumber\\
        &=\mathbb{P}[A=\lambda\mid F=x].
    \end{align}

    For each $\lambda\in\Lambda$, let $\lambda^-\coloneqq\max\{\lambda'\in\{0\}\cup\Lambda:\lambda'<\lambda\}$.
    For each $x\in\Lambda$ such that $\mathbb{P}[F=x]\neq0$, let $H_{x}\colon\{0\}\cup\Lambda\to\mathbb{R}$ be the cumulative distribution function of $A$ conditional on $F=x$, defined by setting $H_x(\lambda)=\mathbb{P}[A\leq\lambda\mid F=x]$.
    Given the triple $(A,F,G)$, sample $D\sim\mathcal{U}((H_{F}(A^-),H_{F}(A)])$.
    Letting \[\Xi\coloneqq\{(x,z)\in\Lambda\times(0,1]:\mathbb{P}[F=x]\neq0\},\] we define a function $a\colon\Xi\to\Lambda$ by setting $a(x,z)\coloneqq\min\{\lambda\in\Lambda:H_{x}(\lambda)\geq z\}$.
    With this definition, we have that $A=a(F,D)$, as desired.
    
    Lastly, we verify that $D$ is independent of~$(F,G)$.
    Indeed, for any $(x,y,z)\in\Lambda^2\times(0,1]$ such that $\mathbb{P}[F=x,G=y]\neq0$ and using \eqref{equa:zakconditionalindep}, we have that
    \COMMENT{In the second equality we use first the fact that, if $A<a(x,z)$, then $D\leq H_x(a(x,z)^-)<z$, and second the fact that, if $A>a(x,z)$, then $A^-\geq a(x,z)$ and thus by definition we have that $D>H_x(a(x,z))\geq z$; and in the third equality we use the observation about conditional independence made above.
    Note that this implies that, in fact, $D\sim\mathcal{U}((0,1])$ (though it is not independent of $A$).}
    \begin{align*}
        \mathbb{P}[D\leq z\mid F=x,G=y]&=\mathbb{P}[D\leq z\mid F=x,G=y,A<a(x,z)]\mathbb{P}[A<a(x,z)\mid F=x,G=y]\\
        &\quad+\mathbb{P}[D\leq z\mid F=x,G=y,A=a(x,z)]\mathbb{P}[A=a(x,z)\mid F=x,G=y]\\
        &\quad+\mathbb{P}[D\leq z\mid F=x,G=y,A>a(x,z)]\mathbb{P}[A>a(x,z)\mid F=x,G=y]\\
        &=\mathbb{P}[A<a(x,z)\mid F=x,G=y]\\
        &\quad+\mathbb{P}[D\leq z\mid F=x,G=y,A=a(x,z)]\mathbb{P}[A=a(x,z)\mid F=x,G=y]+0\\
        &=H_x(a(x,z)^-)+\frac{z-H_x(a(x,z)^-)}{H_x(a(x,z))-H_x(a(x,z)^-)}(H_x(a(x,z))-H_x(a(x,z)^-))\\
        &=z,
    \end{align*}
    so $D$ is indeed independent of~$(F,G)$.
\end{proof}

\COMMENT{Original proof by Zak, under the extra condition that all probabilities in the ambient space are rationals with denominator at most $q\in\mathbb{N}$:
\begin{proof}
    Let $ N \coloneqq (q!)^2 $, and note that any conditional probability in our probability space must be rational with denominator at most $ N $.
    Write $ X $ for the union of the images of the six given random variables, noting that this is finite (without loss of generality, by restricting to elements with non-zero probability).
    For each $ (x, y) \in X^2 $ with $ \mathbb{P}[F = x, G = y] \ne 0 $ and each $ z \in X $, let
    \[ p^{x, y}_z \coloneqq \mathbb{P}[A = z \mid F = x, G = y]. \]
    Let $ \{I^{x, y}_z\}_{z \in X} $ be a partition of $ [N] $ into intervals of sizes $ p^{x, y}_z N $, respectively, noting that these are all integers and sum to $ N $.
    Define $ E $ to be uniformly distributed among $ I^{F, G}_A $.
    For any $ i \in [N] $, given any $ (x, y) \in X^2 $ with $ \mathbb{P}[F = x, G = y] \ne 0 $, we write $ z_i(x, y) $ for the unique $ z \in X $ such that $ i \in I^{x, y}_z $.
    Now, for any fixed $ x, y \in X $ with $ \mathbb{P}[F = x, G = y] \ne 0 $, for each $i\in[N]$ we have that
    \begin{align*}
        \mathbb{P}[E = i \mid F = x, G = y] &= \mathbb{P}[E = i \mid A = z_i(x, y), F = x, G = y] \mathbb{P}[A = z_i(x, y) \mid F = x, G = y]\\
        &= \frac{1}{|I^{x, y}_{z_i(x, y)}|} \cdot \frac{|I^{x, y}_{z_i(x, y)}|}{N}\\
        &= 1 / N,
    \end{align*}
    so we see that $ E $ is uniformly distributed over $ [N] $ and independent of $ (F, G) $.\\
    Assume again that $ \mathbb{P}[F = x, G = y] \ne 0 $, let $ Y \coloneqq \{ (u, v) \in X^2: \mathbb{P}[B = u, D = v \mid F = x, G = y] \ne 0 \} $, and let $ Z \coloneqq \{ v \in X: \mathbb{P}[D = v \mid F = x, G = y] \ne 0 \} $.
    Now observe, using the facts that $ G = g(F, B, D)$, that $ B = b(F) $ and that $ D $ is independent of $ A, B, F $, that
    \begin{align*}
        \mathbb{P}[A = z \mid F = x, G = y]
            &= \sum_{(u, v) \in Y} \mathbb{P}[A = z \mid F = x, B = u, D = v] \mathbb{P}[B = u, D = v \mid F = x, G = y]\\
            &= \sum_{d \in Z} \mathbb{P}[A = z \mid F = x, B = b(x), D = v] \mathbb{P}[B = b(x), D = v \mid F = x, G = y]\\
            &= \sum_{d \in Z} \mathbb{P}[A = z \mid F = x] \mathbb{P}[D = v \mid F = x, G = y]\\
            &= \mathbb{P}[A = z \mid F = x].
    \end{align*}
    This in particular means that $ p^{x, y}_z $ does not depend on $ y $, so $ I^{x, y}_z = I^x_z $ does not depend on $ y $.
    Hence given $ E $ and $ F $, we have that $ A $ is the unique $ x \in X $ such that $ E \in I^F_A $, a well-defined function $ a(E, F) $, as required.
\end{proof}
}


\section{Proof of \texorpdfstring{\cref{claim:linkage_single}}{Claim 6.6}} \label{appen:linkage}

We consider parameters given by $ 1/n \ll 1/C_1 \ll 1 / r \ll \delta, 1/k \leq 1/2 $.
Fix $ \eta \in \mathbb{N} $, $ U \subseteq V $ and $ \mathcal{L} = (A_i, B_i)_{i \in [\eta]} $ as in the statement.
We claim that it suffices to show that, with probability at least $ 1 - n^{-\eta r \log^2{n}} $, there is some $i \in [\eta]$ for which $ G $ contains an $ (A_i, B_i) $-linkage $L$ of length~$ r $ in~$ V \setminus U $ whose internal vertices form an $ (A_i \cup B_i) $-independent set in~$ H $.
Indeed, in this case we would have $ L \subseteq G \setminus H $, and since there are at most $ n $ choices for $ \eta $, at most $ n^{\eta r+1} $ choices for $ U $, and at most $ n^{2k\eta} $ choices for $ \mathcal{L} $, by a union bound we then obtain the desired statement for all such choices with probability at least $ 1 - n^{-\eta r \log^2{n}} \cdot n^{2 + \eta r + 2k\eta} \ge 1 - n^{-\log^2{n}} $.\COMMENT{To get the last inequality we are using that $1/r\ll1/k$ (though there is a lot of leeway here).}
To prove this, we will apply \cref{lem:janson} to the set of all possible $ (A_i, B_i) $-linkages of length $r$ in $ V \setminus U $ whose internal vertices form an $ (A_i \cup B_i) $-independent set in $ H $, over all possible choices for $ i \in [\eta] $.

Recall the definitions of $ \mu $ and $ \Delta $.
Note that, for any $k$-endsequence pair $(A,B)$, an $ (A, B) $-linkage of length $ r $ has $ kr + \inbinom{k+1}{2}$ edges.
Recall also that, by assumption (see \cref{def:sparse-partitionable}), for each $ i \in [\eta] $, the set~$ V $ is $ (A_i \cup B_i, r, n/\log^4{n}, 1/\log{n}) $-sparse in~$ H $.
This means that there are at least $ |V \setminus U|^{r} / \log{n} \ge n^{r} \log^{-3r - 2}{n} $ sets $ X \subseteq V \setminus U $ of size~$ r $ which are $(A_i \cup B_i) $-independent in~$ H $.\COMMENT{This holds since, by the assumption on $\eta$, we have that $|U|\leq|V|/2$ so $|V\setminus U|\geq|V|/2\geq n/2\log^3n\geq n/\log^4n$.}
Thus, using the fact that $1/r\ll\delta$, we may bound\COMMENT{Sufficient to assume, for example, $ r \ge 2 / \delta $.}
\begin{equation}\label{equa:lastappendixboundmu}
    \mu \ge \eta \cdot n^{r} \log^{-3r - 2}{n} \cdot p^{kr + \genfrac{(}{)}{0pt}{2}{k+1}{2}} \ge \eta n^{- (k+1)/2 + \delta(kr + \genfrac{(}{)}{0pt}{2}{k+1}{2})} / \log^{3r + 2}{n} \ge \eta n.
\end{equation}

To bound $ \Delta $, we must consider pairs of linkages $ L_1, L_2 $; let $ i_1, i_2 \in [\eta] $ be such that $ L_j $ is an $ (A_{i_j}, B_{i_j}) $-linkage for each $ j \in [2] $.
Write $ \Delta = \Delta_1 + \Delta_2 $, where $ \Delta_1 $ is the sum over those pairs $ L_1, L_2 $ for which $ i_1 \ne i_2 $, and $ \Delta_2 $ is the sum over pairs with $ i_1 = i_2 $.
We start by bounding $ \Delta_1 $.
Observe that, if $ i_1 \ne i_2 $ and $ |V(L_1) \cap V(L_2)| = j \ge 2 $, then $ L_1 $ and $ L_2 $ certainly share at most $ k(j-1) $ edges.\COMMENT{Certainly at most $ \inbinom{j}{2} $, so if $ j \le 2k $ then we are done. 
Otherwise, consider the vertices in path order (say, with respect to $L_1$).
Then each vertex has at most $ k $ edges going forwards, and the last vertex has no edges going forwards.}
We may therefore bound\COMMENT{The term $\eta^2$ is a bound on the number of choices for $i_1$ and $i_2$.
The constant $C_1$ bounds, given that the intersection has exactly $j$ vertices, the number of ways in which this could happen (by considering different positions of such $j$ points within the order of the two linkages).
For the second inequality, by substituting $ \mu \ge \eta \cdot n^{r} \log^{-3r - 2}{n} \cdot p^{kr + \genfrac{(}{)}{0pt}{2}{k+1}{2}} $, we obtain the bound $\mu^2\sum_{j = 2}^{r} C_1 n^{-j} p^{-k(j-1)}\log^{6r+4}n$, and one can readily verify that the first term is the largest (and then use the lower bound on $p$ to obtain the next bound).
For the last inequality, simply note that $\eta$ is linear.}
\begin{equation}\label{equa:lastappendixbounddelta1}
    \Delta_1 \le \eta^2 \sum_{j = 2}^{r} C_1 n^{2r - j} p^{2kr + 2\genfrac{(}{)}{0pt}{2}{k+1}{2} - k(j-1)} \le 2C_1 \mu^2 n^{-1 - k\delta} \log^{6r + 4}{n} \le n^{-k \delta / 2} \mu^2 / \eta.
\end{equation}
In the case of $ \Delta_2 $, observe that if $ i_1 = i_2 = i $ and $ |(V(L_1) \cap V(L_2)) \setminus (A_i \cup B_i)| = j $ then $ L_1 $ and $ L_2 $ share at most $ kj + \inbinom{k+1}{2} $ edges, and in fact at most $ kj $ in the case that $ j \le r - k $.\COMMENT{Order the vertices in a path from $ A $ to $ B $ and start by counting at most $ kj $ edges forward, which misses at most $ \inbinom{k+1}{2} $ edges incident to $ A $.
In the case $ j \le r - k $, then it is easy to see that this overcounts at least $ \inbinom{k+1}{2} $ edges.
Indeed, observe that this counts at least $ k - i + 1 $ edges between the $ i $-th vertex which does not belong to the intersection and any vertices (including both those in $ A $ and in the intersection) to the left of it.
By the bound on $ j $, there are at least $ k $ such vertices so in total we have overcounted at least $ k + \cdots + 1 = \inbinom{k+1}{2}$ edges.}
We now similarly bound\COMMENT{If $ j \le r / 2 $, then we obtain $ (\mu^2/\eta) \cdot n^{-j} p^{-kj}\log^{6r + 4}{n} $ and $ n^{-j} p^{-kj} \le n^{-\delta kj} \le n^{-\delta k} $.
If $ j \ge r / 2 $, then we obtain $ n^{-j} p^{-kj - \inbinom{k+1}{2}} \le n^{(k+1)/2 -\delta k r / 2} \le n^{-\delta k} $, whenever (for example) $ r \ge 3 / \delta $.
Then absorb all constant and log terms by halving the exponent.}
\begin{equation}\label{equa:lastappendixbounddelta2}
    \Delta_2 \le \eta \sum_{j = 1}^{r} C_1 n^{2r - j} p^{2kr + 2\genfrac{(}{)}{0pt}{2}{k+1}{2} - kj - \genfrac{(}{)}{0pt}{2}{k+1}{2}\mathds{1}(j \ge r - k)} \le n^{- k \delta / 2} \mu^2 / \eta. 
\end{equation}

Combining \eqref{equa:lastappendixboundmu}--\eqref{equa:lastappendixbounddelta2}, we obtain that\COMMENT{Intermediate steps hidden: \[\ge \frac{\mu^2}{2 \mu + 8n^{- k \delta / 2} \mu^2 / \eta} \ge \frac{\eta}{2 \eta / \mu + 8n^{- k \delta / 2}} \ge \frac{\eta}{2 n^{-1} + 8n^{- k \delta / 2}}\]}
\[ \frac{\mu^2}{2\mu + 4\Delta} \ge \eta n^{k \delta / 3}. \]
Hence, by \cref{lem:janson}, there exists $ i \in [\eta] $ such that $ G $ contains an $ (A_i, B_i) $-linkage of length~$ r $ in~$ V \setminus U $, whose internal vertices form an $ (A_i \cup B_i) $-independent set in~$ H $, with probability at least
\[ 1 - \nume^{-\eta n^{k \delta / 3}} \ge 1 - n^{-\eta r \log^2{n}}, \]
as required.
\qed

\end{document}